\newtheoremstyle{exampstyle}
  {3pt} 
  {3pt} 
  {\itshape} 
  {} 
  {\bfseries} 
  {. } 
  {.3em} 
  {} 
  \newtheoremstyle{exampstyle1}
  {3pt} 
  {3pt} 
  {} 
  {} 
  {\bfseries} 
  {. } 
  {.3em} 
  {} 
\declaretheoremstyle[%
  spaceabove=3pt,%
  spacebelow=3pt,%
  headfont=\normalfont\itshape,%
  postheadspace=.3em,%
  qed=\qedsymbol%
]{mystyle} 
\declaretheorem[name={Proof},style=mystyle,unnumbered,
]{dem}
\theoremstyle{exampstyle} 
\newtheorem{proposition}{Proposition}[section] 
\newtheorem{lemma}[proposition]{Lemma}
\newtheorem{question}[proposition]{Question}
\newtheorem{claim}[proposition]{Claim}
\newtheorem{theorem}[proposition]{Theorem} 
\newtheorem{corollary}[proposition]{Corollary}
\newtheorem{conjecture}[proposition]{Conjecture}
\theoremstyle{exampstyle1} 
\newtheorem{example}[proposition]{Example}
\newtheorem{examples}[proposition]{Examples}
\newtheorem{hecho}[proposition]{Fact}
\newtheorem{definition}[proposition]{Definition}
\newtheorem{notation}[proposition]{Notation}
\newtheorem{remark}[proposition]{Remark}
\numberwithin{equation}{section} 
\pgfmathsetmacro{\myxlow}{-2}
\pgfmathsetmacro{\myxhigh}{2}
\pgfmathsetmacro{\myiterations}{6}
\DeclareMathOperator{\Aut}{Aut}
\DeclareMathOperator{\gl}{gl}
\DeclareMathOperator{\id}{id}
\DeclareMathOperator{\Id}{Id}
\DeclareMathOperator{\glu}{gl}
\DeclareMathOperator{\rec}{rec}
\DeclareMathOperator{\rk}{rk}
\DeclareMathOperator{\coh}{Coh}
\DeclareMathOperator{\Coh}{Coh}
\DeclareMathOperator{\Hom}{Hom}
\DeclareMathOperator{\Stab}{Stab}
\DeclareMathOperator{\Slice}{Slice}
\DeclareMathOperator{\Tr}{Tr}
\DeclareMathOperator{\TCoh}{TCoh}
\newcommand{\GL}{\widetilde{\textnormal{GL}}^{+}(2,\mathbb{R})}
\newcommand{\dtri}[3]{\xymatrix{#1 \ar[r] & #2 \ar[r] & #3 \ar[r] & #1[1]}}
\newcommand{\maps}[5]{#1 \colon \xymatrix@C=1.5em{ #2 \ar[r] & #3,} \xymatrix@C=1.5em{#4 \ar@{|->}[r] & #5;}}
\newcommand{\recoll}[9]{
\begin{tikzpicture}[->,>=stealth',shorten >=1pt,auto,node distance=2cm, thick]
  \node (1) {$#1$};
  \node (2) [right of=1] {$#2$};
  \node (3) [right of=2] {$#3$};
 
  \path
    (1) edge node [above, pos=0.5] {$#4$} (2)
    (2) edge [out=110,in=65] node[above, pos=0.5] {$#5$} (1)
    		edge [out=240,in=295] node[above, pos=0.5] {$#6$} (1)
    (2) edge node [above, pos=0.5] {$#7$} (3)
    (3) edge [out=110,in=65] node[above, pos=0.5] {$#8$} (2)
    		edge [out=240,in=295] node[above, pos=0.5] {$#9$} (2);
\end{tikzpicture}}
\newcommand{\la}{\rightarrow}
\newcommand{\lai}{\hookrightarrow}
\newcommand{\bcla}{\begin{claim}}
\newcommand{\ecla}{\end{claim}}
\newcommand{\bp}{\begin{proposition}}
\newcommand{\ep}{\end{proposition}}
\newcommand{\brem}{\begin{remark}}
\newcommand{\erem}{\end{remark}}
\newcommand{\bd}{\begin{definition}}
\newcommand{\ed}{\end{definition}}
\newcommand{\bl}{\begin{lemma}}
\newcommand{\el}{\end{lemma}}
\newcommand{\bh}{\begin{hecho}}
\newcommand{\eh}{\end{hecho}}
\newcommand{\bq}{\begin{question}}
\newcommand{\eq}{\end{question}}
\newcommand{\bo}{\begin{obs}}
\newcommand{\eo}{\end{obs}}
\newcommand{\bc}{\begin{corollary}}
\newcommand{\ec}{\end{corollary}}
\newcommand{\bcon}{\begin{conjecture}}
\newcommand{\econ}{\end{conjecture}}
\newcommand{\bnot}{\begin{notation}}
\newcommand{\enot}{\end{notation}}
\newcommand{\bdem}{\begin{dem}}
\newcommand{\edem}{\end{dem}}
\newcommand{\benum}{\begin{enumerate}}
\newcommand{\eenum}{\end{enumerate}}
\newcommand{\bitem}{\begin{itemize}}
\newcommand{\eitem}{\end{itemize}}
\newcommand{\bes}{\begin{examples}}
\newcommand{\ees}{\begin{examples}}
\newcommand{\be}{\begin{example}}
\newcommand{\ee}{\end{example}}
\newcommand{\bt}{\begin{theorem}}
\newcommand{\et}{\end{theorem}}
\newcommand{\R}{\mathbb{R}}
\newcommand{\Ot}{\mathcal{O}}
\newcommand{\N}{\mathcal{N}}
\newcommand{\T}{\mathcal{T}}
\newcommand{\A}{\mathcal{A}}
\newcommand{\Ser}{\mathcal{S}}
\newcommand{\D}{\mathcal{D}}
\newcommand{\Z}{\mathbb{Z}}
\newcommand{\Co}{\mathbb{C}}
\newcommand{\C}{\mathcal{C}}
\newcommand{\HH}{\mathcal{H}}
\newcommand{\Li}{\mathcal{L}}
\newcommand{\VV}{\mathcal{V}}
\newcommand{\B}{\mathcal{B}}
\newcommand{\DD}{\mathbb{D}}
\newcommand{\FF}{\mathcal{F}}
\newcommand{\HHom}{\mathcal{H}om}
\newcommand{\RHHom}{R\mathcal{H}om}
\newcommand{\lin}{\langle}
\newcommand{\rin}{\rangle}
\newcommand{\Pd}{\mathcal{P}}
\DeclarePairedDelimiter\abs{\lvert}{\rvert}%
\DeclareMathOperator{\Ker}{Ker}
\DeclareMathOperator{\Coker}{Coker}
\DeclareMathOperator{\Img}{Img}
\DeclareMathOperator{\Kom}{Kom}
\DeclareMathOperator{\Mor}{Mor}
\DeclareMathOperator{\rank}{rank}
\DeclareMathOperator{\SL}{SL}
\DeclareMathOperator{\Vect}{Vect}
\newcommand{\mapsi}[5]{#1 \colon \xymatrixrowsep{0.25pc}\xymatrix{ #2 \ar[r] & #3\\
#4 \ar@{|->}[r] & #5
}}
\newcommand\restr[2]{{
  \left.\kern-\nulldelimiterspace 
  #1 
  \vphantom{\big|} 
  \right|_{#2} 
  }}
\newcommand{\sur}{\twoheadrightarrow}
\newsavebox\myboxA
\newsavebox\myboxB
\newlength\mylenA
\newcommand*\xoverline[2][0.75]{%
    \sbox{\myboxA}{$\m@th#2$}%
    \setbox\myboxB\null
    \ht\myboxB=\ht\myboxA%
    \dp\myboxB=\dp\myboxA%
    \wd\myboxB=#1\wd\myboxA
    \sbox\myboxB{$\m@th\overline{\copy\myboxB}$}
    \setlength\mylenA{\the\wd\myboxA}
    \addtolength\mylenA{-\the\wd\myboxB}%
    \ifdim\wd\myboxB<\wd\myboxA%
       \rlap{\hskip 0.5\mylenA\usebox\myboxB}{\usebox\myboxA}%
    \else
        \hskip -0.5\mylenA\rlap{\usebox\myboxA}{\hskip 0.5\mylenA\usebox\myboxB}%
    \fi}
\renewcommand{\tocsection}[3]{%
  \indentlabel{\@ifnotempty{#2}{\bfseries\ignorespaces#1 #2\quad}}\bfseries#3}
\renewcommand{\tocsubsection}[3]{%
  \indentlabel{\@ifnotempty{#2}{\ignorespaces#1 #2\quad}}#3}
\newcommand\@dotsep{4.5}
\def\@tocline#1#2#3#4#5#6#7{\relax
  \ifnum #1>\c@tocdepth 
  \else
    \par \addpenalty\@secpenalty\addvspace{#2}%
    \begingroup \hyphenpenalty\@M
    \@ifempty{#4}{%
      \@tempdima\csname r@tocindent\number#1\endcsname\relax
    }{%
      \@tempdima#4\relax
    }%
    \parindent\z@ \leftskip#3\relax \advance\leftskip\@tempdima\relax
    \rightskip\@pnumwidth plus1em \parfillskip-\@pnumwidth
    #5\leavevmode\hskip-\@tempdima{#6}\nobreak
    \leaders\hbox{$\m@th\mkern \@dotsep mu\hbox{.}\mkern \@dotsep mu$}\hfill
    \nobreak
    \hbox to\@pnumwidth{\@tocpagenum{\ifnum#1=1\bfseries\fi#7}}\par
    \nobreak
    \endgroup
  \fi}
\renewcommand\csname r@tocindent0\endcsname{0pt}
\renewcommand\subsubsection{\@startsection{subsubsection}{3}%
  \z@{.5\linespacing\@plus.7\linespacing}{-.5em}%
  {\normalfont\bfseries}}
\title[BRIDGELAND STABILITY CONDITIONS ON THE CATEGORY OF HOLOMORPHIC TRIPLES]{BRIDGELAND STABILITY CONDITIONS ON THE CATEGORY OF HOLOMORPHIC TRIPLES OVER CURVES}
\author{EVA MART\'{I}NEZ-ROMERO}
\address{Freie Univeristät Berlin, Arnimallee 3, 14195 Berlin, Germany}
\email{3v4.mr6@gmail.com}
\author{ALEJANDRA RINC\'{O}N-HIDALGO}
\address{ICTP.
Leonardo Da Vinci Building,
Strada Costiera 11,
34151 Trieste, 
Italy}
\email{arincon@ictp.it}
\urladdr{http://users.ictp.it/~arincon/}
\author{ARNE R\"{U}FFER}
\address{Mary Immaculate College, University of Limerick, Limerick, Ireland}
\email{arne.rueffer@mic.ul.ie}
\begin{document}
\maketitle
\begin{abstract}
We give a complete description of the Bridgeland stability manifold for the bounded derived category of holomorphic triples over a smooth projective curve of genus 1 as a connected, four dimensional complex manifold.
\end{abstract}
\setcounter{section}{0}

\tableofcontents

\section{Introduction}

The concept of a holomorphic triple was introduced by Bradlow and Garc\'ia-Prada in \cite{GP2} and \cite{GP1}. The abelian category of holomorphic triples $\TCoh(C)$ consists of objects of the form $\varphi\colon E_1 \rightarrow E_2$, where $E_1,E_2$ are coherent sheaves on a nonsingular projective curve $C$ together with a morphism $\varphi$ between them. It was shown in \cite{GP2} that moduli spaces of semistable holomorphic triples of vector bundles exist and are projective. This category has also played an important role in the study of Higgs bundles \cite{bradlow2003surface}. Assuming $C$ to have genus 1, we will provide a complete description of the Bridgeland stability manifold of the triangulated category $\T_C\coloneqq D^b(\TCoh(C))$. 

Stability conditions on triangulated categories have been introduced by Bridgeland in \cite{BS1} as a formalisation of Douglas' work in \cite{BSD1.2} and \cite{BSD2}. The main result in \cite{BS1} asserts that the set of stability conditions has the structure of a complex manifold (usually referred to as stability space). Stability manifolds have several applications in algebraic geometry as for example they serve as an important aid for the understanding of derived categories or as a tool in birational geometry (\cite{huizenga17}, \cite{BS6}). The description of stability spaces is not an easy endeavour in geometric situations. For nonsingular projective curves it is well understood (\cite{BS1}, \cite{KB}, \cite{M1}, \cite{O1}) and our strategy is to use this as a building stone in the case of holomorphic triples. 

Our first finding is that $D^b(\Coh(C))$ can be embedded as a strictly full subcategory into $\mathcal{T}_{C}$ in three different ways. These are 
\begin{alignat}{8}\label{eq:introembeddings}
\mathcal{C}_1 &\coloneqq   \{& X  &\rightarrow&  0 &\colon & X  \in D^b(\Coh(C)) \} & \textnormal{ } \subset \T_C ,\\ \nonumber 
\mathcal{C}_2 &\coloneqq \{& 0  &\rightarrow&  X &\colon & X \in D^b(\Coh(C)) \} & \textnormal{ } \subset  \T_C, \\ \nonumber
\mathcal{C}_3 &\coloneqq  \{ & X &\xrightarrow{\id}& X& \colon& X   \in D^b(\Coh(C)) \} &\textnormal{ }\subset \T_C 
\end{alignat}
and subsequent pairing of two of the resulting subcategories, respectively, leads to three semiorthogonal decompositions
\begin{align}\label{eq:intro:sod}
\mathcal{T}_{C}  =\langle \mathcal{C}_1, \mathcal{C}_2 \rangle  \textnormal{, } \T_C= \langle \mathcal{C}_2, \mathcal{C}_3 \rangle \textnormal{ and } \T_C=\langle \mathcal{C}_3, \mathcal{C}_1 \rangle.
\end{align}
Following \cite{BK}, we can prove the existence of the Serre functor on $\mathcal{T}_{C}.$ Additionally, the reiterated application of the Serre functor to any of the semiorthogonal decompositions \eqref{eq:intro:sod} leads to permuting through all three of them.

We construct discrete pre-stability conditions on $\T_C$ in two different ways. The first approach is to construct pre-stability conditions on $\T_C$ by using the semiorthogonal decompositions \eqref{eq:intro:sod} to glue stability conditions from $\Stab(D^b(\Coh(C))$ following \cite{CP}. Moreover, in the Appendix we compare this gluing procedure with the well-known recollement from \cite{BBD}, which provides the same hearts when the conditions for gluing as in \cite{CP} (we refer to them as CP-glued hearts) are satisfied and we prove that in the case of $\T_C$ we cannot obtain stability conditions from hearts constructed by recollement using hearts that are not CP-glued hearts. The second approach is to tilt the standard heart in $\T_C$ with respect to a certain torsion pair following \cite{HRS}. 

The next step is then to study the structure of the stability manifold $\Stab(\T_C)$. We  first prove that for each pre-stability condition at least two of the three embeddings of $D^b(C)$ into $\mathcal{T}_{C}$ have the property that they map all line bundles and all skyscrapers to stable objects in Theorem \ref{TEO1}. This finding is a crucial step in the process of describing the entire stability space as a connected complex manifold.

It turns out, however, that there are stability conditions that are not in a $\GL$-orbit with a stability condition obtained by CP-gluing. In Proposition \ref{P01} we obtain then, that up to the $\GL$-action, the stability space of $\mathcal{T}_{C}$ is given by the stability conditions obtained via either CP-gluing via the semiorthogonal decompositions \eqref{eq:intro:sod} or by tilting with regard to the torsion pair from Lemma \ref{TPNG}.

In order to prove our main result, we need to verify that tilting and CP-gluing actually produces stability conditions in our situation. This includes the verification of the support property. To prove that the support property is fulfilled for CP-gluing pre-stability conditions we use a generalisation of some of the equations of \cite{GP2} to arbitrary stability conditions. For the non-gluing case, under the condition $g(C)=1,$ we use the Euler form as a Bogomolov-type inequality. We conjecture that the support property will hold for genus $g(C)>1$ as well. We will now extend the  Harder--Narashiman-property using Bridgeland's deformation result to the non-discrete case as in \cite{BS8} and \cite{BMLP}. Hence, we obtain the main theorem of this paper. 

\begin{theorem}[Theorem \ref{TEO2}]
If
\begin{enumerate}
\item $g(C) = 1$ or
\item $g(C) > 1$ and all non-gluing pre-stability conditions with negative discriminant satisfy the support property,
\end{enumerate}
then $\Stab(\mathcal{T}_{C})$ is a connected 4-dimensional complex manifold.
\end{theorem}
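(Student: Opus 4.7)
The plan is to combine the structural results announced earlier, namely Theorem \ref{TEO1} and Proposition \ref{P01}, with Bridgeland's deformation theorem. Proposition \ref{P01} reduces the problem to two families of pre-stability conditions modulo the free $\GL$-action: those obtained by CP-gluing via the three semiorthogonal decompositions in \eqref{eq:intro:sod}, and the non-gluing ones obtained by tilting with respect to the torsion pair of Lemma \ref{TPNG}. The first task is therefore to verify that each candidate is a genuine stability condition, which amounts to checking the support property in each case.

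For the CP-glued family, the key input is a generalisation of the Bradlow--García-Prada inequalities from \cite{GP2}: combined with the support property on $D^b(\Coh(C))$, known from \cite{BS1, KB, M1, O1}, they yield a quadratic form that controls the mass of all semistable triples. For the non-gluing family one instead exploits the Euler form on $K_{\mathrm{num}}(\T_C)$ as a Bogomolov-type inequality. Under $g(C)=1$ the Euler pairing is sufficiently well-behaved that semistable classes can be shown to have nonnegative discriminant directly; under $g(C)>1$ this is simply assumed in hypothesis (2). With the support property in hand, $\Stab(\T_C)$ is a complex manifold of dimension $\rank K_{\mathrm{num}}(\T_C)$. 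Since $K_{\mathrm{num}}(D^b(\Coh(C)))\cong\Z^2$ for $g(C)\ge 1$ and two embeddings $\mathcal{C}_i$ already form a semiorthogonal decomposition, we obtain $\rank K_{\mathrm{num}}(\T_C)=4$, accounting for the four complex dimensions.

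Connectedness is then reduced to showing that every stability condition can be joined to a single basepoint. The $\GL$-action sweeps out each orbit; the three CP-glued families are permuted into one another by the Serre functor on $\T_C$, so they lie in the same connected component; and the non-gluing family is linked to a CP-glued one by a path of tilts that deforms the torsion pair of Lemma \ref{TPNG} into the CP-glued heart corresponding to one of the semiorthogonal decompositions \eqref{eq:intro:sod}. Together with Proposition \ref{P01} this produces one large path-connected region, and a standard open-closedness argument via Bridgeland's deformation theorem, as in \cite{BS8, BMLP}, upgrades it to the full $\Stab(\T_C)$ while simultaneously extending the Harder--Narasimhan property to the non-discrete case.

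The main obstacle is the support property in the non-gluing situation: one must verify that the Euler form is nonnegative on the Mukai vectors of all semistable triples, and the elliptic curve case is precisely the range in which this can be done unconditionally, because the classical Euler pairing on the curve is sufficiently rigid. It is for this reason that genus $1$ gives case (1) outright while higher genus forces the assumption in case (2); pushing through the general-genus case would settle the conjecture mentioned at the end of the introduction.
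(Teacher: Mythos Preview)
The connectedness step has a genuine gap. Your claim that the three CP-glued families ``are permuted into one another by the Serre functor on $\T_C$, so they lie in the same connected component'' does not follow: an autoequivalence acts as a homeomorphism of $\Stab(\T_C)$ and may well permute \emph{distinct} connected components among themselves. To place $\Theta_1,\Theta_2,\Theta_3$ in a single component you need an actual path between representatives, not merely an autoequivalence carrying one set to another. Likewise, the ``path of tilts'' connecting $\Gamma$ to a CP-glued family is asserted but not constructed, and it is not clear that deforming the torsion pair of Lemma~\ref{TPNG} keeps one inside $\Stab(\T_C)$ throughout.

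The paper's route is more concrete. It introduces open sets $S_{ij}\subset\Stab(\T_C)$ (those $\sigma$ for which the skyscrapers and $\Ot_C$ under two of the three embeddings are $\sigma$-stable), takes a $\GL$-slice $\VV_{12}\subset S_{12}$, and shows that the map $\pi\colon\VV_{12}\to\GL$ sending $\sigma$ to the stability condition it induces on $\mathcal{C}_1$ is a homeomorphism onto an explicit connected open set $\mathcal{L}_{12}\subset\GL$. Injectivity uses the classification of hearts (Lemmas~\ref{GUTA} and~\ref{NGUTA}); surjectivity is where the open-and-closed argument actually enters, applied to $\pi(\VV_{12})\subset\mathcal{L}_{12}$ rather than to a subset of $\Stab(\T_C)$, with closedness coming from a wall-crossing analysis via Theorem~\ref{BDT}. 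This step is also what extends the HN-property to non-discrete central charges. Connectedness of $\Stab(\T_C)=S_{12}\cup S_{23}\cup S_{31}$ then follows because each $S_{ij}$ is connected and their pairwise intersections (consisting of $\sigma$ for which all six test objects are stable) are nonempty; the non-gluing region thus serves as the bridge between the three CP-glued families, replacing your appeal to the Serre functor.
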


Finally, we conjecture that if $g(C)=1,$ then $\mathcal{T}_{C}$ is a fractional Calabi--Yau of fractional dimension $\frac{4}{3}$.

As far as the organisation of this paper is concerned, we proceed as follows: after providing the necessary framework in Section 2, we go on to introduce and subsequently make use of the necessary techniques we require to construct pre-stability conditions on $\mathcal{T}_{C}$ in Section 3. In Section 4 we study the stability of skyscraper sheaves and line bundles under the embeddings \eqref{eq:introembeddings} and we prove that every stability condition on $\T_C$ is obtained by the techniques introduced in Section 3. Next, the support property is proved in Section 5, while a topological description of the stability space is provided in Section 6. Finally, in the Appendix we compare the two procedures to glue hearts from semiorthogonal decompositions and justify why we restricted the first part of our construction to CP-glued hearts.

\subsubsection*{Acknowledgments}We thank Bernd Kreussler and Alexander Schmitt for their invaluable advice on the research necessary to produce this article. We also wish to thank Kotaro Kawatani for pointing out a mistake in the proof of Lemma \ref{GKRT} in the preliminary version of this paper. We thank the Freie Universit\"{a}t Berlin, the Berlin Mathematical School, the IRTG GRK 1800 and the Mathematics department of Mary Immaculate College, University of Limerick for the financial support provided to the authors, as well as making possible to meet in order to develop this work. 

\section{Preliminaries}

\subsection{Review of t-structures and semiorthogonal decompositions.}

Let $\mathcal{D}$ be a $\Co$-linear triangulated category  of finite type. For the general theory of t-structures we suggest \cite[Chapter 1]{BBD}.

\bd A \emph{t-structure} on a triangulated category $\D$ consists of a pair of full additive subcategories  $(\D^{\leq 0},\D^{\geq 0})$, with $\D^{\leq i} \coloneqq \D^{\leq 0}[-i]$ and $\D^{\geq i} \coloneqq \D^{\geq 0}[-i]$ for $i \in \Z$, such that:
\begin{enumerate}[leftmargin=0.5cm]
\item $\Hom_\D(\D^{\leq 0}, \D^{\geq 1})=0$.
\item For all $E \in \D$, there is a distinguished triangle $T\la E\la F\la T[1]$
 with $T \in \D^{\leq 0}$ and $F \in \D^{\geq 1}$.
\item $\D^{\leq 0} \subset \D^{\leq 1}$ and $\D^{\geq 0} \supset \D^{\geq 1}$.
\end{enumerate}
A t-structure is \emph{bounded} if every $E\in \D$ is contained in $D^{\leq n} \cap D^{\geq -n}$ for some $n>0.$
\ed
\bd \label{HEARTBT} The \emph{heart} of a bounded t-structure $(\D^{\leq 0},\D^{\geq 0})$ is defined as $\A\coloneqq \D^{\leq 0}\cap \D^{\geq 0}.$
\ed
\brem \href{https://arxiv.org/pdf/1607.01262.pdf}{\cite[Lemma \ 5.2]{MS1}} The heart of a bounded t-structure $\A\subseteq \D$ is an abelian category whose short exact sequences are precisely the exact triangles in $\D$ with objects in $\A.$ A morphism $A\la B$ between two objects in $\A$ is defined to be an inclusion if its cone is also in $\A,$ and it is defined to be a surjection if the cone is in $\A[1].$ 
\erem
\be \cite[Chapter 1]{BBD} \begin{enumerate}[leftmargin=0.5cm] \item If $(\D^{\leq 0},\D^{\geq 0})$ is a t-structure, then the pair $(\D^{\leq n},\D^{\geq n})$ is also a t-structure. 
 \item Let $\D=D^b(\A),$ where $\A$ is an abelian category. The standard bounded t-structure of $\D$ is given by $\D^{\leq 0}\coloneqq \{E\mid H^i(E)=0\textnormal{ , }i>0\}$ and $\D^{\geq 0}\coloneqq\{E\mid H^i(E)=0 \textnormal{ , }i<0\}.$ Note that $\A$ is the heart of the standard bounded t-structure.  
\end{enumerate}
\ee

\bl \label{heartVStstructure} \textup{\href{http://annals.math.princeton.edu/wp-content/uploads/annals-v166-n2-p01.pdf}{\cite[Lemma \ 3.2]{BS1}}},  \textup{\href{https://arxiv.org/pdf/1111.1745.pdf}{\cite[Rem. 1.16]{HUYSC}}}
Let $\A \subset \D$ be a full additive subcategory of a triangulated category $\D$. Then $\A$ is the heart of a bounded t-structure if and only if
\begin{enumerate}[leftmargin=0.5cm]
\item $\Hom_{\D}(\A[k_1],\A[k_2])=0$ for $k_1 > k_2$.
\item For every nonzero $E \in \D$ there exists a finite sequence of integers
$$k_1 > k_2 > \cdots > k_m$$
and a collection of distinguished triangles
$$\xymatrix@C=1em{
	0=E_0 \ar[rr] & & E_1 \ar[dl] \ar[rr] & & E_2 \ar[dl] \ar[r] & \cdots \ar[r] & E_{m-1} \ar[rr] & & E_m = E \ar[dl]\\
	& A_1  \ar@{-->}[ul] & & A_2  \ar@{-->}[ul] & & & & A_m  \ar@{-->}[ul] &
}$$
with $A_j \in \A[k_j]$ for all $j$.
\end{enumerate}
\el
\brem Let $\A\subseteq \D$ be the heart of a bounded t-structure. For every object $E\in \D,$ the objects $A_j\in \A[k_j]$ are its \emph{cohomological objects} with respect to $\A.$ They are denoted by $A_i=H_{\A}^{-k_i}(E).$ Moreover, they induce a cohomological functor i.e.\ they are functorial and induce a long exact sequence of cohomology for any exact triangle. Note that 
a t-structure is uniquely determined by its heart (see \href{https://arxiv.org/pdf/1111.1745.pdf}{\cite[Def. 1.13]{HUYSC}}).
\erem

Hearts of bounded t-structures play an important role in the definition of Bridgeland stability conditions. Therefore, we will discuss different ways of giving hearts of bounded t-structures. 

\subsubsection*{Torsion pairs and tilting}

\bd Let $\A$ be an abelian category. A \emph{torsion pair} for $\A$ consists of a pair $(\mathcal{T}, \mathcal{F})$ of full subcategories such that
\begin{enumerate}[leftmargin=0.5cm]
\item $\Hom_\A(\mathcal{T}, \mathcal{F})=0$.
\item For all $E \in \A$, there is a short exact sequence
$ 0 \rightarrow T \rightarrow E \rightarrow F \rightarrow 0$
with $T \in \mathcal{T}$ and $F \in \mathcal{F}$.
\end{enumerate}
\ed

\bp \label{Tilting} \textup{\cite[Prop.\ 2.1]{HRS}} and  \textup{\cite[Lemma \ 6.3]{MS1}} Let $\A$ be the heart of a bounded t-structure on $\mathcal{D}.$ Given a torsion pair $(\T,\FF)$ in $\A,$ then the full subcategory
$$\A^{\sharp}=\{E\in \D\mid H_\A^{i}(E)=0 \textnormal{ for } i\notin \{-1,0\}\textnormal{, } H_{\A}^{-1}(E)\in \FF \textnormal{ and } H_{\A}^{0}(E)\in \T \}$$ is the heart of a bounded t-structure on $\D.$ We call $\A^{\sharp}$  the \emph{tilt} of $\A$ with respect to $(\T,\FF).$
Moreover, the torsion pair $(\T,\FF)$ gives rise to the torsion pair $(\FF[1],\T)$ for the tilt $\A^{\sharp}.$  
\ep
\brem Note that $\A^{\sharp}[-1]=(\FF,\T[-1])$ is also the heart of a bounded t-structure. The heart $\A^{\sharp}[-1]=(\FF,\T[-1])$ is called the \emph{right tilt} of $\A$ with respect to $(\T,\FF).$
\erem

\subsubsection*{Admissible subcategories and semiorthogonal decompositions}

In this section we introduce concepts to study a triangulated category $\D,$ namely semiorthogonal decompositions and Serre functors. We refer to \cite{HUYFM} and \cite{BK}. If $X$ is a smooth projective variety we refer to $D^b(\Coh(X))$ just as $D^b(X).$ 

\begin{definition}
Let  $\mathcal{D}$ be a triangulated category. Let $\mathcal{A}\subseteq \mathcal{D}$ be a strictly full triangulated subcategory. The category $\mathcal{A}$ is \emph{(left) right-admissible} if the inclusion functor $i\colon \mathcal{A}\rightarrow \mathcal{D}$ has a (left) right adjoint ($ i^*\colon \mathcal{D}\rightarrow \mathcal{A}$) $i^!\colon \mathcal{D} \rightarrow\mathcal{A}.$ It is called \emph{admissible} if it is right and left admissible.
\end{definition}

\begin{definition}
Let $\mathcal{D}$ be a triangulated category. A \textit{semiorthogonal decomposition} of $\mathcal{D}$ consists of a collection $\mathcal{A}_1,\cdots, \mathcal{A}_n$ of full triangulated subcategories such that
\begin{enumerate}[leftmargin=0.5 cm]
\item $\Hom_{\mathcal{D}}(\mathcal{A}_i, \mathcal{A}_j)=0$ for every $1 \leq j < i \leq n$.
\item $\mathcal{D}$ is generated by the $\mathcal{A}_i$.
\end{enumerate}
We write $\mathcal{D}=\langle \mathcal{A}_1,\cdots,\mathcal{A}_n \rangle$.
\end{definition}

\begin{lemma}[{\cite{BK}}]\label{lemma: sodecomp}
Let $\mathcal{D}$ be a triangulated category. Let $\mathcal{A}$ and $\mathcal{B}$ be strictly full triangulated subcategories of $\mathcal{D}$. Assume that $\Hom_{\mathcal{D}}(\mathcal{B},\mathcal{A})=0$. Then, the following are equivalent:
\begin{enumerate}[leftmargin=0.5 cm]
\item The category $\mathcal{D}$ is generated by $\mathcal{A}$ and $\mathcal{B}$ i.e.\ for each $X \in \mathcal{D}$, there exists a distinguished triangle $B\la X \la A \la B[1]$
with $A \in \mathcal{A}$ and $B \in \mathcal{B}$ i.e.\ $\mathcal{D}=\langle \mathcal{A}, \mathcal{B} \rangle.$
\item $\mathcal{B}= \prescript{\perp}{}{\mathcal{A}} \coloneqq \{D \in \mathcal{D} \mid \Hom_{\mathcal{D}}(D, A)=0 \textnormal{ for all } A\in \mathcal{A} \}$ and there exists a functor $i^* \colon \mathcal{D} \rightarrow \mathcal{A}$ which is left adjoint to the inclusion $i \colon \mathcal{A} \hookrightarrow \mathcal{D}$ i.e.\ $\mathcal{A}$ is left admissible. 
\item $\mathcal{A}= \mathcal{B}^\perp  \coloneqq \{D \in \mathcal{D} \mid \Hom_{\mathcal{D}}(B,D)=0 \textnormal{ for all } B\in \mathcal{B} \}$ and there exists a functor $j^! \colon \mathcal{D} \rightarrow \mathcal{B}$ which is right adjoint to the inclusion $j \colon \mathcal{B} \hookrightarrow \mathcal{D}$ i.e.\ $\mathcal{B}$ is right admissible.  
\end{enumerate} 
\end{lemma}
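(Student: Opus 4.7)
The plan is to prove the three equivalences by showing (1) $\Leftrightarrow$ (2) and (1) $\Leftrightarrow$ (3), where the two arguments are formally dual. I will write out the first circle in detail and indicate that the second follows by the analogous argument (reversing arrows and replacing left adjoints by right adjoints).

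\textbf{Proof of (1) $\Rightarrow$ (2).} Suppose $\mathcal{D}=\langle \mathcal{A},\mathcal{B}\rangle$. The assumption $\Hom_{\mathcal{D}}(\mathcal{B},\mathcal{A})=0$ already gives the inclusion $\mathcal{B}\subseteq{}^{\perp}\mathcal{A}$. For the converse, take $X\in{}^{\perp}\mathcal{A}$ and choose a distinguished triangle $B\to X\to A\to B[1]$ with $A\in\mathcal{A}$ and $B\in\mathcal{B}$. Applying $\Hom_{\mathcal{D}}(-,A)$ and using both $\Hom(B,A)=0=\Hom(B[-1],A)$ and $X\in{}^{\perp}\mathcal{A}$, one sees that $\id_{A}$ must vanish, so $A=0$ and hence $X\cong B\in\mathcal{B}$. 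To produce the adjoint, define $i^{*}$ on objects by $i^{*}X\coloneqq A$ from the triangle above. One checks by applying $\Hom_{\mathcal{D}}(-,A')$ for $A'\in\mathcal{A}$ to the triangle that the connecting map $X\to A$ induces an isomorphism $\Hom_{\mathcal{A}}(A,A')\xrightarrow{\sim}\Hom_{\mathcal{D}}(X,A')$, yielding the required adjunction once the construction is shown to be functorial. Functoriality follows from the fact that any morphism $X\to X'$ in $\mathcal{D}$ extends uniquely to a morphism of triangles because $\Hom(B,A'[k])=0$ for $k\in\{-1,0\}$ (using $\mathcal{B}\subseteq{}^{\perp}\mathcal{A}$ and its shift), forcing a unique fill-in.

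\textbf{Proof of (2) $\Rightarrow$ (1).} Given the left adjoint $i^{*}\colon\mathcal{D}\to\mathcal{A}$, let $\eta\colon\id_{\mathcal{D}}\to i\circ i^{*}$ be the unit and, for $X\in\mathcal{D}$, complete $\eta_{X}\colon X\to i(i^{*}X)$ to a distinguished triangle
\[
B\longrightarrow X\xrightarrow{\ \eta_{X}\ } i(i^{*}X)\longrightarrow B[1].
\]
I claim $B\in\mathcal{B}$. Applying $\Hom_{\mathcal{D}}(-,A')$ for $A'\in\mathcal{A}$ and invoking the adjunction isomorphisms $\Hom_{\mathcal{D}}(i(i^{*}X),A')\cong\Hom_{\mathcal{A}}(i^{*}X,A')\cong\Hom_{\mathcal{D}}(X,A')$ (induced precisely by $\eta_{X}$), one deduces that $\Hom(B,A')=0$ as well as $\Hom(B[-1],A')=0$. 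Hence $B\in{}^{\perp}\mathcal{A}=\mathcal{B}$, establishing the desired generating triangle and thereby $\mathcal{D}=\langle\mathcal{A},\mathcal{B}\rangle$.

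\textbf{The equivalence (1) $\Leftrightarrow$ (3)} is obtained by the dual argument: replace ${}^{\perp}\mathcal{A}$ by $\mathcal{B}^{\perp}$, unit by counit, and left adjoint $i^{*}$ by right adjoint $j^{!}$; alternatively, one can pass to the opposite category $\mathcal{D}^{\mathrm{op}}$, which interchanges the roles of $\mathcal{A}$ and $\mathcal{B}$ and swaps left and right admissibility, converting the proof of (1) $\Leftrightarrow$ (2) into that of (1) $\Leftrightarrow$ (3).

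\textbf{Expected main obstacle.} The computational parts are straightforward once the adjunction is unpacked; the delicate point is the functoriality of $i^{*}$ in (1) $\Rightarrow$ (2). One must argue that the triangle $B\to X\to A\to B[1]$ depends functorially on $X$, which requires the uniqueness of fill-ins of morphisms between triangles. This rests on the vanishing of $\Hom(\mathcal{B},\mathcal{A}[-1])$; the latter holds because $\mathcal{A}$ is a triangulated (hence shift-stable) subcategory, so the hypothesis $\Hom_{\mathcal{D}}(\mathcal{B},\mathcal{A})=0$ automatically gives $\Hom_{\mathcal{D}}(\mathcal{B},\mathcal{A}[k])=0$ for all $k$. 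With this vanishing in hand, the uniqueness of the morphism of triangles, and hence functoriality, is immediate.
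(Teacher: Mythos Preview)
The paper does not include a proof of this lemma; it is stated with a citation to \cite{BK} and used as background. Your proof is correct and follows the standard argument one finds in the literature (e.g.\ Bondal--Kapranov or Bondal--Orlov): construct the adjoint from the decomposition triangle in one direction, and from the unit/counit in the other, using the vanishing $\Hom_{\mathcal{D}}(\mathcal{B},\mathcal{A}[k])=0$ for all $k$ (which, as you note, is automatic since $\mathcal{A}$ is triangulated) to ensure uniqueness of the triangle and functoriality of the fill-ins. There is nothing to compare against in the paper itself.
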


\begin{remark} Let $F\colon\mathcal{D}\rightarrow\mathcal{D}$ be an autoequivalence. If there is a semiorthogonal decomposition $\mathcal{D}=\langle \mathcal{A}, \mathcal{B} \rangle,$ then $\mathcal{D}=\langle F(\mathcal{A}), F(\mathcal{B}) \rangle.$
\end{remark}



\begin{definition}
Let $\mathcal{D}$ be a $k$-linear triangulated category of finite type, where $k$ is a field. We say that $\mathcal{D}$ is \textit{right (resp.\  left) saturated} if every contravariant (resp.\ covariant) cohomology functor $\mathcal{D} \rightarrow \Vect_{k}$ of finite type is representable.
\end{definition}

\begin{proposition}[{\cite[Proposition 2.6]{BK}}]\label{prop:Satimpliesad}
Let $\mathcal{A}$ be right (resp.\ left) saturated. Suppose $\mathcal{A}$ is embedded in a triangulated category $\mathcal{D}$ as a full triangulated subcategory. Then $\mathcal{A}$ is right (resp.\ left) admissible.
\end{proposition}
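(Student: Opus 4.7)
The plan is to construct a right adjoint $i^!\colon\mathcal{D}\to\mathcal{A}$ of $i\colon\mathcal{A}\hookrightarrow\mathcal{D}$ by representing, for each $D\in\mathcal{D}$, a suitable contravariant functor on $\mathcal{A}$ via the right-saturation hypothesis, and then assembling the representing objects into a functor by a Yoneda-type uniqueness argument. The left-admissibility case is completely dual, using covariant functors and left-saturation.

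First, fix $D\in\mathcal{D}$ and define the contravariant functor $F_D\colon\mathcal{A}\to\Vect_k$ by $F_D(A)\coloneqq\Hom_\mathcal{D}(i(A),D)$. I would then check two things. \emph{Cohomologicality:} since $\mathcal{A}$ is a full triangulated subcategory, every distinguished triangle in $\mathcal{A}$ is a distinguished triangle in $\mathcal{D}$, and $\Hom_\mathcal{D}(-,D)$ is cohomological on $\mathcal{D}$; so $F_D$ sends triangles to long exact sequences. \emph{Finite type:} for every $A\in\mathcal{A}$ one has $\bigoplus_{n\in\Z}F_D(A[n]) = \bigoplus_{n}\Hom_\mathcal{D}(A,D[-n])$, which is finite-dimensional because $\mathcal{D}$ is of finite type.

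Right-saturation of $\mathcal{A}$ now provides an object $i^!(D)\in\mathcal{A}$ together with a natural isomorphism $\eta_D\colon\Hom_\mathcal{A}(-,i^!(D))\xrightarrow{\sim}F_D(-)$, unique up to unique isomorphism. Next, for any morphism $f\colon D\to D'$ in $\mathcal{D}$, post-composition with $f$ defines a natural transformation $F_D\to F_{D'}$; by the Yoneda lemma applied to the representations $\eta_D,\eta_{D'}$, there is a unique morphism $i^!(f)\colon i^!(D)\to i^!(D')$ making the obvious square commute. The uniqueness clause forces $i^!$ to preserve identities and compositions, so $i^!\colon\mathcal{D}\to\mathcal{A}$ is a functor. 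By construction $\eta$ is natural in the $\mathcal{A}$-variable, and naturality in $D$ follows from the definition of $i^!(f)$; combining these yields the desired adjunction $\Hom_\mathcal{D}(i(A),D)\cong\Hom_\mathcal{A}(A,i^!(D))$, i.e.\ $\mathcal{A}$ is right-admissible.

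There is no genuine obstacle in this argument; everything past the saturation step is formal. The only non-trivial check is finite-typeness of $F_D$, which is immediate from the standing assumption that $\mathcal{D}$ is a triangulated category of finite type. This is exactly the point of the proposition: saturation packages the representability data needed to turn an abstract fully-faithful embedding into an adjoint situation with no further geometric input.
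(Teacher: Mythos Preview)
Your argument is correct and is precisely the standard proof (the one in Bondal--Kapranov): represent the contravariant cohomological functor $A\mapsto\Hom_{\mathcal{D}}(iA,D)$ using right-saturation, then use Yoneda to upgrade the pointwise representing objects to a right adjoint. The paper does not supply its own proof of this proposition but simply cites \cite[Proposition~2.6]{BK}, so there is nothing further to compare.
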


\begin{theorem}[{\cite[Theorem 2.14]{BK}}]
Let $X$ be a smooth projective variety. Then, $D^b(X)$ is right and left saturated.
\end{theorem}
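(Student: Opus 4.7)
The plan is to reduce the two-sided saturatedness to right-saturatedness via the Serre functor, and then to represent any contravariant cohomological functor $H\colon D^b(X)\to \Vect_k$ of finite type by assembling it against an ample generating sequence in $D^b(X)$.

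For the reduction, I would use that for smooth projective $X$ the derived category $D^b(X)$ admits a Serre functor $S_X = -\otimes\omega_X[\dim X]$. The natural isomorphism $\Hom(A,B)\simeq \Hom(B,S_X A)^{\ast}$ then induces a bijection between contravariant cohomological functors of finite type on $D^b(X)$ and covariant ones, via $H \mapsto H(S_X^{-1}(-))^{\ast}$, and this correspondence preserves representability (since $\Hom(-,E)$ corresponds to $\Hom(S_X^{-1}(-),E)^{\ast}$, which is representable via the Serre functor). Hence left- and right-saturatedness are equivalent and it is enough to treat the latter.

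To prove right-saturatedness, I would fix a very ample line bundle $\mathcal{O}_X(1)$ on $X$. By Serre vanishing, for every $F\in D^b(X)$ there is an $n_0$ such that $\Hom(\mathcal{O}_X(-n), F[i]) = 0$ for all $n \geq n_0$ and $i\neq 0$. Combined with the observation that on a smooth projective variety every bounded complex of coherent sheaves admits a finite resolution by direct sums of twists $\mathcal{O}_X(-n)$, this shows that $\{\mathcal{O}_X(-n)\}_{n\gg 0}$ is an ample sequence which generates $D^b(X)$ as a triangulated category. Given $H$ of finite type, the finite-dimensional spaces $H(\mathcal{O}_X(-n))$ together with the transition data coming from multiplication maps in $\Hom(\mathcal{O}_X(-m),\mathcal{O}_X(-n))$ assemble into enough information to inductively construct a complex $E$ whose terms are direct sums of the form $\mathcal{O}_X(n)^{\oplus d_n}$ and which represents $H$ on each element of the ample sequence.

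The main obstacle will be verifying that this inductive construction terminates in $D^b(X)$, that is, that the candidate object $E$ can actually be taken to be a bounded complex with coherent cohomology. This is where the smoothness of $X$ becomes decisive: the finite global homological dimension gives a uniform bound on the length of locally free resolutions, which forces the construction to stabilise after finitely many steps and prevents $E$ from acquiring cohomology in infinitely many degrees. Once $E \in D^b(X)$ is obtained, a Yoneda-style argument shows that $\Hom(-,E)$ agrees with $H$ on the ample sequence, and the generating property of the sequence then upgrades this to a natural isomorphism of cohomological functors on all of $D^b(X)$, completing the proof.
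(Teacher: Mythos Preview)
The paper does not give a proof of this statement: it is simply cited from \cite[Theorem 2.14]{BK} and used as a black box, so there is no in-paper argument to compare against. Your outline is a reasonable sketch of the standard Bondal--Kapranov approach, and the Serre-functor reduction from left- to right-saturatedness is correct.

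That said, your description of the representing-object construction is rather loose. The actual argument in \cite{BK} (and the closely related treatment by Bondal--Van den Bergh) does not literally build $E$ as a single bounded complex of sums of $\mathcal{O}_X(n)$'s by an open-ended induction; rather, one uses that the $\mathcal{O}_X(-n)$ form a spanning (ample) sequence to produce, for each $n$, a natural transformation $\Hom(-,E_n)\to H$ which is an isomorphism on a growing piece of $D^b(X)$, and then invokes finite homological dimension to show that after finitely many steps the transformation is an isomorphism everywhere. The termination is the delicate point, and ``smoothness forces the construction to stabilise'' is the right slogan but hides the work. If you want to turn your outline into an honest proof, this is the step that needs to be spelled out carefully.
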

\subsubsection*{Serre functor}
Let $\mathcal{D}$ be a $\mathbb{C}$-linear triangulated category.
\begin{definition}A \textit{Serre functor} on $\mathcal{D}$ is an exact autoequivalence $S \colon \mathcal{D} \rightarrow \mathcal{D}$ such that for any $E,F \in \mathcal{D}$, there is an isomorphism
$\eta_{E,F} \colon \Hom_\mathcal{D}(E,F) \rightarrow \Hom_\mathcal{D}(F,S(E))^*$
(of $\mathbb{C}$-vector spaces) which is functorial in $E$ and $F$.
\end{definition}

\begin{remark}
For $\mathcal{D}$ of finite type, a Serre functor, if it exists, is unique up to isomorphism. Moreover, it commutes  with equivalences, i.e.\ for $F:\mathcal{D} \rightarrow \mathcal{D}'$ an equivalence, $S_{\mathcal{D}'}\circ F \cong F \circ S_\mathcal{D}$. Furthermore, given an admissible subcategory $\mathcal{X} \subset \mathcal{D},$ by Serre duality $S_{\mathcal{D}}$ sends $\prescript{\perp}{}{\mathcal{X}}$ to $\mathcal{X}^\perp.$ 
\end{remark}

\begin{example}\label{Serre}
Let $X$ be a smooth projective variety defined over $\mathbb{C}.$ The autoequivalence of $D^b(X)$ given by $\Ser_{X}(E)= E\otimes \omega_{X}[\dim X],$ where $\omega_X$ is the dualizing line bundle, is the Serre functor on $D^b(X)$. In particular, if $X$ is a Calabi--Yau variety, its Serre functor is simply the shift functor $S_X=[\dim X]$.
\end{example}

\begin{definition}
Let $n \in \mathbb{Z}$. A triangulated category $\mathcal{D}$ is a $n$-\textit{Calabi-Yau category} if it has a Serre functor $S_\mathcal{D}$ and $S_\mathcal{D} \cong [n]$. The integer $n$ is called the \textit{CY-dimension} of $\mathcal{D}$.
\end{definition}

\begin{definition}
A triangulated category $\mathcal{D}$ is a \textit{fractional Calabi--Yau category} if it has a Serre functor $S_\mathcal{D}$ and there are integers $p$ and $q\neq 0$ such that  $S^{q}_\mathcal{D} \cong [p]$. In this case we say that $\mathcal{D}$ has (\textit{CY}-)\textit{fractional dimension} $p/q$.
\end{definition}
The following result plays an important role in proving the existence of the Serre functor on  $\T_C.$
\begin{proposition}[{\cite[Proposition 3.8]{BK}}]\label{BKSERRE}
Let $\mathcal{D}$ be a triangulated category and $\mathcal{B} \subset \mathcal{D}$ an admissible full triangulated subcategory with $\mathcal{C}\coloneqq \mathcal{B}^\perp$ admissible. If $\mathcal{B}$ and $\mathcal{C}$ have Serre functors, then there exists a Serre functor on $\mathcal{D}$.
\end{proposition}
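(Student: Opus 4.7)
By Lemma \ref{lemma: sodecomp}, the admissibility of $\mathcal{B}$ and of $\mathcal{C}=\mathcal{B}^\perp$ yields a semiorthogonal decomposition $\mathcal{D}=\langle \mathcal{C},\mathcal{B}\rangle$ in which both inclusions $i_{\mathcal{B}}, i_{\mathcal{C}}$ admit left and right adjoints; write these as $\pi^{L}_{\mathcal{B}},\pi^{R}_{\mathcal{B}},\pi^{L}_{\mathcal{C}},\pi^{R}_{\mathcal{C}}$. For every $X\in\mathcal{D}$ there is a functorial distinguished triangle
\[B_X \longrightarrow X \longrightarrow C_X \longrightarrow B_X[1]\]
with $B_X=\pi^R_{\mathcal{B}}(X)$ and $C_X=\pi^L_{\mathcal{C}}(X)$. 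The plan is to construct $S_{\mathcal{D}}$ via representability: it suffices to produce, for each $X\in\mathcal{D}$, an object $S_\mathcal{D}(X)\in\mathcal{D}$ representing the contravariant functor $h^X\colon Y\mapsto \Hom_{\mathcal{D}}(X,Y)^*$. Since cones of representable functors remain representable in a triangulated category via Yoneda, the induced triangle of functors $h^{B_X}\to h^X\to h^{C_X}$ reduces the problem to the cases $X\in\mathcal{B}$ and $X\in\mathcal{C}$ separately.

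For $B\in\mathcal{B}$, the adjunction $i_\mathcal{B}\dashv \pi^R_\mathcal{B}$ together with the vanishing $\Hom_\mathcal{D}(\mathcal{B},\mathcal{C})=0$ give $\Hom_\mathcal{D}(B,Y)\cong \Hom_\mathcal{B}(B,\pi^R_\mathcal{B}(Y))$; dualising and applying Serre duality in $\mathcal{B}$ rewrites this as $\Hom_\mathcal{B}(\pi^R_\mathcal{B}(Y),S_\mathcal{B}(B))$. The candidate representing object is the left mutation through $\mathcal{C}$,
\[S_\mathcal{D}(B)\coloneqq L_{\mathcal{C}}\bigl(i_\mathcal{B}(S_\mathcal{B}(B))\bigr),\]
obtained as the third vertex of the triangle $\pi^R_\mathcal{C}(i_\mathcal{B}(S_\mathcal{B}(B)))\to i_\mathcal{B}(S_\mathcal{B}(B))\to S_\mathcal{D}(B)$; by construction it lies in $\mathcal{C}^\perp=\{Z:\Hom_\mathcal{D}(\mathcal{C},Z)=0\}$. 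For an arbitrary $Y$, the vanishing $S_\mathcal{D}(B)\in\mathcal{C}^\perp$ kills the $C_Y$-contribution from the canonical triangle of $Y$, while $\Hom_\mathcal{D}(\mathcal{B},\mathcal{C})=0$ applied to the defining triangle of $S_\mathcal{D}(B)$ identifies $\Hom_\mathcal{D}(B_Y,S_\mathcal{D}(B))$ with $\Hom_\mathcal{B}(B_Y,S_\mathcal{B}(B))$. Combining these yields $\Hom_\mathcal{D}(Y,S_\mathcal{D}(B))\cong \Hom_\mathcal{B}(\pi^R_\mathcal{B}(Y),S_\mathcal{B}(B))\cong \Hom_\mathcal{D}(B,Y)^*$, as required. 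The case $X=C\in\mathcal{C}$ is entirely dual, with $S_\mathcal{D}(C)\coloneqq R_\mathcal{B}(i_\mathcal{C}(S_\mathcal{C}(C)))$ obtained by right mutation through $\mathcal{B}$.

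The main obstacle I anticipate is promoting this object-level construction into a genuine autoequivalence of $\mathcal{D}$: the cone of a morphism in a triangulated category is canonical only up to non-unique isomorphism, so the functoriality of $X\mapsto S_\mathcal{D}(X)$ must be argued indirectly. The standard way around this is to observe that the representing object $S_\mathcal{D}(X)$ is unique up to unique isomorphism, and that the natural isomorphism $h^X\cong \Hom_\mathcal{D}(-,S_\mathcal{D}(X))$ is natural in $X$; Yoneda then upgrades the assignment to a functor. That this functor is an equivalence and not merely additive can be obtained by running the analogous construction with left Serre functors (available by inverting $S_\mathcal{B}$ and $S_\mathcal{C}$) to exhibit a quasi-inverse, and the resulting natural isomorphism $\Hom_\mathcal{D}(E,F)\cong \Hom_\mathcal{D}(F,S_\mathcal{D}(E))^*$ is then forced by construction on the components $\mathcal{B}$ and $\mathcal{C}$ and extends to $\mathcal{D}$ by the five-lemma applied to the two canonical triangles.
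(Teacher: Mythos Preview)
The paper does not give its own proof of this statement: it is quoted verbatim as \cite[Proposition 3.8]{BK} and used as a black box to deduce the existence of $S_{\mathcal{T}_C}$. So there is nothing to compare against in the paper itself. Your sketch is essentially the original Bondal--Kapranov argument: build $S_{\mathcal{D}}$ on the pieces $\mathcal{B}$ and $\mathcal{C}$ of the semiorthogonal decomposition using the Serre functors $S_{\mathcal{B}},S_{\mathcal{C}}$ together with the mutation through the other factor, then glue via representability.

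One small caution on the reduction step. The sentence ``cones of representable functors remain representable in a triangulated category via Yoneda'' is a bit too breezy: in a bare triangulated category the cone is only defined up to non-unique isomorphism, so a triangle of cohomological functors $h^{C_X}\to h^{X}\to h^{B_X}$ need not automatically yield a representing object for $h^X$ from representing objects for the two ends. You are aware of this---you flag functoriality as the main obstacle---but the issue already appears at the object level. The clean way around it, which is what Bondal--Kapranov do, is exactly what you then outline: show directly that $h^X(-)=\Hom_{\mathcal{D}}(X,-)^*$ is representable by splicing together the two explicit formulas $S_{\mathcal D}(B)=L_{\mathcal C}(S_{\mathcal B}(B))$ and $S_{\mathcal D}(C)=R_{\mathcal B}(S_{\mathcal C}(C))$ and using the defining triangle $B_X\to X\to C_X$, rather than invoking an abstract ``cones of representables are representable'' principle. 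Once every $h^X$ is representable the assignment is automatically functorial by Yoneda, and the same construction with $S_{\mathcal{B}}^{-1},S_{\mathcal{C}}^{-1}$ provides the quasi-inverse. With that adjustment your argument is correct and matches the cited source.
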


\subsection{Review of Bridgeland stability conditions}
\label{BRIPRE}

In this section we define Bridgeland stability conditions on a $\Co$-linear triangulated category $\mathcal{D}$ of finite type. We follow Bridgeland's papers \href{http://annals.math.princeton.edu/wp-content/uploads/annals-v166-n2-p01.pdf}{\cite{BS1}} and \cite{BS8}. We also recommend  the following lecture notes on Bridgeland stability theory \href{https://arxiv.org/pdf/1607.01262.pdf}{\cite{MS1}}, \href{http://www1.phys.vt.edu/mp10/lect-notes/bayer/dc-lecture-notes.pdf}{\cite{BS1.1}}, \href{https://arxiv.org/pdf/1111.1745.pdf}{\cite{HUYSC}} and 
\href{https://arxiv.org/pdf/0912.0043.pdf}{\cite[Appendix B.]{BMLP}}. 
  
\bd The \emph{Grothendieck group} $K(\D)$ of a triangulated category $\D$ is the abelian group generated by the isomorphism classes of objects in $\D$ subject to the relation $[A]=[C]+[B],$ where {$C\la A\la B\la C[1]$} is an exact triangle. 
We consider the Euler  bilinear form given by $\chi(E,F)=\sum_{i}(-1)^i\Hom_{\D}(E,F[i]).$ We define the \emph{numerical Grothendieck group} $\N(\D)$ as the quotient $K (\D)/K (\D)^{\perp},$ where $K(\D)^\perp $ denotes the right orthogonal with respect to the Euler  form.  Moreover, if $\N(\D)$ has finite rank then $\D$ is called \emph{numerically finite}.
\ed

\be \begin{enumerate}[leftmargin=0.5cm]
\item If $X$ is a smooth projective variety over $\Co$, then  $D^b(X)$ is numerically finite.
\item If $\D=\lin D_1, \dots, D_n\rin,$ then $K(\D)=\oplus_i K(D_i).$

\end{enumerate}
\ee

Throughout this entire section, we assume that $\D$ is numerically finite.
\bd  If $\A$ is an abelian category, we define the Grothendieck group $K(\A)$ as the abelian group generated by isomorphism classes of objects of $\A$ subject to the relation 
$[A]=[C]+[B], $ where $0\la C\la A\la B\la 0$ is a short exact sequence in  $\A.$ If $\A\subseteq \D$ is a heart of a bounded t-structure then $K(\D)\cong K(\A).$
\ed
We now fix a finite rank $\mathbb{Z}$-lattice $\Lambda$ and a surjective homomorphism $v\colon K(\D)\sur \Lambda.$
Since $\D$ is numerically finite, then $\N(\D)$ is a finite rank $\Z$-lattice. We often choose $\Lambda=\N(\D)$ and $v$ as the natural projection.

The definition of a Bridgeland stability condition has two main components: the heart of a bounded t-structure and a stability function.  

\bd Let $\A$ be an abelian category. We say that a group homomorphism $Z\colon K (\A)\la \Co$ is a \emph{weak stability function} on $\A$ if, for all $E \in \A$, we have $\Im(Z([E])) \geq 0$, with $\Im(Z([E])) = 0$ implying $\Re(Z([E])) \leq 0.$
If additionally, for $E \neq 0,$ we have that if $\Im (Z([E])) = 0$ then $\Re (Z([E])) < 0$, we say that $Z$ is a \emph{stability function} on $\A$. Note that in this case, the image of $Z$ is contained in the semi-closed upper half plane
$\overline{\mathbb{H}}=\{\alpha\in \Co\mid \Im(\alpha)\geq 0 \textnormal{ and if } \Im(\alpha)=0\textnormal{, then } \Re(\alpha)<0 \}.$ 
\ed

We consider a group homomorphism $Z\colon \Lambda \la \Co,$ such that  $ Z\circ v\colon K(\A)\la \Co$ is a stability function on $\A$. We define the slope $\mu_{Z}\colon K(\A) \la \R \cup \infty $ by \begin{equation} \nonumber
\mu_{Z}(E)=\begin{cases} \nonumber
-\frac{\Re(Z(E))}{\Im(Z(E))} & \textnormal{ if } \Im(Z(E))\neq 0 \\
+\infty & \textnormal{otherwise,}\end{cases}
\end{equation}
where $Z(E)\coloneqq Z(v([E])).$
We say that $0\neq E\in \A$ is \emph{Z-semistable (stable)} if for all proper subobjects  $F\subseteq E,$ we have that $\mu_{Z}(F)\leq\mu_{Z}(E) (\mu_{Z}(F)<\mu_{Z}(E)).$ We also define the phase of $0\neq E$ as  $\phi(E)=\arg(Z(E))\frac{1}{\pi}\in (0,1].$  Note that $E$ is $Z$-semistable if and only if for all proper subobjects $F\subseteq E,$ we have that $\phi(F)\leq \phi(E).$  We will constantly use the correspondence between slope and phase given for the complex numbers in the semi-closed upper half plane. 

\bd A stability function $Z\colon K(\A)\la \Co$ satisfies the \emph{Harder--Narasimhan property} (HN-property, for short) on $\A$ if for every $0\neq E\in \A,$ there is a filtration $0=E_0\subseteq  \cdots \subseteq E_{m-1}\subseteq E_m=E$ on $\A,$ such that $E_i/E_{i-1}$ is $Z$-semistable for $i=1,\ldots,m$ and $\phi(E_1/E_0)>\cdots>\phi(E_m/E_{m-1}).$
Moreover, as the HN-filtration is unique, we define the \emph{HN-factors} of $E$ as the quotients $E_i/E_{i-1}.$ 
\ed

\bd \label{DefSC} A \emph{(weak) pre-stability condition} on $\D$ is a pair $\sigma=(Z,\A),$ where $\A\subseteq \D$ is the heart of a bounded t-structure and $Z\colon \Lambda \la \Co$ is a group homomorphism such that $ Z\circ v\colon K(\A)(=K(\D))\la \Co$ is a (weak) stability function on $\A$ satisfying the HN-property. The homomorphism $Z$ is also called a central charge.
\ed

\brem \label{WEAKTILTING}\textup{\href{https://arxiv.org/pdf/1703.10839.pdf}{\cite[Proposition\ 2.9]{BMSL1}}} Let $\sigma=(Z,\A)$ be a weak pre-stability condition, and let $\alpha\in \R.$ We form the following subcategories of $\A$ 
$$\T_{\sigma}^{\alpha}\coloneqq \{E\in \A\mid \textnormal{The HN-factors $F$ of $E$ satisfy }\mu_{Z}(F)>\alpha\},$$
$$ \FF_{\sigma}^{\alpha}\coloneqq\{E\in \A\mid \textnormal{The HN-factors $F$ of $E$ satisfy }\mu_{Z}(F)\leq \alpha\}.$$
By the HN-property, we obtain that $(\T_{\sigma}^{\alpha},\FF_{\sigma}^{\alpha})$ is a torsion pair on $\A.$
Therefore, by \cite[Proposition\ 2.1]{HRS}, after tilting we obtain a heart of a bounded t-structure denoted by  $\A_{\sigma}^{\alpha}$ with a torsion pair given by $(\FF_{\sigma}^{\alpha}[1],\T^{\alpha}_{\sigma}).$
\erem

We will now define a slicing. Intuitively, a heart of a bounded t-structure $\A\subseteq \D$ breaks up every object in $\D$ in terms of its cohomology indexed by $\Z,$ a slicing further refines the heart of a bounded t-structure, which allows us to break up each object into pieces indexed by the  real numbers.

\bd \label{Slicing}{\cite{BS1}} A \emph{slicing} $\mathcal{P}$ on $\D$ is a collection of full subcategories $\mathcal{P}(\phi)$ for all $\phi \in \R$ satisfying:\begin{enumerate}[leftmargin=0.5cm]\item $\mathcal{P}(\phi)[1]=\mathcal{P}(\phi + 1)$, for all $\phi \in \R$.
\item If $\phi_1 > \phi_2$ and $E_i \in \mathcal{P}(\phi_i)$, $i=1,2$, then $\Hom_{\D} (E_1,E_2)=0$.
\item For every nonzero object $E \in \D$ there exists a finite sequence of maps
$$0=E_0\xrightarrow{f_{0}}E_1\xrightarrow{f_1}\cdots \la E_{m-1}\xrightarrow{f_{m-1}}E_m=E$$


and of real numbers $\phi_0 > \cdots > \phi_m$ such that the cone of $f_j$ is in $\mathcal{P}(\phi_j)$ for $j=0,\cdots,{m-1}$.
\end{enumerate}
\ed

For every interval $I\subseteq \R$ we define $\mathcal{P}(I)$ as the extension-closed subcategory generated by the subcategories $\mathcal{P}(\phi)$ with $\phi\in \R.$

\bp[{\cite[Proposition 5.3]{BS1}}] \label{SCSvH}  To give a pre-stability condition $\sigma$ on $\D$ is equivalent to giving a slicing $\mathcal{P}$ and a group homomorphism $Z\colon \Lambda\la \Co$ such that for every $0\neq E\in \mathcal{P}(\phi),$ we have that $Z(E) \in \R_{>0} \cdot e^{i \pi \phi}.$
\ep
\brem The key point of the proof of \cite[Proposition 5.3]{BS1} is to show that if $\mathcal{P}$ is a slicing, then $\A=\mathcal{P}(0,1]$ is the heart of a bounded t-structure satisfying the HN-property. If we have a pre-stability condition $\sigma=(Z,\A),$  we define $\mathcal{P}(\phi),$ for $\phi\in (0,1]$ as the set of all $Z$-semistable objects in $\A.$ 
\erem

\brem Let $\sigma=(Z,\mathcal{P})$ be a pre-stability condition. The objects of $\mathcal{P}(\phi)$ are called \emph{$\sigma$-semistable} and the simple ones \emph{$\sigma$-stable} objects.
By Definition \ref{Slicing}, for every $E\in \D,$ there is a filtration associated to $E$, that we also refer to as the Harder--Narasimhan filtration. The semistable objects in the filtration are called \emph{Harder--Narasimhan factors} (HN-factors, for short). Moreover, we write $\phi^{+}(E),\phi^{-}(E)$ for the largest and the smallest phase appearing in this filtration respectively. If $E$ is $\sigma$-semistable, $\phi^{+}(E)=\phi^{-}(E)=\phi(E).$
\erem

\brem \label{OCGKR}
\begin{enumerate}[leftmargin=0.5cm]
\item Let $\sigma=(Z,\A)$ be a pre-stability condition. By definition if $E\in \D$ is $\sigma$-semistable, then there exists $n\in \Z$ such that $E[n]\in \A.$
\item If $E,A\in \D$ and $\phi^{-}(E)>\phi^{+}(A),$ then $\Hom_{\D}(E,A)=0.$
\item Consider the last triangle $E_{m-1}\la E\la A_{m}\la E_{m-1}[1]$ of the HN-filtration of $E\in \D,$ where $A_{m}$ is the cone of $f_{m-1}.$ We have that $\Hom_{\D}^{\leq 0}(E_{m-1},A_{m})=0.$
\end{enumerate}  
\erem


Let $\GL$ be the universal covering of $\rm{GL}^{+}(2,\R),$ whose elements are given by pairs $(T,f)$ where $T\in \rm{GL}^{+}(2,\R)$ and $f\colon \R\la \R$ is a continuous increasing function that \linebreak satisfies  $f(x+1)=f(x)+1$ for all $x\in \R$ such that the induced maps of $T$ and $f$ on \linebreak $S^1=\R/2\Z=(\R^2-\{(0,0)\})/\R_{>0}$ coincide. In the next section we study in detail $\GL$ and its action on the set of pre-stability conditions. 

We define a right action of $\GL$ on the set of pre-stability conditions. If $\sigma=(Z,\A)$ is a pre-stability condition and $g=(T,f)\in \GL,$ then we define $\sigma'=\sigma g=(Z',\mathcal{P}')$ as $Z=T^{-1}\circ Z$ and $\mathcal{P}'(\phi)=\mathcal{P}(f(\phi)),$ where $\mathcal{P}$ and $\mathcal{P}'$ are the slicings of $Z$ and $Z'$ respectively.
Note that the $\GL$-action preserves the semistable objects, but relabels their phases. 

Let us consider the group $\Aut_{\Lambda}(\D)$ of autoequivalences $\Phi$ on $\D$ whose induced automorphism $\phi_*$ of $K(\D)$ is compatible with the map $v\colon K(\D)\la \Lambda.$

We define a left action of the group $\Aut_{\Lambda}(\D)$ on the set of pre-stability conditions. For $\Phi\in\Aut_{\Lambda}(\D)$ of $\D.$  We define $\Phi(\sigma)=(Z',\mathcal{P}')$  as  $Z'=Z\circ \phi^{-1}_*$ and $\mathcal{P}'(\phi)=\Phi(\mathcal{P}(\phi)).$ Note that if $E$ is a $\sigma$-semistable object, then $\Phi(E)$ is $\Phi(\sigma)$-semistable.

\bd A pre-stability condition $\sigma$ is \emph{locally finite} if  there is some $\epsilon>0$ such that each category $\mathcal{P}((\phi-\epsilon,\phi+\epsilon)),$ for $\phi\in \R,$ is of finite length. 
\ed

\bd A pre-stability condition is \emph{discrete} if the image of $Z$ is a discrete subgroup of $\Co.$
\ed

\bl[{\cite[Lemma\ 4.5]{BS8}}] \label{DISCRETELF} Suppose that $\sigma=(Z,\mathcal{P})$ is a discrete pre-stability condition and fix $0<\epsilon< \frac{1}{2}.$ Then for each $\phi\in \R$ the category $\mathcal{P}((\phi-\epsilon,\phi+\epsilon))$ is of finite length. In particular $\sigma$ is locally finite. 
\el

\brem[{\cite[Lemma 5.2]{BS1}}] The categories $\mathcal{P}(\phi)$ with $\phi\in \R$ are abelian. If $\sigma$ is locally finite, then $\mathcal{P}(\phi)$ has finite length. Therefore, a $\sigma$-semistable object $E\in \mathcal{P}(\phi)
$ admits a finite Jordan-Hölder filtration, i.e.\ a finite filtration $E_0\subset  \ldots \subset E_{n}=E$ with stable quotient $E_{i+1}/E_{i}\in \mathcal{P}(\phi),$ as the stable objects are the simple objects  in $\mathcal{P}(\phi).$
\erem

We will now recall the support property. It plays an important role in proving   good deformation properties and a well-behaved wall and chamber decomposition. 
We suggest \cite[App. A]{BMS3} to understand better the relation between the support property and effective deformations of Bridgeland stability conditions.

\bd A pre-stability condition $\sigma=(Z,\A)$ satisfies the \emph{support property} if there is a symmetric bilinear form $Q$ on $\Lambda\otimes \R\coloneqq \Lambda_{\R}$
which satisfies
\begin{enumerate}[leftmargin=0.5cm]
\item All $\sigma$-semistable objects $E\in \A,$ satisfy $Q(v(E),v(E))\geq 0.$
\item All nonzero vectors $v\in \Lambda_{\R}$ with $Z(v)=0$ satisfy $Q(v,v)<0.$
\end{enumerate}
\ed

\brem If $\rk(\Lambda)= 2$ and $Z\colon\Lambda\la \Co$ is injective, then every pre-stability condition \linebreak  $\sigma=(Z,\A)$  trivially satisfies the support property with respect to any positive semidefinite quadratic form. 
\erem

\bd  A pre-stability condition $\sigma=(Z,\A)$ that satisfies the support property is called a \emph{Bridgeland stability condition}. The set of Bridgeland stability conditions with respect to $(\Lambda,v)$ is denoted by $\Stab_{\Lambda}(\D).$ If $\Lambda=\N(\D)$ and $v$ the natural projection, then the set of stability conditions is denoted by $\Stab(\D).$
\ed


\brem There is a generalized metric on the set of slicings $\Slice(\D),$ i.e.\ a metric that does not need to be finite: given two slicings $\mathcal{P}$ and $\mathcal{Q},$ we define
$$d(\mathcal{P},\mathcal{Q})=\sup_{0\neq E\in \D}\{\abs{\phi_{\mathcal{P}}^{+}(E)-\phi_{\mathcal{Q}}^{+}(E)}, \abs{\phi_{\mathcal{P}}^{-}(E)-\phi_{\mathcal{Q}}^{-}(E)}\}\in [0, \infty].$$ 
\erem



In order to study $\Stab_{\Lambda}(\D),$ we study the projection 
$\mathcal{Z}\colon \Stab_{\Lambda}(\D)\la \Hom_{\Z}(\Lambda,\Co).$ 

\bt [{\cite[Theorem.\ 7.1]{BS1}}]\label{BDT}  Let $\sigma=(Z,\mathcal{P})$ be a stability condition. If $1/8>\epsilon>0,$ then for any group homomorphism $W\colon K(\D)\la \Co $ with $\vert\vert W-Z \vert \vert _{\sigma}<\sin(\pi\epsilon),$ there exists a stability condition $\tau=(W,\mathcal{Q})$ on $\D$ with $d(\mathcal{P},\mathcal{Q})<\epsilon.$ 
\et
\bt[\cite{BS1}] The map  $\mathcal{Z}\colon \Stab_{\Lambda}(\D)\la \Hom(\Lambda, \Co)$  is a local homeomorphism. Particularly, this implies that $\Stab_{\Lambda}(\D)$ is a complex manifold of dimension $\rk(\Lambda).$
\et

For a complete proof see \cite[Sec 5.5]{BS1.1}.


\bl[{\cite[Lemma\ 4.5]{BS8}, {\cite[Proposition B.4]{BMLP}}}] Let $\sigma$ be a Bridgeland stability condition and fix $0<\epsilon <\frac{1}{2}.$ Then, the quasi-abelian subcategory $\mathcal{P}((\phi-\epsilon, \phi+\epsilon))$ is of finite length, for each $\phi\in \R$. As a consequence, Bridgeland stability conditions are locally-finite.
\el

The main point of the proof of the last lemma is that for a Bridgeland stability condition $\sigma$ there exist discrete stability conditions arbitrarily close to $\sigma.$


\brem \label{SPUTACTION} Let $\sigma\in \Stab_{\Lambda}(\D)$ and $g\in \GL,$ then $\sigma g$ is also a Bridgeland stability condition. Indeed, if $\sigma$ satisfies the support property with respect to $Q,$ then $\sigma g$ also satisfies the support property with respect to $Q.$ If $\Phi\in \Aut_{\Lambda}(\D),$ then $\Phi(\sigma)$ satisfies the support property with respect to $Q\circ \phi^{-1}_*.$ By \cite[Lemma\ 8.2]{BS1}, the right action of $\GL$ and the left action of $\Aut_{\Lambda}(\D)$ on $\Stab_{\Lambda}(\D)$ commute.
\erem
\subsection{Bridgeland stability conditions on curves with \texorpdfstring{$g>0$}{TEXT}}

Let $C$ be a nonsingular projective curve of genus $g(C)>0.$
To describe $\Stab(C)\coloneqq\Stab(D^b(C)),$ we follow closely \cite[Section\ 9]{BS1} and \cite{M1}.
Note that there is an isomorphism $\mathcal{N}(D^b(C))\cong \Z^2$ given by $(\deg(E),\rk(E))$ for $E\in D^b(C).$ 

To describe $\Stab(C),$ one studies the last triangle of their HN-filtrations $E\la X\la G\la E[1],$ where $X$ is either $\Co(x)$ or $\Li.$ Note that by Remark \ref{OCGKR}, we have that $\Hom^{\leq 0 }(E,G)=0.$ The next lemma is a strong consequence of $D^b(C)$ being hereditary, i.e.\ it has homological dimension $1$ and the fact that $g(C)>0.$
\begin{lemma}[{\cite[Lemma\ 7.2]{GKR}}]\label{GKR}

\label{lem:GKRlemma} 
Given a distinguished triangle $E\la X\la G \la E[1]$ in $D^b(C)$,
with $X \in \Coh(C)$ and $\Hom^{\leq 0}_{D^b(C)}(E,G)=0$, then $E,G \in \Coh(C)$.
\end{lemma}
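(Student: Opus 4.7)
The plan is to exploit the fact that $D^b(C)$ is hereditary, so that both $E$ and $G$ split as direct sums of shifts of their cohomology sheaves, and then to combine the long exact sequence of the triangle with Serre duality to force all but the zeroth cohomology to vanish.

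First I would write $E = \bigoplus_i E_i[-i]$ and $G = \bigoplus_j G_j[-j]$, with $E_i \coloneqq H^i(E)$ and $G_j \coloneqq H^j(G)$. Taking cohomology of the triangle $E \to X \to G \to E[1]$ and using that $X$ is concentrated in degree zero yields isomorphisms $E_i \cong G_{i-1}$ for every $i \notin \{0,1\}$, together with the five-term exact sequence
\[0 \to G_{-1} \to E_0 \to X \to G_0 \to E_1 \to 0,\]
and in particular a monomorphism $G_{-1} \hookrightarrow E_0$ and an epimorphism $G_0 \twoheadrightarrow E_1$. Next I would unpack the hypothesis $\Hom^{\leq 0}(E,G) = 0$ through the hereditary decomposition: expanding $\Hom(E, G[-k])$ for $k \geq 0$ and using that $\Ext^{\geq 2}$ vanishes on $\Coh(C)$ produces the vanishings $\Hom(E_i, G_j) = 0$ whenever $j \leq i$ and $\Ext^1(E_i, G_j) = 0$ whenever $j \leq i - 1$. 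Combining $\Hom(E_i, G_{i-1}) = 0$ with the isomorphism $E_i \cong G_{i-1}$ from the previous step forces $\End(E_i) = 0$, and hence $E_i = 0$ (and therefore also $G_{i-1} = 0$) whenever $i \notin \{0,1\}$.

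It then remains to kill the edge terms $G_{-1}$ and $E_1$. For the first, Serre duality on $C$ gives $\Ext^1(E_0, G_{-1}) \cong \Hom(G_{-1}, E_0 \otimes \omega_C)^{\ast}$, and the left-hand side vanishes by the previous paragraph. Since $g(C) > 0$ we have $h^0(\omega_C) \geq 1$, so a nonzero section $s \in H^0(\omega_C)$ induces a sheaf map $\mu_s \colon E_0 \to E_0 \otimes \omega_C$. If $G_{-1} \neq 0$, I would exhibit a nonzero composition $G_{-1} \hookrightarrow E_0 \xrightarrow{\mu_s} E_0 \otimes \omega_C$ and derive a contradiction: any torsion-free component of $G_{-1}$ injects under multiplication by any nonzero $s$, while a torsion component is supported on finitely many points at which $s$ can be arranged to be nonvanishing (trivially when $g=1$ via a nowhere-vanishing constant section, and by basepoint-freeness of $|\omega_C|$ together with a general-position argument when $g \geq 2$). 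The symmetric argument based on $\Ext^1(E_1, G_0) \cong \Hom(G_0, E_1 \otimes \omega_C)^{\ast} = 0$ and the surjection $G_0 \twoheadrightarrow E_1$ forces $E_1 = 0$.

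The hardest part is this last step: one has to ensure that the composition through $\omega_C$ is actually nonzero even when the offending sheaf contains torsion. This is exactly where the hypothesis $g(C) > 0$ enters, supplying the nonzero global sections of $\omega_C$ needed to rule out the remaining edge cohomology.
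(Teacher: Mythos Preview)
Your proof is correct. The paper does not actually supply a proof of this lemma; it merely cites \cite[Lemma 7.2]{GKR} and remarks that the statement is ``a strong consequence of $D^b(C)$ being hereditary, i.e.\ it has homological dimension $1$, and the fact that $g(C)>0$.'' Your argument makes both ingredients explicit: the hereditary splitting together with the cohomology long exact sequence handles all degrees outside $\{0,1\}$, and the Serre-duality step using a section of $\omega_C$ (which exists precisely because $g>0$) eliminates the two edge terms. This is exactly the mechanism the paper invokes later in Remark~\ref{EFGKR} and in the proof of the triple-category analogue Lemma~\ref{GKRC}, so your approach is fully in line with the paper's methods. The torsion subtlety you flag in the last paragraph is handled correctly: basepoint-freeness of $|\omega_C|$ for $g\ge 2$ (and $\omega_C\cong\mathcal{O}_C$ for $g=1$) lets you choose the section $s$ to avoid any prescribed finite set of points, so the composite through $\mu_s$ is indeed nonzero.
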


\begin{proposition}[{\cite{M1}}]\label{prop:all-L,k(x)-stable}
For any $\sigma \in \Stab(D^b(C))$, every line bundle $\mathcal{L}$ and skyscraper sheaf $\mathbb{C}(x)$ of a point $x \in C$ are $\sigma$-stable.
\end{proposition}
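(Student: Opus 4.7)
The plan is to first establish $\sigma$-semistability of $\mathbb{C}(x)$ and $\mathcal{L}$ via Lemma~\ref{GKR}, and then to upgrade semistability to stability by a numerical argument exploiting the primitivity of the classes $v(\mathbb{C}(x))=(1,0)$ and $v(\mathcal{L})=(\deg\mathcal{L},1)$ in the rank-two lattice $\N(D^b(C))\cong\Z^2$.

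For the semistability of $\mathbb{C}(x)$: if $\mathbb{C}(x)$ were not $\sigma$-semistable, I would take the last HN-triangle $E\to\mathbb{C}(x)\to G\to E[1]$ with $E,G$ nonzero. By Remark~\ref{OCGKR}(3) one has $\Hom^{\leq 0}_{D^b(C)}(E,G)=0$, so Lemma~\ref{GKR} forces $E,G\in\Coh(C)$, producing a short exact sequence in $\Coh(C)$ that contradicts the simplicity of $\mathbb{C}(x)$ as a coherent sheaf. For the semistability of $\mathcal{L}$ I would apply the same machinery to the \emph{first} HN-triangle $A\to\mathcal{L}\to B\to A[1]$: Lemma~\ref{GKR} yields a short exact sequence $0\to A\to\mathcal{L}\to B\to 0$ in $\Coh(C)$ with $A$ a sub-line-bundle of $\mathcal{L}$ and $B$ a nonzero torsion sheaf. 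Iterating GKR along the HN-filtration of $B$ shows every HN-factor of $B$ is itself a torsion sheaf, hence has class a positive integer multiple of $v(\mathbb{C}(x))$ and therefore the same $\sigma$-phase $\phi_{\mathrm{sky}}$; strict distinctness of HN-phases then forces $B$ to be $\sigma$-semistable of phase $\phi_{\mathrm{sky}}$. Thus $A$ is $\sigma$-semistable of phase $\phi(A)=\phi^+(\mathcal{L})>\phi^-(\mathcal{L})=\phi_{\mathrm{sky}}$. But for any $y\in C$ the evaluation map $A\twoheadrightarrow\mathbb{C}(y)$ is a nonzero morphism between $\sigma$-semistables of phases $\phi(A)$ and $\phi_{\mathrm{sky}}$, forcing $\phi(A)\leq\phi_{\mathrm{sky}}$, a contradiction.

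For the upgrade to stability, let $X\in\{\mathcal{L},\mathbb{C}(x)\}$ be $\sigma$-semistable of phase $\phi$, and suppose it admitted a nontrivial Jordan--H\"older filtration in the abelian slice $\mathcal{P}(\phi)$ with simple factors $T_1,\ldots,T_n$, $n\geq 2$. Each $Z(T_i)$ lies on the ray $\R_{>0}Z(X)$, and since $Z$ is injective on $\Lambda_{\R}$ for any stability condition on a positive-genus curve, this gives $v(T_i)\in\R_{>0}v(X)$. Integrality of $v(T_i)\in\Z^2$ together with the primitivity of $v(X)$ would then force each $v(T_i)$ to be a positive integer multiple of $v(X)$, and the constraint $\sum v(T_i)=v(X)$ would leave only $n=1$, contradiction. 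Hence $X$ is $\sigma$-stable.

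The main obstacle is the semistability of $\mathcal{L}$: unlike $\mathbb{C}(x)$, a line bundle admits many proper subsheaves, so the simplicity argument in $\Coh(C)$ fails and one must combine Lemma~\ref{GKR} with the already-established semistability of skyscrapers and the Hom-vanishing between $\sigma$-semistables of strictly decreasing phases to exclude a nontrivial HN-filtration. Once this is in place, the passage from semistability to stability is a clean numerical argument using only the rank-two lattice structure of $\N(D^b(C))$.
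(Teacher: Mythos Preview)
Your semistability arguments via Lemma~\ref{GKR} are sound in outline and match the approach the paper sketches, but two steps need more care.

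\emph{Minor issue (semistability of $\mathcal{L}$).} The claim that every torsion HN-factor of $B$ has ``the same $\sigma$-phase $\phi_{\mathrm{sky}}$'' is not justified by the numerical class alone: the phase of a semistable object is determined by $Z$ only modulo $2\Z$, and you have not yet shown that all skyscrapers share one phase. A clean fix: apply GKR to the \emph{last} HN-triangle of $\mathcal{L}$ as well, so that the last HN-factor $F_m$ is a nonzero torsion sheaf in $\Coh(C)$; pick $z\in\supp(F_m)$, so that $\Co(z)\hookrightarrow F_m$ gives $\phi(\Co(z))\leq\phi^-(\mathcal{L})$. Then the evaluation $A\twoheadrightarrow\Co(z)$ (with $A$ your first HN-factor, a sub-line-bundle) gives $\phi^+(\mathcal{L})=\phi(A)\leq\phi(\Co(z))\leq\phi^-(\mathcal{L})$, a contradiction. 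This avoids any global claim about $\phi_{\mathrm{sky}}$.

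\emph{Genuine gap (stability).} The assertion that ``$Z$ is injective on $\Lambda_{\R}$ for any stability condition on a positive-genus curve'' is what makes the primitivity argument work, but it is neither proved nor automatic from the support property (a rank-two $Z$ can have a one-dimensional real kernel missing the lattice). In the paper's logical order this proposition \emph{precedes} Theorem~\ref{thm:stabC}, so you cannot invoke the $\GL$-transitivity to get injectivity. You must establish it directly: once all $\Co(x)$ and all line bundles are semistable, the nonzero maps $\mathcal{O}_C\to\Co(x)$ and $\Co(x)\to\mathcal{O}_C[1]$ give $\phi(\mathcal{O}_C)\in[\phi(\Co(x))-1,\phi(\Co(x))]$; if either endpoint is attained, then $Z(\mathcal{O}_C)$ and $Z(\Co(x))$ are $\R$-collinear and all $Z(\mathcal{L})$ lie on this line. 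One then checks (using the short exact sequences $0\to\mathcal{L}(-nx)\to\mathcal{L}(-(n-1)x)\to\Co(x)\to 0$) that this forces an infinite strictly ascending or descending chain inside some slice $\mathcal{P}(\phi)$, contradicting the finite length of $\mathcal{P}(\phi)$ granted by the support property. Only after this can you run your primitivity argument. With these two fixes the proof is complete and follows the standard route indicated (but not carried out) in the paper.
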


Moreover, there is a distinguished stability condition given by the standard slope stability: \linebreak 
$\sigma_\mu \coloneqq (Z_\mu,\coh(C))$
where $Z_\mu(r,d)= -d + i r$ with $(r,d)\in \mathbb{Z}^2$ with $r\coloneqq \rk(E)$ and $d \coloneqq \deg(E)$ for $[E]\in \mathcal{N}(D^b(C)).$ The next theorem states that this is in fact the only stability condition in $\Stab(D^b(C))$ up to the $\GL$-action.
\begin{theorem}[{\cite[Theorem.\ 9.1]{BS1},  \cite[Theorem 2.7]{M1}}]\label{thm:stabC}\label{GLC}
The action of $\GL$ on $\Stab(D^b(C))$ is free and transitive. In particular,
\begin{equation}\label{eq: curves iso}
\Stab(D^b(C)) \cong \GL.
\end{equation}
\end{theorem}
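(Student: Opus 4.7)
I will fix $\sigma_\mu = (Z_\mu, \coh(C))$ as a basepoint and show that every $\sigma \in \Stab(D^b(C))$ lies in its $\GL$-orbit (transitivity), while no nontrivial $g \in \GL$ fixes $\sigma_\mu$ (freeness). The three crucial inputs are Proposition \ref{prop:all-L,k(x)-stable} (the $\sigma$-stability of every line bundle and skyscraper), Lemma \ref{GKR} (pinning down cohomological degrees), and the factorization of $Z$ through $\N(D^b(C)) \cong \Z^2$.

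\textbf{Transitivity.} Given $\sigma = (Z, \mathcal{P})$, all line bundles and skyscrapers are $\sigma$-stable, with central charge determined only by the class in $\N(D^b(C))$; in particular $Z(\Co(x))$ is independent of $x$ and $Z(\mathcal{L})$ depends only on $\deg(\mathcal{L})$. From the short exact sequence $0 \to \mathcal{L} \to \mathcal{L}(x) \to \Co(x) \to 0$, together with the nontrivial maps $\mathcal{L} \to \Co(x)$ and $\Co(x) \to \mathcal{L}[1]$, Remark \ref{OCGKR} forces $\phi_\sigma(\mathcal{L}) \leq \phi_\sigma(\Co(x)) \leq \phi_\sigma(\mathcal{L}) + 1$, and the support property applied on the rank-two lattice rules out $\R$-collinearity of $Z(\Co(x))$ and $Z(\mathcal{O})$, making the inequalities strict. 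Hence there is a unique $g \in \GL$ sending $Z$ to $Z_\mu$ and adjusting phases so that $\phi(\Co(x)) = 1$ and $\phi(\mathcal{O}) = 1/2$. Replacing $\sigma$ by $\sigma \cdot g$, we may assume $Z = Z_\mu$ and that these phases hold.

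\textbf{Heart identification.} It remains to verify $\mathcal{P}(0,1] = \coh(C)$. For any line bundle $\mathcal{L}$ of degree $d$, Lemma \ref{GKR} applied to its last HN-triangle forces $\mathcal{L} \in \coh(C)$, and the normalization yields $\phi_\sigma(\mathcal{L}) = \arg(-d+i)/\pi \in (0,1)$, so $\mathcal{L} \in \mathcal{P}(0,1]$; similarly $\Co(x) \in \mathcal{P}(0,1]$. Since $\coh(C)$ is generated as an extension-closed subcategory by line bundles and skyscrapers, and $\mathcal{P}(0,1]$ is extension-closed, we get $\coh(C) \subseteq \mathcal{P}(0,1]$. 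Two hearts of bounded t-structures in $D^b(C)$ with one contained in the other must coincide: apply the filtration of Lemma \ref{heartVStstructure} for the smaller heart to an arbitrary object $E$ of the larger one and observe that the cohomology of $E$ with respect to the larger heart must vanish outside degree zero. Hence $\mathcal{P}(0,1] = \coh(C)$ and $\sigma = \sigma_\mu$.

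\textbf{Freeness and main obstacle.} For freeness, suppose $\sigma_\mu \cdot g = \sigma_\mu$ with $g = (T,f) \in \GL$. Then $T^{-1} \circ Z_\mu = Z_\mu$; since $Z_\mu(\mathcal{O}) = i$ and $Z_\mu(\Co(x)) = -1$ span $\R^2$, $T$ is the identity. The map $f$ must fix the phase of every $\sigma_\mu$-stable object; since for $g(C) \geq 1$ there exist slope-stable vector bundles for every coprime $(r,d)$, these phases form a dense subset of $(0,1]$, and together with $f(\phi+1) = f(\phi) + 1$ and monotonicity this forces $f = \id$. The main obstacle I anticipate is the heart identification step: after normalizing the central charge and two reference phases, one must still rule out that $\mathcal{P}(0,1]$ is a nontrivial tilt of $\coh(C)$, which is exactly what Lemma \ref{GKR} prevents by keeping all $\sigma$-stable objects inside shifts of $\coh(C)$ rather than permitting genuine complexes.
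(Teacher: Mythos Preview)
The paper does not supply its own proof of this theorem: it is stated with citations to \cite[Theorem 9.1]{BS1} and \cite[Theorem 2.7]{M1} and then used as a black box, so there is no in-paper argument to compare against. Your outline is essentially the standard proof from those references---normalize via the $\GL$-action using the $\sigma$-stability of $\Co(x)$ and $\Ot_C$ (Proposition \ref{prop:all-L,k(x)-stable}), then identify the heart---and it is correct.

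Two minor remarks. In the heart identification step you invoke Lemma \ref{GKR} for a line bundle $\Li$, but $\Li$ is already $\sigma$-stable so its HN-filtration is trivial; what actually pins down $\phi_\sigma(\Li)\in(0,1)$ are the nonzero morphisms $\Li\to\Co(x)$ and $\Co(x)\to\Li[1]$ together with $\phi_\sigma(\Co(x))=1$. Lemma \ref{GKR} is really the engine behind Proposition \ref{prop:all-L,k(x)-stable}, not a separate ingredient here. For freeness, a cleaner alternative to density of phases is: once $T=\Id$, the compatibility of $f$ with $T$ on $S^1$ forces $f(t)=t+2k$ for some $k\in\Z$, whence $\Coh(C)=\mathcal P(0,1]=\mathcal P(2k,2k+1]=\Coh(C)[2k]$, so $k=0$.
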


\brem \label{HEARTSCURVE} Let us consider $\sigma_{\mu}=(Z_{\mu},\Coh(C))$ and its corresponding slicing $\mathcal{P}_{\mu}.$ Note that $\Coh^{r}(C)\coloneqq \mathcal{P}_{\mu}(r,r+1]$ for $r\in \R$ is a heart of a bounded t-structure. All the hearts appearing in the stability conditions $\sigma=(Z,\A)\in \Stab(C)$ are of this form. Indeed, if $\sigma=\sigma_{\mu}g$ with $g=(T,f)\in \GL,$ we obtain that $\A=\mathcal{P}_{\mu}(f(0),f(1)=f(0)+1].$ If $f(0)=n+\theta,$ with $n\in \Z$ and $\theta\in  [0,1)$ then we also have $\A=\Coh^{\theta}(C)[n].$
\erem

For our purpose it is important to understand the isomorphism \eqref{eq: curves iso}. We define the following matrices in $\SL(2,\R)$
$$K_{\phi}=\begin{bmatrix}
\cos(\phi) & -\sin(\phi) \\
\sin(\phi) & \cos(\phi),
\end{bmatrix} \textnormal{, } A_{a}=\begin{bmatrix}
a&0 \\
0&\frac{1}{a}
\end{bmatrix} \textnormal{ and } N_{x}=\begin{bmatrix}
1& x\\0 & 1
\end{bmatrix},$$

for $\phi\in [0,2\pi),$ $x,a\in\R$ and $a>0.$

\bl[Iwasawa decomposition {\cite[Section\ 16.3]{HJB}}] For every $T\in \rm{GL}^{+}(2,\R),$ there are real numbers $\phi\in [0,2\pi),$ $k,a\in\R_{>0}$ and $x\in \R,$ such that $T=k K_{\phi}A_{a}N_{x}.$ Moreover, this representation is unique.
\el

Let us consider $g=(T,f)\in \GL$ where $T=k K_{\phi}A_{a}N_{x}$ and $f(0)=n+\theta,$ as above. We will now relate $\phi$ and $\theta.$

\brem \label{SCCHR}\label{KANS} 
\begin{enumerate}[leftmargin=0.5cm]
\item Let us consider $g=(T,f)\in \GL$ where $T=k K_{\phi}A_{a}N_{x}$ as above,  then there is $m\in \Z,$ such that $2m+\frac{\phi}{\pi}=f(0).$ Moreover, we have two cases: either
$$n=2m \textnormal{ with } \theta=\frac{\phi}{\pi} \textnormal{ or } n=2m+1 \textnormal{ with } \theta=\frac{\phi}{\pi}-1.$$ 
\item Let $\sigma=(Z,\A)\in \Stab(C).$ If $\sigma=\sigma_{\mu}g$ with  $g=(T,f)\in \GL,$ where $T=k K_{\phi}A_{a}N_{x}$ then $\A=\Coh^{\frac{\phi}{\pi}}(C)[2m].$
\item  Let $\sigma=(Z,\A)\in \Stab(C).$ If $\sigma'=\sigma g$ with $g=(T,f)\in \GL$ where $T=\pm k A_{a}N_{x},$ then there is $l\in \mathbb{Z}$ with $\A'=\A[l]$ where $\sigma'=(Z',\A').$ 
\item Let $\sigma=\sigma_{\mu}g=(Z,\A)$ with $g=(T,f)\in \GL$ and  $T^{-1}=\begin{bmatrix} -A & B \\ -D & C\end{bmatrix}$. We obtain that \linebreak $Z=T^{-1}Z_{\mu},$ i.e.\ $Z(d,r)=Ad+Br+i(Cr+Dd).$ Moreover, if $T=k K_{\phi}A_{a}N_{x},$ then \linebreak  $\arg(C+Di)=\phi.$ Indeed, as $C=\cos(\phi)\frac{a}{k}\textnormal{ and } D=\sin(\phi)\frac{a}{k},$ then $$\arg(C+Di)=\arg(\cos(\phi)+\sin(\phi)i)=\phi.$$
\item Let $\theta\in (0,1).$ First note, that by the HN-property, if $\T_{\theta}=\mathcal{P}_{\mu}(\theta,1]$ and $\FF_{\theta}=\mathcal{P}_{\mu}(0,\theta],$ then $(\T_{\theta},\FF_{\theta})$ is a torsion pair of $\Coh(C).$ By \cite[Proposition\ 2.1]{HRS} it induces a heart of a bounded t-structure which is precisely $\Coh^{\theta}(C).$ Moreover, if we define $\alpha=-\cot(\pi\theta),$ then by Remark \ref{WEAKTILTING}  we obtain that $\Coh^{\theta}(C)=\Coh(C)^{\alpha}_{{\sigma}_{\mu}}.$ Indeed, it is enough to notice that $\T_{\theta}=\T_{\sigma_{\mu}}^{\alpha}$ and that $\FF_{\theta}=\FF_{\sigma_{\mu}}^{\alpha},$ because
$\mu(E)>\alpha \textnormal{ if and only if } \phi(E)>\theta.$
\end{enumerate} 
\erem





Let $\sigma=\sigma_{\mu}g=(Z,\A)$ with $g\in \GL$ and $\mathcal{P}$ its slicing. The objects $\Co(x)$ and $\Li$ are $\sigma$-stable. By the definition of the $\GL$-action, this implies that for every $\sigma\in \Stab(C)$ we obtain $\phi_0\coloneqq \phi_{\sigma}(\Co(x))$ for $x\in C.$
 
By definition of the $\GL$-action, we obtain that $\mathcal{P}(\phi_0)=\mathcal{P}_{\mu}(f(\phi_0))=\mathcal{P}_{\mu}(1),$ i.e.\ \linebreak $f(\phi_0)=1.$ Analogously, if $\phi_1=\phi_{\sigma}(\Ot_C),$ then $f(\phi_1)=\frac{1}{2}.$ 

\brem \label{IPSC}\label{DATA}  \begin{enumerate}[leftmargin=0.5cm]
    \item We have that $-n<\phi_0=f^{-1}(1)\leq-n+1 \textnormal{ if and only if } f(0)=n+\theta,$ with $n\in \Z$ and $\theta\in [0,1).$ 
    \item There is a homeomorphism
\begin{eqnarray} \nonumber
\rho\colon \Stab(C) &\la & \{(m_0,m_1,\phi_0,\phi_1)\in \R^4 \mid \phi_1<\phi_0<\phi_1+1,\textnormal{ and } m_0,m_1>0\} \\\nonumber
 \sigma=(Z,\A) & \mapsto & (m_0,m_1,\phi_0,\phi_1),
\end{eqnarray}
where  $m_0=|Z([\Co(x)])|, m_1=|Z([\Ot_C])|.$
\end{enumerate} 
\erem
\section{Bridgeland stability conditions on \texorpdfstring{$\mathcal{T}_{C}$}{TEXT}}

Let $C$ denote a smooth projective curve of genus $g>0$. 
\begin{definition}
A \textit{holomorphic triple} $E=(E_1, E_2, \varphi)$ (also denoted by $E=E_1 \overset{\varphi}\rightarrow E_2$) on $C$ consists of two coherent sheaves $E_1, E_2 \in \coh(C)$ and a sheaf morphism $\varphi \colon E_1 \rightarrow E_2$ between them. We denote by $\TCoh(C)$ the category of holomorphic triples on $C.$
\end{definition}

\begin{remark}
The category $\TCoh(C)$ is abelian (see \href{https://pdfs.semanticscholar.org/2f49/9840b84f5aa95e0c89bbdcf2b082e5f79dbd.pdf}{\cite[Theorem\ 1]{MJ1}}).
\end{remark}

\subsection{The triangulated category \texorpdfstring{$\mathcal{T}_{C}$}{TEXT}}
To be studied in this subsection is the category \linebreak $\T_{C}\coloneqq D^b(\TCoh(C)).$  We construct different semiorthogonal decompositions for $\T_{C}.$ We prove the existence of the Serre functor.

We note that we have three different ways to embed $\Coh(C)$ into $\TCoh(C)$:
\begin{multicols}{3}\noindent\begin{eqnarray}\nonumber
i_{*}\colon \Coh(C) &\lai &\TCoh(C)\\
 X&\mapsto & X\rightarrow 0,\nonumber
\end{eqnarray}
\begin{eqnarray} \nonumber
j_{*}\colon \Coh(C) &\lai & \TCoh(C)\\
 X&\mapsto & 0\rightarrow X,\nonumber
\end{eqnarray}\begin{eqnarray} \nonumber
l_{*}\colon \Coh(C) &\lai & \TCoh(C)\\
X&\mapsto & X\xrightarrow{\id} X.\nonumber
\end{eqnarray}

\end{multicols}


As these functors are exact, we take their corresponding derived functors and by \href{https://link.springer.com/content/pdf/10.1007/978-3-319-28829-1_4.pdf}{\textup{\cite[Theorem\ 2.4]{BDG1}},} we obtain three different exact embeddings from $D^b(C)$ into $\TCoh(C)$. We adapt the same notation $i_*$, $j_*$, $l_*$. We denote the strictly full subcategories of $\mathcal{T}_{C}$ obtained as images of $D^b(C)$ in $\mathcal{T}_{C}$ under each embedding ($i_*$, $j_*$, $l_*$ respectively) by $\mathcal{C}_i$ for $i=1,2,3$.

Since the subcategories $\mathcal{C}_i $ of $\mathcal{T}_{C}$ are equivalent to $D^b(C),$ they are also saturated. 
\begin{remark}[{\cite{BK}}]\label{rem:Diadmissible}
As a consequence of Proposition \ref{prop:Satimpliesad}, we obtain that the strictly full subcategories $\mathcal{C}_i$ are admissible in $\mathcal{T}_{C}$ for all $i=1,2,3.$
\end{remark}

\begin{proposition}\label{prop:SOD}
The triangulated category $\mathcal{T}_{C}$ admits 3 semiorthogonal decompositions $$\mathcal{T}_{C}  =\langle \mathcal{C}_1, \mathcal{C}_2 \rangle\textnormal{, }\T_C=\langle \mathcal{C}_2, \mathcal{C}_3 \rangle \textnormal { and }\T_C =\langle \mathcal{C}_3, \mathcal{C}_1 \rangle.
$$
\begin{proof}
By Remark \ref{rem:Diadmissible} we already know that the $\mathcal{C}_i$ are admissible in $\mathcal{T}_{C}$ for all $i=1,2,3.$

We define the following functors
\begin{align*}
l_{*} &\colon \mathcal{C}_3 \rightarrow \mathcal{T}_{C} & X  &\mapsto X \overset{\id}{\rightarrow} X \\
l^* &\colon \mathcal{T}_{C} \rightarrow \mathcal{C}_3 & E_1 \overset{\varphi}{\rightarrow} E_2 &\mapsto E_2 \\
l^! &\colon \mathcal{T}_{C} \rightarrow \mathcal{C}_3 & E_1 \overset{\varphi}{\rightarrow} E_2 &\mapsto E_1.
\end{align*}
Left adjointness, $(l^*,l_*)$ and right adjointness, $(l_*,l^!)$ follow directly from the definitions. 

Additionally, we define the functors
\begin{align*}
i_{*} &\colon \mathcal{C}_1 \rightarrow \mathcal{T}_{C}  &X  &\mapsto X \rightarrow 0 \\
i^* &\colon \mathcal{T}_{C} \rightarrow \mathcal{C}_1  &E_1 \overset{\varphi}{\rightarrow} E_2 &\mapsto E_1.
\end{align*}
Left adjointness, $(i^*,i_*)$ follows directly from the definitions. 

Finally, we have to prove that
$\mathcal{C}_{3}^{\perp}=\{E \in \mathcal{T}_{C} \mid \Hom_{\mathcal{T}_{C}}(l_*\mathcal{C},E)=0 \} =\mathcal{C}_2.$ 
Indeed, by adjointness we have $\Hom_{\mathcal{T}_{C}}(l_*(X ), E)=\Hom_{\mathcal{C}}(X, l^!(E))$ for any $E=E_1 \overset{\varphi}{\rightarrow} E_2 \in \mathcal{T}_{C}$ and any $X \in \mathcal{C}$. This means that $\Hom_{\mathcal{C}}(X, E_1)=0$ for all $X \in \mathcal{C}$. This happens if and only if $E_1=0$.

With a similar argument, one can see that 
$\prescript{\perp}{}{\mathcal{C}}_3=\mathcal{C}_1.$ and $\prescript{\perp}{}{\mathcal{C}}_1=\mathcal{C}_2.$
\end{proof}
\end{proposition}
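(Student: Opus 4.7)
The plan is to apply Lemma \ref{lemma: sodecomp} to each of the three pairs. Since Remark \ref{rem:Diadmissible} already furnishes admissibility of $\mathcal{C}_1, \mathcal{C}_2, \mathcal{C}_3$ in $\mathcal{T}_C$, it suffices, for each claimed decomposition $\mathcal{T}_C = \langle \mathcal{A}, \mathcal{B} \rangle$, to verify that $\Hom_{\mathcal{T}_C}(\mathcal{B}, \mathcal{A}) = 0$ and that $\mathcal{B}$ (or $\mathcal{A}$) coincides with the appropriate orthogonal complement.

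First I would produce the adjoints explicitly. On triples, define the termwise exact functors
\[
l^{*}\colon E_1 \xrightarrow{\varphi} E_2 \;\longmapsto\; E_2, \qquad
l^{!}\colon E_1 \xrightarrow{\varphi} E_2 \;\longmapsto\; E_1, \qquad
i^{*}\colon E_1 \xrightarrow{\varphi} E_2 \;\longmapsto\; E_1,
\]
and dually $j^{!}\colon (E_1\to E_2) \mapsto E_2$. Each of these is exact at the abelian level, so it derives to an exact functor on bounded derived categories without difficulty. From the definitions one checks the adjoint pairs $(l^{*}, l_{*})$, $(l_{*}, l^{!})$, $(i^{*}, i_{*})$ and $(j_{*}, j^{!})$ by comparing $\Hom$-sets componentwise: a morphism of triples from $X \xrightarrow{\id} X$ to $E_1 \xrightarrow{\varphi} E_2$ is determined by a map $X \to E_1$ (whence $(l_{*}, l^{!})$), while a morphism from $E_1 \xrightarrow{\varphi} E_2$ to $X \xrightarrow{\id} X$ is determined by a map $E_2 \to X$ (whence $(l^{*}, l_{*})$), and analogously for $i_{*}$ and $j_{*}$.

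Second, I would compute the two-sided orthogonal complements of $\mathcal{C}_3$, and one of those of $\mathcal{C}_1$, via adjunction. For any $E = (E_1 \xrightarrow{\varphi} E_2) \in \mathcal{T}_C$ and any $X \in D^b(C)$,
\[
\Hom_{\mathcal{T}_C}(l_{*}X, E) \;\cong\; \Hom_{D^b(C)}(X, l^{!}E) \;=\; \Hom_{D^b(C)}(X, E_1),
\]
which vanishes for every $X$ precisely when $E_1 = 0$, i.e.\ when $E \in \mathcal{C}_2$; hence $\mathcal{C}_3^{\perp} = \mathcal{C}_2$. Dually,
\[
\Hom_{\mathcal{T}_C}(E, l_{*}X) \;\cong\; \Hom_{D^b(C)}(l^{*}E, X) \;=\; \Hom_{D^b(C)}(E_2, X)
\]
vanishes for all $X$ iff $E_2 = 0$, so ${}^{\perp}\mathcal{C}_3 = \mathcal{C}_1$. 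Finally,
\[
\Hom_{\mathcal{T}_C}(E, i_{*}X) \;\cong\; \Hom_{D^b(C)}(i^{*}E, X) \;=\; \Hom_{D^b(C)}(E_1, X)
\]
vanishes for all $X$ iff $E_1 = 0$, giving ${}^{\perp}\mathcal{C}_1 = \mathcal{C}_2$.

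Putting it together, the semiorthogonality conditions $\Hom(\mathcal{C}_2, \mathcal{C}_1) = \Hom(\mathcal{C}_3, \mathcal{C}_2) = \Hom(\mathcal{C}_1, \mathcal{C}_3) = 0$ follow (the last, for instance, because $\mathcal{C}_1 \subseteq {}^{\perp}\mathcal{C}_3$), and combined with the admissibility from Remark \ref{rem:Diadmissible} the criterion of Lemma \ref{lemma: sodecomp} yields
\[
\mathcal{T}_C = \langle \mathcal{C}_1, \mathcal{C}_2 \rangle, \qquad
\mathcal{T}_C = \langle \mathcal{C}_2, \mathcal{C}_3 \rangle, \qquad
\mathcal{T}_C = \langle \mathcal{C}_3, \mathcal{C}_1 \rangle.
\]
There is no genuine obstacle here; the only point that needs care is to observe that $l^{*}$, $l^{!}$, $i^{*}$ and $j^{!}$ are exact in the abelian sense so that no derived machinery is required to pass to $\mathcal{T}_C = D^b(\TCoh(C))$, after which the adjunctions and orthogonality calculations are entirely formal.
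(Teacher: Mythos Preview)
Your proof is correct and follows essentially the same approach as the paper: define the explicit adjoints $l^{*}, l^{!}, i^{*}$, use them to identify the orthogonal complements $\mathcal{C}_3^{\perp} = \mathcal{C}_2$, ${}^{\perp}\mathcal{C}_3 = \mathcal{C}_1$, ${}^{\perp}\mathcal{C}_1 = \mathcal{C}_2$ via adjunction, and conclude by the admissibility criterion of Lemma~\ref{lemma: sodecomp}. Your version is slightly more explicit about the passage from abelian to derived categories, which the paper leaves implicit.
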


\begin{remark} \label{Triangles} Let $E=E_1\xrightarrow{\varphi} E_2\in \T_C.$ \label{rmk:SOD}\begin{enumerate}[leftmargin=0.5cm]
\item The semiorthogonal decomposition $\mathcal{T}_{C}=\langle \mathcal{C}_1, \mathcal{C}_2\rangle$ induces the  distinguished triangle $j_*(E_2)\la E\la i_*(E_1)\la j_*(E_2)[1].$

\item The semiorthogonal decomposition $\mathcal{T}_{C}=\langle \mathcal{C}_3, \mathcal{C}_1 \rangle$ induces the  distinguished triangle
$i_*i^!(E)\la E \la l_*(E_2)\la i_*i^!(E)[1].$

\item The semiorthogonal decomposition $\mathcal{T}_{C}=\langle \mathcal{C}_2, \mathcal{C}_3 \rangle$ induces the  distinguished triangle $l_*(E_1)\la E\la j_*(j^*(E))\la l_*(E_1)[1].$
\end{enumerate}

By admissibility of the full subcategories $\mathcal{C}_i$ for $i=1,2,3$, and the unicity of the triangles associated to the semiorthogonal decompositions, we have that $l^*=j^!$, where $j^!$ satisfies right adjunction $(j_*,j^!)$, similarly \ $l^!=i^*$.  

In order to describe the precise distinguished triangles associated to the semiorthogonal decompositions of $\mathcal{T}_{C}$ we need to understand the functors $i^!$ and $j^*$ satisfying the adjunction relations $(i_*,i^!)$ and $(j^*, j_*)$ respectively. We will now describe the functors $i^!$ and $j^*$ at the level of objects and some features at the level of morphisms.

\end{remark}

\begin{lemma}\label{SQ31} 
Let $E=(E_1,E_2,\varphi_{E})$ and  $F=(F_1,F_2,\varphi_{F})\in \T_{C}.$ The functor $i^!$ is given by \begin{eqnarray}
i^!\colon \T_{C}& \la & D^b(C) \\ \nonumber
E &\mapsto & C(\varphi_{E})[-1], 
\end{eqnarray} where $C(-)$ is the cone, 
at the level of objects. If $\psi\colon E\la F$ is a morphism of triples, then the following diagram commutes
\begin{equation} \label{squarei}
 \xymatrix{ C(\varphi_E)[-1]  \ar[r]^{i^!(\psi)} \ar[d]^{{\pi}_E} & C(\varphi_F)[-1] \ar[d]^{{\pi}_F}  \\
 E_1 \ar[r]^{i^*(\psi)} & F_1.}
\end{equation}
\begin{proof}
From the triangle in part (3) from Remark \ref{Triangles} and the fact that $i^*$ is an exact functor, we obtain the triangle 
$i^!(E)\xrightarrow {\pi_E} E_1\xrightarrow{\varphi} E_2\la i^!(E),$
therefore $i^!(E)= C(\varphi_{E})[-1].$
We use the naturality of the adjunction and we obtain $\psi\circ \pi_E=i_*i^!(\psi)\circ \pi_F$ and  by taking $i^*$ in both sides we obtain the square \eqref{squarei}.
\end{proof}
\end{lemma}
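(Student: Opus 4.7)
The plan is to use the semiorthogonal decomposition $\mathcal{T}_C = \langle \mathcal{C}_3, \mathcal{C}_1 \rangle$ recalled in Remark \ref{Triangles}(2). For each $E \in \mathcal{T}_C$ it supplies a canonical distinguished triangle
$$i_* i^!(E) \to E \to l_*(E_2) \to i_* i^!(E)[1],$$
whose second arrow is the unit of the adjunction $(l^*, l_*)$. At the level of triples this unit is $(\varphi_E, \id) \colon (E_1 \to E_2) \to (E_2 \xrightarrow{\id} E_2)$, as is forced by the universal property together with the observation that any morphism of triples $(\alpha, \beta)$ landing in $l_*(E_2)$ must satisfy $\alpha = \beta \circ \varphi_E$. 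Identifying $i^!(E)$ therefore reduces to computing the shifted cone of $(\varphi_E, \id)$ in $D^b(\TCoh(C))$.

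I would carry this out by producing an explicit quasi-isomorphism in $D^b(\TCoh(C))$ between that mapping cone --- the two-term complex of triples in degrees $-1$ and $0$ with differential $(\varphi_E, \id)$ --- and $i_* C(\varphi_E)$, the pushforward under $i_*$ of the two-term complex $[E_1 \xrightarrow{\varphi_E} E_2]$. A natural chain map is given in each degree by the morphism of triples $(\id, 0) \colon (X \xrightarrow{\varphi} Y) \to (X \to 0)$; both routes around the defining square compose to $(\varphi_E, 0)$, so this genuinely is a chain map. A direct calculation inside the abelian category $\TCoh(C)$ shows that both complexes have cohomology $i_* \ker \varphi_E = (\ker \varphi_E \to 0)$ in degree $-1$ and $i_* \coker \varphi_E = (\coker \varphi_E \to 0)$ in degree $0$, and that $(\id, 0)$ induces the identity on each; hence the chain map is a quasi-isomorphism. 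Together with the SOD triangle and the full faithfulness of $i_*$, this yields the claimed formula $i^!(E) \cong C(\varphi_E)[-1]$.

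For the commutative square \eqref{squarei}, the key observation is that the SOD triangle is functorial in $E$: a morphism $\psi \colon E \to F$ induces a morphism of the associated triangles, in particular a commutative square in $\mathcal{T}_C$ with horizontal arrows $i_* i^!(E) \to E$ and $i_* i^!(F) \to F$, left vertical $i_* i^!(\psi)$, and right vertical $\psi$. Applying the left adjoint $i^*$ (which satisfies $i^* i_* \cong \id_{D^b(C)}$) to this square produces precisely \eqref{squarei}: the horizontal arrows descend to $\pi_E$ and $\pi_F$, the right vertical to $i^*(\psi) = \psi_1$, and the left vertical to $i^!(\psi)$ after using the identification $i^!(E) \cong C(\varphi_E)[-1]$.

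The main obstacle will be the explicit verification that the degree-wise chain map $(\id, 0)$ is a quasi-isomorphism, as this requires careful bookkeeping of kernels and cokernels inside the less familiar abelian category $\TCoh(C)$ of triples. Once this is established, the identification of $i^!$ on morphisms and the commutativity of \eqref{squarei} follow formally from the naturality of the SOD triangle and the adjunction identity $i^* i_* \cong \id$.
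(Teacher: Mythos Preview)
Your proof is correct, and for the commutative square you argue exactly as the paper does: naturality of the SOD triangle followed by an application of $i^*$ (using $i^*i_*\cong\id$). For the object-level identification, however, you take a longer road than necessary. You compute the cone of the unit $(\varphi_E,\id)\colon E\to l_*(E_2)$ inside $D^b(\TCoh(C))$ by writing down an explicit chain map to $i_*C(\varphi_E)$ and checking by hand that it is a quasi-isomorphism. The paper instead simply applies the exact functor $i^*$ to the SOD triangle $i_*i^!(E)\to E\to l_*(E_2)\to i_*i^!(E)[1]$. Since $i^*i_*\cong\id$, $i^*(E)=E_1$, $i^*l_*(E_2)=E_2$, and $i^*$ of the unit is $\varphi_E$, one obtains directly a distinguished triangle $i^!(E)\xrightarrow{\pi_E} E_1\xrightarrow{\varphi_E} E_2\to i^!(E)[1]$ in $D^b(C)$, whence $i^!(E)\cong C(\varphi_E)[-1]$. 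This bypasses your cohomology computation in $\TCoh(C)$ entirely; your explicit quasi-isomorphism is effectively reproving that $i^*$ preserves this particular cone, which is automatic from exactness.
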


\begin{lemma} 
Let $E=(E_1,E_2,\varphi_{E})$ and $F=(F_1,F_2,\varphi_{F})\in \T_{C}.$ The functor $j^*$ is given by \begin{eqnarray}
j^*\colon \T_{C}& \la & D^b(C) \\ \nonumber
E &\mapsto & C(\varphi_{E}) 
\end{eqnarray}
at the level of objects. If $\psi\colon E\la F$ is a morphism of triples, then the following diagram commutes
\begin{equation} 
 \xymatrix{E_2 \ar[r]^{j^!(\psi)} \ar[d]_{\tau_E} & F_2 \ar[d]^{\tau_F} \\ C(\varphi_E) \ar[r]^{j^*(\psi)} & C(\varphi_F). }
\end{equation}

\begin{proof}
The proof goes along the lines of Lemma \ref{SQ31}.
\end{proof}
\end{lemma}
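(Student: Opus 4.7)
The plan is to mirror the proof of Lemma \ref{SQ31}, using the distinguished triangle supplied by part (3) of Remark \ref{Triangles},
\[
l_*(E_1) \to E \to j_*(j^*(E)) \to l_*(E_1)[1],
\]
together with the fact noted in Remark \ref{Triangles} that the right adjoint $j^!$ of $j_*$ coincides with the exact functor $l^*$. Applying $j^! = l^*$ to this triangle and using $j^! \circ j_* = \id$, together with the immediate identifications $j^!(l_*(X)) = X$ and $j^!(X_1 \xrightarrow{\alpha} X_2) = X_2$, one obtains a distinguished triangle
\[
E_1 \xrightarrow{\varphi_E} E_2 \xrightarrow{\tau_E} j^*(E) \to E_1[1]
\]
in $D^b(C)$. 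Comparing with the defining triangle of the cone then yields the desired identification $j^*(E) \cong C(\varphi_E)$.

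The only subtle point in this step is identifying the first map above as $\varphi_E$. This follows from observing that the canonical morphism $l_*(E_1) \to E$ in the SOD triangle is the counit of the adjunction $(l_*, l^!)$ evaluated at $E$; a direct computation, using that $l^!$ extracts the first component of a triple and that a morphism of triples from $l_*(E_1) = (E_1 \xrightarrow{\id} E_1)$ to $E = (E_1 \xrightarrow{\varphi_E} E_2)$ lifting $\id_{E_1}$ on the first component is unique, shows that in coordinates this counit must be $(\id_{E_1}, \varphi_E)$. Hence $j^! = l^*$ sends it to $\varphi_E$, as required.

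For the commutativity of the square, a morphism of triples $\psi \colon E \to F$ induces, by functoriality of the SOD, a morphism between the corresponding part (3) triangles, with middle vertical $\psi$ and right vertical $j_*(j^*(\psi))$. Applying the exact functor $j^! = l^*$ and using naturality of the adjunction together with the identifications above gives a morphism of triangles in $D^b(C)$ whose middle square is precisely the one asserted in the statement, relating $\tau_E$, $\tau_F$, $j^!(\psi)$, and $j^*(\psi)$. The main (and only) technical obstacle is the coordinate identification of the counit mentioned above; everything else is formal consequence of the exactness of adjoint functors in the semiorthogonal setting.
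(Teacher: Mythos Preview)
Your proposal is correct and follows exactly the approach the paper intends: it is the dual of the argument for Lemma \ref{SQ31}, using the triangle from part (3) of Remark \ref{Triangles} in place of part (2), the exact functor $j^! = l^*$ in place of $i^*$, and naturality of the adjunction unit in place of the counit. Your write-up is in fact more detailed than the paper's own proof, which simply reads ``goes along the lines of Lemma \ref{SQ31}.''
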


\begin{remark}
In many cases, we express  a morphism $\Phi\in \Hom_{\T_{\mathcal{C}}}(E,F)$ as two horizontal arrows, but note that they just represent $i^*(\Phi)$ and $j^!(\Phi)$ and they do not characterize the morphism $\Phi.$ 
\end{remark}

We denote by $\mathcal{N}(C)=\mathcal{N}(D^b(C))$ the numerical Grothendieck group of $\coh(C).$ 

\brem Note that $K(\T_C)=K(D^b(C))\oplus K(D^b(C)).$ 

Moreover, the Euler form $\chi(E,E') \coloneqq \sum \limits_{i} (-1)^i \dim_\mathbb{C} \Hom_{\mathcal{T}_{C}}(E,E'[i])$ on $\TCoh(C)$ vanishes if and only if $r_1=r_2=0$ and $d_1=d_2=0$, where $d_i\coloneqq \deg(E_i)$, $r_i\coloneqq  \rk(E_i)$ for $i=1,2$. Hence, $\mathcal{N}(\mathcal{T}_{C})$ is isomorphic to $\mathbb{Z}^4$ by identifying the class $[E] \in \mathcal{N}(\mathcal{T}_{C})$ of a holomorphic triple $E=E_1 \overset{\varphi}\rightarrow E_2$ with the point $(r_1,d_1,r_2,d_2) \in \mathbb{Z}^4.$
\erem 

\subsubsection*{Serre functor}

By Proposition \ref{BKSERRE} and Proposition \ref{prop:SOD}, we have the existence of the Serre functor $S_{\mathcal{T}_{C}}$ on $\mathcal{T}_{C}$. The adjunction properties of the Serre functor provide us with the following lemma.

\begin{lemma}[{\cite[Lemma 2.42]{ARH1}}]\label{lem:SerreTappliedToCohi}
Let $X \in  D^b(C)$. The following equalities hold:
\begin{enumerate}
\item $S_{\mathcal{T}_{C}}(i_*(X))= j_*(S_{C}(X))[1],$ 
\item $S_{\mathcal{T}_{C}}(j_*(X))= l_*(S_{C}(X)),$ 
\item $S_{\mathcal{T}_{C}}(l_*(X))= i_*(S_{C}(X)),$ where $S_C$ is as in Example \ref{Serre}. 
\end{enumerate}
\end{lemma}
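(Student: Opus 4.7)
The plan is to establish each isomorphism via the Yoneda lemma, by computing for an arbitrary $Y = (Y_1 \xrightarrow{\psi} Y_2) \in \mathcal{T}_C$ both $\Hom_{\mathcal{T}_C}(Y, S_{\mathcal{T}_C}(\cdot))$ and $\Hom_{\mathcal{T}_C}(Y, \cdot)$ applied to the conjectured right-hand side, and then comparing them. The Serre functor $S_{\mathcal{T}_C}$ exists by Proposition \ref{BKSERRE} applied to the semiorthogonal decompositions in Proposition \ref{prop:SOD}. The ingredients I would use in the computation are: Serre duality on $\mathcal{T}_C$ to rewrite the left-hand side, Serre duality on $D^b(C)$ (with $S_C$ from Example \ref{Serre}) in the middle, the six adjunctions attached to the SODs, and the explicit formulae $i^!(Y) = C(\psi)[-1]$, $j^*(Y) = C(\psi)$, $j^!(Y) = l^*(Y) = Y_2$, $l^!(Y) = i^*(Y) = Y_1$ collected in Remark \ref{rmk:SOD} and Lemma \ref{SQ31}.

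For part (1), I would chain the isomorphisms
\begin{align*}
\Hom_{\mathcal{T}_C}(Y, S_{\mathcal{T}_C}(i_*X))
&\cong \Hom_{\mathcal{T}_C}(i_*X, Y)^* \cong \Hom_C(X, C(\psi)[-1])^* \\
&\cong \Hom_C(C(\psi), S_C(X)[1]) \cong \Hom_{\mathcal{T}_C}(Y, j_*(S_C(X))[1]),
\end{align*}
applying, in order, Serre duality on $\mathcal{T}_C$, the adjunction $(i_*,i^!)$ together with Lemma \ref{SQ31}, Serre duality on $D^b(C)$ (which absorbs the shift onto the other factor), and finally the adjunction $(j^*,j_*)$. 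All isomorphisms are natural in $Y$, so the Yoneda lemma delivers (1). Parts (2) and (3) follow by the same template but without any shift: for (2), the adjunction $(j_*,j^!)$ with $j^!=l^*$ produces $\Hom_C(X,Y_2)^*$, to which I apply Serre duality on $D^b(C)$ and then re-interpret via $(l^*,l_*)$ as $\Hom_{\mathcal{T}_C}(Y, l_*(S_C X))$; for (3), I use $(l_*,l^!)$ with $l^!=i^*$ to obtain $\Hom_C(X,Y_1)^*$ and finish via $(i^*,i_*)$ to reach $\Hom_{\mathcal{T}_C}(Y, i_*(S_C X))$.

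Conceptually, the three identities reflect the cyclic permutation $\mathcal{C}_1 \to \mathcal{C}_2 \to \mathcal{C}_3 \to \mathcal{C}_1$ effected by $S_{\mathcal{T}_C}$: from the SODs of Proposition \ref{prop:SOD} one reads off $\mathcal{C}_2 = {}^\perp\mathcal{C}_1$, $\mathcal{C}_3 = {}^\perp\mathcal{C}_2$, $\mathcal{C}_1 = {}^\perp\mathcal{C}_3$, and the general principle that $S_{\mathcal{T}_C}$ sends left orthogonals to right orthogonals then forces $S_{\mathcal{T}_C}(\mathcal{C}_i)$ to land in the next subcategory. The only expected subtlety is the shift $[1]$ in (1), which is forced by the asymmetry in the adjoints: $i^!$ involves a shift by $[-1]$ coming from the cone description, while $j^!$ and $l^!$ are plain projections. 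After Serre duality on $D^b(C)$, this $[-1]$ migrates to the image side and yields the shift $[1]$ in the target, while the other two parts remain shift-free. Apart from correctly tracking this shift and matching the six adjunctions, the argument is a formal Yoneda verification with no further obstacle.
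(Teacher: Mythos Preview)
Your proof is correct and is precisely the argument the paper points to: the paper does not spell out a proof but states that ``the adjunction properties of the Serre functor provide us with the following lemma'' and cites \cite[Lemma 2.42]{ARH1}. Your Yoneda computation using Serre duality on both $\mathcal{T}_C$ and $D^b(C)$ together with the adjunctions $(i_*,i^!)$, $(j^*,j_*)$, $(j_*,j^!)$, $(l^*,l_*)$, $(l_*,l^!)$, $(i^*,i_*)$ is exactly what ``adjunction properties'' means here, and your tracking of the shift in part (1) via the identity $i^!(Y)=C(\psi)[-1]$ is the only non-formal point.
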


\begin{remark}Let $E=E_1\xrightarrow{\varphi} E_2$  be an object in $\T_{\mathcal{C}}.$  As we have $F^!S_{\T_{\mathcal{C}}}=S_{C}F^*$ for $F=i,j$ or $l,$ we directly $i^!(S_{\T_{\mathcal{C}}}(E))  =  S_{C}(E_1) \textnormal{ and }j^!(S_{\T_{\mathcal{C}}}(E))  =  S_{C}(C(\varphi)).$

\end{remark}

\brem [{\cite[Proposition 1.3.3]{BBD}} and {\cite[Corollary 1.1.10]{BBD}}] \label{Uniquemorphism}
Let us consider the triangle $j_*j^!(E)\la E \la i_*i^*(E)\xrightarrow{t_{E}} j_*j^!(E)[1].$ Note that the morphism $t_{E} \colon i_*i^*(E)\rightarrow j_*j^!(E)[1]$ characterizes the triangle, i.e.\ it is the unique morphism representing the isomorphism class of the triangle 
induced by the semiorthogonal decomposition $\lin \mathcal{C}_1, \mathcal{C}_2\rin$ for $E.$
Moreover, note that $C(t_{E})[-1]$ is isomorphic to $E$ up to a nonunique isomorphism in $\T_{C}.$ 
\erem

\begin{remark}\label{GBSF} Let us consider the triangle $j_*j^!(E)\la E \la i_*i^*(E)\xrightarrow{t_{E}} j_*j^!(E)[1].$ After applying the Serre functor we obtain a triangle $$S_{\T_{C}}(j_*j^!(E))\la S_{\T_{C}}(E) \la S_{\T_{C}}(i_*i^*(E))\xrightarrow{S_{\T_{C}}(t_{E})} S_{\T_{C}}(j_*j^!(E)[1].$$ As $S_{\T_{C}}({^\perp}\D)=\D^{\perp}$ for an admissible triangulated subcategory $\D\subseteq \T_C,$ then $S_{\T_{C}}(j_*j^!(E))\in \mathcal{C}_3 \textnormal{ and } S_{\T_{C}}(i_*i^*(E))\in \mathcal{C}_2.$ By the uniqueness of the triangle induced by the semiorthogonal decomposition, we obtain precisely the corresponding triangle of $S_{\T_{C}}(E)$ induced by the semiorthogonal decomposition $\T_{C}=\lin \mathcal{C}_2,\mathcal{C}_3\rin $ up to isomorphism. Moreover $S_{\T_{C}}(t_{E})$ is the unique morphism that characterizes the triangle as mentioned in Remark \ref{Uniquemorphism}. See \href{https://arxiv.org/pdf/1509.07657.pdf}{\cite[Lemma\ 2.3]{AK1}} and \cite[Proposition\ 1.3.3]{BBD}.
\end{remark} 
The following remark connects the triple $E$ with the roof $[\varphi_E]\coloneqq (\id,\varphi_E) $ in $\Mor(D^b(C),D^b(C))$ and it  plays a role in the explicit construction of the Serre functor. 

\brem[{\cite[Lemma 2.49]{ARH1}}]\label{TL12}  $t_{E}=t_{l_*(E_2)}\circ i_*([\varphi]).$
\erem 

We will now describe the Serre functor at the level of objects. 

\bp [{\cite[Lemma 2.51]{ARH1}}] \label{SERRE1}  Let $E=E_1\xrightarrow{\varphi}E_2$ be an object of $\T_{C}.$ If $S_{C}(i_{E})$ is a morphism of complexes, where $E_2\xrightarrow{i_E} C(\varphi)$ is the morphism of complexes given by the injection, then $S_{\T_{C}}(E)$ is isomorphic to the triple $S_{C}(E_2)\xrightarrow{S_{C}(i_{E})}S_{C}(C(\varphi))$ in $\T_{C}.$ 

On the other hand, if $S_{C}^{-1}(p_E)$ is a morphism of complexes, where $C(\varphi)[-1]\xrightarrow{p_E}E_1$ is the projection, then $S^{-1}_{\T_{C}}(E)$ is isomorphic to $S^{-1}_{C}(C(\varphi)(-1))\xrightarrow{S^{-1}_{C}(p_E)}S^{-1}_{C}(E_1).$
\ep

\begin{remark} First of all, note that by Serre duality and the semiorthogonal decomposition described above, it is clear that the homological dimension of $\TCoh(C)$ is $\leq 2$ (see \cite[Proposition 2.7.24]{EM1}). Moreover, given $E_1 \in \Coh(C)$, if we take $i_*(E_1)$ and  $j_*(E_1)$, by Serre duality and Lemma \ref{lem:SerreTappliedToCohi} we get 
\begin{align*}
\Hom_{\mathcal{T}_{C}} (i_*(E_1), j_*(E_1)[2]) &=\Hom_{\mathcal{T}_{C}} (j_*(E_1)[2],S_{\mathcal{T}_{C}}(i_*(E_1)))^* \\
&=\Hom_{D^b(C)} (E_1, E_1 \otimes \omega_C)^*[2]\neq 0.  
\end{align*}
 
This implies that the category $\TCoh(C)$ has
homological dimension 2.
\end{remark}

If $g(C)=1$, we have the following conjecture.
\begin{conjecture}
Let $C$ be an elliptic curve. Then $S_{\mathcal{T}_{C}}^3=[4]$, which implies that $\mathcal{T}_{C}$ is a fractional Calabi--Yau category of fractional dimension $4/3$.
\end{conjecture}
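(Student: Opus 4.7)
The plan is to exploit the fact that for an elliptic curve $C$, the Serre functor of $D^b(C)$ reduces to the shift $S_C\cong [1]$, since $\omega_C\cong\mathcal{O}_C$ and $\dim C=1$ (Example \ref{Serre}). Substituting this into Lemma \ref{lem:SerreTappliedToCohi} produces a three-cycle
\[
S_{\T_C}\circ i_* \;\cong\; j_*[2],\qquad S_{\T_C}\circ j_* \;\cong\; l_*[1],\qquad S_{\T_C}\circ l_* \;\cong\; i_*[1],
\]
so $S_{\T_C}$ cyclically permutes the three embedded copies $\mathcal{C}_1,\mathcal{C}_2,\mathcal{C}_3$ of $D^b(C)$ up to shifts whose exponents total $4$. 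Composing these natural isomorphisms yields $S_{\T_C}^3\circ F_*\cong F_*[4]$ for each $F\in\{i,j,l\}$, so the desired relation $S_{\T_C}^3\cong [4]$ holds on each of the three admissible generating subcategories.

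To extend this component-wise isomorphism to a single natural isomorphism $\eta\colon S_{\T_C}^3\Rightarrow [4]$ on all of $\T_C$, I would use the SOD triangle $j_*(E_2)\to E\to i_*(E_1)\xrightarrow{t_E}j_*(E_2)[1]$ coming from $\T_C=\langle \mathcal{C}_1,\mathcal{C}_2\rangle$, uniquely determined by its connecting morphism $t_E$ (Remark \ref{Uniquemorphism}). By Remark \ref{GBSF}, applying $S_{\T_C}$ rotates this triangle to the SOD triangle of $S_{\T_C}(E)$ in $\langle \mathcal{C}_2,\mathcal{C}_3\rangle$, classified by $S_{\T_C}(t_E)$; two further applications rotate through $\langle \mathcal{C}_3,\mathcal{C}_1\rangle$ and then back to a triangle in $\langle \mathcal{C}_1,\mathcal{C}_2\rangle$ classifying $S_{\T_C}^3(E)$ via $S_{\T_C}^3(t_E)$. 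Once one verifies that, under the component-wise identifications $S_{\T_C}^3\, i_*(X)\cong i_*(X)[4]$ and $S_{\T_C}^3\, j_*(Y)\cong j_*(Y)[4]$, the morphism $S_{\T_C}^3(t_E)$ is identified with $t_E[4]$, the uniqueness of the SOD triangle yields the canonical isomorphism $\eta_E\colon S_{\T_C}^3(E)\xrightarrow{\sim} E[4]$, whose naturality in $E$ then follows from the functoriality of $t_{(-)}$ together with the naturality of the three component isomorphisms.

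The main obstacle is precisely this compatibility check, which amounts to tracking the iterated Serre duality isomorphism on $\Hom_{\T_C}(i_*(X),j_*(Y)[1])$ and showing that it collapses exactly to the shift $[4]$ rather than to $[4]$ twisted by a nontrivial automorphism. A clean route goes via Proposition \ref{SERRE1}: on morphisms of the form $i_*(X)\to j_*(Y)[1]$, which by adjunction correspond to morphisms $X\to Y[1]$ in $D^b(C)$, three applications of $S_{\T_C}$ reduce through the three-cycle above to three applications of $S_C=[1]$, giving the required shift after accounting for the degree shifts appearing at each step. Alternatively, since a natural transformation of exact functors on a triangulated category is determined by its values on a classical generator, one can reduce $\eta$ to an explicit generator such as $i_*(\mathcal{O}_C)\oplus i_*(\mathbb{C}(x))\oplus j_*(\mathcal{O}_C)\oplus j_*(\mathbb{C}(x))$ for some point $x\in C$, where the compatibility can be checked by a direct computation. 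Conceptually, the fractional dimension $4/3$ reflects the expected relation $\T_C\sim D^b(C)\boxtimes D^b(kA_2)$, whose two factors have CY-dimensions $1$ and $1/3$ respectively, summing to $4/3$.
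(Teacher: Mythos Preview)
The statement is labelled a \emph{conjecture} in the paper and no proof is given there, so there is nothing to compare your attempt against.

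Your outline is the natural strategy and correctly isolates the difficulty. The identifications $S_{\T_C}^3|_{\mathcal{C}_k}\cong[4]|_{\mathcal{C}_k}$ follow at once from Lemma~\ref{lem:SerreTappliedToCohi} together with $S_C\cong[1]$, and the SOD triangle with Remarks~\ref{Uniquemorphism} and~\ref{GBSF} is the right gluing mechanism. Moreover, because $\Hom_{\T_C}(\mathcal{C}_2,\mathcal{C}_1[k])=0$ for all $k$, the middle arrow in any morphism of SOD triangles is uniquely determined by the outer two; hence, once the compatibility $S_{\T_C}^3(t_E)=t_E[4]$ (under the component identifications) is established naturally in $E$, naturality of the resulting $\eta_E$ is automatic and the usual worry about non-functorial cones does not arise.

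Two genuine gaps remain, however. First, your alternative route via a classical generator has the logic inverted: checking on a generator shows that a \emph{given} natural transformation is an isomorphism, but it does not construct one. Here there is no a priori map $S_{\T_C}^3\Rightarrow[4]$ to test, so that route presupposes what you want to build. Second, and this is the heart of the matter, the compatibility on $t_E$ is the entire content of the conjecture: it amounts to showing that the adjunction isomorphisms underlying Lemma~\ref{lem:SerreTappliedToCohi}, iterated three times, compose to the identity (up to shift) on the bifunctor $\Hom_{\T_C}(i_*(-),j_*(-)[1])$. This is neither carried out in your sketch nor supplied by the tools available in the paper; Proposition~\ref{SERRE1} describes $S_{\T_C}$ only at the level of objects, and Lemma~\ref{lem:SerreTappliedToCohi} does not record how its isomorphisms interact with morphisms between the different embeddings. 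That unfinished coherence check is precisely why the authors record the statement as a conjecture rather than a theorem.
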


\subsection{Constructing pre-stability conditions}
\subsubsection{CP-gluing} 

In \cite{CP} (and Collins in \cite{Co09}) Collins and Polishchuk introduced a way to construct hearts from semiorthogonal decompositions of a triangulated category $\mathcal{D}$ which later allow us to define stability conditions on $\mathcal{D}$ in a natural way. 

Let $\mathcal{D}$ be a triangulated category equipped with a semiorthogonal decomposition $\mathcal{D}=\langle \mathcal{D}_1, \mathcal{D}_2\rangle$. Let $\rho_2 \colon \mathcal{D} \rightarrow \mathcal{D}_2 $ be the right adjoint functor to the full embedding $i_2 \colon \mathcal{D}_2 \rightarrow \mathcal{D}$ and let $\lambda_1 \colon \mathcal{D} \rightarrow \mathcal{D}_1 $ be the left adjoint functor to the full embedding $i_1 \colon \mathcal{D}_1 \rightarrow \mathcal{D}$. 
\begin{proposition}[{\cite[Lemma 2.1]{CP}}]\label{CP}
With the above notations, assume that we have t-structures $(\mathcal{D}_{i}^{\leq 0},\mathcal{D}_{i}^{\geq 0})$ with hearts $\mathcal{A}_i$ on $\mathcal{D}_i$, for $i=1,2$, such that
\begin{equation}\label{eq:glHom0}
\Hom_{\mathcal{D}}^{\leq 0}(i_1 \mathcal{A}_1,i_2 \mathcal{A}_2)=0.
\end{equation}
Then there is a t-structure on $\mathcal{D}$ with the heart
\begin{equation}\label{eq:gluedA}
\glu(i_1 \mathcal{A}_1,i_2 \mathcal{A}_2)=\{E \in \mathcal{D} \mid \rho_2 E \in \mathcal{A}_2, \lambda_1 E \in \mathcal{A}_1 \}.
\end{equation}
Moreover, $i_k\mathcal{A}_k \subset \mathcal{A} \coloneqq \glu(i_1 \mathcal{A}_1,i_2 \mathcal{A}_2)$ for $k=1,2$.
\end{proposition}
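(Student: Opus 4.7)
The plan is to construct the candidate t-structure $(\mathcal{D}^{\leq 0}, \mathcal{D}^{\geq 0})$ on $\mathcal{D}$ component-wise from the two t-structures on $\mathcal{D}_1, \mathcal{D}_2$ and verify the axioms using the compatibility condition. Explicitly, I would set
\[
\mathcal{D}^{\leq 0} := \{E \in \mathcal{D} \mid \lambda_1 E \in \mathcal{D}_1^{\leq 0},\ \rho_2 E \in \mathcal{D}_2^{\leq 0}\},
\]
\[
\mathcal{D}^{\geq 0} := \{E \in \mathcal{D} \mid \lambda_1 E \in \mathcal{D}_1^{\geq 0},\ \rho_2 E \in \mathcal{D}_2^{\geq 0}\},
\]
so that the shift inclusion $\mathcal{D}^{\leq 0} \subset \mathcal{D}^{\leq 1}$ is immediate. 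The heart $\mathcal{A} = \mathcal{D}^{\leq 0} \cap \mathcal{D}^{\geq 0}$ will then coincide with $\glu(i_1\mathcal{A}_1, i_2\mathcal{A}_2)$ by inspection, and the inclusions $i_k\mathcal{A}_k \subset \mathcal{A}$ follow from $\lambda_1 i_1 \simeq \mathrm{id}$, $\rho_2 i_1 \simeq 0$ and the symmetric identities for $i_2$, which themselves are consequences of the SOD $\mathcal{D} = \langle \mathcal{D}_1, \mathcal{D}_2\rangle$.

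The technical key is to first upgrade the hypothesis to $\Hom_{\mathcal{D}}(i_1 X, i_2 Y) = 0$ for all $X \in \mathcal{D}_1^{\leq 0}$ and $Y \in \mathcal{D}_2^{\geq 0}$. For $X = X'[m]$, $Y = Y'[-n]$ with $X' \in \mathcal{A}_1$, $Y' \in \mathcal{A}_2$ and $m,n \geq 0$, this is $\Hom^{-(m+n)}(i_1 X', i_2 Y') = 0$ by the hypothesis; the general case follows by devissage along the respective t-structure filtrations. With this upgrade the orthogonality axiom $\Hom(\mathcal{D}^{\leq 0}, \mathcal{D}^{\geq 1}) = 0$ becomes a short exercise: apply $\Hom(E, -)$ to the SOD triangle of $F$ to reduce to $\Hom(E, i_1\lambda_1 F) \simeq \Hom_{\mathcal{D}_1}(\lambda_1 E, \lambda_1 F) = 0$ and to $\Hom(E, i_2\rho_2 F)$, and expand the latter via the SOD triangle of $E$, so that it is controlled by $\Hom_{\mathcal{D}_2}(\rho_2 E, \rho_2 F) = 0$ and by the upgraded compatibility.

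The main obstacle is producing the truncation triangles for arbitrary $E \in \mathcal{D}$. Starting from the SOD triangle $i_2\rho_2 E \to E \to i_1\lambda_1 E \xrightarrow{\alpha} i_2\rho_2 E[1]$, I would truncate the two outer pieces in their own t-structures. The composite
\[
i_1\tau_{\leq 0}\lambda_1 E \longrightarrow i_1\lambda_1 E \xrightarrow{\alpha} i_2\rho_2 E[1] \longrightarrow i_2\tau_{\geq 1}\rho_2 E[1]
\]
takes values in a Hom-group that vanishes by the upgraded compatibility, so $\alpha$ restricts uniquely (by another application of the same vanishing) to a morphism $\tilde\alpha\colon i_1\tau_{\leq 0}\lambda_1 E \to i_2\tau_{\leq 0}\rho_2 E[1]$. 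Setting $T := \mathrm{cone}(\tilde\alpha)[-1]$, the uniqueness of SOD triangles identifies $\rho_2 T = \tau_{\leq 0}\rho_2 E$ and $\lambda_1 T = \tau_{\leq 0}\lambda_1 E$, placing $T$ in $\mathcal{D}^{\leq 0}$. A natural morphism $T \to E$ descends from the square comparing the truncated and untruncated SOD triangles, and an octahedral completion exhibits its cone $F$ as the SOD extension of $i_1\tau_{\geq 1}\lambda_1 E$ by $i_2\tau_{\geq 1}\rho_2 E$, landing in $\mathcal{D}^{\geq 1}$. The delicate point is that every lift and map assembled along the way is uniquely determined, and this is exactly what the compatibility hypothesis guarantees.
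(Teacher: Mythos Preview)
The paper does not give its own proof of this proposition; it is simply quoted from \cite[Lemma 2.1]{CP}. Your argument is the standard one and is correct: defining the aisles component-wise via $\lambda_1$ and $\rho_2$, upgrading the Hom-vanishing by d\'evissage, checking orthogonality via the two SOD triangles, and building the truncation by lifting the gluing map $\alpha$ through the truncations on each factor is exactly how Collins and Polishchuk proceed. There is nothing to add.
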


\begin{definition}
We will refer to hearts of the form \eqref{eq:gluedA} as hearts obtained by \textit{CP-gluing}.
\end{definition}

\begin{notation}
In the case of the semiorthogonal decompositions $\mathcal{T}_{C}=\langle \mathcal{C}_i, \mathcal{C}_j \rangle$, with \linebreak $ij \in \{12, 23, 31\}$, we will denote the heart obtained by CP-gluing of hearts $\mathcal{A}_i$ on $ \mathcal{C}_i$ and $\mathcal{A}_j$ on $\mathcal{C}_j$ by $ \glu_{ij}(\mathcal{A}_i,\mathcal{A}_j)$. Recall that for $r\in \R,$ we define $\Coh^r(C)$ as in Remark \ref{HEARTSCURVE}.
\end{notation}

\begin{proposition}\label{prop: gluingcond}
We distinguish 3 cases according to the semiorthogonal decomposition of $\mathcal{T}_{C}$.
\begin{enumerate}[leftmargin=0.5cm]
\item If $\mathcal{T}_{C}=\langle \mathcal{C}_1, \mathcal{C}_2 \rangle$, we have
$\Hom_{\mathcal{T}_{C}}^{\leq 0}(\Coh_1^{r_1}(C) ,\Coh_2^{r_2}(C) )=0$ if and only if $r_1 \geq r_2$.
\item If $\mathcal{T}_{C}= \langle \mathcal{C}_3, \mathcal{C}_1 \rangle$, we have
$\Hom_{\mathcal{T}_{C}}^{\leq 0}(\Coh_3^{r_3}(C),\Coh_1^{r_1}(C))=0$ if and only if $r_3 \geq r_1 +1$.
\item If $\mathcal{T}_{C}= \langle \mathcal{C}_2, \mathcal{C}_3 \rangle$, we have
$\Hom_{\mathcal{T}_{C}}^{\leq 0}(\Coh_2^{r_2}(C) ,\Coh_3^{r_3}(C))=0$ if and only if $r_2 \geq r_3 +1$.
\end{enumerate}
\begin{proof}
As in Remark \ref{SCCHR}, we write each $r_i = n_i + \theta_i$ for unique $n_i \in \mathbb{Z}$ and $\theta_i\in [0,1)$.
Firstly, we want to determine under which conditions
$\Hom_{\mathcal{T}_{C}}(\Coh_1^{\theta_1}(C) [n_1],\Coh_2^{\theta_2}(C)[n_2+i])=0$
for every $i \leq 0$. Assume $n_2=0$, up to shifting by $-n_2$. For a fixed $i$, take $0 \rightarrow E_2 \in \Coh_2^{\theta_2}(C)$ and $E_1 \rightarrow 0 \in \Coh_1^{\theta_1}(C)$. By Serre duality we have
\begin{equation}\label{eq:serredualityingluingprop}
\Hom_{\mathcal{T}_{C}}(E_1 [n_1] \rightarrow 0,0 \rightarrow E_2 [i])=\Hom_{\mathcal{T}_{C}}(0 \rightarrow E_2,S_{\mathcal{T}_{C}}(E_1 \rightarrow 0) [n_1 - i])^*.
\end{equation}
By Lemma \ref{lem:SerreTappliedToCohi}, $S_{\mathcal{T}_{C}}(E_1 \rightarrow 0) [n_1] \in j_*(S_{C}(\Coh^{\theta_1}(C)[n_1]))[1]$, so \eqref{eq:serredualityingluingprop} vanishes for all $i \leq 0$ if and only if $n_1 \geq 0$ and if $n_1 =0$, we see that we need that $ \theta_1 \geq \theta_2$. Indeed, if $n_1=0$, recall that each heart $\Coh^{\theta_i}(C)$ was defined by tilting $\Coh^{\theta_i}(C)=\langle F_{\theta_i}[1], T_{\theta_i} \rangle$, for $i=1,2$. By the previous argument with the Serre functor, the only restriction appears when $E_1 \rightarrow 0 \in T_{\theta_1}$ and $0 \rightarrow E_2 [1]\in F_{\theta_2}[1]$. Here, if $ \theta_1 \geq \theta_2$, we have $T_{\theta_1} \subset T_{\theta_2}$. Use Serre duality for $D^b(C)$, such that
$$\Hom_{D^b(C)}(E_2[1],E_1 \otimes \omega_{C}[2-i])=\Hom_{D^b(C)}(E_1  [1-i], E_2 [1])^*.$$
This vanishes for all $i \leq 0$, since $(T_{\theta_2}, F_{\theta_2})$ is a torsion pair.

Case 2 (resp.\ 3) follows applying the Serre functor $S_{\mathcal{T}_{C}}$ (resp.\ $S^{-1}_{\mathcal{T}_{C}}$) to case 1.
\end{proof}
\end{proposition}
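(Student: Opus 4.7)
The plan is to reduce each case to a Hom-vanishing computation in $D^b(C)$ by applying Serre duality on $\mathcal{T}_{C}$ together with Lemma \ref{lem:SerreTappliedToCohi}, and then to extract the threshold from the heredity of $D^b(C)$ combined with the torsion-pair description of the tilted hearts $\Coh^{\theta}(C)$. I would prove case (1) in detail and deduce cases (2), (3) by transporting along the Serre functor $S_{\mathcal{T}_{C}}$.

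For case (1), write $r_i = n_i + \theta_i$ with $n_i \in \mathbb{Z}$ and $\theta_i \in [0,1)$, so that $\Coh^{r_i}(C) = \Coh^{\theta_i}(C)[n_i]$; shifting both arguments by $-n_2$ I may assume $n_2 = 0$. I then need to check that $\Hom_{\mathcal{T}_{C}}(i_*(E_1)[n_1], j_*(E_2)[i]) = 0$ for all $i \leq 0$, $E_1 \in \Coh^{\theta_1}(C)$, $E_2 \in \Coh^{\theta_2}(C)$. By Serre duality on $\mathcal{T}_{C}$, Lemma \ref{lem:SerreTappliedToCohi}(1), and full faithfulness of $j_*$, this group is dual to
\[
\Hom_{D^b(C)}(E_2, S_C(E_1)[n_1 + 1 - i]).
\]
Since $D^b(C)$ is hereditary, this Hom vanishes automatically whenever the shift $n_1 + 1 - i$ is at least $2$, which handles all $i \leq -1$ under the assumption $n_1 \geq 0$. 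The delicate boundary is $i = 0$ with $n_1 = 0$, where one must analyze $\Hom_{D^b(C)}(E_2, S_C(E_1)[1])$. Using $S_C = - \otimes \omega_C[1]$ together with a second application of Serre duality in $D^b(C)$, this Hom rewrites in terms of pairings between objects of $T_{\theta_1}$ and $F_{\theta_2}[1]$ coming from the tilting decompositions $\Coh^{\theta_i}(C) = \langle F_{\theta_i}[1], T_{\theta_i}\rangle$. Such pairings vanish exactly when $T_{\theta_1} \subseteq T_{\theta_2}$, equivalently $\theta_1 \geq \theta_2$. For the converse, if $n_1 < 0$ or if $n_1 = 0$ with $\theta_1 < \theta_2$, a suitably chosen object of $T_{\theta_2} \cap F_{\theta_1}$ exhibits a nonzero Hom and rules out the gluing condition.

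For cases (2) and (3) I would transport case (1) along $S_{\mathcal{T}_{C}}$ (resp.\ its inverse), which by Lemma \ref{lem:SerreTappliedToCohi} cycles the embeddings $i_* \mapsto j_* \mapsto l_* \mapsto i_*$ up to twists by $S_C$ and a single shift. This carries $\langle \mathcal{C}_1, \mathcal{C}_2 \rangle$ onto $\langle \mathcal{C}_2, \mathcal{C}_3 \rangle$ and $\langle \mathcal{C}_3, \mathcal{C}_1 \rangle$ respectively; the extra $[1]$ appearing in $S_{\mathcal{T}_{C}}(i_*) = j_*(S_C(-))[1]$ is precisely what produces the $+1$ in the thresholds of cases (2) and (3). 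The main technical obstacle is the boundary analysis of case (1): establishing the two-way equivalence with $\theta_1 \geq \theta_2$ requires combining the two applications of Serre duality with careful bookkeeping of how the torsion pairs $(T_{\theta_1}, F_{\theta_1})$ and $(T_{\theta_2}, F_{\theta_2})$ overlap inside $\Coh(C)$. Once this boundary case is settled, the Serre-functor transport delivers the other two cases formally.
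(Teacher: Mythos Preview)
Your proposal is correct and follows essentially the same route as the paper's proof: reduce case~(1) via Serre duality on $\mathcal{T}_C$ and Lemma~\ref{lem:SerreTappliedToCohi} to a Hom computation in $D^b(C)$, handle the boundary $n_1=0$, $i=0$ using the torsion-pair description of $\Coh^{\theta_i}(C)$ together with Serre duality on $D^b(C)$, and then transport to cases~(2) and~(3) via $S_{\mathcal{T}_C}^{\pm 1}$. Your write-up is in fact slightly more explicit than the paper's in two respects: you spell out the heredity argument that disposes of the non-boundary shifts, and you state the converse direction (exhibiting a nonzero Hom when $r_1<r_2$) rather than leaving it implicit.
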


\begin{remark}
By Theorem \ref{thm:stabC}, the CP-gluing conditions in Proposition \ref{prop: gluingcond} can be re-stated in terms of the corresponding elements $(T_i, f_i) \in \GL$ for $i = 1,2,3$ by considering $r_i = f_i(0)$.
\end{remark}

Trivial examples of hearts in $\T_C$ defined by CP-gluing can be given by considering $\coh(C)$ embedded in each triangulated subcategory $\mathcal{C}_i$ for $i=1,2,3$ as follows.

\begin{definition}[{\cite{CP}}]
Let $\sigma_i=(Z_i,\mathcal{A}_i)$ be stability conditions on $\mathcal{D}_i$ for $i=1,2$, such that the hearts $\mathcal{A}_i$ satisfy \eqref{eq:glHom0}. Then we say that a pair $\sigma=(Z,\mathcal{A})$ on $\mathcal{D}=\lin \mathcal{D}_1,\mathcal{D}_2 \rin$ is a \textit{CP-glued pair} (from $\sigma_1$ and $\sigma_2$) if the heart $\mathcal{A}$ is given by \eqref{eq:gluedA} and $Z \colon K(\mathcal{A}) \rightarrow \mathbb{C}$ is given by
\begin{equation}\label{eq:gluedZ}
Z = Z_1 \circ \lambda_1 + Z_2 \circ \rho_2.
\end{equation}
\end{definition}

\begin{remark}
Note that this CP-glued pair is uniquely determined by $\sigma_1$ and $\sigma_2$. It is a pre-stability condition if and only if the Harder--Narasimhan property holds for the stability function $Z$ on the glued heart $\mathcal{A}$. We will check this property separately later.
\end{remark}

\begin{notation}
In the case of the semiorthogonal decompositions $\mathcal{T}_{C}=\langle \mathcal{C}_i, \mathcal{C}_j \rangle$ for $ij \in \{12, 23, 31\}$ we will denote by $\glu_{ij}(\sigma_i,\sigma_j)$ the CP-glued pair obtained by CP-gluing of stability conditions $\sigma_i$ on $ \mathcal{C}_i$ and $\sigma_j$ on $\mathcal{C}_j$.
\end{notation}
\begin{definition}[Standard hearts]\label{hearts123} 
Consider $\T_C=\lin \mathcal{C}_1,\mathcal{C}_2 \rin.$ By definition we have that $$\TCoh(C)=\gl_{12}(\Coh_1(C),\Coh_2(C)).$$ 

Consider $\T_C=\lin \mathcal{C}_2,\mathcal{C}_3 \rin,$ then we define $$\HH_{23}\coloneqq \gl_{23}(\Coh_2(C)[1],\Coh_3(C)).$$ 

Consider $\T_C=\lin \mathcal{C}_3,\mathcal{C}_1 \rin,$ then we define $$\HH_{31}\coloneqq \gl_{31}(\Coh_3(C)[1],\Coh_1(C)).$$
\end{definition}
\begin{lemma}[{\cite[Proposition 2.2]{CP}}]\label{lem:CPgluProps}
\begin{enumerate}[leftmargin=0.5cm]
\item A pre-stability condition $\sigma=(Z,\mathcal{A})$ on $\mathcal{D}$ is CP-glued from $\sigma_1=(Z_1,\mathcal{A}_1)$ on $\mathcal{D}_1$ and $\sigma_2=(Z_2,\mathcal{A}_2)$ on $\mathcal{D}_2$ if and only if $Z_i= Z|_{\mathcal{D}_i}$ for $i=1,2$, \linebreak $\Hom_{\mathcal{D}}^{\leq 0}(\mathcal{A}_1,\mathcal{A}_2)=0$ and $\mathcal{A}_i \subset \mathcal{A}$ for $i=1,2$.
\item Let $\sigma=(Z,\mathcal{A})$ be a pre-stability condition on $\mathcal{D}$. Assume that the heart $\mathcal{A}$ is glued from hearts $\mathcal{A}_1 \subset \mathcal{D}_1$ and $\mathcal{A}_2 \subset \mathcal{D}_2$, with $\Hom_{\mathcal{D}}^{\leq 0}(\mathcal{A}_1,\mathcal{A}_2)=0$, such that \eqref{eq:gluedA} holds. Then, there exists a pre-stability condition $\sigma_i=(Z_i=Z|_{\mathcal{D}_i},\mathcal{A}_i)$ on $\mathcal{D}_i$, for $i=1,2$, such that $\sigma$ is glued from $\sigma_1$ and $\sigma_2$.
\item If $\sigma=(Z,\mathcal{P})$ is CP-glued from the pre-stability conditions $\sigma_1=(Z_1,\mathcal{P}_1)$ and $\sigma_2=(Z_2,\mathcal{P}_2)$, then $\mathcal{P}_1(\phi) \subset \mathcal{P}(\phi)$ and  $\mathcal{P}_2(\phi) \subset \mathcal{P}(\phi)$ for every $\phi \in \mathbb{R}$.
\end{enumerate}
\end{lemma}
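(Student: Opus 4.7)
My plan is to reduce all three items to a single structural observation about the glued heart, and then chase the SOD triangle. With $i_1,i_2$ the inclusions and $\lambda_1,\rho_2$ their adjoints from the setup of Proposition \ref{CP}, I first record that $\lambda_1\circ i_1\cong\id$, $\rho_2\circ i_2\cong\id$, while $\lambda_1|_{\mathcal{D}_2}=0$ and $\rho_2|_{\mathcal{D}_1}=0$; and that every $E\in\mathcal{D}$ sits in the canonical SOD triangle
$$i_2\rho_2 E\longrightarrow E\longrightarrow i_1\lambda_1 E\longrightarrow i_2\rho_2 E[1].$$
The key structural observation I would prove is that, whenever $\mathcal{A}=\glu(i_1\mathcal{A}_1,i_2\mathcal{A}_2)$, the induced functors $\rho_2\colon\mathcal{A}\to\mathcal{A}_2$ and $\lambda_1\colon\mathcal{A}\to\mathcal{A}_1$ are exact abelian functors. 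This follows by applying the long exact cohomology sequence in $\mathcal{D}_2$ with respect to the t-structure with heart $\mathcal{A}_2$ to the image under $\rho_2$ of a short exact sequence in $\mathcal{A}$: the three terms already lie in $\mathcal{A}_2$ by the defining condition \eqref{eq:gluedA}, so the long exact sequence collapses to a short one. The argument for $\lambda_1$ is symmetric.

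For part (1) the forward direction is immediate: if $\sigma$ is CP-glued, Proposition \ref{CP} gives $\mathcal{A}_i\subseteq\mathcal{A}$ and the $\Hom^{\leq 0}$-vanishing, while restricting the formula $Z=Z_1\circ\lambda_1+Z_2\circ\rho_2$ of \eqref{eq:gluedZ} to $\mathcal{D}_i$ and invoking the vanishing of the opposite adjoint recovers $Z_i$. For the converse, given the three hypotheses I would take an arbitrary $E\in\mathcal{A}$, apply the SOD triangle, and show $\rho_2 E\in\mathcal{A}_2$ and $\lambda_1 E\in\mathcal{A}_1$ using the orthogonality hypothesis together with the containments $\mathcal{A}_i\subseteq\mathcal{A}$; this identifies $\mathcal{A}$ with $\glu(\mathcal{A}_1,\mathcal{A}_2)$. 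The central charge formula then follows by additivity of $Z$ on the short exact sequence in $\mathcal{A}$ provided by the SOD triangle, using $Z|_{\mathcal{D}_i}=Z_i$.

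For part (2), set $\sigma_i=(Z|_{\mathcal{D}_i},\mathcal{A}_i)$; the gluing identity is then a consequence of part (1), so it remains to verify the HN property of each $\sigma_i$. Given $E\in\mathcal{A}_1$, take its HN filtration $0=E_0\subset\cdots\subset E_n=E$ in $\mathcal{A}$. Since $\rho_2 E=0$ and $\rho_2$ is exact on $\mathcal{A}$ by the structural observation above, induction forces $\rho_2 E_j=0$, hence every term and every quotient lies in $\mathcal{A}_1$; and any subobject of $E_j/E_{j-1}$ in $\mathcal{A}_1$ is a subobject in $\mathcal{A}$, so the $\sigma$-semistability of the factors upgrades to $\sigma_1$-semistability, giving the HN filtration for $\sigma_1$. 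For part (3), if $F\in\mathcal{P}_1(\phi)$, any subobject $F'\hookrightarrow F$ in $\mathcal{A}$ satisfies $\rho_2 F'\hookrightarrow\rho_2 F=0$ by exactness, so $F'\in\mathcal{A}_1$; then the $\sigma_1$-semistability of $F$ yields $\phi_\sigma(F')\leq\phi_\sigma(F)$, proving $F\in\mathcal{P}(\phi)$. The inclusion $\mathcal{P}_2(\phi)\subseteq\mathcal{P}(\phi)$ is identical with $\lambda_1$ in place of $\rho_2$.

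The main obstacle is pinning down the exactness of $\lambda_1,\rho_2$ on the glued heart; once this abelian-category-level fact is in hand, every remaining step is a diagram chase along the SOD triangle. A subtler point in the converse of part (1) is that I must show the equality $\mathcal{A}=\glu(\mathcal{A}_1,\mathcal{A}_2)$ rather than a mere containment, and for this the orthogonality hypothesis is what ensures that the SOD pieces of an object of $\mathcal{A}$ land in the correct hearts.
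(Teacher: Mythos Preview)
Your proposal is correct. The paper does not prove this lemma at all; it simply cites \cite[Proposition 2.2]{CP}, so you are supplying what the paper outsources. Your key structural observation---that $\rho_2$ and $\lambda_1$ restrict to exact functors on the glued heart---is exactly the engine that makes parts (2) and (3) go through, and your downward induction on the HN filtration via $\rho_2 E_j\hookrightarrow\rho_2 E_{j+1}$ (and the symmetric $\lambda_1$ argument for $\mathcal{A}_2$) is clean.

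Two minor remarks. In the converse of part (1), the argument you sketch (show $\rho_2 E\in\mathcal{A}_2$ and $\lambda_1 E\in\mathcal{A}_1$ for $E\in\mathcal{A}$) does work, but it requires a small bootstrap: adjunction alone gives only $\rho_2 E\in\mathcal{D}_2^{\geq 0}$ and $\lambda_1 E\in\mathcal{D}_1^{\leq 0}$, and then you feed these half-bounds back through the SOD triangle together with $\Hom^{\leq 0}(\mathcal{A}_1,\mathcal{A}_2)=0$ to obtain the missing halves. An alternative that avoids this is to note that $\glu(\mathcal{A}_1,\mathcal{A}_2)\subseteq\mathcal{A}$ is immediate (every object of the glued heart is an extension of something in $i_1\mathcal{A}_1$ by something in $i_2\mathcal{A}_2$, both contained in $\mathcal{A}$), and two hearts with one contained in the other coincide. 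In part (2) you should also record, even if only in passing, that $Z_i=Z|_{\mathcal{D}_i}$ is a genuine stability function on $\mathcal{A}_i$ before running the HN argument; this is immediate from $\mathcal{A}_i\subset\mathcal{A}$ but deserves a sentence.
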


We will now explain the behaviour of the CP-glued pre-stability conditions under the $\GL$-action. 
\begin{lemma}\label{GBCP} 
Let $\sigma_i=(Z_i,\mathcal{A}_i)$ be pre-stability conditions on $\mathcal{D}_i$ for $i=1,2$ and let  $\sigma$ be a CP-glued pre-stability condition from $\sigma_1$ and $\sigma_2$ on a triangulated category $\mathcal{D}$ with respect to the semiorthogonal decomposition $\mathcal{D}=\lin \mathcal{D}_1, \mathcal{D}_2 \rin.$ Let $g=(T,f)\in \GL,$ then $\sigma g=(W,\B)$ satisfies the following conditions:
If $\sigma_i g=(W_i,\B_i),$ for $i=1,2$ then 
\begin{enumerate}[leftmargin=0.5cm]
\item $i_1\B_1\subseteq \B$ and $i_2\B_2 \subseteq \B$
\item $\restr{W}{\mathcal{D}_i}=W_i,$ for i=1,2. 
\end{enumerate}
Moreover, if $\Hom^{\leq 0}_\mathcal{D}(i_1\B_1,i_2\B_2)$ then $\sigma g=\gl_{12}(\sigma_1 g, \sigma_2 g)$.

\begin{proof} Let us consider the slicing $\mathcal{P}_i$ of $\sigma_i,$ for $i=1,2,$ and $\mathcal{P}$ the slicing of $\sigma.$ By definition of the $\GL$-action we have $\B_i=\mathcal{P}_i(f(0),f(1)]$ and $\B=\mathcal{P}(f(0),f(1)].$ By the third part of Proposition \ref{lem:CPgluProps} we obtain directly that $i_1\B_1\subseteq \B$ and $i_2\B_2\subseteq \B.$ 

Next, recall that by definition we have that $W=T^{-1}\circ Z.$ Let $E_i\in \mathcal{D}_i$ for $i=1,2$. We note that $\lambda_1(E_1)=E_1$ and $\rho_2(E_1)=0$ (resp.\ $\lambda_1(E_2)=0$ and $\rho_2(E_2)=E_2$) which implies $W([E_i])=T^{-1}\circ Z_i([E_i])=W_i$. Finally, if we also assume that $\Hom^{\leq 0}_{\mathcal{D}}(i_1\B_1,i_2\B_2)=0,$ by the first part of Proposition \ref{lem:CPgluProps} we obtain that $\sigma g=\gl_{12}(\sigma_1 g,\sigma_2 g).$
\end{proof}
\end{lemma}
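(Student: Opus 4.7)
The plan is to unpack the two actions in play (the $\GL$-action on pre-stability conditions and the CP-gluing construction) and then verify the three items directly from the definitions and Lemma \ref{lem:CPgluProps}. Recall that for $g=(T,f)\in\GL$, the action on a pre-stability condition $\tau=(V,\mathcal{R})$ produces $\tau g=(T^{-1}\circ V,\mathcal{R}\circ f)$, i.e.\ the slicing is relabelled by $\phi\mapsto \mathcal{R}(f(\phi))$, and the heart of $\tau g$ is $\mathcal{R}((f(0),f(1)])$. In particular $\mathcal{B}=\mathcal{P}((f(0),f(1)])$ and $\mathcal{B}_i=\mathcal{P}_i((f(0),f(1)])$, where $\mathcal{P},\mathcal{P}_1,\mathcal{P}_2$ denote the slicings of $\sigma,\sigma_1,\sigma_2$.

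For (1), I would invoke the third item of Lemma \ref{lem:CPgluProps}: since $\sigma$ is CP-glued from $\sigma_1$ and $\sigma_2$, for each real $\phi$ the inclusions $i_k(\mathcal{P}_k(\phi))\subseteq \mathcal{P}(\phi)$ hold. Because the heart of a slicing over an interval is extension-closed and the interval $(f(0),f(1)]$ is the same on both sides, one concludes $i_k\mathcal{B}_k\subseteq \mathcal{B}$ for $k=1,2$.

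For (2), I would use the gluing formula $Z=Z_1\circ\lambda_1+Z_2\circ\rho_2$. For $E\in\mathcal{D}_1$ one has $\lambda_1(E)=E$ and $\rho_2(E)=0$ (these are formal adjunction identities since $E$ is in the left factor of the semiorthogonal decomposition), so $Z|_{\mathcal{D}_1}=Z_1$. Composing with $T^{-1}$ gives $W|_{\mathcal{D}_1}=T^{-1}\circ Z_1=W_1$, and symmetrically $W|_{\mathcal{D}_2}=W_2$.

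Finally, for the \emph{moreover} clause, the hypothesis $\Hom_{\mathcal{D}}^{\leq 0}(i_1\mathcal{B}_1,i_2\mathcal{B}_2)=0$ together with the two bullet points verifies all three conditions of item (1) of Lemma \ref{lem:CPgluProps} for the triple $(\sigma g,\sigma_1 g,\sigma_2 g)$: restrictions of the central charge match, the semiorthogonal $\Hom$-vanishing is given, and the two hearts $i_k\mathcal{B}_k$ sit inside $\mathcal{B}$. Hence $\sigma g$ is CP-glued from $\sigma_1 g$ and $\sigma_2 g$. The only step requiring care is that $\sigma g$ and $\sigma_i g$ are genuine pre-stability conditions so that the CP-gluing vocabulary applies; this is standard because the $\GL$-action permutes pre-stability conditions (it preserves the HN-property, as it merely relabels the phases of semistable objects), so no substantial obstacle arises.
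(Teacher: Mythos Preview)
Your proof is correct and follows essentially the same route as the paper: both arguments identify the hearts $\mathcal{B}$ and $\mathcal{B}_i$ as $\mathcal{P}((f(0),f(1)])$ and $\mathcal{P}_i((f(0),f(1)])$, invoke part (3) of Lemma~\ref{lem:CPgluProps} for the inclusions, compute the restriction of $W=T^{-1}\circ Z$ via $\lambda_1,\rho_2$, and finish with part (1) of Lemma~\ref{lem:CPgluProps}. Your added remark that the $\GL$-action preserves pre-stability conditions is a harmless clarification the paper leaves implicit.
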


It is important to remark that the gluing condition may not be preserved after applying the $\GL$-action, as we can see in the following example. 

\begin{example}\label{ex:gl-action}
Consider the CP-glued pair $\sigma=\gl_{12}(\sigma_1,\sigma_\mu)$ with $\sigma_1=\sigma_\mu g$ and  $g=(N_{1},f)$ in $\GL.$
Note that $f(0)=0$ and that $\mathcal{P}_1(t)=\mathcal{P}_{\mu}(f(t)).$

We have $1>t > f(t)$ for all $t \in (0,1)$. Indeed, let $E \in \mathcal{P}_{\mu}(t)$ be an object of phase $t \in (0,1)$. In particular, the rank $r$ of $E$ is strictly positive. Now, since $E \in \mathcal{P}_{\mu}(t) = \mathcal{P}_1(f^{-1}(t))$, together with the fact that $\Im Z_1 (E) =\Im Z_\mu(E)=r$ and $\Re Z_1(E)= -d+r > \Re Z_{\mu}(E)$, we have \linebreak $0<f(t)<t<1$. The inequalities follow because $f$ is strictly increasing. 
Let us consider $g'=(K_{\phi \pi},f_{\phi\pi})$  for $\phi\in (0,1).$ We will now study $\sigma_1'=\sigma_1 g'$
and $\sigma_2'=\sigma_\mu g',$ which are also given by $(T'_i,f'_i)\in \GL$ for $i=1,2$ respectively. Note that $f'_1(0)=f\circ g'(0)=f(\phi)$ and \linebreak $f'_2(0)=g'(0)=\phi.$ Therefore, we obtain  $f'_1(0)<f'_2(0)$ and from Proposition \ref{prop: gluingcond} it follows that $\sigma'_1$ and $\sigma'_2$ do not satisfy gluing conditions with respect to the semiorthogonal decomposition $\T_C=\lin \C_1,\C_2 \rin.$ See Subsection \ref{CSC12} for the general picture. 

\end{example}

Now that we have hearts in $\mathcal{T}_{C}$ with the corresponding stability functions, we have to check that they satisfy the Harder--Narasimhan property and the support property. We begin with the Harder--Narasimhan property, arguing along the lines of \cite{CP}.

\begin{theorem}[{\cite[Theorem 3.6]{CP}}]\label{thm:HNbig}
Let $(\mathcal{D}_1,\mathcal{D}_2)$ be a semiorthogonal decomposition of a triangulated category $\mathcal{D}$. Suppose $\sigma_i =(Z_i,\mathcal{P}_i)$ is a stability condition on $\mathcal{D}_i$ for $i=1,2$ and let $a \in (0,1)$ be a real number. Assume the following conditions hold:
\begin{enumerate}
\item $\Hom_{\mathcal{D}}^{\leq 0}(i_1 \mathcal{P}_1(0,1],i_2 \mathcal{P}_2(0,1])=0$ 
\item $\Hom_{\mathcal{D}}^{\leq 0}(i_1 \mathcal{P}_1(a,a+1],i_2 \mathcal{P}_2(a,a+1])=0$. 
\end{enumerate}
Then, there exists a locally finite pre-stability condition $\sigma$ glued from $\sigma_1$ and $\sigma_2$.
\end{theorem}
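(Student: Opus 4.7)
I would set $Z \coloneqq Z_1 \circ \lambda_1 + Z_2 \circ \rho_2$ and, by Proposition \ref{CP} applied to hypothesis (1), let $\mathcal{A} \coloneqq \gl(i_1\mathcal{P}_1(0,1], i_2\mathcal{P}_2(0,1])$ be the resulting heart. For $E \in \mathcal{A}$ the semiorthogonal triangle $i_2 \rho_2 E \to E \to i_1 \lambda_1 E \to i_2\rho_2 E[1]$ gives $Z(E) = Z_1(\lambda_1 E) + Z_2(\rho_2 E)$; both summands lie in $\overline{\mathbb{H}}$ because each $\sigma_k$ is a stability condition on $\mathcal{D}_k$, hence so does the sum, and it vanishes only when both do, i.e.\ when $E = 0$. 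Thus $Z$ is a stability function on $\mathcal{A}$. Applying Proposition \ref{CP} to the shifted slicings guaranteed by hypothesis (2) produces a second heart $\mathcal{A}^a \coloneqq \gl(i_1 \mathcal{P}_1(a, a+1], i_2 \mathcal{P}_2(a, a+1])$, on which $Z$ is again a stability function.

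The next step is to construct a slicing $\mathcal{P}$ on $\mathcal{D}$ by defining $\mathcal{P}(\phi)$ as the extension-closed subcategory of $\mathcal{D}$ generated by $i_1 \mathcal{P}_1(\phi) \cup i_2 \mathcal{P}_2(\phi)$. The shift relation $\mathcal{P}(\phi+1) = \mathcal{P}(\phi)[1]$ and the phase compatibility $Z(\mathcal{P}(\phi)) \subset \mathbb{R}_{>0}\cdot e^{i\pi\phi}$ are immediate. The only nontrivial slicing axiom is $\Hom_{\mathcal{D}}(\mathcal{P}(\phi_1), \mathcal{P}(\phi_2)) = 0$ for $\phi_1 > \phi_2$: the diagonal cases $\Hom(i_k \mathcal{P}_k(\phi_1), i_k \mathcal{P}_k(\phi_2))$ vanish because each $\mathcal{P}_k$ is a slicing on $\mathcal{D}_k$ and $i_k$ is fully faithful, the cross term $\Hom(i_2 \mathcal{P}_2(\phi_1), i_1 \mathcal{P}_1(\phi_2))$ vanishes by the semiorthogonality $\Hom_{\mathcal{D}}(\mathcal{D}_2, \mathcal{D}_1) = 0$, and the remaining cross term $\Hom(i_1 \mathcal{P}_1(\phi_1), i_2 \mathcal{P}_2(\phi_2))$ is where the two hypotheses are essential: by shifting both $\phi_1$ and $\phi_2$ by a common integer one can place the pair in $(0,1]$ or in $(a, a+1]$, and then (1) or (2) furnishes the vanishing.

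The main obstacle is producing the HN filtration for an arbitrary $E \in \mathcal{D}$. My approach is first to identify $\mathcal{A}^a$ with the tilt of $\mathcal{A}$ along the torsion pair $(\mathcal{T}_a, \mathcal{F}_a)$, where $\mathcal{T}_a$ (resp.\ $\mathcal{F}_a$) is the extension closure in $\mathcal{A}$ of $i_k\mathcal{P}_k(a,1]$ (resp.\ $i_k\mathcal{P}_k(0,a]$) for $k=1,2$; this identification follows from Proposition \ref{Tilting} combined with the characterisation of both hearts via the CP-gluing formula. Such a torsion pair provides a genuine cut of every object of $\mathcal{A}$ at phase $a$. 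Acting by elements of $\GL$ that shift phases by integer and $a$-translates preserves the gluing hypotheses (possibly after relabelling), so the same argument yields torsion pairs cutting $\mathcal{A}$ at every phase in the dense subgroup $\mathbb{Z} + \mathbb{Z}a$. I then build the HN filtration of $E \in \mathcal{A}$ by an inductive procedure using these cuts; termination follows from local finiteness of each $\sigma_k$ (Lemma \ref{DISCRETELF}), which bounds the number of distinct phases appearing in $\lambda_1 E$ and $\rho_2 E$. Passing from $\mathcal{A}$ to all of $\mathcal{D}$ uses the cohomological functor associated with the bounded t-structure whose heart is $\mathcal{A}$.

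Finally, local finiteness of $\sigma = (Z, \mathcal{P})$ is immediate: for $\epsilon < 1/2$ and any $\phi \in \mathbb{R}$, the category $\mathcal{P}((\phi - \epsilon, \phi + \epsilon))$ is generated under extensions by the finite-length pieces $i_k \mathcal{P}_k((\phi - \epsilon, \phi + \epsilon))$ from each locally finite $\sigma_k$. Proposition \ref{SCSvH} then upgrades $(Z, \mathcal{P})$ to a pre-stability condition on $\mathcal{D}$, and Lemma \ref{lem:CPgluProps}(1) identifies it as CP-glued from $\sigma_1$ and $\sigma_2$ since $Z|_{\mathcal{D}_k} = Z_k$ by construction and $\mathcal{P}_k(0,1] \subset \mathcal{A}$ by Proposition \ref{CP}.
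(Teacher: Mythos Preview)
The paper does not prove this theorem; it is quoted verbatim from \cite[Theorem 3.6]{CP} and used as a black box. So there is no ``paper's own proof'' to compare against. I can, however, point out a genuine gap in your plan.

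Your proposed slicing $\mathcal{P}(\phi)$, defined as the extension closure of $i_1\mathcal{P}_1(\phi)\cup i_2\mathcal{P}_2(\phi)$, is \emph{not} the slicing of the glued pre-stability condition. Concretely, take $\mathcal{T}_C=\langle \mathcal{C}_1,\mathcal{C}_2\rangle$ and the glued condition $\sigma_\alpha$ of Example~\ref{EXGP94}: a $\sigma_\alpha$-stable triple $E_1\xrightarrow{\varphi}E_2$ of phase $\phi$ with $\varphi\neq 0$ generally has $E_1$ and $E_2$ of \emph{different} $\mu$-phases, so it does not lie in the extension closure of $i_*\mathcal{P}_\mu(\phi)\cup j_*\mathcal{P}_\mu(\phi)$. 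Consequently your $\mathcal{P}$ cannot satisfy the HN-decomposition axiom of Definition~\ref{Slicing}: if it did, then by Proposition~\ref{SCSvH} and Lemma~\ref{lem:CPgluProps}(1) it would have heart $\mathcal{A}$ and hence $\mathcal{P}(\phi)$ would have to coincide with the $Z$-semistables of phase $\phi$ in $\mathcal{A}$, which it does not. The correct route (and the one taken in \cite{CP}) is to work directly with the pair $(Z,\mathcal{A})$ and prove the HN-property for $Z$ on $\mathcal{A}$; the slicing then emerges a posteriori and is \emph{not} describable by the simple formula you propose.

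Two further points. First, your claim that acting by $\GL$ produces torsion-pair cuts at every phase in $\mathbb{Z}+\mathbb{Z}a$ is unjustified: rotating by $a$ turns hypothesis~(1) into hypothesis~(2), but turns hypothesis~(2) into a vanishing at $2a$ that you have not assumed. In \cite{CP} the two given cuts suffice, via a more careful inductive argument inside $\mathcal{A}$. Second, your termination argument (``local finiteness of each $\sigma_k$ \ldots\ bounds the number of distinct phases appearing in $\lambda_1E$ and $\rho_2E$'') does not control the HN filtration of $E$ in the glued heart: the phases of the $\sigma$-HN factors of $E$ bear no direct relation to the $\sigma_k$-phases of $\lambda_1E$ or $\rho_2E$, and Lemma~\ref{DISCRETELF} concerns discreteness, not finiteness of HN filtrations.
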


\begin{definition}
For a real number $a \in (0,1)$, we define the subset $S(a)$ as the subset of pairs of stability conditions $(\sigma_1, \sigma_2) \in  \Stab(\mathcal{D}_1) \times \Stab(\mathcal{D}_2)$ satisfying the conditions in Theorem \ref{thm:HNbig}.
\end{definition}

\begin{theorem}[{\cite[Theorem 4.3]{CP}}]\label{thm:gluecontinous}
Let $\textnormal{gl} \colon S(a) \rightarrow \Stab(\mathcal{D})$ be the map associating to \linebreak  $(\sigma_1, \sigma_2) \in  \Stab(\mathcal{D}_1) \times \Stab(\mathcal{D}_2)$ the corresponding glued pre-stability condition $\sigma$ on $\mathcal{D}$ (defined by Theorem \ref{thm:HNbig}). Then, the map $\rm{gl}$ is continuous on $S(a)$.
\end{theorem}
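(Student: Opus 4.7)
The plan is to verify continuity directly using Bridgeland's generalised metric on $\Stab(\mathcal{D})$, which simultaneously controls the central-charge distance $\|Z_\sigma - Z_\tau\|$ and the slicing distance $\sup_{0\neq E}\{|\phi^+_\sigma(E) - \phi^+_\tau(E)|, |\phi^-_\sigma(E) - \phi^-_\tau(E)|\}$. The central-charge component is immediate: the formula $Z = Z_1 \circ \lambda_1 + Z_2 \circ \rho_2$ from \eqref{eq:gluedZ} is linear in each of $Z_1, Z_2$, and the adjoint functors $\lambda_1, \rho_2$ are fixed independently of the stability data, so the assignment $(Z_1, Z_2) \mapsto Z$ is continuous in any norm on $\Hom_\Z(\Lambda, \Co)$.

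For the slicings I would first establish the following structural statement, which is essentially implicit in the proof of Theorem \ref{thm:HNbig}: for every $0 \neq E \in \mathcal{D}$, the multiset of HN-factors of $E$ with respect to the glued pre-stability condition $\sigma$ is the disjoint union of the multisets of HN-factors of $\lambda_1(E)$ with respect to $\sigma_1$ and of $\rho_2(E)$ with respect to $\sigma_2$ (viewed in $\mathcal{D}$ via $i_1$ and $i_2$). This is obtained by taking the SOD triangle $i_2\rho_2(E) \to E \to i_1 \lambda_1(E) \to i_2\rho_2(E)[1]$, refining it with the HN-filtrations of $\lambda_1(E)$ and $\rho_2(E)$ in $\sigma_1$ and $\sigma_2$, and then re-arranging the resulting semistable pieces in decreasing order of phase via the octahedral axiom. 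The two $\Hom^{\leq 0}$-vanishing conditions from the definition of $S(a)$, applied at the shifted hearts indexed by the phases $0$ and $a$, are exactly what makes the re-ordering possible across a full period.

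Once this identification is in place, continuity reduces to a uniform estimate. Given a sequence $(\sigma_1^n, \sigma_2^n) \to (\sigma_1, \sigma_2)$ in $S(a)$, write $\sigma^n = \textnormal{gl}(\sigma_1^n, \sigma_2^n)$ and apply the structural statement both to $\sigma^n$ and to $\sigma$. One then deduces the identities
\begin{align*}
\phi^+_{\sigma^n}(E) &= \max\{\phi^+_{\sigma_1^n}(\lambda_1 E),\, \phi^+_{\sigma_2^n}(\rho_2 E)\}, \\
\phi^-_{\sigma^n}(E) &= \min\{\phi^-_{\sigma_1^n}(\lambda_1 E),\, \phi^-_{\sigma_2^n}(\rho_2 E)\},
\end{align*}
and the analogues for $\sigma$. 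Because $|\max\{a_n, b_n\} - \max\{a, b\}| \leq \max\{|a_n-a|, |b_n-b|\}$, one obtains
$$|\phi^{\pm}_{\sigma^n}(E) - \phi^{\pm}_\sigma(E)| \leq \max\{d(\sigma_1^n, \sigma_1),\, d(\sigma_2^n, \sigma_2)\},$$
uniformly in $E$, which gives the slicing convergence. A parallel estimate for the mass $m_{\sigma}(E) = |Z_\sigma(E)|$, applied again through the decomposition into HN-factors coming from $\lambda_1(E)$ and $\rho_2(E)$, completes the proof that $\sigma^n \to \sigma$ in $\Stab(\mathcal{D})$.

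The main obstacle I expect is justifying the structural statement about HN-factors, i.e.\ proving that no genuinely new $\sigma$-semistable HN-factor is created by gluing. This requires invoking the two $\Hom^{\leq 0}$-vanishings of $S(a)$ at both ends of the period, so that all re-orderings needed to assemble the HN-filtration of $E$ in $\sigma$ from those of $\lambda_1(E)$ and $\rho_2(E)$ remain inside the allowed semistable strata. Once this is established, the continuity estimate is genuinely routine.
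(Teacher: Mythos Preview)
The paper does not prove this theorem; it is quoted verbatim from \cite[Theorem~4.3]{CP} and used as a black box. So there is no proof in the paper to compare against, and I will only assess the correctness of your argument.

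Your central-charge argument is fine, but the ``structural statement'' you rely on for the slicings is false, and with it the identities
\[
\phi^+_{\sigma}(E) = \max\{\phi^+_{\sigma_1}(\lambda_1 E),\, \phi^+_{\sigma_2}(\rho_2 E)\},\qquad
\phi^-_{\sigma}(E) = \min\{\phi^-_{\sigma_1}(\lambda_1 E),\, \phi^-_{\sigma_2}(\rho_2 E)\}.
\]
Already for the $A_2$ quiver with the obvious semiorthogonal decomposition into the two simples $S_1,S_2$, take $\phi(S_1)=\tfrac34$ and $\phi(S_2)=\tfrac14$; this pair lies in $S(a)$ for $a\in(0,\tfrac14)$. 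The indecomposable projective $P_1$ (the nontrivial extension of $S_1$ by $S_2$) is $\sigma$-semistable of phase $\tfrac12$, so $\phi^+_\sigma(P_1)=\tfrac12$, whereas your formula would give $\max\{\tfrac34,\tfrac14\}=\tfrac34$. The glued stability condition genuinely creates new semistable objects that are not in either $\mathcal{D}_i$, so the HN-factors of $E$ cannot in general be read off from those of $\lambda_1 E$ and $\rho_2 E$ separately. Your octahedral re-ordering fails precisely because the $\Hom^{\leq 0}$-vanishing in the definition of $S(a)$ does not give $\Hom(i_1\mathcal{P}_1(\psi),i_2\mathcal{P}_2(\phi)[1])=0$ for arbitrary $\psi>\phi$; that $\Ext^1$ is exactly what produces the new semistables.

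What survives from your argument are the one-sided inequalities $\phi^+_\sigma(E)\le\max\{\ldots\}$ and $\phi^-_\sigma(E)\ge\min\{\ldots\}$, obtained from the concatenated filtration without re-ordering. These alone do not bound $|\phi^\pm_\sigma(E)-\phi^\pm_{\sigma'}(E)|$ from both sides. The actual argument in \cite{CP} instead shows directly that every $\sigma$-semistable object of phase $\phi$ lies in $\mathcal{P}_{\sigma'}((\phi-\epsilon,\phi+\epsilon))$ when $d(\sigma_i,\sigma'_i)<\epsilon$, by analysing how the semiorthogonal triangle of a $\sigma$-semistable $E$ behaves under the perturbation; this is where the second vanishing at phase $a$ is really used.
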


For $a \in (0,1)$, we have a precise description of the sets $S(a)$ for the semiorthogonal decomposition $\mathcal{T}_{C} = \langle \mathcal{C}_1, \mathcal{C}_2 \rangle$.

\begin{proposition}\label{prop:S(a)forTCOH}
Consider the semiorthogonal decomposition $\mathcal{T}_{C} = \langle \mathcal{C}_1, \mathcal{C}_2 \rangle$. For $a \in (0,1)$, we have $S(a)\cong \left\{ ((T_1, f_1),(T_2, f_2)) \in \GL \times \GL \colon f_1(0)\geq f_2(0) \textnormal{ and } f_1(a)\geq f_2(a)\right\}.$
\begin{proof}
Suppose $\sigma_i =(Z_i,\coh_i^{\theta_i}(C)[n_i])$ is a stability condition on $\mathcal{C}_i$  with $\theta_i \in [0,1)$ and $n_i \in \mathbb{Z}$, for $i=1,2$. Assume that these stability conditions satisfy the gluing condition, i.e.\ $n_1 + \theta_1 \geq n_2 + \theta_2$. Let $(T_i,f_i)$ be the elements in $\GL$ corresponding to $\sigma_i$ under the equivalence in Theorem \ref{thm:stabC} (for $i=1,2$). Note that $f_i(0)=n_i + \theta_i$ (for $i=1,2$) so the condition 1 in Theorem \ref{thm:HNbig} is equivalent to $f_1(0)\geq f_2(0)$. We end the proof by showing that condition 2 is equivalent to $f_1(a)\geq f_2(a)$. Indeed, we will have 
$\Hom_{\mathcal{D}}^{\leq 0}(i_* \mathcal{P}_1(a,a+1],j_* \mathcal{P}_2(a,a+1])=0$
if and only if the stability condition $\sigma'$ obtained from $\sigma$, acting by rotation of angle $a$ satisfies the gluing property. Hence, if we denote $\mathcal{P}(0,1]= \Coh(C)$ the standard heart associated to slope-stability $Z_\mu$, then $\mathcal{P}_i(a,a+1] = \mathcal{P}(f_i(a), f_i(a)+1]
=\coh^{f_i(a)}(C),$ for $i=1,2$ and they will satisfy the gluing condition if and only if $f_1(a)\geq f_2(a)$.
\end{proof}
\end{proposition}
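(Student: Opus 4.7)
The plan is to translate the two Hom-vanishing conditions of Theorem \ref{thm:HNbig} into explicit inequalities on the functions $f_1, f_2$ by combining the $\GL$-parametrization of $\Stab(D^b(C))$ from Theorem \ref{thm:stabC} with the gluing criterion of Proposition \ref{prop: gluingcond}.

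First, by Theorem \ref{thm:stabC}, each stability condition $\sigma_i$ on $\mathcal{C}_i \cong D^b(C)$ is uniquely of the form $\sigma_\mu \cdot (T_i, f_i)$ for some $(T_i, f_i) \in \GL$. By the definition of the $\GL$-action on slicings, the slicing $\mathcal{P}_i$ of $\sigma_i$ satisfies $\mathcal{P}_i(\phi) = \mathcal{P}_\mu(f_i(\phi))$ for every $\phi \in \mathbb{R}$. Using the relation $f_i(\phi + 1) = f_i(\phi) + 1$ together with Remark \ref{HEARTSCURVE}, I would deduce that for any real number $b$,
\[
\mathcal{P}_i(b, b+1] \;=\; \mathcal{P}_\mu\bigl(f_i(b),\, f_i(b)+1\bigr] \;=\; \Coh^{f_i(b)}(C).
\]
In particular, $\mathcal{P}_i(0,1] = \Coh^{f_i(0)}(C)$ and $\mathcal{P}_i(a, a+1] = \Coh^{f_i(a)}(C)$.

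With this dictionary in hand, the first vanishing condition \linebreak $\Hom_{\T_C}^{\leq 0}(i_*\mathcal{P}_1(0,1], j_*\mathcal{P}_2(0,1]) = 0$ becomes $\Hom_{\T_C}^{\leq 0}(i_*\Coh^{f_1(0)}(C), j_*\Coh^{f_2(0)}(C)) = 0$, which by case (1) of Proposition \ref{prop: gluingcond} is equivalent to $f_1(0) \geq f_2(0)$. The second vanishing condition translates analogously into $f_1(a) \geq f_2(a)$. Taken together, these two inequalities carve out exactly the subset of $\GL \times \GL$ appearing in the statement, giving the desired identification of $S(a)$.

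The main technical point in carrying out this plan is the identification $\mathcal{P}_i(b, b+1] = \Coh^{f_i(b)}(C)$ for arbitrary real $b$, which requires writing $f_i(b) = n_i + \theta_i$ with $n_i \in \mathbb{Z}$ and $\theta_i \in [0,1)$ and recognising the shifted tilted heart $\Coh^{\theta_i}(C)[n_i]$ as the corresponding piece of the slicing; this is already implicit in Remark \ref{HEARTSCURVE} and in the formulation of Proposition \ref{prop: gluingcond} for real parameters $r_i$, so once this bookkeeping is in place the argument is a direct invocation of the results cited above.
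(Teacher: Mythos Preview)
Your proposal is correct and follows essentially the same approach as the paper: both arguments identify the hearts $\mathcal{P}_i(b,b+1]$ with $\Coh^{f_i(b)}(C)$ via the $\GL$-action on slicings (Remark \ref{HEARTSCURVE}) and then invoke case (1) of Proposition \ref{prop: gluingcond} at $b=0$ and $b=a$ to translate the two Hom-vanishing conditions into the inequalities $f_1(0)\geq f_2(0)$ and $f_1(a)\geq f_2(a)$. The paper's phrasing in terms of ``acting by rotation of angle $a$'' is exactly your substitution $b=a$, and your write-up is if anything slightly more explicit about the underlying dictionary.
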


\begin{example}\label{ex:condHNthm}
Consider the semiorthogonal decomposition $\mathcal{T}_{C} = \langle \mathcal{C}_1, \mathcal{C}_2 \rangle$. Let $\sigma_i =(Z_i,\coh_i^{r_i}(C))$ be a stability conditions on $\mathcal{C}_i$  with $r_i \in \mathbb{R}$, for $i=1,2$. 
\begin{enumerate}
\item If $r_1 > r_2$, then $\gl_{12}(\sigma_1,\sigma_2)$ is a locally finite pre-stability condition on $\T_C$
\item The CP-glued pair $\sigma=\gl_{12}(\sigma_1,\sigma_\mu)$ with $\sigma_1=\sigma_\mu g$ and  $g=(N_{1},f)\in \GL$, that we considered in Example \ref{ex:gl-action}, does not satisfy the conditions in Theorem \ref{thm:HNbig}.
\end{enumerate}
See \cite[Example 2.9.11]{EM1} for more details on these and more examples.
\end{example}

For small hearts that fulfill the condition 1 in Theorem \ref{thm:HNbig} but not 2, we may need a different strategy. 

\begin{proposition}[HN-property for $\mathbb{Q}$]\label{prop:HNrational}
Let 
$$\glu_{12}(\sigma_1,\sigma_2)=(Z_{12}=Z_1\circ i^* + Z_2 \circ j^!, \mathcal{A}_{12}=\glu_{12}(\mathcal{A}_1, \mathcal{A}_2))$$ 
be a CP-glued pair in $\mathcal{T}_{C}$ obtained by CP-gluing the stability conditions $\sigma_i=(Z_i,\mathcal{A}_i)$ on $\mathcal{C}_i$ with $\mathcal{A}_i = \Coh_i^{\theta_i}(C)$ for $\theta_i \in [0,1)$  for $i=1,2$.
If $\tan(\pi \theta_i)\in \mathbb{Q}$, for all $i$, then $Z_{12}$ has the HN-property.
\begin{proof}
It is easy to see that $0$ is an isolated point of $\Im Z_i(\mathcal{A}_i) \subset \mathbb{R}_{\geq 0}$ for $i=1,2$. Then, it follows from {\cite[Proposition 3.5]{CP}} that $Z_{12}$ has the HN-property on $\mathcal{A}_{12}$.
\end{proof}
\end{proposition}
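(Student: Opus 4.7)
The plan is to reduce to the Collins--Polishchuk criterion \cite[Proposition 3.5]{CP}, which asserts that the CP-glued central charge $Z_{12}$ has the HN-property on $\mathcal{A}_{12}$ provided that $0$ is an isolated point of $\Im Z_i(\mathcal{A}_i) \subset \mathbb{R}_{\geq 0}$ for each $i \in \{1,2\}$. The containment in $\mathbb{R}_{\geq 0}$ is built into the definition of a stability function on the heart $\mathcal{A}_i$, so the remaining task is to prove that $\Im Z_i(\mathcal{A}_i)$ is discrete.

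To this end, I would first write $\Im Z_i$ explicitly on $\mathcal{N}(D^b(C)) \cong \mathbb{Z}^2$. By Theorem \ref{GLC}, the stability condition $\sigma_i = (Z_i, \Coh^{\theta_i}(C))$ is obtained from $\sigma_{\mu}$ via a unique element $g_i = (T_i, f_i) \in \GL$. Applying the Iwasawa decomposition $T_i = k_i K_{\phi_i} A_{a_i} N_{x_i}$ and Remark \ref{KANS}, one obtains $\phi_i \equiv \pi \theta_i \pmod{\pi}$ (so $\tan \phi_i = \tan(\pi \theta_i)$), and for the class $(d,r) = (\deg,\rk)$ the central charge reads
\[
Z_i(d, r) = A_i d + B_i r + i(C_i r + D_i d), \qquad \arg(C_i + D_i i) = \phi_i.
\]
In particular $D_i / C_i = \tan(\pi \theta_i)$ whenever $C_i \neq 0$, and the hypothesis rules out $\theta_i = 1/2$, for which $\tan(\pi \theta_i)$ would be undefined.

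The rationality assumption $\tan(\pi \theta_i) \in \mathbb{Q}$ is thus precisely the statement that $D_i / C_i \in \mathbb{Q}$. Writing this ratio as $m_i / n_i$ in lowest terms (with the convention $(m_i, n_i) = (0,1)$ when $D_i = 0$), we find
\[
\Im Z_i(d, r) = C_i r + D_i d = \frac{C_i}{n_i}\bigl(n_i r + m_i d\bigr) \in \frac{C_i}{n_i}\, \mathbb{Z},
\]
a discrete subgroup of $\mathbb{R}$. Therefore $\Im Z_i(\mathcal{A}_i)$ is discrete in $\mathbb{R}_{\geq 0}$, so $0$ is an isolated point of it, and \cite[Proposition 3.5]{CP} yields the HN-property of $Z_{12}$ on $\mathcal{A}_{12}$. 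There is no serious obstacle: the only content of the argument is the extraction of the Iwasawa data from Remark \ref{KANS} together with the elementary observation that a $\mathbb{Z}$-linear functional on $\mathbb{Z}^2$ has discrete image in $\mathbb{R}$ precisely when the ratio of its coefficients lies in $\mathbb{P}^1(\mathbb{Q})$.
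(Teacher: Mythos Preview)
Your proposal is correct and follows exactly the same route as the paper: both reduce to \cite[Proposition 3.5]{CP} by showing that $0$ is an isolated point of $\Im Z_i(\mathcal{A}_i)$. The paper merely asserts this is ``easy to see,'' while you have unpacked the Iwasawa data from Remark \ref{KANS} to verify explicitly that $\Im Z_i$ takes values in a cyclic subgroup of $\mathbb{R}$; this is precisely the intended justification.
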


\begin{corollary}\label{CP1}
Let 
$\glu_{12}(\sigma_1,\sigma_2)=(Z_{12}=Z_1\circ i^* + Z_2 \circ j^!, \mathcal{A}_{12}=\glu_{12}(\mathcal{A}_1, \mathcal{A}_2))$
be a CP-glued pair in $\mathcal{T}_{C}$ obtained by CP-gluing the stability conditions $\sigma_i=(Z_i,\mathcal{A}_i)$ on $\mathcal{C}_i$ with $\mathcal{A}_i = \Coh_i^{\theta_i}(C)$ for $\theta_i \in [0,1)$  for $i=1,2$, such that $\theta_1 \geq \theta_2$ but it does not belong to $S(a)$ for any $a \in (0,1)$. If $\tan(\pi \theta_i)\in \mathbb{Q}$, for all $i$, then $\glu_{12}(\sigma_1,\sigma_2)$ is a pre-stability condition.
\end{corollary}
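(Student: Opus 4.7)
The plan is to verify each of the three defining conditions of a pre-stability condition for $\glu_{12}(\sigma_1,\sigma_2)$: namely that $\mathcal{A}_{12}$ is the heart of a bounded t-structure, that $Z_{12}$ is a stability function on $\mathcal{A}_{12}$, and that $Z_{12}$ satisfies the Harder--Narasimhan property. The delicate point is that Theorem \ref{thm:HNbig} is \emph{not} available, because the pair is assumed not to lie in any $S(a)$; the role of the rationality hypothesis is precisely to plug this gap via Proposition \ref{prop:HNrational}.

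First, I would deduce that $\mathcal{A}_{12}$ is a heart from Proposition \ref{CP}. By Proposition \ref{prop: gluingcond}(1), the inequality $\theta_1\geq \theta_2$ is exactly the CP-gluing condition $\Hom_{\mathcal{T}_C}^{\leq 0}(i_*\Coh_1^{\theta_1}(C),\,j_*\Coh_2^{\theta_2}(C))=0$, so Proposition \ref{CP} applies and
\[
\mathcal{A}_{12}=\{E\in\mathcal{T}_C : i^*(E)\in \mathcal{A}_1,\; j^!(E)\in \mathcal{A}_2\}
\]
is the heart of a bounded t-structure on $\mathcal{T}_C$.

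Next, to see that $Z_{12}$ is a stability function on $\mathcal{A}_{12}$, take any nonzero $E\in \mathcal{A}_{12}$ and consider the distinguished triangle from Remark \ref{Triangles}(1),
\[
j_*j^!(E)\la E\la i_*i^*(E)\la j_*j^!(E)[1],
\]
which has $i^*(E)\in \mathcal{A}_1$ and $j^!(E)\in \mathcal{A}_2$ by construction of the glued heart. Then $Z_{12}(E) = Z_1(i^*E)+Z_2(j^!E)$ is a sum of two elements of $\overline{\mathbb{H}}\cup\{0\}$. Since imaginary parts are non-negative and a vanishing imaginary part forces a strictly negative real part, no cancellation is possible: $Z_{12}(E)=0$ forces $i^*E=j^!E=0$, and the triangle then yields $E=0$, contradicting $E\neq 0$. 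Hence $Z_{12}(E)\in\overline{\mathbb{H}}$.

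Finally, the Harder--Narasimhan property for $Z_{12}$ on $\mathcal{A}_{12}$ is exactly the content of Proposition \ref{prop:HNrational}, whose hypotheses are satisfied under the assumption that $\tan(\pi \theta_i)\in\mathbb{Q}$ for $i=1,2$. Combining the three steps, $\glu_{12}(\sigma_1,\sigma_2)$ satisfies all the conditions of Definition \ref{DefSC} and is therefore a pre-stability condition. No genuine obstacle arises in this assembly; the conceptual content of the corollary is simply the observation that the rationality hypothesis of Proposition \ref{prop:HNrational} can substitute for the second Hom-vanishing required for $S(a)$-membership in Theorem \ref{thm:HNbig}, so that pre-stability conditions can still be produced outside the regime covered by that theorem.
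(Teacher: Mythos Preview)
Your proof is correct and is exactly the argument the paper intends: the corollary is stated without proof because it follows immediately from Proposition~\ref{prop:HNrational} together with the fact that a CP-glued pair already has $\mathcal{A}_{12}$ a heart (Proposition~\ref{CP}) and $Z_{12}$ a stability function. You have simply spelled out these implicit steps.
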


\begin{example}[classic $\alpha$-stability as CP-gluing]\label{EXGP94}
Note that by {\cite[Theorem 5.3.11]{CDK1}}, pre-stability conditions on $\mathcal{T}_{C}$ with heart $\TCoh(C)$ are obtained as CP-gluing of two stability conditions on $D^b(C)$ with heart $\Coh(C)$ and the stability function is given as
$$Z(r_1,d_1,r_2,d_2)\coloneqq -A_1 d_1 -A_2 d_2 +B_1 r_1 + B_2 r_2 + i(C_1 r_1 +C_2 r_2)$$
where $A_i,B_i,C_i \in \mathbb{R}$ such that $A_i, C_i >0$, for $i=1,2$. In particular, we can recover the classic notion of $\alpha$-stability for holomorphic triples of vector bundles of Garc\'{i}a-Prada et al.\ in \cite{GP94} and \cite{GP2} by taking $\sigma_{\alpha}=(Z_{\alpha},\TCoh(C))$ where $Z_\alpha(r_1,d_1,r_2,d_2)\coloneqq -d_1 -d_2 -\alpha r_1 + i( r_1 + r_2)$ with $\alpha \in \mathbb{R}.$
\end{example}

\begin{remark}
There is a way to construct hearts from semiorthogonal decompositions which agrees with CP-gluing when the gluing condition \eqref{eq:glHom0} is satisfied. This is the well-known recollement introduced in \cite{BBD}. See Appendix A. and Lemma \ref{lem:jealousy} for examples of hearts that do not accept a stability function.
\end{remark}

 \subsubsection{Constructing pre-stability conditions via tilting}\label{CSCTILT}
In this section we construct pre-stability conditions in $\T_C$ whose hearts are not given by Proposition \ref{CP}. We follow the steps of \cite[Lemma\ 6.1]{BS8}, i.e.\ we use  weak stability functions on $\TCoh(C)$ to obtain torsion pairs on $\TCoh(C)$ via truncation of the HN-filtrations. After tilting in the sense of \cite[Proposition\ 2.1]{HRS}, we obtain hearts that admit Bridgeland stability functions. 

\brem The intuition of this construction comes from Proposition \ref{BH}. This proposition gives us a description of torsion pairs of $\TCoh(C),$ which after tilting will give us a heart of a pre-stability condition.
\erem

We define the following homomorphism:
\begin{eqnarray}\label{WNGStability}
Z \colon  \mathbb{Z}^4 &\la & \Co \\ \nonumber
 (r_1,d_1,r_2,d_2) &\mapsto &  D_1d_1+ (C_1-1)r_1+ i(r_1+r_2), \nonumber 
\end{eqnarray}
where $D_1,C_1\in \mathbb{Q},$ and $D_1<0.$ For every $E\in \TCoh(C)$ with $E\neq j_*(T),$ where $T$ is a torsion sheaf, we define the phase of $E$ as $\lambda(E)=(1/\pi)\arg(Z([E]))\in (0,1].$


\brem \label{SSS} Let $0\la A\la B \la C \la 0$ be a short exact sequence where $A,B,C\in \TCoh(C)$ and $A,B,C \neq 0\la T,$ where $T$ is a torsion sheaf, then 
 $$ \lambda(A)<\lambda(B) \iff \lambda(B)<\lambda(C) \textnormal{ and } \lambda(A)>\lambda(B) \iff \lambda(B)>\lambda(C).$$
\erem

For a triple $E=E_1\xrightarrow{\varphi}E_2\in \TCoh(C),$ let $T(E_i)$ be the torsion part  and $F(E_i)$ be the \linebreak torsion-free part of of $E_i$ for $i=1,2.$ By the functoriality of the   torsion part, we obtain \linebreak $T(E)=T(E_1)\la T(E_2)$ and the following short exact sequence \begin{equation} \label{TPHT}
0\la T(E)\la E \la F(E)\la 0,
\end{equation} where $F(E)=F(E_1)\la F(E_2)=E/T(E).$ 


 A triple $E=E_1\xrightarrow{\varphi}E_2\in \TCoh(C)$ is called \emph{torsion-free} if $T(E_i)=0,$ for $i=1,2.$ We define the \emph{torsion-free triple} of  $E=E_1\xrightarrow{\varphi} E_2\textnormal{ as } F(E)=F(E_1)\xrightarrow{f} F(E_2).$


\bd A torsion-free triple $E\in \TCoh(C)$ is called $\lambda$-semistable if for all non-zero subobjects $F\subseteq E$ we have $\lambda(F)\leq \lambda(E).$ 
\ed

We will now show that the $\lambda$-semistable objects admit HN-filtrations. The proof goes along the lines of the classical proof for $\mu$-stability on \href{http://www.mathe2.uni-bayreuth.de/stoll/lecture-notes/vector-bundles-Faltings.pdf}{curves}. 

\bl [HN-filtration for $\lambda$-stability.] \label{HNNG} Let $F=F_1 \xrightarrow{\varphi} F_2\in \TCoh(C)$ be a  torsion-free object, then there is a unique Harder--Narasimhan filtration i.e.\ there is an increasing filtration $0\subseteq E_1 \cdots \subseteq E_{n-1}\subseteq E_{n}=F$ where $G_i=E_i/E_{i-1}$ is $\lambda$-semistable for each $i=0,\ldots,n$ and $\lambda(G_{1})> \cdots > \lambda(G_{n-1})> \lambda(G_{n}).$
Moreover, this filtration is unique. 
\el
\bdem If $F$ is $\lambda$-semistable, there is nothing to prove. Let us consider the subobjects  $E$ of $F.$ Note that $E$ is also torsion-free.  Take the object $E_1$ with maximal $\lambda(E_1)$ among all the subobjects of $F$ and with  maximal imaginary part among all the subobjects of $F$ with maximal $\lambda$-phase. This object exists because the phase is bounded. Indeed, by the correspondence between slope and phase, it follows from the fact that $-D>0$ and by Riemann--Roch. 
As a consequence, the subobject $E_1$ is necessarily $\lambda$-semistable and $F/E_1$ is torsion free. We also have that for all $E$ with $0\neq E/E_1\subseteq F/E_1,$ we get $\lambda(E/E_1)< \lambda(E_1).$ We now apply the same construction to $F/E_1.$ We get the desired filtration. 
\edem


\bl \label{TPNG} Let $\phi=3/4.$ There is a torsion pair $ (\T,\mathcal{F})$ on the category $\TCoh(C)$ defined as follows: $E\in \T$ if the Harder--Narasimhan $\lambda$-semistable factors  $A_{i}$ of $F(E)$ satisfy $\lambda(A_i)>\phi$ and  $i^!(E)\in \Coh(C).$  We say that  $E\in\mathcal{F}$  if either $i^*(E)$ is torsion-free and the Harder--Narasimhan factors $A_{i}$ of $F(E)$ satisfy  $\lambda(A_i)\leq \phi$ or $i^*(E)=0.$
\el
\bdem 



Note that if $E\in \T,$ by our definition of $\T$ and the correspondence between slope and phase, we have that $F(E)$ satisfies 
\begin{equation} \label{TNG}
\frac{-D_1d_1-(C_1-1)r_1}{r_1+r_2}>-\cot(3\pi/4)\textnormal{ i.e.\ }-D_1d_1-C_1r_1-r_2>0.
\end{equation}
where here $d_i=\deg(F(E_i))$ and $r_i=\rank(F(E_i)),$ for $i=1$, $2.$






We show that $( \T, \mathcal{F})$ is a torsion pair of $\TCoh(C).$ First, we prove that $\Hom_{\T_C}(\T,\mathcal{F})=0.$ By our definition of stability we have that $\Hom_{\T_C}(E,F)=0,$ for all objects $E\in\T$ and $F\in \mathcal{F}$ that are torsion-free.

Let $E=E_1\xrightarrow{\varphi} E_2 \in \T$ and $G=G_1\la G_2\in \mathcal{F}.$ Let us consider the following short exact sequences as in the triangle \eqref{TPHT}
$$0\la T(E)\la E \la F(E)\la 0\textnormal{ and }0\la T(G)\la G\la F(G)\la 0.$$ By definition of $\FF$, we have that $F(G)\in \FF$ and as $i^*(G)$ is torsion-free, we get $i^*(T(G))=0$ and $T(G)\in \FF.$ Then it is enough to show $\Hom_{\TCoh(C)}(E,G)=0$ for $G=0\la H,$ for any $H\in\Coh(C)$ and $G=G_1\la G_2\in \FF$ where $G_1,G_2$ are torsion-free.

\textbf{Case 1:} $G\in \FF$ a torsion-free triple.

By definition $F(E)\in \T$ and by stability  we have $\Hom_{\TCoh(C)}(F(E),G)=0.$ Also \linebreak $\Hom_{\TCoh(C)}(T(E),G)=0$ as $G$ is torsion-free. 
Therefore, it follows that $\Hom_{\TCoh(C)}(E,G)=0.$

\textbf{Case 2:} $G=0\la H.$ We have $\Hom_{\T_C}(E,G)=\Hom_{\T_C}(E,j_*(j^!(G))),$ by adjointness $$\Hom_{\T_C}(E,j_*(j^!(G)))=\Hom_{D^b(C)}(j^*(E),j^!(G))=\Hom_{D^b(C)}(\Ker(\varphi)[1],H)=0,$$ because $j^*(E)[1]=i^!(E)=\Ker(\varphi)\oplus \Coker(\varphi)[1]$ is in $\Coh(C),$ which implies that $\Coker(\varphi)=0.$


 


Note that for a torsion-free object $E,$ by Lemma \ref{HNNG}, there is a short exact sequence \linebreak $0\la T\la E\la F\la 0,$ with $T$ torsion-free such that the HN-factors $A_i$ of $T$ satisfy that $\lambda(A_i)>\frac{3}{4}$ and  $F=F_1\xrightarrow{f'} F_2\in \FF$ also torsion-free. Note that $T=T_1\xrightarrow{t'} T_2$ is not necessarily surjective, however we have the following claim. 
\bcla Either $\Coker(t')=0$ or $\Coker(t')$ is a torsion sheaf.
\ecla
\bdem
Because of Lemma \ref{HNNG}, it is enough to show the statement for a $\lambda$-semistable object \linebreak $T$ satisfying $\lambda(T)>\frac{3}{4}.$ 
Let us assume that $\Coker(t')\neq 0.$ Note that $t'\neq 0.$ 
Then \linebreak $\Img(t'),\Coker(t')\neq 0,$ and as a consequence, we have 
\begin{equation} 
 \xymatrix{ T_1 \ar@{->>}[r]\ar[d]_{t'}& T_1 \ar[d]_{0}\\
 T_2 \ar@{->>}[r] & \Coker(t')} \textnormal{ and } \xymatrix{ T_1 \ar@{^{(}->}[r] \ar[d]_{t'} & T_1 \ar[d]_{t'}\\
\Img(t') \ar@{^{(}->}[r] & T_2. }
\end{equation} If $\rk(\Coker(t'))>0,$ then by $\lambda$-semistability of $T,$ we obtain $\lambda(T)=\frac{1}{2}<\frac{3}{4},$ which gives us a contradiction. 
\edem
We obtain the following short exact sequence
\begin{equation}\label{TFSES}
 \xymatrix{0\ar[r] \ar[d] & T_1 \ar[r] \ar[d]_{t'} & E_1 \ar[r] \ar[d]_{\varphi} & F_1 \ar[d] \ar[r] &0 \ar[d]\\
0 \ar[r]& \Img(t') \ar[r] & E_2 \ar[r] & F_2' \ar[r] & 0.}
\end{equation}

Note that $T'\coloneqq T_1\xrightarrow{t'} \Img(t')$ is in $\T.$ Indeed, let us consider the last short exact sequence in its HN-decomposition $0\la E\la T'\sur A\la 0 $ with $A=A_1 \la A_2$ a $\lambda$-semistable torsion free sheaf. We want to show that $\lambda(A)>\frac{3}{4}.$
Note that $E$ is also a subobject of $T,$ as a consequence we consider the short exact sequence $0\la E \la T \la T/E\la 0.$ We have that $T/E$ is a quotient of $T$ and therefore $\lambda(F(T/E))>\frac{3}{4}.$ We also have that $\frac{3}{4}<\lambda(F(T/E))=\lambda(A).$ Moreover, we get that $F$ is the torsion-free part of $F_1\xrightarrow{f'} F_2'$ and this implies that $F_1\xrightarrow{f'} F_2'\in \FF.$ Therefore, if $E_1\la E_2$ is torsion-free, the triangle \eqref{TFSES} gives us the decomposition of $E$ in $(\T,\FF).$


Let $E=E_1\xrightarrow{\varphi} E_2\in \TCoh(C).$ Let us, again, consider the short exact sequence
\begin{equation} \label{NGH}
 \xymatrix{0\ar[r] \ar[d] & T(E_1) \ar[r] \ar[d]_{t} & E_1 \ar[r] \ar[d]_{\varphi}& F(E_1) \ar[d]_{f} \ar[r] &0 \ar[d]\\
0 \ar[r]& T(E_2) \ar[r] & E_2 \ar[r] & F(E_2)\ar[r] & 0.}
\end{equation}

Since $F(E)$ is torsion-free, as mentioned before there is a short exact sequence $$0\la T^{'} \la F(E)\la F'\la 0,$$
with $T'\in \T$ and $F^{'}\in \FF.$ Explicitly \begin{equation} \label{NGH1}
 \xymatrix{0\ar[r] \ar[d] & T'_1 \ar[r]^{l_1} \ar[d]_{t'} & F(E_1) \ar[r]^{g_1} \ar[d]_{f}& F'_1 \ar[d]_{f'} \ar[r] &0 \ar[d]\\
0 \ar[r]& T'_2 \ar[r] ^{l_2} & F(E_2) \ar[r]^{g_2} & F'_2\ar[r] & 0.}
\end{equation}

After choosing a splitting  $E_i=F(E_i)\oplus T(E_i),$ for $i=1,2,$ we have a morphism $$l\colon F(E_1)\lai E_1\xrightarrow{\varphi} E_2\la T(E_2).$$  
 We now define the following morphism: 
 $\beta_1\colon T^{'}_1\oplus T(E_1)\xrightarrow{\begin{bmatrix}
l_1 & 0\\ 
 0 & \id \\ 
 \end{bmatrix}} F(E_1)\oplus T(E_1).$

 

We obtain the following short exact sequence:
\begin{equation} 
\xymatrixcolsep{4pc}  \xymatrix{0\ar[r] \ar[d] & T_1'\oplus T(E_1)  \ar[r]^{\beta_1} \ar[d]_{\varphi\circ \beta_{1}}  & F(E_1)\oplus T(E_1) \ar[r]^{(g_1,0)} \ar[d]^{\varphi} & F^{'}_1 \ar[d]_{l'} \ar[r] &0 \ar[d]\\
0 \ar[r]&  \Img(\varphi\circ \beta_{1}) \ar[r]^{i} & F(E_2)\oplus T(E_2) \ar[r] & G \ar[r] & 0.}
\end{equation}

Note that $\varphi(x,y)=(f(x),l(x)+t(y)),$ for $(x,y)\in  F(E_1)\oplus T(E_1).$

We claim that $T'_1\oplus T(E_1)\xrightarrow{\varphi\circ \beta_1} \Img(\varphi\circ \beta_1)\in \T \textnormal{ and } 
F_1'\xrightarrow{l'} G \in \FF.$
Indeed, note that we have the following decomposition:
\begin{equation} 
 \xymatrixcolsep{4pc}\xymatrix{0\ar[r] \ar[d] & T(E_1)  \ar[r] \ar[d]_{t} & T'_1\oplus T(E_1) \ar[r] \ar[d]_{\varphi\circ \beta_1} & T'_1 \ar[d]_{t'} \ar[r] &0 \ar[d]\\
0 \ar[r]&  \Ker(\pi) \ar[r] & \Img(\varphi\circ \beta_1) \ar[r]^{\pi} & T'_2 \ar[r] & 0.}
\end{equation}
Note that, as seen from triangle \eqref{NGH1},
we have that $\pi\colon \Img(\varphi\circ \beta_1)\subseteq T'_2\oplus T(E_2)\la T'_2$ is just the projection. Note also that we abused the notation by ignoring the inclusion $l_2.$ 
As \linebreak  $\Ker(\pi)=\Img(\varphi \circ \beta_1\mid_{\Ker{t'}\oplus T(E_1)})$ is given by the points $ (0,x)\in \Img(\varphi\circ \beta_1)\subseteq T'_2\oplus T(E_2)$ and we can see it as a subset of $\subseteq T(E)_{2}.$ Thus, $\Ker(\pi)$ is a torsion sheaf and by definition $T'_2$ is torsion-free. As a consequence, the torsion-free part of $T'_1\oplus T(E_1)\xrightarrow{\varphi\circ \beta_1} \Img(\varphi\circ \beta_1)\in \T$ is $T'_1\xrightarrow{t'} T'_2.$

Analogously, we have the following decomposition:
\begin{equation} 
 \xymatrix{0\ar[r] \ar[d] & 0  \ar[r] \ar[d] & F'_1\ar[r] \ar[d]_{l'} & F'_1 \ar[d]_{f'} \ar[r] &0 \ar[d]\\
0 \ar[r]&  \Ker([g_2,0]) \ar[r]^{i} & G \ar[r]^{[g\circ g_2,0]} & F'_2 \ar[r] & 0.}
\end{equation}

We first check that $[g_2,0]$ is well-defined. As $G=F(E_2)\oplus T(E_2) / \Img(\varphi\circ \beta_1),$ then if $(x,y)$ in $\Img(\varphi\circ \beta_1),$ there are $(x',y')\in T'_1\oplus T(E_1),$ which satisfy $(f(x'),l(x')+t(y'))=(x,y).$ Note that $f(x')\in T'_2,$ therefore $g_2(f(x'))=0.$  Since $g_2$ is surjective, we have that $[g_2,0]$ is clearly surjective.

Note that $\Ker([g\circ g_2,0])= T_2\oplus T(E_2)/\Img( \varphi\circ \beta_1).$ As a consequence, $\Ker([g_2])$  is a torsion sheaf. 
We now obtain that the torsion-free part of $F'_1\xrightarrow{l'} G$ is the same as the one of $F'_1\la F'_2$ and this implies that
$F'_1\xrightarrow{l'} G\in \FF.$ 
\edem


After tilting $\TCoh(C)$ with respect to the torsion pair of Lemma \ref{TPNG} as in \cite[Proposition\ 2.1]{HRS}, we obtain the heart $$\A_{r}=\{E\in \T_C\mid H^i(E)=0\textnormal{ for } i\neq 1,0, H^1(E)\in \T \textnormal { and } H^0(E)\in \mathcal{F}\},$$ where $r=\frac{\arg(C_1+D_1i)}{\pi}\in (-1,0].$ It has a corresponding torsion pair $\A_{r}=(\mathcal{F},\T[-1]).$

We will now define a stability function $Z_{r}$ on $\A_{r}$ such that the pair $(Z_{r},\A_{r})$ is a Bridgeland stability condition.

\brem If we have $\sigma=\gl_{12}(\sigma_1,\sigma_2)$ a pre-stability condition and we act by $\sigma_2^{-1}=g_2$ in $\GL$ and obtain a non CP-glued pre-stability condition, then the heart of $\sigma g_2$ is going to be  given by $\A_{r}$ (see Subsection \ref{SST12}).
\erem

We now define\begin{eqnarray}\noindent \label{NGStability} Z_{r} \colon \mathbb{Z}^4 &\la & \Co \\ \nonumber
 (r_1,d_1,r_2,d_2) &\mapsto & A_1d_1+ B_1r_1-d_2+i(D_1d_1+r_1C_1+r_2),  \nonumber 
\end{eqnarray} where 
$M=\begin{bmatrix}
-A_1 & B_1  \\
-D_1 & C_1  \\
\end{bmatrix}
$
 with $\det(M)>0,$ $\det(M+I)>0,$ $A_1,B_1\in \mathbb{R},$ $C_1,D_1 \in \mathbb{Q}$ and $D_1<0.$

\brem \label{RZVL}Note that if $E\in \TCoh(C)$ is a $\lambda$-semistable torsion-free triple, then \linebreak $\lambda(E)>\frac{3}{4}$ if and only if 
$\Im(Z_r(E))<0.$ Moreover,  $\lambda(E)\leq \frac{3}{4}$ if and only if $\Im(Z_r(E))\geq 0.$
\erem 
If we consider the same $C_1,D_1$ as above for the construction of $\A_{r},$ we obtain the following lemma. 

\bl \label{GNGSC} The group homomorphism $Z_{r}$ is a stability function on $\A_{r}$ 
\el

\bdem First of all, we show that the image  of $E\in\A_{r}$ under $Z_{r}$ lies in $\mathbb{H}\cup\R_{<0}.$ Let $E\in \T,$ then we consider the short exact sequence $0\la T(E)\la E\la F(E)\la 0.$ Note that by definition $F(E)\in\T$ and the right exactness of $\Coker(-)$ we obtain that $i^!(F(E))\in \Coh(C).$ 

We prove now that $Z_r(E[-1])\in \mathbb{H}\cup\R_{<0}.$
We first show that $Z_r(F(E)[-1])\in \mathbb{H}\cup\R_{<0}.$ It is enough to assume that $F(E)$ is $\lambda$-semistable and this follows directly from Remark \ref{RZVL}.

As $\rk(T(E_1))=\rk(T(E_2))=0,$ if $T(E_1)\neq 0,$ then $\deg(T(E_1))\geq 0$ and $\Im(Z_r((T(E)[-1]))$ is precisely $-\deg(T(E_1))D_1>0.$  
If $\deg(T(E_1))=0,$ then $T(E_1)=0,$ and $F(E_1)\cong E_1.$ This implies $\Im(Z_{r}(F(E)[-1]))=\Im Z_{r}(E[-1])>0.$ 
Since $Z$ is additive with respect to short exact sequences, we obtain that  $Z(E[-1])\in \mathbb{H}\cup\R_{<0}.$

We will now show that if $E\in \FF,$ then $Z_{r}(E)\in \mathbb{H}\cup\R_{<0}.$ Once again we consider the short exact sequence $0\la T(E)\la E\la F(E)\la 0.$ As $i^*(E)$ is a torsion-free sheaf  $T(E)=0\la T(E_2),$ where $T(E_2)$ is a torsion sheaf.  Note that by definition $T(E)\in \FF$ and $F(E)\in\FF.$ 
Clearly $Z_r(T(E))=-\deg(T(E_2))<0,$ as $T(E_2)$ is a torsion-sheaf. 
 
Let $F(E)\in\FF$ be a torsion-free $\lambda-$semistable object. If $F(E)$ satisfies $\lambda(F(E))<3/4,$ then \linebreak $-D_1d_1-C_1r_1-r_2<0$ and $Z(E)$ lies in the upper-half plane. 

We now assume that $F(E)=F(E_1)\xrightarrow{f} F(E_2)$ in $\mathcal{F}$ is a torsion-free object with \linebreak $D_1d_1+r_1C_1+r_2=0.$ It suffices to show our statement for $F(E)$ a $\lambda$-semistable object. We will now prove that $A_1d_1+B_1r_1-d_2<0.$ 

First note that if $F(E_1)=0,$ then $0=D_1d_1+r_1C_1+r_2=r_2$ and this implies $F(E)=0.$ If $F(E_2)=0,$ then $D_1d_1+r_1C_1=0,$ and $A_1d_1+B_1r_1-d_2=A_1d_1+B_1r_1<0,$ because $\det(M)>0.$ Therefore, we assume $F(E_1)$ and $F(E_2)\neq 0.$ Moreover, note that $\rk(\Coker(f))= 0.$ Indeed, otherwise $j_*(\Coker(f))$ would be a quotient of $F(E)$ with slope $\frac{1}{2},$ which gives us a contradiction. 







Note that $\lambda(F(E_1)\la \Img(f))=\lambda(F(E)),$ and moreover 
$[F(E)]=[F(E_1)\la \Img(f)]+ (0,0,0,d_2''),$ where $d_2''=\deg(\Coker(f))>0,$  $d''_1=\deg(\Img(f))$ and $d_2=\deg(F(E_2)).$
As $$A_1d_1+B_1r_1-d_2=A_1d_1+B_1r_1-d''_1-d_2''<A_1d_1+B_1r_1-d''_1$$
and $F(E_1)\la \Img(f)\in \FF,$ since it is a subobject of $F(E),$ it is enough to show our statement for torsion-free objects $F(E_1)\xrightarrow{f} F(E_2)\in \FF$ with $\Coker(f)=0.$  As a consequence, we assume $r_1=\rk(F(E_1))\geq \rk(F(E_2))=r_2.$ 

Since $K=i_*(i^{!}(F(E)))=\Ker(f)\la 0$ is a subobject of $F(E)$ in $\TCoh(C)$ and $\mathcal{F}$ is closed under subobjects, we obtain that  $K\in \mathcal{F}.$ Since $[i_*(i^{!}(E))]=[K]=(r_1-r_2,d_1-d_2,0,0),$ where $d_i=\deg(F(E_i))$ for $i=1,2,$ it follows that $-D_1(d_1-d_2)-C_1(r_1-r_2)\leq 0.$ By hypothesis $D_1d_1+r_1C_1+r_2=0.$ Therefore we obtain $D_1d_2 \leq  -(C_1+1)r_2.$

Now we want to show that $A_1d_1+B_1r_1-d_2<0.$ First note that $d_1=\frac{r_2+C_1r_1}{-D_1},$ we obtain
\begin{eqnarray} \label{NGD} \nonumber
A_1d_1+B_1r_1-d_2 &= &  (\frac{1}{-D_1})(A_1r_2+r_1A_1C_1 - r_1B_1D_1 +D_1d_2).  \nonumber
\end{eqnarray}

Since $-D_1>0,$ it is enough to show that  $A_1r_2+r_1A_1C_1 - r_1B_1D_1 +D_1d_2<0.$ We have
\begin{eqnarray} \nonumber
A_1r_2+r_1A_1C_1 - r_1B_1D_1 +D_1d_2 &{\leq}  &  r_2(A_1-C_1-1)-r_1(\det(M)) \\ \nonumber
&\leq&  r_2(A_1-C_1-1)-r_2(\det(M))\\
&=& r_2(-\det(M+I))<0,
\end{eqnarray}
as $-r_1\leq -r_2$ and also $\det(M)>0$ and $\det(M+I)>0.$ Since $Z_r$ is additive with respect to short exact sequences we obtain that $Z_r(E)\in \mathbb{H}\cup\R_{<0}$ for $E\in \FF.$
\edem
\bl \label{PrestaNG} If $A_1,B_1,C_1,D_1\in \mathbb{Q},$ the pair $(Z_{r},\A_{r})$ is a pre-stability condition. 
\el
\bdem We follow the steps of \href{https://arxiv.org/pdf/math/0307164.pdf}{\cite[Proposition 7.1]{BS8}.}  First note that by \href{https://arxiv.org/pdf/0912.0043.pdf}{\cite[Proposition B.2]{BMLP}}, it is enough to show that if $E\in \A_{r}$ and $0 \subset L_1\subset L_2 \ldots \subset L_i \subset \ldots \subset E,$ where $L_i$ belongs to the full subcategory  $\mathcal{P}'(1)$ of objects with phase one, the sequence stabilizes. 
As $L_i\in \A_{r}=(\FF,\T[-1]),$ we consider the short exact sequence
$0\la F_i \la L_i \la T_i[-1]\la 0,$ where $F_i\in \FF$ and $T_i\in \T.$ As $\Im Z_r(L_i)=0,$ we obtain that $\Im Z_r(T_i[-1])=0.$ Note that this implies that $T_i=0.$  Therefore, we obtain that $\mathcal{P}'(1)\subseteq \FF\subseteq \TCoh(C),$ as $\TCoh(C)$ is noetherian, our result follows. 
\edem

\brem \label{RSHBS} \label{TPNG1} \begin{enumerate}[leftmargin=0.5cm] \item By Lemma \ref{DISCRETELF} the pre-stability condition $\sigma_{r}=(Z_r,\A_r)$ is locally finite. 
\item Let $\sigma=(Z_r,\A_{r})$ be a pre-stability condition constructed in Lemma \ref{PrestaNG}. Note that $i_*(\Co(x))[-1],$ $l_*(\Co(x))[-1]$ and $j_*(\Co(x))$ are in $\A_{r}$ and $j_*(\Co(x))$ is stable of  phase one. 
\item We consider the torsion pair $( \T_1,\mathcal{F}_1)=\Coh(C),$ which is given by truncating the Harder--Narasimhan filtration with respect to $\sigma_1=(Z(r,d)=D_1d+(C_1-1)r+ir,\Coh(C))$ at $\frac{3}{4}$ in $\Stab(C).$ 
\end{enumerate}
\erem

    \bl \label{THI} We have that $\Coh_2(C)\subseteq \A_r \textnormal{ and } \Coh_1^{r}(C)\subseteq \A_r \textnormal{ and } \Coh_3^{r_3}(C)\subseteq \A_r,$
where $r_3=\frac{\textnormal{acot}(\frac{C_1+1}{D_1})}{\pi}.$
\el
\bdem Note that $\Coh_2(C)\subseteq \FF\subseteq \A_{r}.$
We consider the torsion pair $\Coh(C)=(\T_1,\FF_1)$ of Remark \ref{TPNG1}, which also induces the heart $\Coh^r_1(C)$ after a right tilt, i.e.\ $\Coh^r_1(C)=( \FF_1,\T_1[-1]).$
Let $E\in\Coh_1(C)$ be a $\mu$-semistable object. If $E=i_*(\Co(x)),$ then by Remark \ref{RSHBS}, we have $E[-1]\in \A_r.$  We assume that $E$ is torsion free. As the only possible subobjects or quotients of $E$ are in $\Coh_1(C),$ we have that $E$ is $\mu$-semistable if and only if it is $\lambda$-semistable. It follows directly from the definition of $\T$ and $\FF$ that
$i_*(\T_1)\subseteq \T \textnormal{ and } i_*(\FF_1)\subseteq \FF.$

We also consider a torsion pair $\Coh(C)=(\T_3,\FF_3),$ as in Remark \ref{SCCHR}, such that after taking the right tilt, $\Coh^{r_3}(C)=(\FF_3,\T_3[-1]).$
Let $E\in\Coh_3(C)$ be a $\mu$-semistable object. We have that $E=l_*(\Co(x))\in \A_r$  by Remark \ref{RSHBS}. It is easy to see by computing the slope of the Harder--Narasimhan factors that $l_*(\T_3)\subseteq \T$ and $l_*(\FF_3)\subseteq \FF.$
\edem

\section{The stability manifold \texorpdfstring{$\Stab(\T_C)$}{TEXT}}
\label{SMC}

Lemma \ref{GKR} is the main tool to prove that $\Co(x)$ is a $\sigma$-stable object for every $\sigma\in \Stab(C)$ and every $x\in C.$ In this section, we prove the analogous statement for $\T_C,$ to give a characterization of every $\sigma\in\Stab(\T_{C})$ in terms of the skyscraper sheaves. In Subsection \ref{SST12}, we follow closely the steps of \cite[Lemma\ 10.1]{BS8} to describe the possible hearts appearing in pre-stability conditions on $\T_C.$  We finally prove that every pre-stability condition on $\T_C$ has to be given by one of the already constructed pairs in Lemma \ref{PrestaNG} or Subsection \ref{CSCTILT}  i.e.\ either by CP-gluing or by tilting. 

\newpage

\brem \label{EFGKR} Due to the assumption that $g(C)\geq 1,$ if $\Hom_{\Coh(C)}(E,A)\neq 0,$ then \newline $\Hom_{\Coh(C)}(E,A\otimes \omega_C)\neq 0.$ If $f$ is a non-zero morphism in $\Hom_{\Coh(C)}(E,A),$ we denote by $f_C$ the non-zero morphism  in $\Hom_{\Coh(C)}(E,A\otimes \omega_C)$ associated to $f.$
\erem 

\bl\label{GKRC}
If there is a distinguished triangle in $\T_{C}$ of the form \begin{equation} \label{GKRT}
\xymatrix{E_1 \ar[r] \ar[d]_{\varphi_E} & X \ar[r] \ar[d]& A_1 \ar[d]^{\varphi_A} \ar[r] & E_1[1] \ar[d]^{\varphi_E[1]}\\
E_2 \ar[r] & 0 \ar[r] & A_2 \ar[r]^{l} & E_2[1]}
\end{equation}
 with $X \in \Coh(C)$ and $\Hom_{\T_{C}}^{\leq 0}(E,A)=0,$  then $E_1,A_1\in \Coh(C)$.
\el
\bdem
First of all note that $l_2\coloneqq j^!(l)$ is an isomorphism in $D^b(C).$
We now consider the functor 
$F  \colon  \T_C \la \Mor(D^b(C))$ defined as $F(E_1\xrightarrow{\varphi_E} E_2)=E_1\xrightarrow{[\varphi_E]} E_2$ at the level of objects. For $\Phi$ in $\Hom_{\T_C}(E,F),$ by \ref{SQ31}, we get the commutative square \begin{equation}
\xymatrix{E_1 \ar[r]^{i^*(\Phi)} \ar[d]_{[\varphi_{E}]}  & F_1 \ar[d]^{[\varphi_F]}\\
E_2 \ar[r]^{j^!(\Phi)} & F_2}
\end{equation}
in $D^b(C),$ which is in fact a morphism in $\Mor(D^b(C)).$ 

\begin{claim} The functor $F\colon \T_C\la \Mor(D^b(C))$ is full. 
\end{claim}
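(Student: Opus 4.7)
The plan is to apply the TR3 axiom to the two distinguished triangles
$$j_*j^!E \to E \to i_*i^*E \xrightarrow{t_E} j_*j^!E[1] \quad \text{and} \quad j_*j^!F \to F \to i_*i^*F \xrightarrow{t_F} j_*j^!F[1]$$
furnished by the semiorthogonal decomposition $\T_C = \lin \mathcal{C}_1, \mathcal{C}_2 \rin$. Given a morphism $(h_1,h_2) \colon F(E) \to F(F)$ in $\Mor(D^b(C))$, meaning $h_1 \colon i^*E \to i^*F$ and $h_2 \colon j^!E \to j^!F$ satisfying $h_2 \circ \varphi_E = \varphi_F \circ h_1$, I would set $\Psi_1 \coloneqq i_*h_1$ and $\Psi_2 \coloneqq j_*h_2$ and seek to extend this pair to a morphism of triangles $\Phi \colon E \to F$. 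Once $\Phi$ is produced, applying $i^*$ to the right-hand square and using $i^*i_* \cong \id$ yields $i^*\Phi = h_1$, while applying $j^!$ to the left-hand square and using $j^!j_* \cong \id$ yields $j^!\Phi = h_2$; hence $F(\Phi) = (h_1,h_2)$ and fullness follows.

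The essential step is to verify that the compatibility square $t_F \circ \Psi_1 = \Psi_2[1] \circ t_E$ holds in $\Hom_{\T_C}(i_*i^*E, j_*j^!F[1])$, since TR3 then delivers the required $\Phi$. The strategy is to pass to the natural identification
$$\Hom_{\T_C}(i_*i^*E, j_*j^!F[1]) \;\cong\; \Hom_{D^b(C)}(i^*E, j^!F),$$
obtained by combining the $(i_*, i^!)$-adjunction with the direct computation $i^!(j_*Y[1]) = C(0 \to Y)[-1][1] = Y$. Under this identification $t_E$ and $t_F$ correspond respectively to $\varphi_E$ and $\varphi_F$ by the very definition of $F$ (see Remark \ref{Uniquemorphism}). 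Applying $i^!$ to the compositions in question then gives $i^!(t_F \circ \Psi_1) = \varphi_F \circ h_1$ (using $i^!i_* \cong \id$) and $i^!(\Psi_2[1] \circ t_E) = h_2 \circ \varphi_E$ (using $i^!j_*[1] \cong \id$ together with the shift). The commutativity of the target square therefore reduces precisely to $\varphi_F \circ h_1 = h_2 \circ \varphi_E$, which is exactly the hypothesis on $(h_1,h_2)$.

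The main obstacle I anticipate is the bookkeeping involved in the naturality argument: one must verify carefully that under the displayed adjunction the compositions $t_F \circ \Psi_1$ and $\Psi_2[1] \circ t_E$ really translate into $\varphi_F \circ h_1$ and $h_2 \circ \varphi_E$ respectively, with no overlooked shift or sign discrepancies entering through the counits $i^!i_* \cong \id$ and $i^!j_*[1] \cong \id$. Once this bookkeeping is dispatched, the existence of $\Phi$ is an immediate application of TR3, and the counit computations above yield $F(\Phi) = (h_1, h_2)$ without further work, establishing that $F$ is full.
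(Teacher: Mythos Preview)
Your proposal is correct and follows essentially the same route as the paper: both arguments use the semiorthogonal decomposition triangles for $E$ and $F$, verify that the square $t_F \circ i_*h_1 = j_*h_2[1] \circ t_E$ commutes (the paper appeals implicitly to Remark~\ref{TL12}, while you spell this out via the $(i_*,i^!)$-adjunction), and then invoke TR3 to produce $\Phi$ with $F(\Phi)=(h_1,h_2)$. Your treatment of the bookkeeping for the commutativity step is in fact more explicit than the paper's.
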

\begin{proof}
Let $(\varphi_1,\varphi_2)\in \Hom_{\Mor(D^b(C))}(F(E),F(G)),$ with $E,G\in \T_C.$ By definition we have the commutative diagram 
 \begin{equation}
\xymatrix{E_1 \ar[r]^{\varphi_1} \ar[d]_{[\varphi_{E}]}  & G_1 \ar[d]^{[\varphi_G]}\\
E_2 \ar[r]^{\varphi_2} & G_2.}
\end{equation}
Note that we obtain a commutative square in $\T_C$ given by \begin{equation}
\xymatrix{i_*(E_1) \ar[r]^{t_E} \ar[d]_{\varphi_1}  & j_*(E_2)[1] \ar[d]^{\varphi_2}\\
i_*(G_1) \ar[r]^{t_F} & j_*(G_2)[1].}
\end{equation}
By taking cones of the horizontal arrows and using Remark \ref{Uniquemorphism}, we obtain that there is a morphism $\Phi\in \Hom_{\T_C}(E,F)$ with $F(\Phi)=(\varphi_1,\varphi_2).$
\end{proof}

By \cite[Corollary 3.15]{HUYFM}, for every $G\in D^b(C),$ we have that $G=\oplus_{i\in \Z} G_i[-i]$ with $G_i=H^i(G)\in \Coh(C).$ We also have the canonical morphisms (which come from the genuine chain maps)
$$G^\alpha[-\alpha]\xrightarrow{i_{\alpha}} G \xrightarrow{\pi_{\beta}} G^{\beta}[-\beta],$$ for $\alpha,\beta\in \Z.$ Moreover, note that if we have a morphism $\psi\colon G_1\la G_2$ in $D^b(C)$, then $H^i(\psi)[-i]=\pi^{2}_{i}\circ \psi \circ i^1_{i}\colon G^{i}_1[-i] \la G_2^{i}[-i].$

For  $G=G_1\xrightarrow{\varphi_G}G_2\in \T_C$ 
we get that $F(G)\in \Hom_{D^b(C)}(\oplus_i G_1^i[-i],\oplus_i G_2^i[-i]),$ with $G_j^i\in \Coh(C),$ for $j=1,2$ and $i\in \mathbb{Z}.$  Due to the fact that $\Coh(C)$ has homological dimension one, there is a non-zero morphism ${t_G}^i$ from $ G^i_1[-i]\la G_2^i[-i]\oplus G_2^{i-1}[-i+1]$ to $G_1\xrightarrow{\varphi_G}G_2$ in $\Mor(D^b(C)).$ We construct the morphism
$$\xymatrix{ A_1 \ar[r]^{\pi_i\circ[\varphi_A]} \ar[d]^{[\varphi_A]} & A_2^i[-i] \ar[r]^{id_C[-i]} \ar[d]^{\id} & A^{i}_2[-i]\otimes {\omega_C} \ar[r]^{l[-i]\otimes \omega_C} \ar[d]^{l[-i]\otimes \omega_C} &  E^{i+1}_2[-i]\otimes \omega_C \ar[d]
 \ar[r]_{t^i_{\Ser_{\T_C}(E)}} & \ar[d]^{[i_E]} E_2\otimes \omega_C[1] \\ A_2 \ar[r]^{\pi_i} &  A_2^i[-i] \ar[r] & E^{i+1}_2[-i]\otimes \omega_C  \ar[r]&  \C(\varphi_E)^{i+1}[-i]\oplus  \C(\varphi_E)^{i}[-i+1]\otimes \omega_C  \ar[r] &   C(\varphi_E)[1]}$$ in  $\Mor(D^b(C)).$ 
 Note that the morphism $\id_C$ is given by Remark \ref{EFGKR}.

 We obtain a morphishm $A\la \Ser_{\T_C}(E)$ in $\Mor(D^b(C)).$ Since $F$ is full, we have a  morphism in $\Hom_{\T_C}(A,\Ser_{\T_C}(E)).$ By Serre duality, we obtain $$\Hom_{\T_C}(A,\Ser_{\T_C}(E))\cong \Hom_{\T_C}(E,A)^*.$$ By hypothesis, this morphism has to be zero. This directly implies that $\id_C[-i]\circ (\pi_i\circ [\varphi_A])=0,$ because $l\otimes \omega_C $ is an isomorphism. Since the construction does not depend on the $i\in \Z,$ we obtain that the induced morphism $A^i_1[-i]\xrightarrow{\id_C[-i]\circ\pi_i\circ [\varphi_A]\circ i_i} A^i_2[-i]\otimes \omega_C$ is zero. Due to the fact that it is precisely the morphism $(\pi_i\circ [\varphi_A]\circ i_i)_C[-i]$ in $\Coh(C)[-i],$ as in Remark \ref{EFGKR}, we obtain that $A^i_1\xrightarrow{\pi_i\circ [\varphi_A]\circ i_1} A^i_2$ is also zero. The triangle \eqref{GKRT} in $\T_C$ induces a long exact sequence in cohomology and we get 
 $$\xymatrix{A_1^i \ar[r]^{\cong}\ar[d]_{\pi_i\circ [\varphi_A]\circ i_i}  & E_1^{i+1}\ar[d]^{\pi_{i+1}\circ [\varphi_E]\circ i_{i+1}}\\
A_2^i \ar[r]^{\cong} & E_2^{i+1},}$$ as the morphism in the component $i$ in $\Mor(D^b(C))$ is precisely the one given by the cohomology. Therefore $\pi_i\circ [\varphi_E]\circ i_1=0,$ for all $i\neq -1,0.$ 

This implies that there is a morphism from $E^i_1[-i]\la E^{i-1}_2[-i+1]$ to  $E_1\xrightarrow{\varphi_E} E_2,$ which, moreover, is a split monomorphism for $i\neq 0,1.$ In addition, it provides a split monomorphism from $A^i_1[-i]\la A^{i-1}_2[-i+1]$ to $A_1\xrightarrow{\varphi_A} A_2$ for $i\neq -1,0.$

We get the isomorphism 

$$\xymatrix{ A^i_1[-i] \ar[r] \ar[d] & A_1 \ar[r] \ar[d]^{\varphi_A} & E_1[1] \ar[r] \ar[d]^{\varphi_E[1]} &  E^{i+1}_1[-i] \ar[d]
  \\ A^{i-1}_2[-i+1] \ar[r]^{} &  A_2 \ar[r]^{l} & E_2[1] \ar[r] &  E^{i}_2[-i+1]}$$ in $\Mor(D^b(C))$, were the morphism of the first row is the isomorphism induced by the long exact sequence of cohomology of 
  the triangle \eqref{GKRT}.
  
  From the isomorphism above, we construct the morphism  $$\xymatrix{ E_1[1] \ar[r] \ar[d]_{[\varphi_E]} &  E^{i+1}_1[-i] \ar[r]^{\cong} \ar[d] & A^i_1[-i] \ar[r] \ar[d] &  A_1 \ar[d]_{[\varphi_A]}
  \\E_2[1] \ar[r]^{} &  E^{i}_2[-i+1]  \ar[r]^{\cong} & A^{i-1}_2[-i+1] \ar[r] &  A_2}$$ in $\Mor(D^b(C)).$ Once again, since the functor $F$ is full, this needs to be the zero morphism. As a consequence, we obtain that $A^i_1[-i], A^{i}_2[-i]$ are zero for all $i\neq 0,-1.$
  We now study the remaining object in the long exact sequence of cohomology \begin{equation}\label{GKRDIA1} \xymatrix{ 0 \ar[r ] \ar[d] & A^{-1}_1 \ar[r]_{t_1} \ar[d]_{0} &  E^{0}_1 \ar[r] \ar[d] & X \ar[r] \ar[d] &  A^{0}_1 \ar[d]_{0} \ar[r]_{t_2} & E^{1}_1 \ar[r] \ar[d] & 0 \ar[d]
  \\0 \ar[r] &  A^{-1}_2  \ar[r]^{\cong} &  E^{0}_2  \ar[r]^{} & 0 \ar[r] & A^{0}_2 \ar[r]^{\cong} & E^{1}_2 \ar[r] & 0.}\end{equation}
  
  We will now show that $t_i=0,$ for $i=1,2.$
  If $t_1\neq 0,$ by Remark \ref{EFGKR}, we obtain a non-zero morphism ${t_1}_C\colon A^{-1}_1\rightarrow E^0_1\otimes \omega_C.$ By Serre duality $$\Hom_{D^b(C)}(A_1^{-1},E^0_1\otimes \omega_C)\cong \Hom_{D^b(C)}(E^0_1[-1], A_1^{-1})$$ and there is a non-zero morphism $t'_1\colon E^0_1[-1] \la A^{-1}_1.$ As there is a split monomorphism from $E_1^0\la E^0_2$ to $E,$ we get the non-zero morphism 
  $$\xymatrix{ E_1 \ar[r] \ar[d]_{\varphi_E} &   E^0_1 \ar[r]^{t'_1[1]} \ar[d] & A_1^{-1}[1] \ar[r] \ar[d]_{0} &  A_1 \ar[d]_{\varphi_A}
  \\E_2 \ar[r] & E^0_2  \ar[r]_{0} & A_2^{-1}[1] \ar[r] &  A_2}$$  in $\Mor(D^b(C)).$ 
  As above, this implies that $t'_1=0$ and therefore $t_1=0.$
  We will now prove that $t_2=0$.

The triangle $A_1\la A_2 \la C(\varphi_A)$ induces a long exact sequence of cohomology. It follows that $H^i(C(\varphi_A))=0$ unless $i=-1,$ i.e. $C(\varphi_A)\in \Coh(C)[1].$

Note that $A_2[-1]\la C(\varphi_A)[-1]\la A_1$ is a short exact sequence in $\Coh(C),$ then the morphism $C(\varphi_A)[-1]\xrightarrow{\pi_A} A_1$ is surjective in $\Coh(C).$

Let us assume that $t_2\neq 0.$
By Remark \ref{EFGKR}, there is a non-zero morphism $t'_2\colon A_1\la E^1_1\otimes \omega_C.$ We now consider $t_3\coloneqq t'_2\circ\pi_A$ in $\Coh(C).$ Since $\pi_A$ is surjective and $t'_2\neq 0,$ then $t_3\neq 0.$ 

This induces the morphism $\Phi\colon A \la \Ser_{\T_C}(E)$ given by

$$\xymatrix{ A_1 \ar[r]^{t'_2} \ar[d]_{\varphi_A} &   E^1_1\otimes \omega_C \ar[r]^{i} \ar[d] & E_1[1]\otimes \omega_C \ar[r]^{\varphi_E\otimes \id} \ar[d] & E_2[1]\otimes \omega_C \ar[d]  
  \\A_2 \ar[r] & 0 \ar[r]& 0 \ar[r] & \C(\varphi_E)[1]\otimes \omega_C}$$

 in $\Hom_{\T_C}(A,\Ser_{\T_C}(E)).$ Note that $i$ is a split monomorphism. By Serre duality, 
$$\Hom_{\T_C}(A,\Ser_{\T_C}(E))\cong \Hom_{\T_C}(E,A)^*=0.$$ Therefore, we have that $\Phi=0.$ After applying $i^!,$ we obtain  $$i^!(\Phi)=\C(\varphi_A)[-1]\xrightarrow{t_3} E^1_1\otimes \omega_C \xrightarrow{i} E_1[1]\otimes \omega_C,$$ which has to be zero. 
As consequence $t_3=0$ and we have a contradiction.
Hence, we get $A^{-1}_1=E^1_1=0,$ which implies that $A_1,E_1\in \Coh(C).$\edem

\brem From now on we assume that all pre-stability conditions on $\T_C$ that we are considering are locally finite.
\erem

\bp\label{2of3}
Let $X$ be either a skyscraper sheaf $\Co(x)$ or a line bundle $\Li$ on $C$. For any pre-stability condition $\sigma$ on $\T_{C}$, if $i_*(X)$ is not $\sigma$-semistable, then $j_*(X)$ and $l_*(X)$ are $\sigma$-stable.
\ep
\bdem We assume that $i_*(X)$ is not $\sigma$-semistable. Therefore, we consider the last triangle of its Harder--Narasimhan filtration $E\la i_*(X) \la A \la E[1]$ with $\Hom^{\leq 0}_{\T_{\C}}(E,A)=0$ and $A\in \T_{C}$ semistable. By Lemma \ref{GKRC}, we have $E_1,A_1\in \Coh(C),$ thus we obtain a short exact sequence 
$0\la E_1\la X\la A_1\la 0,$ in $\Coh(C)$ which is not possible. Hence, either $E_1=0$ or $A_1=0.$
If $E_1=0,$ we obtain a contradiction. As a consequence,  $A_1=0$ and $E_1\cong X.$ By adjointness we have $$\Hom^{\leq 0}_{\T_C}(E,A)=\Hom^{\leq 0}_{\T_C}(E,j_*(A_2))=\Hom^{\leq 0}_{D^b(C)}(C(\varphi_{E}),A_2)=0.$$ Exactness of the functor $j^*$ yields the triangle
$$\dtri{C(\varphi_{E})}{X[1]}{A_2}.$$ Since $\Hom^{\leq 0}(C(\varphi_{E}),A_2)=0,$ due to Lemma \ref{GKR}, the classical GKR lemma for curves, we get that $C(\varphi_{E})[-1], A_2[-1]\in \Coh(C)$  which once again is not possible. As $A_2$ cannot be zero, we get $C(\varphi_{E})=0$ and $A_2\cong X[1].$ This implies that $A\cong j_*(X)[1]$ and that $E\cong l_*(X)\in \C_3.$ As a consequence, $j_*(X)$ is $\sigma$-semistable. 

We will now show that $l_*(X)$ is $\sigma$-semistable. We proceed by contradiction. If $l_*(X)$ is not $\sigma$-semistable, we examine the last triangle of its HN-filtration and we apply the Serre functor and the autoequivalence $G_{\omega^*_C}.$ 
Arguing as above, we obtain $B\cong i_*(X)$ and $i_*(X)$ is $\sigma$-semistable, which contradicts our assumption. Therefore $l_*(X)$ is $\sigma$-semistable. 

We prove now that $l_*(X)$ and $j_*(X)$ are $\sigma$-stable. We start by proving by contradiction that $l_*(X)$ is stable. If $l_*(X)$ is not $\sigma$-stable, we consider its Jordan-H\"older filtration. Note that all its $\sigma$-stable factors $A_i$ have the same phase $\phi.$
We assume that $\Hom_{\T_{C}}(A_{i_{0}},l_*(X))\neq 0$ for a $\sigma$-stable factor $A_{i_0}.$ Therefore by \href{https://arxiv.org/abs/1111.1745}{\cite[Example 1.6]{HUYSC}}, we have that all the stable factors of $l_*(X)$ are isomorphic to $A_{i_0}.$ Hence, $[l_*(X)] = n[A_{i_0}]$, where $n$ is the number of stable factors. Since $[l_*(\Co(x))] = (0,1,0,1)$ and $[\Li] = (1, \deg(\Li),1, \deg(\Li))$, we must have $n = 1$, i.e.\ $l_*(\Co(x))$ and $l_*(\Li)$ are stable. 

An analogous proof works for the stability of $j_*(X).$ Instead of using the Serre functor, we use $\Ser^{-1}.$ 
Consequently, we obtain that $j_*(\Co(x))$ and $j_*(\Li)$ are stable. 
\edem
\brem  \label{HNX0}If $X$ is either $\Co(x)$ or $\Li$ as above, we use Proposition \ref{2of3} to prove that if $j_*(X)$ ($l_*(X)$) is not $\sigma$-semistable, then $i_*(X)$ and $l_*(X)$ ($j_*(X)$ and $i_*(X)$) are $\sigma$-stable. Meaning that if one of the objects $i_*(X),j_*(X), l_*(X)$ is not $\sigma$-semistable then the other two have to be $\sigma$-stable.
\erem 

\brem \label{HNX} If $i_*(X)$ is not $\sigma$-semistable, where
$X$ is either $\Co(x)$ or $\Li,$ then by Proposition \ref{2of3}, we obtain the HN-filtration for $i_*(X).$ It is given precisely by $$l_*(X)\la i_*(X)\la j_*(X)[1]\la l_*(X)[1].$$
After applying the Serre functor, we obtain the corresponding HN-filtration for $j_*(X)$ and $l_*(X).$
\erem

Moreover, we define 
$$\phi^0_{x}\coloneqq \phi_{\sigma}(i_*(\Co(x))) \textnormal { and } \phi^1_{\Li}\coloneqq \phi_{\sigma}(i_*(\Li)),$$
$$\phi^2_{x}\coloneqq \phi_{\sigma}(j_*(\Co(x))) \textnormal { and } \phi^3_{\Li}\coloneqq \phi_{\sigma}(j_*(\Li)),$$
$$\phi^4_{x}\coloneqq \phi_{\sigma}(l_*(\Co(x))) \textnormal { and } \phi^5_{\Li}\coloneqq \phi_{\sigma}(l_*(\Li)).$$

If $\Li=\Ot_C,$ then $\phi_1=\phi_{\sigma}(i_*(\Ot_C)),$ $\phi_3=\phi_{\sigma}(j_*(\Ot_C)),$ and $ \phi_5=\phi_{\sigma}(l_*(\Ot_C)).$

\brem Note that if $i_*(\Co(x))$ is not $\sigma$-semistable, then $\phi^4_{x}>\phi^2_{x}+1.$ Similarly, if $i_*(\Li)$ is not $\sigma$-semistable, then $\phi^5_{\Li}>\phi^3_{\Li}+1.$ 
\erem

\brem \label{HNX1} After applying the Serre functor to the HN-filtration of $i_*(X)$, we obtain the  analogous results for $j_*(X)$ and $l_*(X).$ For example, if $l_*(\Co(x))$ is not $\sigma$-semistable, then  $\phi^2_{x}>\phi^0_{x}.$ 
\erem

\brem \label{JHX} If $i_*(X)$ is strictly $\sigma$-semistable, where
$X$ is either $\Co(x)$ or $\Li,$ then $j_*(X)$ and $l_*(X)$ are $\sigma$-stable. Moreover, a Jordan-H\"older filtration is given by
$$l_*(X)\la i_*(X)\la j_*(X)[1]\la l_*(X)[1].$$
\erem

It now makes sense to define the following sets:

\bd We define the set $\Theta_{ij}$ of pre-stability conditions on $\T_C$ for $ij=12,23$ or $31$ as follows: \begin{alignat}{7}\nonumber \Theta_{12}&=\{&\sigma &\mid& i_*(\Co(x)), j_*(\Co(x)), i_*(\Li), j_*(\Li) \: \: \rm{stable} \textnormal{ for all line bundles $\Li\in\Coh(C)$ and all $x\in C$ }  &\}&,\\ \nonumber
\Theta_{23}&= \{&\sigma &\mid& j_*(\Co(x)), l_*(\Co(x)), j_*(\Li), l_*(\Li) \: \: \rm{stable} \textnormal{ for all line bundles $\Li\in\Coh(C)$ and all $x\in C$ } &\}&,\\ \nonumber
\Theta_{31}&=\{&\sigma &\mid& i_*(\Co(x)), l_*(\Co(x)), i_*(\Li), l_*(\Li) \:  \:  \rm{stable}  \textnormal{ for all line bundles $\Li\in\Coh(C)$ and all $x\in C$ }&\}&.
\end{alignat}
Recall that we assumed that all the pre-stability conditions are locally finite. 
\ed

\brem Note that 
$\Ser_{\T_C}(\Theta_{12})=\Theta_{23}\textnormal{ and }\Ser_{\T_C}(\Theta_{23})=\Theta_{31}\textnormal{ and }\Ser_{\T_C}(\Theta_{31})=\Theta_{12}.$
\erem

\bt\label{TEO1} If $\sigma$ is a pre-stability condition on $\T_C,$ then $\sigma\in \Theta_{12}\cup \Theta_{23}\cup \Theta_{31}.$
\et
\bdem We first assume that
$\sigma\notin \Theta_{23}$ and we prove that $\sigma\in \Theta_{12}$ or $\sigma\in \Theta_{31}.$ Thus,  there is a line bundle $\Li$ such that $j_*(\Li)$ or $l_*(\Li)$ is  not $\sigma$-stable, or either $j_*(\Co(x))$ or $l_*(\Co(x))$ is not $\sigma$-stable for some $x\in C.$ Assume that there is $x\in C$ such that $l_*(\Co(x))$ is not $\sigma$-stable. We need to show that $\sigma\in \Theta_{12},$ as it cannot be in $\Theta_{31}.$ By Remark \ref{HNX0},  it follows that  $j_*(\Co(x))$ and $i_*(\Co(x))$ are $\sigma$-stable. 
We will now show that for every line bundle $\Li$ and every $x\in C,$  we have that $j_*(X)$ and $i_*(X)$ are $\sigma$-stable, where $X=\Li$ or $X=\Co(x).$

We prove this by contradiction, assume that there is a line bundle $\Li,$ such that $i_*(\Li)$ is not $\sigma$-stable, which implies that $j_*(\Li)$ and $l_*(\Li)$ are $\sigma$-stable. Since $l_*(\Co(x))$ is not $\sigma$-stable, by Remark \ref{HNX} we obtain that $\phi_{x}^2\geq \phi^0_{x}.$ Analogously, since $i_*(\Li)$ is not $\sigma$-semistable, then we get $\phi^3_{\Li}+1 \leq \phi^{5}_{\Li}.$ Let us consider the non-zero morphism  $j_*(\Co(x))\la j_*(\Li)[1].$ Since by hypothesis both are stable and not isomorphic, we obtain that $\phi^2_{x}< \phi^{3}_{\Li}+1.$ 
We have a non-zero morphism $l_*(\Li)[1]\la i_*(\Co(x))[1],$ as both objects are $\sigma$-stable and not isomorphic, we get $\phi^5_{\Li}<\phi^0_{x}.$ We now put all the inequalities together, yielding 
$$\phi^0_{x}\leq \phi^2_{x}<\phi_{\Li}^3+1\leq\phi^5_{\Li}<\phi^0_{x},$$ which is a contradiction. Therefore $i_*(\Li)$ has to be $\sigma$-stable for all line bundles. Analogously we prove that $j_*(\Li)$ has to be also stable for all line bundles $\Li.$ 

We now assume that there is a point $y\in C,$ such that $j_*(\Co(y))$ is not $\sigma$-stable. Then, by Remark \ref{HNX0}  we obtain $\phi^0_{y}-1\geq\phi_{y}^4.$  Note that $[i_*(\Co(x))]=[i_*(\Co(y))]$ in the Grothendieck group. As a consequence, we obtain $\phi^0_{x}=\phi^0_{y}+m,$ with $m\in \Z.$ But as $i_*(\Ot_C)$ is $\sigma$-semistable and we have non-zero  morphisms from $i_*(\Ot_C)$  to $i_*(\Co(x)) $ and to $i_*(\Co(y),$ we also get non-zero morphisms from $i_*(\Co(x))$ and $i_*(\Co(y))$ to $i_*(\Ot_C)[1],$
we obtain 
$\phi_1<\phi^0_{y}< \phi_1+1\textnormal{ and } \phi_1<\phi^0_{x}<\phi_1+1,$
which implies that $\phi^0_{y}=\phi^0_{x}.$

Since we have a non-zero morphism $j_*(\Ot_C)\la l_*(\Co(y)),$ we obtain $\phi_3<\phi^4_{y}$ and from the morphism $j_*(\Co(x))\la j_*(\Ot_C)[1]$ we obtain $\phi^2_{x}<\phi_3+1.$

As a consequence, we get $\phi_3+1<\phi^4_{y}+1\leq\phi^0_{y}=\phi^0_{x}\leq\phi^2_{x}<\phi_3+1,$ which is a contradiction. Therefore, we obtain that $j_*(\Co(y))$ is $\sigma$-stable. Then $\sigma\in \Theta_{12}.$
The other cases follow analogously.
\edem

\bc \label{SPSK} Let $\sigma$ be a pre-stability condition on $\T_C,$ with $i_*(\Co(x))$ $\sigma$-stable for $x\in C$. Then, for all $y\in C,$ we have that $i_*(\Co(y))$ is $\sigma$-stable and $\phi^0_{x}=\phi^0_{y}.$
\ec
\bdem By Theorem \ref{TEO1}, we have that if $\sigma\in \Theta_{12}$ or $\sigma\in \Theta_{31},$ it follows directly that for all $y\in C$ we have that $i_*(\Co(y))$ is $\sigma$-stable.
Note that $[i_*(\Co(x))]=[i_*(\Co(y))]$ in the Grothendieck group. This implies that $\phi^0_{x}=\phi^0_{y}+m,$ with $m\in \Z.$ By stability, we obtain $m=0.$
\edem

\brem Analogously, by using Serre duality, we prove Corollary \ref{SPSK} for $j_*(\Co(x))$ and $l_*(\Co(x)).$ 
\erem
By Corollary \ref{SPSK}, if $i_*(\Co(x))$ is $\sigma$-stable for some $x\in C,$ then $ \phi^0_{x}$  does not depend on $x.$ Therefore, we define $\phi_0\coloneqq \phi^0_{x}$,  $\phi_2\coloneqq \phi^2_{x}$, $\phi_4\coloneqq \phi^4_{x}.$
\bl \label{NGSCStability} \label{NGSL}\label{NGSL2}\label{NGSL3}  Let $\sigma=(Z_{r},\A_{r})$ be a pre-stability condition constructed in Lemma \ref{PrestaNG}. Then \begin{enumerate}[leftmargin=0.5cm]
\item $i_*(\Co(x))$ $l_*(\Co(x))$ and $j_*(\Co(x))$ are stable. 
\item $i_*(\Ot_C)$ and $j_*(\Ot_C)$ are $\sigma$-stable if and only if $\phi_1<\frac{3}{2}.$
\item $i_*(\Ot_C),$ $j_*(\Ot_C)$ and $l_*(\Ot_C)$ are $\sigma$-stable if and only if $\frac{1}{2}=\phi_3<\phi_5<\phi_1<\frac{3}{2}.$
\item $j_*(\Ot_C)$ and $l_*(\Ot_C)$ are $\sigma$-stable and $i_*(\Ot_C)$ is not $\sigma$-stable if and only if $\phi_1\geq \frac{3}{2}$ and $\phi_5>\frac{1}{2}.$
Moreover, we have that $i_*(\Ot_C)$ and $l_*(\Ot_C)$ are $\sigma$-stable and $j_*(\Ot_C)$ is not $\sigma$-stable if and only if $\phi_1\geq \frac{3}{2}$ and $\phi_5\leq\frac{1}{2}.$
\end{enumerate}
\el
\bdem We start by proving the first part. By Remark \ref{RSHBS}, we have that $i_*(\Co(x))[-1],$ $l_*(\Co(x))[-1]$ and $j_*(\Co(x))$ are in $\A_{r}$ and that $j_*(\Co(x))$ is stable of phase one. We will now show that $i_*(\Co(x))[-1]$ is $\sigma$-stable. By contradiction we first assume that $i_*(\Co(x))$ is not $\sigma$-stable. As a consequence of Remark \ref{HNX1}, we have that $l_*(\Co(x))$ is $\sigma$-stable and $\phi_4\geq \phi_2+1,$ but $\phi_2=1$ and $1<\phi_4<2,$ which gives a contradiction. The same reasoning works to prove that $l_*(\Co(x))[-1]$ is stable. 
For the second part, note that $\phi_1=\phi(i_*(\Ot_C))$ and $\phi_3=\phi(j_*(\Ot_C))=\frac{1}{2}$ makes sense, as Lemma \ref{THI} implies that $j_*(\Ot_C)\in \A_{r}$ and that $i_*(\Ot_C)$ is in $\A_r$ or in $\A_r[1].$ Moreover, if $i_*(\Ot_C)$ and $j_*(\Ot_C)$ are $\sigma$-stable, then $\phi_1<\phi_3+1=\frac{3}{2},$ because there is a non-zero morphism $i_*(\Ot_C)\la j_*(\Ot_C)[1].$  We will now prove the other direction. We assume that $\phi_1<\frac{3}{2}$ and that $i_*(\Ot_C)$ is not stable. By Remark \ref{HNX1}, the statement follows. The other two statements follow in a similar way.
\edem

\subsection{Pre-stability conditions in \texorpdfstring{$\Theta_{12}$}{TEXT}}
\label{SST12}

  We are going to show that pre-stability conditions $\sigma\in \Theta_{12}$ are given as the ones constructed in Corollary \ref{CP1} or in Lemma \ref{PrestaNG}.

We first characterise  the hearts of the pre-stability conditions in terms of the stability of the skyscraper sheaves. We study pre-stability conditions satisfying that $j_*(\Co(x))$ is stable of phase one. We separate them into two cases when $l_*(\Co(x))$ is not $\sigma$-stable and when it is. If $l_*(\Co(x))$ is not $\sigma$-stable, then the pre-stability conditions are CP-glued. If $l_*(\Co(x))$ is $\sigma$-stable, we obtain stability conditions of the form of Lemma \ref{GNGSC}. We follow closely \cite[Proposition 10.1]{BS8}.

\bl\label{BHNS} Let $\sigma=(Z,\A)$  be a pre-stability condition such that $i_*(\Co(x))$ is $\sigma$-stable, $j_*(\Co(x))$ is $\sigma$-stable of phase one and $l_*(\Co(x))$ is not $\sigma$-stable. 

We assume that $i_*(\Co(x))[n]\in\A.$ If $E= E_1\xrightarrow{\varphi} E_2 \in \A,$ then $H^i(E_1)=0,$ unless \linebreak $i=-n-1,-n.$ Also $H^i(E_2)=0$ unless $i=0$ and  $H^i(C(\varphi))=0,$ unless $i=-n-1,-n-2,0.$ 
\el
\bdem 
First note that  $n\geq 0.$ Indeed, as $l_*(\Co(x))$ is not $\sigma$-stable, it implies that $\phi_0-\phi_2\leq 0.$ As $\phi_2=1$ we have $\phi_0\leq 1,$ also $0<\phi(i_*(\Co(x))[n])=\phi_0+n\leq 1.$ Combining the inequalities above, we get $n\geq 0.$

Let $E\in \A$ be stable with phase $0<\phi(E)<1.$ As $j_*(\Co(x))$ is stable we have  that $\Hom_{\T_C}^{i}(E,j_*(\Co(x)))=0,$ for all $i<0.$ 
By adjointness, we have $$0=\Hom_{\T_C}^{i}(E,j_*(\Co(x)))=\Hom^{i}_{D^b(C)}(C(\varphi),\Co(x)).$$ Moreover, by stability we have $\Hom^{i}(i_*(\Co(x))[n],E)=0,$ for all $i<0.$ By adjointness and Serre duality in $D^b(C),$ we get 
$$0=\Hom_{\T_C}^{i}(i_*(\Co(x))[n],E)=\Hom_{D^b(C)}^{i}(\Co(x)[n],C(\varphi)[-1])=\Hom^{n-i+2}_{D^b(C)}(C(\varphi),\Co(x))^*,$$ i.e.\ $\Hom_{D^b(C)}^{j}(C(\varphi),\Co(x))=0$ for $j>n+2.$ Therefore, we obtain $\Hom^{i}(C(\varphi),\Co(x))=0,$ unless $0\leq i \leq n+2.$ By \textup{\href{https://arxiv.org/pdf/math/9908022.pdf}{\cite[Proposition 5.4]{BFM}}}, it follows that $C(\varphi)$ is isomorphic to a  complex of locally-free sheaves and $H^i(C(\varphi))=0 \textnormal{ unless } -n-2 \leq i\leq 0.$

Similarly, by  stability $\Hom_{\T_C}^{i}(E,i_*(\Co(x))[n])=0,$ for all $i<0.$ By adjointness, we get \linebreak $0=\Hom_{\T_C}^{i}(E,i_*(\Co(x))[n])=\Hom^{i+n}_{D^b(C)}(E_1,\Co(x)),$  i.e.\ $\Hom_{D^b(C)}^{j}(E_1,\Co(x))=0$ for $j<n.$ Since $l_*(\Co(x))$ is not $\sigma$-semistable, it follows from its HN-filtration that $\phi_2\geq \phi_4\geq \phi_0,$ which implies that $l_*(\Co(x))[n] \in \mathcal{P}(0,n+1],$ because $j_*(\Co(x))\in\mathcal{P}(0,1]$ and $i_*(\Co(x))[n]\in\mathcal{P}(0,1].$ As a consequence, we have $\Hom^{i}(l_*(\Co(x))[n],E)=0,$ for all $i<0.$ By adjointness and Serre duality in $D^b(C),$ 
we get $0=\Hom_{T_C}^{i}(l_*(\Co(x))[n],E)=\Hom^{n+1-i}_{D^b(C)}(E_1,\Co(x))^*,$ i.e.\ $\Hom_{D^b(C)}^{j}(E_1,\Co(x))=0$ for $j>n+1.$  Consequently, we obtain  $\Hom^i(E_1,\Co(x))=0$ unless $n\leq i\leq n+1.$ By \linebreak  \textup{\href{https://arxiv.org/pdf/math/9908022.pdf}{\cite[Proposition 5.4]{BFM}}}, we have that $E_1$ is isomorphic to a length two complex of locally-free sheaves and  $H^i(E_1)=0 \textnormal{ unless } i=-n,-n-1.$

We will now prove a similar result for $E_2.$ In this case we use that $j_*(\Co(x))$ is stable of phase one. First of all, by the same reasoning as above, $l_*(\Co(x)) \in \mathcal{P}(-n,1].$ Hence, we get 
$\Hom^{i}(E,l_*(\Co(x)))=0,$ for all $i<0.$ By adjointness, we obtain 
$0=\Hom_{\T_C}^{i}(E,l_*(\Co(x)))=\Hom^{i}_{D^b(C)}(E_2,\Co(x)).$ In addition, since $j_*(\Co(x))$ is stable of phase $1,$ we get  $\Hom_{\T_C}^{i}(j_*(\Co(x)),E)=0$ for $i\leq 0.$ By adjointness and Serre duality in $D^b(C),$ we have 
$0=\Hom_{\T_C}^{i}(j_*(\Co(x)),E)=\Hom^{1-i}_{D^b(C)}(E_2,\Co(x))^*,$
i.e.\ $\Hom_{D^b(C)}^{j}(E_2,\Co(x))=0$ for $j>0.$ Thus, we obtain $\Hom^{i}(E_2,\Co(x))=0,$ unless $i=0.$ By \textup{\href{https://arxiv.org/pdf/math/9908022.pdf}{\cite[Proposition 5.4]{BFM}}}, we have that $E_2$ is isomorphic to a length one complex of locally-free sheaves and  $H^i(E_2)=0 \textnormal{ unless } i=0.$

The triangle $E_1\la E_2\la C(\varphi)$ induces a long exact sequence in cohomology. As a consequence, we have that $H^i(C(\varphi))=0 \textnormal{ unless } -n-2,-n-1,0.$

If $E\ncong j_*(\Co(x))$ is $\sigma$-stable with phase one, then $H^i(E_1)=0$ unless $i=-n-1 \textnormal{ and }$\linebreak  $ H^i(E_2)=0 \textnormal{ for all }i .$ This implies that $H^i(C(\varphi))=0 \textnormal{ unless  } i=-n-2$ and that $E_1$ is torsion free.
\edem

\bl\label{BH}Let $\sigma=(Z,\A)$ be a pre-stability condition such that $l_*(\Co(x)), i_*(\Co(x))$ are  $\sigma$-stable and $j_*(\Co(x))$ is $\sigma$-stable of phase one. Then, for $E=E_1 \xrightarrow{\varphi} E_2 \in \T_C$ we have 
\begin{enumerate}[leftmargin=0.5cm]

\item If $E \in \A,$ then $H^i(E_j)=0,$ unless $i={0,1},$ for $j=1,2.$ Also $H^i(C(\varphi))=0,$ unless $i={-1,0}.$
Moreover, $H^{-1}(C(\varphi)),H^0(E_1)$ are torsion-free.
\item If $E$ is stable of phase one, then either $E=j_*(T),$ where $T\in \Coh(C)$ a torsion sheaf, or $E\in\TCoh(C)$ with $H^0(C(\varphi))=0.$ We have that $E_1$ and $E_2$ are torsion-free.
\item $\TCoh(C)\subseteq \mathcal{P}(0,2]$
\item The pair $\mathcal{T}=\TCoh(C)\cap \mathcal{P}(1,2]\textnormal{ and } \mathcal{F}=\TCoh(C)\cap\mathcal{P}(0,1]$ defines a torsion pair on $\TCoh(C).$ Moreover, the heart $\A$ is the corresponding tilt. 
\end{enumerate}
\el
\bdem

First note that  $i_*(\Co(x))[-1], l_*(\Co(x))[-1]\in \A.$ Indeed, there are non-zero morphisms $i_*(\Co(x))\la j_*(\Co(x)[1]),$ $j_*(\Co(x))\la l_*(\Co(x))$ and $l_*(\Co(x)) \la i_*(\Co(x)).$ By stability, it follows that $1<\phi_4<\phi_0< 2.$

We start by proving part one. Let $E\in \A$ be stable with phase $0<\phi(E)<1.$ As $j_*(\Co(x))$ is stable we have  that $\Hom_{\T_C}^{i}(E,j_*(\Co(x)))=0,$ for all $i<0.$ By adjointness, we have \linebreak $0=\Hom_{\T_C}^{i}(E,j_*(\Co(x)))=\Hom^{i}_{D^b(C)}(C(\varphi),\Co(x)).$ Moreover, by stability we have \linebreak $\Hom^{i}(i_*(\Co(x))[-1],E)=0,$ for all $i<0.$ By adjointness and Serre duality in $D^b(C),$ we get 
$0=\Hom_{T_C}^{i}(i_*(\Co(x))[-1],E)=\Hom^{1-i}_{D^b(C)}(C(\varphi),\Co(x))^*,$ i.e.\ $\Hom_{D^b(C)}^{j}(C(\varphi),\Co(x))=0$ for $j>1.$ Therefore, we obtain $\Hom^{i}(C(\varphi),\Co(x))=0,$ unless $0\leq i \leq 1.$ By \textup{\href{https://arxiv.org/pdf/math/9908022.pdf}{\cite[Proposition 5.4]{BFM}}}, it follows that $C(\varphi)$ is isomorphic to a length two complex of locally-free sheaves and $H^i(C(\varphi))=0 \textnormal{ unless } i={-1,0}.$

Similarly, we obtain   $\Hom^i(E_1,\Co(x))=0$ unless $-1\leq i\leq 0.$ By \textup{\href{https://arxiv.org/pdf/math/9908022.pdf}{\cite[Proposition 5.4]{BFM}}}, we have that $E_1$ is isomorphic to a length two complex of locally-free sheaves and $H^i(E_1)=0 \textnormal{ unless } i={0,1}.$

We prove a similar result for $E_2.$ In this case we use that $j_*(\Co(x))$ is stable of phase one. We obtain $\Hom^{i}(E_2,\Co(x))=0,$ unless  $-1\leq i \leq 0.$  By \textup{\href{https://arxiv.org/pdf/math/9908022.pdf}{\cite[Proposition 5.4]{BFM}}}, we have that $E_2$ is isomorphic to a length two complex of locally-free sheaves and  
$H^i(E_2)=0 \textnormal{ unless } i={0,1}.$ This concludes the proof of the first part. 

We will now proceed to proving the second part. Let $E\in \mathcal{P}(1)$ be  a stable  object, which is not isomorphic to $j_*(T),$ where $T$ is a torsion sheaf.
Since the phase of $E$ is one, we have $H^i(E_1)=0 \textnormal{ unless } i={0}\textnormal{ and } H^i(E_2)=0 \textnormal{ unless } i={0}.$

For the third part, we assume that $E\in \TCoh(C).$ If $F\in\mathcal{P}((2,\infty)),$ then by the first part $F\in D^{\leq -2}\subseteq D^{\leq -1},$ where $(D^{\leq 0},D^{\geq 0})$ is the standard t-structure. Consequently, we have \linebreak $0=\Hom_{\T_C}(D^{\leq 0}, D^{\geq 1 })=\Hom(D^{\leq -1},D^{\geq 0}),$ therefore $\Hom_{\T_C}(F,E)=0.$ Analogously, we have that if $B\in \mathcal{P}(\leq 0),$ then  $B \in D^{\geq 1}.$ As $\Hom_{\T_C}(D^{\leq 0},D^{\geq 1})=0,$ we obtain $\Hom_{\T_C}(E,B)=0.$ It follows that $E\in\mathcal{P}(0,2].$

We will now prove the fourth part. Let $E\in \TCoh(C).$ By the third part of the statement, there is a triangle $A\la E \la B\la A[1],$ where $A\in \mathcal{P}(1,2]$ and $B\in \mathcal{P}(0,1].$ 
After applying $i^*,$ we obtain a long exact sequence of cohomology and by part one we obtain
$H^{-1}(i^*(A))=H^{1}(i^*(B))=0.$
This implies that $i^*(A), i^*(B)\in \Coh(C).$ Analogously, we have $j^!(A), j^!(B)\in \Coh(C)$ and we obtain $A,B\in \TCoh(C).$

Moreover, if $A\in\mathcal{T},$ we have additional information. Indeed, as  we have $H^i(j^*(B))=0$ unless $i=-1,0$ and $H^{i}(j^*(A))=0$ unless $i=-2,-1.$ After taking the long exact sequence we obtain  $H^i(j^*(A))=0$ unless $i=-1.$ As a consequence, if $A=A_1\xrightarrow{g} A_2,$ it follows that $\Coker(g)=0.$
\edem 

By applying Serre duality of doing exactly the same proof under the new hypothesis, we obtain the following results.

\bl\label{BH1}Let $\sigma=(Z,\A)$ be a pre-stability condition such that $j_*(\Co(x)), i_*(\Co(x))$ are  $\sigma$-stable and $i_*(\Co(x))$ is $\sigma$-stable of phase one. Then, for $E=E_1 \xrightarrow{\varphi} E_2 \in \T_C$ we have 
\begin{enumerate}[leftmargin=0.5cm]

\item If $E \in \A,$ then $H^i(E_j)=0,$ unless $i={-1,0},$ for $j=1,2.$ Also $H^i(C(\varphi))=0,$ unless $i={-1,0}.$ We also have that $H^{-1}(E_1), H^{-1}(E_2)$ are torsion-free.

\item If $E$ is stable of phase one, then either $E=i_*(T),$ where $T\in \Coh(C)$ is a torsion sheaf, or $E\in \mathcal{H}_{31}$ with $H^0(E_1)=0.$ Moreover, we have that $E_2$ and $C(\varphi)$ are torsion-free.
\item $\HH_{31}\subseteq \mathcal{P}(0,2]$
\item The pair $\mathcal{T}=\HH_{31}\cap \mathcal{P}(1,2]\textnormal{ and } \mathcal{F}=\HH_{31}\cap\mathcal{P}(0,1]$ defines a torsion pair on $\HH_{31}.$ Moreover, the heart $\A$ is the corresponding tilt. 
\end{enumerate}
\el

\bl\label{BH2}Let $\sigma=(Z,\A)$ be a pre-stability condition such that $i_*(\Co(x)), j_*(\Co(x))$ are  $\sigma$-stable and $l_*(\Co(x))$ is $\sigma$-stable of phase one. Then, for $E=E_1 \xrightarrow{\varphi} E_2 \in \T_C$ we have 
\begin{enumerate}[leftmargin=0.5cm]

\item If $E \in \A,$ then $H^i(E_1)=0,$ unless $i={0,1}$ and $H^i(E_2)=0,$ unless $i={0,-1}.$  We also obtain that $H^i(C(\varphi))=0,$ unless $i={-1,0}.$ We also have that $H^{-1}(C(\varphi)),H^{-1}(E_2)$ are torsion-free.

\item If $E$ is stable of phase one, then either $E=l_*(T),$ where $T\in \Coh(C)$ a torsion sheaf, or $E \in \HH_{23}$  with $H^0(E_2)=0$ and $H^0(E_1)$ torsion-free sheaves.  

\item $\HH_{23}\subseteq \mathcal{P}(0,2]$

\item The pair $\mathcal{T}= \HH_{23}\cap \mathcal{P}(1,2]\textnormal{ and } \mathcal{F}=\HH_{23}\cap\mathcal{P}(0,1]$defines a torsion pair on $\HH_{23}.$ Moreover, the heart $\A$ is the corresponding tilt. 

\end{enumerate}
\el

We will now study the orbit of $\sigma\in \Theta_{12}$ under the action of $\GL$ in order to choose a simpler representative. 

\bp \label{12UTA} For every $\sigma\in \Theta_{12},$ there is $g\in \GL$ such that for $\sigma g=(Z,\A)$ we can find stability conditions $\sigma_1=(Z_1,\Coh^{r}(C))\textnormal{ and }\sigma_2=(Z_\mu,\Coh(C))\in \Stab(C)$ with $\Coh^{r}_1(C)\subseteq \A$ and $r>-1,$ $\Coh_2(C)\subseteq \A,$ $\restr{Z}{\mathcal{C}_1}=Z_1$ and $\restr{Z}{\mathcal{C}_2}=Z_\mu.$
\ep

\bdem

By the stability of $i_*(\Co(x))$ and $i_*(\Ot_C),$ we have $\phi_1<\phi_0<\phi_1 +1.$ Therefore, there is  an orientation preserving transformation $M \colon \R^2\la  \R^2$ satisfying that  
$(A,D) \mapsto  (-1,0)$ and $ (B,C) \mapsto   (0,1), $
where $Z(i_*(\Co(x))=A+Di$ and $Z(i_*(\Ot_C))=B+Ci.$ There is an increasing function $f \colon \R\la \R$ that satisfies  $f(x+1)=f(x)+1$ with $f(1)=\phi_0,$  $f(1/2)=\phi_1.$  We obtain $(T,f)\in\GL$. The stability condition $\sigma^{'}\coloneqq \sigma (T,f)$ satisfies 
$$
Z^{'}(r_1,d_1,0,0)=-d_1+ r_1i   \textnormal{ and }  \Coh_1(C)\subseteq \A^{'},$$ 
where $\sigma^{'}=(Z^{'},\A^{'}).$ Indeed, we have  
 \begin{eqnarray*} \nonumber
i_*(\Co(x)) \in \Pd(\phi_0)&=&\Pd^{'}(1),\\ \nonumber
i_*(\Ot_C) \in \Pd(\phi_1)&=&\Pd^{'}(1/2),\\   \nonumber  
i_*(\Li) \in \Pd(\phi_{\Li})&=&\Pd^{'}(t_{\Li}), \nonumber
\end{eqnarray*}  with $t_{\Li}\in(0,1).$ Therefore, all skyscraper sheaves and line bundles in $\Coh_1(C)$ are in $\Pd^{'}(0,1].$ Since any object in $\Coh_1(C)$ admits a filtration with quotients either isomorphic to skyscraper sheaves or to line bundles, we obtain $\Coh_1(C)\subseteq \A^{'}.$


As the action of $\GL$ on the set of pre-stability conditions preserves the stability of the objects and $j_*(\Co(x)),$ $j_*(\Ot_C)$ are $\sigma$-stable, we have $\phi_3<\phi_2<\phi_3 +1$ in $\sigma^{'}.$ Therefore, we can find $g_1=(T_1,f_1)\in\GL,$ such that $\sigma^{''}\coloneqq \sigma' (T_1,f_1)$ satisfies $\restr{Z^{''}}\C_2 = Z_{\mu}$ and $\Coh_2(C)\subseteq \A^{''}.$ 

Moreover, if we consider $\sigma_1=(Z_1,\Coh^r(C)) \in \Stab(C)$ with $f_1(0)=r\in\R$, which under our correspondence with $\GL$ is $(T_1,f_1)\in \GL,$ then $\sigma^{''}$ also satisfies that $\restr{Z^{''}} \C_1= Z_1$ and  $\Coh_1^r(C)\subseteq \A^{''}.$ Indeed, by definition $Z^{''}= T_1^{-1}\circ Z^{'}.$ Consequently,  we can assert that $\restr{Z^{''}} \C_1=T_1^{-1}\circ Z_{\mu}.$

We will now show that $\Coh_1^{r}(C)\subseteq \A^{''},$ where $f_1(0)=r=n+\theta,$ and $n\in \Z$ and $\theta\in [0,1).$ We prove this in several steps. We first show that $\Coh_1(C)[n]\subseteq \mathcal{P}(-1,1].$ Then, we construct a torsion pair of $\Coh_1(C)[n]$ and we compare it with $\Coh(C)[n]=\lin \T_{\theta}[n],\FF_{\theta}[n]\rin,$ which is the torsion pair given in Remark \ref{SCCHR}.


We first show that $i_*(\Co(x)[n])\in \A^{''}.$ Note that $f_1(\phi_0)=1,$ because $$i_*(\Co(x))\in \Pd^{'}(1)=\Pd^{''}(f_1^{-1}(1))=\Pd^{''}(\phi_0)\textnormal{ and } f_1(-n)=\theta.$$
We apply $f_1^{-1}$ to the following inequality $\theta<1\leq \theta+1$ and we obtain 
\begin{equation} \label{SK1}
-n<\phi_0\leq -n+1,
\end{equation} which is equivalent to $i_*(\Co(x)[n])\in \A^{''}.$ By stability $-n<2.$ 

\bcla \label{SB12} $f_1(0)=r>-1.$
\ecla
\bdem  Since $r=n+\theta$ and $ n\geq -1,$ we obtain  $r\geq -1.$ We just need to prove that $r\neq -1.$ Assume that $r=-1.$ Then  if $Z_1(r_1,d_1)=A_1d_1+Br_1+(C_1r_1+D_1d_1)i,$ we get $D_1=0.$ By the stability of $i_*(\Co(x))$ and $j_*(\Co(x))$ in $\sigma^{''},$ we get $\phi_0<2.$ But, by the definition of $Z_1,$ we obtain \linebreak $\phi_0=\phi(i_*(\Co(x))[-1])+1=2,$ which is a contradiction. 
\edem

We will now study line bundles $\Li.$ Note that $f_1(\phi_{\Li})=t_{\Li},$ with $t_{\Li}\in (0,1)$ because $$i_*(\Li)\in \Pd^{'}(t_{\Li})=\Pd^{''}(f^{-1}(t_{\Li}))=\Pd^{''}(\phi_{\Li}).$$ The proof naturally falls into two cases:

\textbf{Case 1}: $\theta< t_{\Li} \leq \theta+1.$ We apply $f_1^{-1}$ to the inequality above and we obtain $-n<\phi_{\Li}\leq -n+1,$ which is equivalent to $i_*(\Li[n])\in \A^{''}.$ 

\textbf{Case 2}: $\theta-1<t_{\Li} \leq \theta.$ We apply $f_1^{-1}$ to the inequality above and we obtain $-n-1<\phi_{\Li}\leq -n,$ which is equivalent to $i_*(\Li[n+1])\in \A^{''}.$ 

\bcla $\Coh_1(C)\subseteq \mathcal{P}(-1,1].$
\ecla
\bdem
We just proved that all line bundles and the skyscraper sheaves are in $\mathcal{P}(-n-1,-n+1],$ therefore $\Coh_1(C)[n]\subseteq \mathcal{P}(-1,1].$
\edem  
We set $\mathcal{T}_1=\Coh_1(C)[n]\cap\mathcal{P}(0,1] \textnormal{ and }\mathcal{F}_1=\Coh_1(C)[n]\cap\mathcal{P}(-1,0].$

\bcla $(\mathcal{T}_1,\mathcal{F}_1)$ is a torsion pair of $\Coh_1(C)[n].$
\ecla
\bdem
The proof falls naturally into two cases:

\textbf{Case 1:} $l_*(\Co(x))$ is not $\sigma^{''}$-stable.

Let $E\in \Coh(C).$ We have $E[n]\in \Coh(C)[n].$
Since $\Coh_1(C)[n]\subseteq \mathcal{P}(-1,1],$ there are objects $T\in \mathcal{P}(0,1]$ and $F\in\mathcal{P}(-1,1]$ such that
$0\la T\la i_*(E)[n]\la F \la 0.$

We obtain long exact sequences in cohomology and by Lemma \ref{BHNS}, we have that \linebreak $H^i(T_1)=H^i(F_1)=0$ unless $i=-n$ and that $T_2=F_2=0.$  As a consequence, we get $\Coh_1(C)[n]=( \T_1, \mathcal{F}_1)$ and $\mathcal{T}_1,\mathcal{F}_1[1]\subseteq \A^{''}.$

\textbf{Case 2:} $l_*(\Co(x))$ is $\sigma^{''}$-stable.

Note that the stability of $l_*(\Co(x))$ implies, as proved in Lemma \ref{BH},  that $n=-1$ and that $l_*(\Co(x))[-1]\in A^{''}.$ 

Let $E\in \Coh(C).$ We have $E[-1]\in \Coh(C)[-1].$
Since $\Coh_1(C)[-1]\in\mathcal{P}(-1,1],$ there are objects $T\in \mathcal{P}(0,1]$ and $F\in\mathcal{P}(-1,1]$ such that
$0\la T\la i_*(E)[-1]\la F \la 0.$ 
We obtain long exact sequences in cohomology. By Lemma \ref{BH}, we have $H^i(T_1)=0$ unless $i=0,1$ and $H^i(F_1)=0$ unless $i=1,2.$ Then, we obtain that $H^i(T_1)=H^i(F_1)=0$ unless $i=1.$
We also get that $H^i(T_2)=0$ unless $i=0,1$ and $H^i(F_2)=0$ unless $i=1,2. $ 
Therefore, we have $T_2=F_2=0.$
\edem

We will now proceed to prove that $\Coh_1^r(C)\subseteq \A^{''}.$ We  show that $\T_1=\T_{\theta}$ and $\mathcal{F}_1=\mathcal{F}_{\theta}.$ It is easy to show that $\T_{\theta}\subseteq \T_1$ and $\mathcal{F}_{\theta}\subseteq \mathcal{F}_1,$ where $\Coh(C)=( \T_{\theta}, \mathcal{F}_{\theta})$ is the torsion pair described in Remark \ref{SCCHR} (up to shift) and $\theta\neq 0.$

It is enough to show this for slope stable torsion free sheaves $E\in \Coh(C)$, such that \linebreak $E[n]\in \Coh(C)[n].$   Let $0\la T\la E[n]\la F \la 0$ be the triangle induced by the torsion pair $(\T_1,\mathcal{F}_1).$ If $T$ and $F$ are non-trivial, then as  $i_*(F[1])\in\mathcal{A}^{''}$ and $i_*(T)\in \mathcal{A}^{''},$  we get that $\Im(Z)(i_*(T))>0,$ which implies
$\mu(T[-n])> -\cot(\theta \pi)$ and $\Im(Z)(i_*(F))\leq 0,$ which implies $\mu(F[-n])\leq -\cot(\theta \pi).$ Since $E$ is $\mu$-stable, we obtain a contradiction.

If $\theta=0,$ then $\T^{\theta}_1=\Coh(C)[n]$ and $\FF^{\theta}_1=0.$ Since $r\in \Z,$ we obtain  $$Z_1(r,d)=A_1d_1+Br_1+(Cr_1)i,$$ i.e.\ $D_1=0.$ This implies that  
$\T_1=\Coh_1(C)[n].$ Therefore $\Coh^{r}_1(C)\subseteq \A^{''}.$
\edem

\brem Let us consider $\sigma\in \Theta_{12}$ and $g'=(T',f')\in \GL.$ By Proposition \ref{12UTA}, there is $g=(T,f)\in \GL$ such that for  $\sigma g=(Z,\A)$ we can find stability conditions $$\sigma_1=(Z_1,\Coh^{r}(C))\textnormal{ and }\sigma_2=(Z_\mu,\Coh(C))\in \Stab(C)$$ with $\Coh^{r}_1(C)\subseteq \A$ and $r>-1,$ $\Coh_2(C)\subseteq \A.$ Note that the proof of the inclusion of the hearts in $\A$ depends only on the stability of $i_*(\Co(x))$ and $j_*(\Co(x))$ for all $x\in C$ and $i_*(\Li)$ and  $j_*(\Li),$ for every line bundle $\Li.$ As a consequence, we obtain that $\sigma g g'=(Z',\A')$ satisfies that $$\Coh_1^{f\circ f'(0)}(C)\subseteq A'\textnormal{ and }\Coh^{f'(0)}_2(C)\subseteq \A'.$$  
\erem

The following lemma gives us some CP-glued stability conditions in $\Theta_{12}.$

\bl \label{GUTA} Let $\sigma=(Z,\A)\in \Theta_{12},$ such that there are stability conditions $$\sigma_1=(Z_1,\Coh^{r}(C))=(T_1,f_1)\textnormal{ and }\sigma_2=(Z_\mu,\Coh(C))\in \Stab(C)$$ with $\Coh^{r}_1(C)\subseteq \A,$ $\Coh_2(C)\subseteq \A,$ and $\restr{Z}\C_1=Z_1$ and $\restr{Z}\C_2=Z_\mu.$  If $f_1(0)=r \geq 0,$ then $\sigma=\gl_{12}(\sigma_1,\sigma_{\mu}).$
\el

\bdem

Since $\sigma_1$ and $\sigma_2$ satisfy gluing conditions i.e\ $f_1(0)\geq f_2(0)=0,$ then there is a pre-stability condition $\sigma_{12}=\gl_{12}(\sigma_1,\sigma_2)$ on $\T_C.$ It follows directly from Proposition \ref{CP1} that $\sigma =\sigma_{12}.$  
\edem
We will now study pre-stability conditions $\sigma=(Z,\A)\in \Theta_{12}.$ For $$\sigma_1=(Z_1,\Coh^{r}(C))\textnormal{ and }\sigma_2=(Z_\mu,\Coh(C))\in \Stab(C)$$ we have $\Coh^{r}_1(C)\subseteq \A,$ $\Coh_2(C)\subseteq \A,$  $\restr{Z}\C_1=Z_1,$ $\restr{Z}\C_2=Z_\mu,$ and $-1<r<0,$ where \linebreak $r=n+\theta=f_1(0)$ and $n\in \Z$ with $\theta\in [0,1).$ Here  $\sigma_1$ is given by $(T_1,f_1)$ under the correspondence with $\GL.$

\brem \label{Cx3stable} $l_*(\Co(x))$ is $\sigma$-stable. Assume that $l_*(\Co(x))$ is not stable. By Remark \ref{HNX1} this implies that $n+\theta=f_1(0)\geq0,$ which contradicts our assumption.
\erem

\subsection{Pre-stability conditions in \texorpdfstring{$\Theta_{23}$}{TEXT} and \texorpdfstring{$\Theta_{31}$}{TEXT}}

We have the analogous statement of Proposition \ref{12UTA} for $\Theta_{23}$ and $\Theta_{31}.$ We leave the proof to the reader, since it follows exactly the same steps as the result for $\Theta_{12}.$
\bp \label{23UTA}For every $\sigma\in \Theta_{23},$ there is a $g\in \GL$ such that for $\sigma g=(Z,\A)$ we can find stability conditions $$\sigma_3=(Z_3,\Coh^{r}(C))\textnormal{ and }\sigma_2=(Z_\mu,\Coh(C))\in \Stab(C)$$ with $\Coh^{r}_3(C)\subseteq \A,$ $\Coh_2(C)\subseteq \A,$ $r<0,$ $\restr{Z}\C_3=Z_3$ and $\restr{Z}\C_2=Z_\mu.$
\ep
\bp \label{31UTA}For every $\sigma\in \Theta_{31},$ there is a $g\in \GL$ such that for $\sigma g=(Z,\A)$ we can find stability conditions $$\sigma_3=(Z_3,\Coh^{r_3}(C))\textnormal{ and }\sigma_1=(Z_1,\Coh^{r_1}(C))\in \Stab(C)$$ with $\Coh^{r_3}_3(C)\subseteq \A,$ $\Coh^{r_1}_1(C)\subseteq \A,$ with  $r_3-r_1>0,$ $\restr{Z}\C_3=Z_3,$ $\restr{Z}D_1=Z_1$ and $M_3-M_1=I.$
\ep

\subsection{Non-gluing pre-stability conditions}

We will now study pre-stability conditions $\sigma$ on $\T_C$ satisfying that $i_*(\Co(x))$ and $l_*(\Co(x))$ are stable, where  $j_*(\Co(x))$ is stable with  phase one and $j_*(\Ot_C)$ has phase 1/2. Note that as proved in Lemma \ref{BH}, we have $l_*(\Co(x))[-1]\in \A.$ We use Lemma \ref{BH} to prove that these stability conditions are precisely given by the pair $(Z_r,\A_r)$ constructed in Lemma \ref{PrestaNG}.

\bl \label{NGUTA} Let $\sigma=(Z,\A)$ be a pre-stability condition on $\T_C$  such that $i_*(\Co(x)),l_*(\Co(x))$ are $\sigma$-stable and the object $j_*(\Co(x))$ and $j_*(\Ot_C)$ are in $\A$ and are also $\sigma$-stable with $$Z([j_*(\Co(x))])=-1 \textnormal{ and } Z([j_*(\Ot_C)])=i.$$ 
Then $\sigma$ is given by the pairs constructed in Lemma \ref{PrestaNG}.
\el

\bdem We prove the statement for $\sigma\in \Theta_{12},$ the other two cases follow analogously.
 By Proposition \ref{12UTA}, there are stability conditions $\sigma_1=(Z_1,\Coh^{r}(C))\textnormal{ and }\sigma_2=(Z_\mu,\Coh(C))\in \Stab(C),$ such that $\Coh^{r}_1(C)\subseteq \A$ and $\Coh_2(C)\subseteq \A,$ with $\restr{Z}\C_1=Z_1$ and $\restr{Z}\C_2=Z_\mu.$ Note that $-1<f_1(0)<0,$ where $\sigma_1=(T_1,f_1)\in \GL.$
First of all we show that $Z$ is given by equation \ref{NGStability}. Our stability function $Z$ is completely determined by $Z_1$ and $Z_\mu,$ therefore it has the following form
$$Z(r_1,d_1,r_2,d_2)=Ad_1+Br_1-d_2+i(Cr_1+Dd_1+r_2).$$ Let $M\coloneqq T_1^{-1}=\begin{bmatrix}
-A & B\\
-D & C\\
\end{bmatrix}.$  Since $\sigma_1$ is a stability condition, we have that $\det(M)>0.$
As $-1<f_1(0)<0,$ then by Lemma \ref{BH}, we obtain $1<\phi_0<2$ and $D<0.$  

We will now show that $\det(M+I)>0.$
If $\Tr(M)\geq 0,$  there is nothing to prove because \linebreak $\det(M+I)=\det(M)+\Tr(M)+1>0.$ Note that if $\Tr(M)<0,$ then $l_*(\Ot_C)$ is $\sigma$-stable.  Since $l_*(\Co(x))$ and $l_*(\Ot_C)$ are stable,  we obtain $\phi_5<\phi_4< \phi_5+1$ which is equivalent to the fact that $\det(M+I)>0.$

We consider the torsion pair $( \T,\FF)=\TCoh(C)$ given in Lemma \ref{BH}. We are going to show that it is equal to  the torsion pair $( \T^{'}, \FF^{'})$ given by Lemma \ref{TPNG}.  It is enough to show that $\T^{'}\subseteq \T$ and $\FF^{'}\subseteq \FF.$

Let us take a torsion-free $\lambda$-semistable object $E=E_1\la E_2\in \T',$ which  by definition satisfies $\lambda(E)> 3/4.$
There is a short exact sequence $0\la T\la E\la F\la 0,$ with $T\in \T$ and \linebreak $F=F_1\la F_2\in \FF.$
By Lemma \ref{BH} and Lemma \ref{TPNG} we have that $i^!(E), i^!(T)\in \Coh(C).$ It follows that $i^!(F)\in \Coh(C).$ 

As $F_1$ is torsion-free and $F_1\twoheadrightarrow F_2,$ then $F\neq 0\la G_2,$ where $G_2$ is a torsion sheaf. We apply Remark \ref{SSS} and we obtain that $3/4<\lambda(E)\leq \lambda(F)\leq 3/4,$ which gives us a contradiction and this implies that $E\in \T.$

Let us take a $\lambda$-semistable torsion-free object $E=E_1\la E_2\in \FF',$ and by definition holds that $\lambda(E)\leq 3/4.$ We start with the case $\lambda(E)< 3/4.$ There is a short exact sequence given by \linebreak $0\la T\la E\la F\la 0,$ with $T\in \T$ and $F\in \FF.$ We get $\lambda(T)\geq 3/4$ and $\lambda(F)\leq 3/4.$ If $T\neq 0,$ then
by $\lambda$-semistability $3/4\leq\lambda(T)\leq \lambda(E)< 3/4,$ which gives us a contradiction and this implies that $E\in F.$



Let us take a torsion-free $\lambda$-semistable object $E=E_1\xrightarrow{\varphi} E_2\in \FF'$ with  $\lambda(E)=3/4.$ Consider the short exact sequence $0\la T\la E\la F\la 0,$ with $T\in \T$ and $F\in \FF.$  Note that the inequality $3/4\leq \lambda(T)\leq \lambda(E)=3/4$ holds, so that $3/4=\lambda(T).$ If $T=T_1\xrightarrow{\varphi'} T_2,$ we get  $T[-1]\in \mathcal{P}'(1).$ Moreover by Lemma \ref{BH}, we have that $\mathcal{P}'(1)\subseteq \TCoh(C)$ which implies directly that $T=0.$

Note that for all torsion-free objects $E\in \T^{'},$ after using its Harder--Narasimhan filtration, we obtain that $E\in \T.$ Analogously for $E\in \FF^{'}.$ We easily extend the result above to any object in $\T$ and $\FF.$ Consequently $\T=\T^{'}$ and $\FF=\FF^{'}.$
\edem

\brem\label{3SC} Under the assumption of the last proposition, for $\sigma\in \Theta_{12}$ we can define a third stability condition $\sigma_3=((M+I)^{-1},f_3)\in \Stab(C),$ that comes from the data given by $l_*(\Co(x))$ and $l_*(\Ot_C),$ where the integer part of $f_3(0)=r_3$ is $-1.$ Note that by Lemma \ref{THI}, we obtain $\Coh^{r_3}_3(C)\subseteq \A.$

\erem

\bp \label{P01}  Let $\sigma$ be a pre-stability condition in $\Theta_{12}.$ There is an element $g\in \GL$ such that $\sigma g$ is given by a CP-glued pre-stability condition or one constructed by tilting in Subsection \ref{CSCTILT}.
\ep

\bdem After applying Proposition \ref{12UTA}, this follows directly from Lemma \ref{GUTA} and Lemma \ref{NGUTA}.
\edem

\subsection{Characterising CP-glued pre-stability conditions in \texorpdfstring{$\Theta_{12}$}{TEXT}}
\label{CSC12}
 
 Let us consider the sets $\Theta_{i}$ consisiting of pre-stability conditions, for $i=1,2$ or $3,$ which are, up to the action of $\GL,$ CP-glued with respect to the semiorthognal decomposition $\lin \C_i,{^{\perp}}\C_i\rin.$ Note that by Proposition \ref{CP1}, it follows that the set $\Theta_1\subseteq \Theta_{12}.$

The aim of this section is to characterise the sets $\Theta_i$ inside $\Theta_{12}.$  

\textbf{Condition (*)}: If $\sigma\in \Theta_{12}$ satisfies  that 
there are stability conditions $\sigma_1=(\Coh^{r}_1(C),Z_1)$ and $\sigma_2=(\Coh_2(C),Z_{\mu})\in \Stab(C)$ with $\Coh^{r}_1(C)\subseteq \A,$ $\Coh(C)_2\subseteq \A,$ and $\restr{Z}\C_1=Z_1$ and $\restr{Z}\C_2=Z_{\mu}.$ Let $f_1(0)=n+\theta,$ with $n\in\Z$ and $\theta\in [0,1),$ we say that it satisfies condition (*). Let $\sigma\in \Theta_{12}.$ By Proposition \ref{12UTA}, we have that there is $g\in \GL,$ such that $\sigma g$ satisfies condition (*).

Under condition (*), we define $f_{12}(\sigma)(x)=f_1(x)-x.$

From now on we assume that $\sigma$ satisfies condition (*) and $\sigma_1=(T,f_1).$ If $f_{12}(\sigma)(0)\geq 0$ by Proposition \ref{GUTA}, we already know that $\sigma\in \Theta_1.$ Now we assume $f_{12}(\sigma)(0)<0.$ We would like to understand when $\sigma\in \Theta_{12}$ is in $\Theta_i,$ for $i=1,2,3.$ 

Recall that by Remark \ref{3SC} there is also $\sigma_3=(Z_3,\Coh^{r_3}(C))$ with $\restr{Z}\C_3=Z_3$ and  $\Coh^{r_3}_3(C)\subseteq \A,$ and we can also define $f_{23}(\sigma)(x)=x-f_3(x) \textnormal{ and } f_{31}(\sigma)(x)=f_3(x)-f_1(x).$ 


\bl \label{f12} Let $\sigma$ be as in (*), such that $f_{12}(\sigma)(0)<0.$ There is a $t\in \R$ such that $f_{12}(\sigma)(t)=0$ if and only if $\sigma\in \Theta_1.$
\el

\bdem Assume that there is $t\in\R$ such that $f_{12}(\sigma)(t)=0.$ Since $f_2(t)=t,$ this implies that $f_1(t)=t.$ Let $g=(K_{t\pi},f_{t\pi})\in\GL.$  Since $f_1\circ f_{t\pi}(0)=f_1(t)=t$ and $f_2\circ f_{t\pi}(0)=f_2(t)=t,$ as a consequence $f_{12}(\sigma g)(0)=0$ and it follows directly  from Lemma \ref{GBCP} that $\sigma g=\gl_{12}(\sigma_{1}g,\sigma_{2}g).$ 

We now assume that $\sigma \in \Theta_1,$ then there is a $g\in \GL$ such that $\sigma  g$ satisfies the gluing conditions for $\sigma_1 g$ and $\sigma_2 g$. Without losing generality, we take $g=(K_{l\pi}, f_{l\pi})\in \GL,$ with $l\in \R $  Thus,  $f_1(l)=(f_1\circ f_{l\pi})(0)\geq (f_2\circ f_{l\pi})(0)=f_{l\pi}(0)=l.$ 
By hypothesis $f_{12}(\sigma)(0)<0,$ but also  $f_{12}(\sigma)(l)\geq 0.$ Since $f_{12}(\sigma)(x)$ is a continuous function, there is a $t\in \R$ that satisfies $f_{12}(\sigma)(t)=0,$ as we desired. 
\edem 

\brem\label{FIXEDPOINT}[Fixed point] Since in our case $f_{12}(\sigma)(x)=f_1(x)-x,$ we obtain that  $f_{12}(\sigma )(x)=0$ if and only if  there is $x\in\R$ such that $f_1(x)=x.$ After using that the restrictions of $f_1$ and $T$ to $S^1$ agree, the search of points $x\in\R$ such that $f_1(x)=x$ reduces to the study of the eigenvectors of $T.$

\erem

Let $M\coloneqq T^{-1}=\begin{bmatrix}
-A& B\\
-D& C\\
\end{bmatrix}.$

We will now study the characteristic polynomial $p(x)=x^2-\Tr(M)x+\det(M)$ of $M.$ It plays an important role in determining if $\sigma\in \Theta_i$ for some $i=1,2,3.$ The sign of the discriminant \linebreak $\Delta(M)=\Tr(M)^2-4\det(M)$ of  $p(x)$ tells us about the existence of real eigenvalues.

\bp \label{12_1} If $f_{12}(\sigma)(0)<1,$ the discriminant of $\Delta(M)=\Tr(M)^2-4\det(M)$ is non-negative and the eigenvalues are positive, then there is $g\in \GL$ such that $\sigma g=(Z,\TCoh^l(C)),$ with $l\in\R$ and $-1<l\leq 1.$ Moreover $\sigma\in \Theta_1.$
\ep

\bdem Since $\Delta(M) \geq 0,$ it guarantees the existence of real eigenvalues. By hypothesis the eigenvalues are positive. The same follows for $T_1.$ Let $\lambda\in \R$ be an eigenvalue of $T_1$ and $v\in \R^2$ its corresponding eigenvector. We obtain $T_1 v= \lambda v.$  Let us consider the polar coordinates of $v=(m \cos(\phi), m \sin(\phi))$ with $\phi \in (-\pi, \pi ]$ and $m\in \R_{>0}.$ 

We claim now that $\sigma g\in \Theta_1,$ where  $g=(K_{\phi}, f_{\phi})\in \GL.$ First of all we consider \linebreak $\sigma_1 g =(T_1 K_{\phi}, f_1\circ f_{\phi}).$ By the correspondence between $f_1\circ f_{\phi}$ and $T_1K_{\phi}$ over $S^1$ and the fact that $v$ is an eigenvector, we obtain that $(f_1\circ f_{\phi})(0)=\phi/\pi.$ Now we consider $\sigma_2 g=(K_{\phi},f_{\phi}).$ As a consequence, $f_{12}(\sigma g)=0.$ By Lemma \ref{CP1}, it is clear that $\sigma g=(Z,\TCoh^\phi(C))=\gl_{12}(\sigma_1 g,\sigma_2 g).$
\edem

\brem Note that from Lemma \ref{12_1} and Remark \ref{FIXEDPOINT}, we obtain that if $\sigma\in \Theta_{12}$ satisfies (*), then $\sigma\in \Theta_1$ if and only if the eigenvalues of $M$ are positive. Moreover, by Lemma \ref{f12}, if $\Delta(M)<0,$  then $\sigma$ can never be in $\Theta_1.$ 
\erem

Under the assumption that the discriminant $\Delta(M)$ of $M$ is non-negative and that the eigenvalues $\lambda_1, \lambda_2$ of $M$ are both negative we prove the following lemmas. 

\brem Under the assumption that the discriminant $\Delta(M)$ of $M$ is non-negative and that the eigenvalues are  negative, then if $\det(M+I)>0,$  there are just two options, either  $\lambda_1, \lambda_2< -1\textnormal{ or }\lambda_1, \lambda_2> -1.$ 
\erem

The pre-stability conditions in $\Theta_2$ or $\Theta_3$ satisfy that $l_*(\Li)$ is stable for all line bundles $\Li\in \Coh(C).$ Hence, in order to determine which $\sigma\in \Theta_{12}$ are in $\Theta_2$ or $\Theta_3$ we need to study the stability of $l_*(\Li).$

\bl \label{EQ1} Let $\sigma=(Z,\A)$ be 
a pre-stability condition as in (*). Let $\Li\in \Coh(C)$ be a line bundle with $\deg(\Li)=d\leq \frac{-C}{D}$. The object $l_*(\Li)$ is stable in $\sigma$ if and only if the inequality $-Dd^2-(A+C)d-B>0$
holds.
\el 
\bdem 
Since $\deg(\Li)=d\leq \frac{-C}{D}$ we have $i_*(\Li)\in \A.$ 
By considering the triangle 
$$\dtri{j_{*}(\Li)}{l_*(\Li)}{i_{*}(\Li)}$$ and the correspondence between slope and phase the result follows. 
\edem

\brem\label{EQ2}  \begin{enumerate}[leftmargin=0.5cm]
\item  The discriminant of the quadratic equation $Dd^2+(A+C)d+B$ is given by $(A+C)^2-4BD=\Delta(M),$ i.e.\  it has  the same discriminant as $p(x).$ 
\item By Remark \ref{JHX}, if $\Li$ is a line bundle and $i_*(\Li)[-1]\in \A,$ then $l_*(\Li)$ is stable. 
\end{enumerate}
\erem




%
\bl \label{EQ3} If $\lambda_1,\lambda_2<0,$ then inequality  $-Dd^2-(A+C)d-B>0$ holds for all $d\leq -\cot(\theta\pi).$ 
\el
\bdem Let us consider the polynomial $q(x)=Dx^2+(A+C)x+B.$ As the discriminant of $q(x)$ is also $\Delta(M)\geq 0,$ we get that $q(x)$ has real roots $\mu_1,\mu_2\in \R.$ We assume that $\mu_1\leq \mu_2.$
It is enough to conduct the proof for $ \mu_1\geq \frac{-C}{D}.$ Indeed, if $\frac{-C}{D}\leq \mu_1$ and $d< \frac{-C}{D}$ then $q(x)<0.$ 
As $0>2\lambda_1=(-A+C)-\sqrt{\Delta(M)},$ we obtain 
\begin{eqnarray}\nonumber
\frac{C}{-D}&<&\frac{-(A+C)-\sqrt{\Delta(M)}}{2D}=\mu_1,
\end{eqnarray}
as we wanted to prove. 
\edem

\bc \label{LBS}If $\lambda_1,\lambda_2<0,$ then $l_*(\Li)$ is $\sigma$-stable for all line bundles $\Li.$ Moreover, $\sigma$ is in  $\Theta_{23}$ and $\Theta_{31}.$ 
\ec
\bdem This follows directly from Lemma \ref{EQ1}, Lemma \ref{EQ2} and Lemma \ref{EQ3}.
\edem



\bl \label{f31}\label{f23} \begin{enumerate}[leftmargin=0.5cm]
    \item There is $t\in \R$ such that $f_{31}(\sigma)(t)=1$ if and only if $\sigma\in \Theta_3.$
    \item There is a $t\in \R$ such that $f_{23}(\sigma)(t)=1$ if and only if $\sigma\in \Theta_2.$
\end{enumerate}
 \el
\bdem The proof goes along the lines of Lemma \ref{f12}.
\edem

\bp \label{12_3}\label{12_2} \begin{enumerate}[leftmargin=0.5cm]
    \item If $\lambda_1, \lambda_2< -1,$ then $\sigma\in \Theta_3.$
    \item If $0>\lambda_1, \lambda_2> -1,$ then $\sigma\in \Theta_2.$
\end{enumerate} 
\ep
\bdem The proof goes exactly as the one of Lemma \ref{12_1}.
\edem

\brem \begin{enumerate}[leftmargin=0.5cm]
\item Note that from Proposition \ref{12_3} and Lemma \ref{f31}, we obtain that if $\sigma\in \Theta_{12}$ satisfies (*), then $\sigma\in \Theta_3$ if and only if the eigenvalues $\lambda_1,\lambda_2$ of $M$ are $< -1.$
\item Note that from Proposition \ref{12_2} and Lemma \ref{f23}, we obtain that if $\sigma\in \Theta_{12}$ satisfies (*), then $\sigma\in \Theta_2$ if and only if the eigenvalues $\lambda_1,\lambda_2$ of $M$  satisfy  $0>\lambda_1,\lambda_2>-1.$
\end{enumerate}
\erem

\brem Note that if $\sigma\in \Theta_{12}$ satisfies (*), then either $\sigma\in \Theta_1$ or $\sigma\in \Theta_{12}\cap \Theta_{23}.$ Indeed, if the eigenvalues of $M$ are positive, we have that $\sigma\in \Theta_{1}.$ If the eigenvalues are negative, this follows from Corollary \ref{LBS}. If $\Delta(M)<0,$ it follows from Lemma \ref{EQ1}.
This remark is useful when studying $\Theta_{23}$(or $\Theta_{31}$) because either $\sigma\in \Theta_{2}$ or $\sigma\in \Theta_{12}\cap\Theta_{23},$ thus we can use our classification inside $\Theta_{12}$ to understand $\Theta_{23}.$
\erem

\brem Note that if $\sigma\in \Theta_{23},$ then by Proposition \ref{23UTA}, up to the $\GL$-action we can find stability conditions $\sigma_3=(Z_3,\Coh^{r}(C))=(T_3,f_3)\textnormal{ and }\sigma_2=(Z_\mu,\Coh(C))\in \Stab(C)$ with $\Coh^{r}_3(C)\subseteq \A,$ $\Coh_2(C)\subseteq \A,$ $r<0,$ $\restr{Z}\C_3=Z_3$ and $\restr{Z}\C_2=Z_\mu.$ If $i_*(\Co(x))$ is $\sigma$-stable for $x\in C$ and $\Delta(M_3)<0,$ with $M_3\coloneqq T_3^{-1},$ by following the steps of Lemma \ref{EQ1} we can show that $i_*(\Li)$ is $\sigma$-stable for all line bundles $\Li.$ As a consequence, by Lemma \ref{NGUTA}, we have that $\sigma$ is given, up to the $\GL$-action, by a pre-stability condition constructed in Lemma \ref{PrestaNG}. Moreover, $M_3=I+M,$ the $M$ appearing in Lemma \ref{PrestaNG} and  $\Delta(M)=\Delta(M_3)<0.$ Analogously for the case $\sigma\in \Theta_{31}.$
\erem


\bt \label{TE02} For all pre-stability conditions $\sigma$ on $\T_C,$ we have that 
$$\sigma\in \Theta_1\cup \Theta_2 \cup \Theta_3\cup \Gamma,$$ where
$\Gamma$ is the set of pre-stability conditions, which up to the $\GL$-action is given by Lemma \ref{PrestaNG} with $\Delta(M)< 0.$
Moreover, note that $\Gamma\subseteq \Theta_{ij},$ where $ij=12,23,31.$
\et

\bdem By Theorem \ref{TEO1}, we have that $\sigma\in \Theta_{12}\cup \Theta_{23} \cup \Theta_{31}.$ By Serre duality, it suffices to check for $\sigma\in \Theta_{12}.$ By Proposition \ref{12UTA}, we have that up to the action of $\GL,$ there are \linebreak  $\sigma_1=(\Coh^{r}_1(C),Z_1)=(T,f)$ and $\sigma_2=(\Coh_2(C),Z_{\mu})\in \Stab(C)$ with  $\Coh^{r}_1(C)\subseteq \A,$ \linebreak $\Coh(C)_2\subseteq \A,$ and $\restr{Z}\C_1=Z_1,$ $\restr{Z}\C_2=Z_{\mu}$ with $f(0)\geq -1.$ We reduce to the study of these pre-stability conditions, because the sets $\Theta_i$ and $\Gamma,$ for $i=1,2,3,$ are defined up to the $\GL$-action.
If $f(0)\geq 0,$ then by Lemma \ref{GUTA} $\sigma\in \Theta_1.$ If $-1<f(0)<0,$ we classify these pre-stability conditions in two sets if $\Delta(M)\geq 0$ or $\Delta(M)<0.$ If $\Delta(M)<0$ then $\sigma\in \Gamma.$ If $\Delta(M)\geq 0,$ with positive eigenvalues then by Lemma \ref{12_1} we get $\sigma\in \Theta_1.$ If the eigenvalues are smaller than $-1,$ then by Lemma \ref{12_3} we have that $\sigma\in \Theta_3$ and if the eigenvalues are between $0$ and $-1$ by Lemma \ref{12_2}, we obtain that $\sigma\in \Theta_2$.
\edem

\brem Note that by Lemma \ref{f12} and Lemma \ref{f31} we have that $$\Theta_i\cap \Theta_j =\emptyset \textnormal{ and } \Theta_i\cap \Gamma=\emptyset \textnormal{ for } i,j\in \{1,2,3\}.$$
\erem


\section{Support property for \texorpdfstring{$\T_{C}$}{TEXT}, with \texorpdfstring{$g(C)\geq 1$}{TEXT}}
\label{SPC}

In this section, we prove the support property for the already constructed pre-stability conditions. We start by studying CP-glued pre-stability conditions $\sigma=\gl_{12}(\sigma_1,\sigma_2)$ on  $\T_{C}=\lin \C_1, \C_2\rin,$ with $\sigma_i=(Z_i,\A_i)\in \Stab(D^b(C)),$ for $i=1,2.$ By Theorem \ref{GLC} there is always $g=(T,f)\in \GL,$ such that $\sigma_1=\sigma_2 g.$ Note that the gluing condition states precisely that $f(0)\geq 0$. Therefore, the proof of the support property falls naturally into the cases $f(0)\geq 1$ and $1> f(0)\geq 0.$
More precisely we reduce the proof of the support property to three cases. Namely, in Lemma \ref{SP2G} we prove it for pre-stability conditions satisfying that $f(0)\geq 1,$ in Lemma \ref{SP1} we prove it for CP-glued pre-stability condition with $1>f(0)\geq 0$ and $\Delta(T^{-1})>0.$ In Lemma \ref{SPCPNDG} for CP-glued pre-stability conditions with $1>f(0)\geq 0$ and negative discriminant. Finally, in Subsection \ref{SPNGND} we prove it for $\sigma\in \Gamma.$ We give some inequalities to study the $\sigma$-semistable objects in our pre-stability conditions. These inequalities follow closely the steps of \cite[Section\ 3]{GP2}. In Subsection \ref{SPNGND} we prove the support property under the assumption that $g(C)=1,$ for non-gluing pre-stability conditions with $\Delta(M)<0.$ We conjecture that for $g(C)>1$ the same result holds. 

\subsection{Support property for CP-glued pre-stability conditions}
\label{SPSOC}

We start by proving the support property for pre-stability conditions with a stronger orthogonality condition.

\bl \label{SP2G} If the pair $\sigma=(Z,\A)=\gl_{12}(\sigma_1,\sigma_2)$ is a CP-glued pre-stability condition satisfying that $\Hom_{\T_C}^{\leq 1}(i_*\A_1,j_*\A_2)=0$ or equivalently  $f(0)\geq 1,$ then it satisfies the support property and in consequence it is a Bridgeland stability condition.    
\el

\bdem  
We use the following notation $Z_1([E])=Z_1([i^*(E)])$ and $Z_2([E])=Z_2([j^{!}(E)]).$

First note that we can linearly  extend $Z$ and the homomorphism induced by the exact functors $i^*,j^!$ to $\N(\T_{C})\otimes \R\cong \R^4.$ We define the quadratic form  $$Q\colon \N(\T_{\C})\otimes \R\la \R \textnormal{ as } Q(v)=\Im(Z_1(v))\Im(Z_2(v))+\Re(Z_1(v))\Re(Z_2(v)),$$ where $\Re(\alpha)$ and $\Im(\alpha)$ are the real and the imaginary parts of $\alpha\in \R^4$ respectively.   

By the linearity of $Z$, it is clear that $Q$ is a quadratic form. 
We first show that it is negative definite on $\Ker(Z)=\{v\in \R^4\mid \Re(Z_1(v))=-\Re(Z_2(v))\textnormal{ and } \Im(Z_1(v))=-\Im(Z_2(v))\}.$
Indeed, we have that $Q(v)=-\Im(Z_1(v))^2-\Re(Z_1(v))^2\leq 0,$
for $v\in \Ker(Z).$ Note that $Q(v)=0,$  implies that $Z_i(v)=0,$ for $i=1,2.$ Then $v=0$ as $\rank(\N(D^b(C))=2.$

Let $E= E_1\xrightarrow{\varphi} E_2$ be a $\sigma$-semistable object. By \cite[Lemma\ A.6]{BMS3} it is enough to show that \linebreak $Q(E)\geq 0$ for $\sigma$-stable objects. Note that $\varphi=0.$ Indeed, as $\sigma$ is a \linebreak CP-glued pre-stability condition, by the definition of  $\gl_{12}(\A_1,\A_2),$ we have that $E_1\in \A_1$ and $E_2\in \A_2.$ As  $\varphi\in \Hom_{D^b(C)}(E_1,E_2)=\Hom_{\T_C}(i_*(E_1),j_*(E_2)[1])$ and by hypothesis \linebreak $\Hom_{\T_C}(i_*(E_1),j_*(E_2)[1])=0,$ we obtain that $\varphi=0.$

Since $E$ is $\sigma$-stable and by the definition of a CP-glued heart, we have $i_*(E_1), j_*(E_2)\in \A.$ It follows from the short exact sequence $0\la j_*(E_2)\la E\la i_*(E_1)\la 0$  $E_1=0$ or $E_2=0.$ As a consequence, $Q([E])=0.$  
\edem

We will now follow the steps of \cite[Section\ 3]{GP2} in order to study the $\sigma$-semistable objects in the CP-glued pre-stability conditions $\sigma=(Z,\A)=\gl_{12}(\sigma_1,\sigma_2),$ on $\T_{C}$ such that $\sigma_1=\sigma_2 g$ where $g=(T,f)$ in $\GL,$ with $f(0)=0.$ Note that by the same argument as in Proposition \ref{12_1}, these are precisely, up to the $\GL$-action, the pre-stability conditions with $1>f(0)\geq 0$ and positive discriminant. 

We have that $g=(T,f)\in \GL$ has the following form $T^{-1}=\begin{bmatrix} -A&B\\
0&C
\end{bmatrix},$
with $C>0$ and $\det(T)>0.$ This implies that $\A_1=\A_2=\HH.$
We obtain that $\sigma_1=(Z_1,\HH)$ and $\sigma_2=(Z_2,\HH).$  
By definition we have \begin{eqnarray} \label{SameheartG}
\Re(Z_1(w))&=&-A\Re(Z_2(w)))+B\Im(Z_2(w)),\\ \nonumber
\Im(Z_1(w)) &=& C\Im(Z_2(w)) \textnormal{ where } w\in \mathbb{Z}^4.\nonumber
\end{eqnarray} 
We use the following notation 
\begin{alignat}{6} \label{NotCPPD}
 d_2&=&-\Re(Z_2([j^!(E]))\textnormal{, } d_1&=&-\Re(Z_2([i^*(E)])),\\ \nonumber
 r_2&=&\Im(Z_2([j^!(E)]))\textnormal{ and } r_1&=&\Im(Z_2([i^*(E)])).
 \end{alignat}

Therefore, we have $Z_2([j^!(E)])=-d_2+i r_2 \textnormal{ and }  Z_1([i^*(E)])=Ad_1+Br_1+i(Cr_1).$
Note that for every $E\in \B,$ we have that $r_1,r_2\geq 0.$ If $r_1,r_2\neq 0,$ we define $\mu_{\sigma}(E)=\frac{-Ad_1-Br_1+d_2}{Cr_1+r_2}.$

We obtain the following inequalities. 

\bl \label{IBGG} If $E=E_1\xrightarrow{\varphi} E_2\in \A$ with $r_1,r_2\neq 0$ is a $\sigma$-semistable object and $[\varphi]\neq 0,$ then $$-B\geq (A+C)\mu_{\sigma}(E)$$.
\el 
\bdem As  $\HH$ is an abelian category, we can compute $\Ker(\varphi),\Coker(\varphi)\in \HH$ and by the definition of $\A,$ we obtain morphisms $E\twoheadrightarrow i_*(\Img(\varphi))$ and $j_*(\Img(\varphi))\lai E$ in $\A.$

Let $Z_2(\Img(\varphi))=-d''_1+r''_1i.$ Note that $r''_1\neq 0.$ Indeed, if $r''_1= 0,$ then $\phi(j_*(\Img(\varphi)))=1,$ and by the $\sigma$-semistability $\phi(j_*(\Img(\varphi)))\leq \phi(E)\leq 1,$ therefore $\Img(Z(E))=0,$ which implies that $r_1,r_2=0,$ which is a contradiction.

From these short exact sequences and the correspondence between slope and phase, we obtain that
$$\frac {d''_1}{r''_1}\leq \mu_{\sigma}(E)\textnormal{ and } \mu_{\sigma}(E)\leq \frac{-Ad''_1-Br''_1}{Cr''_1}.$$
It follows that $\mu_{\sigma}(E) \leq  \frac{-A}{C}\frac{d''_1}{r''_1}-\frac{B}{C},$ then $ \mu_{\sigma}(E) \leq  \frac{-A}{C}\mu_{\sigma}(E) -\frac{B}{C}.$ As we have that $-A,C>0,$ therefore we obtain $\mu_{\sigma}(E) (A+C)\leq -B.$ 
\edem

\be Consider $\sigma_{\alpha}$ as in Example \ref{EXGP94} on $\T_{C}.$  As $\sigma_{\alpha}=\gl_{12}(\sigma_\mu g, \sigma_{\mu})$
with $g=(T,f),$ where $T^{-1}=\begin{bmatrix}1& \alpha\\ 0&1 \end{bmatrix}$
and  $f(0)=0.$  If there are $\sigma_{\alpha}$-semistable objects, as $A+C=0$ and $-B=\alpha,$ then $\alpha\geq 0$ (see \cite[Proposition 3.13]{GP2}).
\ee

\bl \label{IIBGG} If $E=E_1\xrightarrow{\varphi} E_2\in \A$ with $r_1, r_2\neq 0$ and $[\varphi]\neq 0$ is $\sigma$-semistable, then  $$-Ad_1+d_2-\mu_{\sigma}(E)(r_2-Ar_1)\leq 0.$$
\el
\bdem
We clearly have 
$$\mu_{\sigma}(E)  =\frac{-Ad_1+d_2-Br_1}{Cr_1+r_2} = \frac{-Ad_1+d_2}{Cr_1+r_2}+\frac{-Br_1}{Cr_1+r_2}.$$  
By Lemma \ref{IBGG}, we obtain 
\begin{eqnarray} \nonumber 
\mu_{\sigma}(E) & \geq & \frac{-Ad_1+d_2}{Cr_1+r_2} +(A+C)\frac{\mu_{\sigma}(E)r_1}{Cr_1+r_2}, \\ \nonumber
\mu_{\sigma}(E) (r_2-Ar_1) & \geq & -Ad_1+d_2. \nonumber  
\end{eqnarray} \edem 

\bl \label{IE1GG}If $E=E_1\xrightarrow{\varphi} E_2\in \A$ with $r_1, r_2\neq 0$ with $[\varphi]\neq 0$ is a $\sigma$-semistable object, then $$(r_2-r_1)\mu_{\sigma}(E)\leq d_2-d_1$$ 
\el
\bdem 
If $\Ker(\varphi)=0,$ we have the exact sequence 
$0\la l_*(E_1)\la E \la j_*(\Coker(\varphi))\la 0.$
By the $\sigma$-semistability of $E,$ we directly obtain $\mu_{\sigma}(E)\leq \frac{d_2-d_1}{r_2-r_1}.$

If $\Coker(\varphi)=0,$ we have the exact sequence 
$0\la i_*(\Ker(\varphi))\la E\la l_*(E_2)\la 0.$
We obtain $$\mu_{\sigma}(E)\geq \frac{-A(d_1-d_2)-B(r_1-r_2)}{C(r_1-r_2)}=\frac{-A(d_1-d_2)}{C(r_1-r_2)} +\frac{-B}{C}.$$

By Lemma \ref{IBGG}, we have $\mu_{\sigma}(E)\geq \mu_{\sigma}(E)+\mu_{\sigma}(E)\frac{A}{C}-\frac{A(d_1-d_2)}{C(r_1-r_2)}.$
Since we assumed $-A,C\geq 0,$ we obtain directly that $$\mu_{\sigma}(E)\geq \frac{d_1-d_2}{r_1-r_2}.$$ As in this case $r_1-r_2\geq 0,$ because $\rk(\Ker(\varphi))=r_1-r_2$ and $\Ker(\varphi)\in \mathcal{H},$
which implies $$\mu_{\sigma}(E)(r_2-r_1)\leq (d_2-d_1). $$

Now we assume that $\Coker(\varphi)\neq 0$ and $\Ker(\varphi)\neq 0.$ 
From the morphism $i_*(\Ker(\varphi))\hookrightarrow E$ 
 follows that $\frac{-Ad'_1-Br'_1}{Cr'_1}\leq \mu_{\sigma}(E),$ where $Z_2(\Ker(\varphi))=-d'_1+r'_1i.$  Note that $r'_1\neq 0.$ If $r'_1=0,$ then $\phi(i_*(\Ker(\varphi)))=1$ and by $\sigma$-semistability we have that $1=\phi(i_*(\Ker(\varphi)))\leq \phi(E)\leq 1,$ which implies that $r_1,r_2=0,$ and this gives us a contradiction.
 
Since $\frac{-Ad'_1}{Cr'_1} \leq \mu_{\sigma}(E)+\frac{B}{C},$
 we obtain
 \begin{equation} \label{E2GG}
 -Ad'_1r_1+\mu_{\sigma}(E)r_2r'_1-r'_1(-Ad_1+d_2)\leq 0.
 \end{equation}
Note that if $r_2-r''_1\neq 0,$ then from the morphism $E\twoheadrightarrow j_*(\Coker(\varphi))$
follows that 
\begin{equation} \label{E3GG}
\mu_{\sigma}(E)\leq \frac{d_2-d''_1}{r_2-r''_1},\\
\end{equation}
where $Z_2(\Img(\phi))=-d''_1+i r''_1,$ as before. 

As $d''_1=d_1-d'_1$ and $r''_1=r_1-r'_1,$ after multiplying  
\eqref{E3GG} by $-Ar_1$ and adding \eqref{E2GG} we obtain
$$Ar_1(d_2-d_1)-r'_1(-Ad_1+d_2)+\mu_{\sigma}(E)(-Ar_1(r_2-r_1)+r'_1(-Ar_1+r_2))\leq 0.$$
By Lemma \ref{IIBGG}, we have $d_1-d_2+\mu_{\sigma}(E)(r_2-r_1)\leq 0.$
As a consequence, we get $$\mu_{\sigma}(E)(r_2-r_1)\leq d_2-d_1.$$

If $r_2-r''_1=0,$ then we obtain  that $d_2-d''_1\geq 0.$
Due to the fact that $d''_1=d_1-d'_1$ and $r''_1=r_1-r'_1,$ we get that $$-Ar_1(d_1-d'_1-d_2)\leq 0.$$
Adding the last inequality with \eqref{E2GG}, we have 
\begin{eqnarray} \nonumber
\mu_{\sigma}(E)r_2r'_1-r'_1(-Ad_1+d_2)-Ar_1(d_1-d_2)& \leq & 0.
\end{eqnarray}
By Lemma \ref{IIBGG}, we obtain 
\begin{eqnarray}\nonumber
\mu_{\sigma}(E)r_2r'_1-r'_1(\mu_{\sigma}(E)(r_2-Ar_1)-Ar_1(d_1-d_2))&\leq & 0.
\end{eqnarray}
As $-A>0,$ we conclude that $\mu_{\sigma}(E)(r_2-r_1)\leq (d_2-d_1).$
\edem

\bl \label{COTASSPGG}Let $\sigma=(Z,\A)$ be as above with $Z=Ad_1+Br_1-d_2+i(Cr_1+r_2).$ If there is a $\sigma$-semistable object with $r_2>r_1>0$  then $ Cy+Ax\leq -B,$ where $x=\frac{d_1}{r_1}$ and $y=\frac{d_2}{r_2}.$ Moreover, if $[\varphi]\neq 0$ then  $y-x\geq 0,$   and $$-B\in [Cy+Ax, (Ax+Cy+y-x-\frac{r_1}{r_2}x(A+C)) \frac{r_2}{r_2-r_1}].$$
\el
\bdem Let $E=E_1\xrightarrow{\varphi} E_2$ be a $\sigma$-semistable object. Let us consider the following short exact sequence 
$0\la j_*(E_2)\la E\la i_*(E_1)\la 0.$
By the semistability of $E$ and the correspondence between slope and phase it follows that 
\begin{equation} \label{12SESGG}
\frac{d_2}{r_2}\leq \mu_{\sigma}(E)\leq  \frac{-Ad_1-Br_1}{Cr_1}.
\end{equation}
Thus it is implied that $Cr_1d_2+Ad_1r_2\leq-\frac{B}{C}r_1r_2,$
as $r_1,r_2,C>0.$ We obtain  $Cy+Ax\leq -B.$

As $\varphi\neq 0,$ by Lemma \ref{IE1GG}, we have the following inequality $\frac{-Ad_1-Br_1+d_2}{Cr_1+r_2}\leq \frac{d_2-d_1}{r_2-r_1}.$
From this equation we obtain 
$-B\leq \frac{Cd_2r_1+Ad_1r_2-(C+A)d_1r_1+d_2r_1-d_1r_2}{(r_2-r_1)r_1,}.$
After dividing both sides of the inequality into $r_1r_2$ we get
$$-B\in [Cy+Ax, (Ax+Cy+y-x-\frac{r_1}{r_2}x(A+C)) \frac{r_2}{r_2-r_1}].$$
From the equation 
$Cy+Ax \leq-B,$
we obtain 
\begin{eqnarray} \nonumber
Cy+Ax &\leq & (Ax+Cy+y-x-\frac{r_1}{r_2}x(-A-C)) \frac{r_2}{r_2-r_1},\\ \nonumber
0&\leq & (y-x)(r_2+Cr_1). \nonumber
\end{eqnarray}
As $r_2+Cr_1>0,$ then  we obtain $y-x\geq 0$.
\edem 

We will now write Lemma \ref{COTASSPGG} explicitly in terms of the $r_1,r_2,d_1,d_2$ from the Grothendieck group.

\bl \label{COTASSPGG2C}Let $\sigma=(Z,\TCoh^{\theta}(C))=\gl_{12}(\sigma_1,\sigma_2)$ be a pre-stability condition on $\T_C$ with \linebreak  $\sigma_i=(Z_i,\Coh^{\theta}(C))$ and $\theta\in [0,1).$ If $ Z_2(d_2,r_2)=A_2d_2+Br_2+i(C_2r_2+D_2d_2)$ and $\sigma_1=\sigma_2g,$ were $g=(T,f)\in \GL$ and $$T^{-1}=\begin{bmatrix}
-A& B\\0 & C
\end{bmatrix}$$ with $C>0$ and there is a $\sigma$-semistable object $E\in \TCoh^{\theta}(C)$ with $[E]=[r_1,d_1,r_2,d_2]$ and  $C_2r_2+D_2d_2>C_2r_1+D_2d_1>0,$ then $Cy+Ax\leq -B,$ where $x=\frac{-A_2d_1-B_2r_1}{C_2r_1+D_2d_1}$ and $y=\frac{-A_2d_2-Br_2}{C_2r_2+D_2d_2}.$ Moreover, if $[\varphi]\neq 0$ then  $y-x\geq 0,$   and $$-B\in [Cy+Ax, (Ax+Cy+y-x-\frac{C_2r_1+D_2d_1}{C_2r_2+D_2d_2}y(A+C)) \frac{C_2r_2+D_2d_2}{C_2r_2+D_2d_2-C_2r_1-D_2d_1}].$$
\el

\subsection{Duality and semistability on \texorpdfstring{$\T_{C}$}{TEXT}}

The aim of this subsection is to prove an analogous statement to Lemma \ref{COTASSPGG} for $\sigma$-stable objects $E$ with $r_1>r_2>0,$ where $\sigma$ is a pre-stability condition on $\T_C.$ We follow closely \cite{GJ1} to define the derived dual in this case and to show that it induces an anti-autoequivalence. 



Now we consider the following functor $\DD=\RHHom(-,\Ot_C)\colon D^{b}(C)^{\rm{op}}\la D^{b}(C).$ By \linebreak \textup{\href{https://arxiv.org/pdf/1708.00522.pdf}{\cite[Section\ 2.3]{K4}}}, we have that $\DD$ is an equivalence of categories and $\DD^2=\Id.$

Let us consider the functor 
\begin{eqnarray}\nonumber
\HHom(-,\Ot_X)_{\TCoh(C)}\colon \TCoh(C)&\la & \TCoh(C) \\ \nonumber
E_1\xrightarrow{\varphi} E_2 &\mapsto & \HHom(E_2,\Ot_X)\xrightarrow{\HHom(\varphi)} \HHom(E_1,\Ot_X).
\end{eqnarray}

\bl Every bounded object $E$ in $\Kom^b(\TCoh(C))$  is quasi-isomorphic to a complex \linebreak $F\in \Kom^b(\TCoh(C))$ of locally free sheaves in $\T_C.$
\el
\bdem It is enough to show that for $E=E_1\xrightarrow{\varphi}E_2\in \TCoh(C)$ there is $F\in \TCoh(C)$ of locally free sheaves with $F \sur E.$ There are locally-free sheaves $F_i$ and surjective morphisms $F_i\xrightarrow{\pi_i} E_i$ for $i=1,2.$ Let us consider the triple $G=F_1\la F_1\oplus F_2$ as the inclusion of the first component. Note that $F_1\oplus F_2$ is also locally free. We have a surjective morphism $G\la E$ in $\TCoh(C)$.
\edem

For any bounded acyclic object $E\in \Kom^b(\TCoh(C))$ of locally free sheaves, we have that $\HHom(E,\Ot_C)_{\TCoh(C)}$ is acyclic. As a consequence, the class of objects with locally free sheaves as components in $\TCoh(C)$ is adapted for the left exact functor $\HHom(-,\Ot_C)_{\TCoh(C)}$ and by \linebreak \cite[Rem.\ 2.51]{HUYFM}, we can define the right derived functor $\RHHom(-,\Ot_C)_{\T_C}\colon \T_C^{\rm{op}}\la \T_C.$



\bp \label{DUAL12} The functor $\mathbb{D}_{1}\coloneqq\RHHom(-,\Ot_C)_{\TCoh(C)}\colon \T_C^{\rm{op}}\la \T_C$ is an equivalence of categories. 
\ep
\bdem Note that $\T_C^{\rm{op}}=\lin \C_2^{\rm{op}}, \C_1^{\rm{op}}\rin.$ Moreover, we have that $\restr{\mathbb{D}_1}{\C_2^{\rm{op}}}\colon \C_2^{\rm{op}} \la \C_1$ and $\restr{\mathbb{D}_1}{\C_1^{\rm{op}}}\colon \C_1^{\rm{op}} \la \C_2$ are equivalences of categories. As $\mathbb{D}_1(\C_3^{\rm{op}})\subseteq C_3,$ by \href{https://arxiv.org/pdf/0812.2519.pdf}{\cite[Lemma 1.3]{Ka1}}
we obtain that $\mathbb{D}_{1}$ is an equivalence of categories. 
\edem


We first study the functor $\DD$ on $D^b(C),$ where $C$ is a curve with $g(C)\geq 1.$ By \cite[Corollary 3.40]{HUYFM}, we obtain that $\DD(\Co(x))=\Co(x)[-1]$ and for a locally free sheaf $E\in \Coh(C),$ we obtain that $\DD(E)=E^{\vee}=\HHom(E,\Ot_C)$ wich satisfies that $\deg(E^{\vee})=-\deg(E)\textnormal{ and } \rank(E)=\rank(E^{\vee}).$
Moreover, in general it follows that if $E\in D^b(C)$ and $[E]=[r,d]$ then $[\DD(E)]=[r,-d].$

Let us consider the following torsion pair
\brem \label{TPSPG1} Let $\sigma=(Z,\A) \in \Stab(D^b(C)).$ We have the following torsion pair on $\A$.
$$\T=\{E\in \A\mid \phi_{\sigma}(E)=1 \},$$
$$\FF=\{E\in \A\mid \textnormal{Its } \sigma \textnormal{-semistable factors } F_i \textnormal{ satisfy } \phi_{\sigma}(F_i)<1\}.$$
\erem

If $\Coh^{\theta}(C)\subseteq \T_C$ with $\theta\in [0,1),$
we rename the torsion pair given in Remark \ref{TPSPG1} because \linebreak $\Coh^{\theta}(C)=(T_{\sigma}^{\theta},F_{\sigma}^{\theta}).$ Recall that by Remark \ref{SCCHR}, we also have that $\Coh^{\theta}(C)=(\FF_{\theta}[1],\T_{\theta})$ where $\T_{\theta}=\mathcal{P}_{\mu}(\theta, 1]$ and $\FF_{\theta}=\mathcal{P}_{\mu}(0,\theta]$ in  $\Coh(C).$

\bd Let $\sigma=(Z(r,d)=Ad+Br+i(Cr+Dd),\Coh^{\theta}(C))=\Stab(C).$ We define $\sigma^{\vee}\coloneqq (Z(r,d)=Ad-Br+i(Cr-Dd),\Coh^{1-\theta}(C)[-1])\in \Stab(C).$
\ed

\brem \label{SLOPEDUAL}\label{DTFP}

\begin{enumerate}[leftmargin=0.5cm]
\item If we have $\sigma_1=(Z_1,\Coh^{\theta}(C))$ and $\sigma_2=(Z_2,\Coh^{\theta}(C))$ with $0\leq\theta<1,$ then $F_{\sigma_1}^{\theta}=F^{\theta}_{\sigma_2}.$ Therefore, for any stability condition $\sigma$ with heart $\Coh^{\theta}(C),$ we denote $F^{\theta}_{\sigma}$ by $F^{\theta}.$
\item Note that $\DD(\Coh^{\theta}(C))\neq \Coh^{1-\theta}(C)[-1].$
\item $\sigma^{\vee}_{\mu}=\sigma_{\mu}.$
\item $\mu_{\sigma}(E)=-\mu_{{\sigma}^{\vee}}(\DD(E)).$
\item Let $\sigma=(Z,\Coh^{\theta}(C)),$ then $\DD(F^{\theta})=F^{1-\theta}[-1],$ where $F^{1-\theta}[-1]$ is given with respect to $\sigma^{\vee}.$
\end{enumerate} 
\erem


\be If $\sigma_{\mu}=(Z_{\mu},\Coh(C)),$ the torsion pair $T^{0}$ and $F^{0}$ is given precisely by the subcategory of torsion sheaves and torsion-free sheaves respectively. We have that $\DD(\Coh(C))=\lin \Co(x)[-1], \Li \rin,$ for any line bundle $\Li$ in $\Coh(C)$ and any point  $x\in C.$ Note that $\lin \Co(x)[-1], \Li \rin$ is a heart of a bounded t-structure on $D^b(C)$ which does not admit a stability function. Moreover, we trivially have that $\DD(F^{0})=F^{0}.$
\ee

We now study $\DD_1$ on $\T_C.$
Let $\sigma=(Z,\TCoh^{\theta}(C))=\gl_{12}(\sigma_1,\sigma_2)$ be a pre-stability on $\T_C$ with $\sigma_1=\sigma_2 g$ where $g=(T,f)\in \GL$ with $f(0)=0$ and $T^{-1}=\begin{bmatrix} -A & B \\ 0 & C \end{bmatrix}.$


\bd Let $\sigma=(Z,\TCoh^{\theta}(C))=\gl_{12}(\sigma_1,\sigma_2),$ we define the dual stability condition on $\T_C$ as $\sigma^{*}=\gl_{12}(\sigma_2^{\vee},\sigma_1^{\vee})=(Z^{*},\TCoh^{1-\theta}(C)[-1]).$
\ed

\brem \label{STFCG} \label{SLOPEDUALT} 
\begin{enumerate}[leftmargin=0.5cm]
\item Let $\sigma=(Z,\TCoh^{\theta}(C))$ be a pre-stability condition as above. If $E=E_1\xrightarrow{\varphi} E_2$ is $\sigma$-stable with $\phi_{\sigma}(E)<1,$ i.e.\ $\Im(Z(E))\neq 0,$ then $E_1,E_2\in F^{\theta}.$ 
\item Let $E=E_1\la E_2\in \TCoh^{\theta}(C),$ with $E_1,E_2\in F^{\theta}$
then $\mu_{\sigma}(E)=-\mu_{\sigma*}(\DD_1(E)).$ Moreover, we also have that 
$\sigma^*=\gl_{12}(\sigma_2^{\vee},\sigma^{\vee}_{2}gN_{\frac{2B}{C}}).$                                                                                                                                                                                                                                                                                                                                                                                                                                                                                                                                                                                                                                                                                                                                                                                                                                                                                                                                                                                                                                                                                                                                                                                                                                                                                                                                                                                                                                                                                                                                                                                                                                                                                                                                                                                                                                                                                                                                                                                                                                                                                                                                                                                                                                                                                                                                                                                                                                                                                                                                                                                                                                                                                                                                                                                                                                                                                                                                                                                                                                                                                                                                                                                                                                                                                                                                                                                                                                                                                                                                                                                                                                                                                                                                                                                                                                                                                                
\item If $\sigma=\gl_{12}(\sigma_{\mu}g,\sigma_{\mu}),$ with  $g=(T,f)\in \GL,$ where $$T^{-1}=\begin{bmatrix}
1&-\alpha\\
0& 1
\end{bmatrix}$$ and $f(0)=0,$
then $\sigma^*=\gl_{12}(\sigma_{\mu},\sigma_{\mu}g')$ with $g'=g N_{-2\alpha}.$ Note that $g'=g^{-1}.$
By the $\GL$-action, an object $E\in \TCoh(C)$ is $\sigma^{*}$-stable if and only if it is $\sigma^{*}g$-stable, where \linebreak $\sigma^{*}g=\gl_{12}(\sigma_{\mu}g,\sigma_{\mu}).$
\end{enumerate}
\erem


\bl \label{SSDCorrespondence} Let us consider $\sigma=(Z,\TCoh^{\theta}(C))=\gl_{12}(\sigma_1,\sigma_2).$ An object $E\in \TCoh^{\theta}(C) $ is $\sigma$-stable with $\phi_{\sigma}(E)<1$ if and only if $\mathbb{D}_1(E)$ is $\sigma^*$-stable and $\phi_{\sigma^*}(\DD_1(E))<1.$
\el

\bdem 




As in Remark \ref{STFCG} we have that $E_1,E_2\in F^{\theta}.$ By Remark \ref{DTFP}, it follows that $\DD(E_1),\DD(E_2)$ are in $F^{1-\theta}[-1].$ Then $\DD_1(E)=\DD(E_2)\la \DD(E_1)\in \TCoh(C)^{1-\theta}[-1]\textnormal{ and }\phi_{\sigma*}(\DD_1(E))<1.$
Let us consider  $Q=Q_1\la Q_2\in \TCoh^{1-\theta}(C)[-1],$ satisfying $Q_1, Q_2 \in F^{1-\theta}[-1]$ and $$0\la G\la \DD_1(E)\la Q \la 0$$ a short exact sequence in  $\TCoh^{1-\theta}(C)[-1],$ where $G=G_1\la G_2.$  Note that it is enough to prove that $\phi_{\sigma*}(\DD_1(E))<\phi_{\sigma*}(Q)$ to show the stability of $\DD_1(E).$ Indeed, if $P=P_1\la P_2$  is an arbitrary quotient of $\DD_1(E),$ either there is an object $Q=Q_1\la Q_2$ with  $Q_1,Q_2\in F^{1-\theta}[-1]$ with $P\sur Q$ and $\phi_{\sigma*}(Q)<\phi_{\sigma^*}(P)$ or $\phi_{\sigma^*}(P)=1$ and we trivially obtain that $\phi_{\sigma^*}(\DD_1(E))<\phi_{\sigma^*}(P).$
We also have that $G_1,G_2\in F^{1-\theta}[-1]$ as $F^{1-\theta}[-1]$ is closed under subobjects. Moreover, the duality gives us a correspondence between short exact sequences on $F^{1-\theta}[-1]$ and $F^{\theta}.$
We obtain a short exact sequence in $\TCoh^{\theta}(C)$ given by $0\la \DD_1(Q)\la E\la \DD_1(G)\la 0.$ By the stability of $E$ we obtain that
$\mu_{\sigma}(\DD_1(Q))<\mu_{\sigma}(E)$ and it follows if and only if $\mu_{\sigma*}(\DD_1(E))<\mu_{\sigma*}(Q).$ As a consequence, $\DD_1(E)$ is $\sigma^{*}$-stable. 
\edem

Let $\sigma=\gl_{12}(\sigma_2g,\sigma_2)=(Z,\TCoh^{\theta}(C))$ as above and $E=E_1\xrightarrow{\varphi} E_2$ a $\sigma$-stable object with $\phi(E)<1,$ $\varphi\neq 0$ and $0<C_2r_2+Dd_2<C_2r_1+D_2d_1,$ where $[E]=(r_1,d_1,r_2,d_2).$ Note that we cannot apply Lemma \ref{COTASSPGG2C} directly.

Note that $\sigma*=\gl_{12}(\sigma_2^{\vee},\sigma_2^{\vee}gN_{\frac{2B}{C}}).$ After applying the $\GL$-action and by Lemma \ref{SSDCorrespondence},  we have that $E$ is $\sigma$-stable if and only if $\DD_1(E)$ is $\sigma'=\sigma^*g'=(\sigma_2^{\vee}g',\sigma^{\vee}_2)$-stable, where $$g'=N_{\frac{-2B}{C}}g^{-1}=(T',f')\in \GL$$
and $T'^{-1}=\frac{1}{\det(T^{-1})}\begin{bmatrix} C & B\\0&-A
\end{bmatrix}.$

If $\sigma_2=(Z_2(r,d)=A_2d_2+B_2r_2+i(C_2r_2+D_2d_2),\Coh^{\theta}(C)),$
then $$\sigma^{\vee}_2=(Z_2^{\vee}(r,d)=A_2d_2-B_2r_2+i(C_2r_2-D_2d_2),\Coh^{1-\theta}[-1](C)).$$

If we apply Lemma \ref{COTASSPGG2C} to $\sigma',$ we have that if $F=F_1\xrightarrow{\varphi_{F}} F_2$ is $\sigma$-stable with $$C_2\rk(F_2)-D_2\deg(F_2)>C_2\rk(F_1)-D_2\deg(F_1)$$ and $[\varphi_{F}]\neq 0,$
then $$\frac{-A_2\deg(F_1)+B_2\rk(F_1)}{C_2\rk(F_1)-D_2\deg(F_1)}\leq \frac{-A_2\deg(F_2)+B_2\rk(F_2)}{C_2\rk(F_2)-D_2\deg(F_2)}.$$

Note that $[\DD_1(E)]=[r_2,-d_2,r_1,-d_1].$ As a consequence, we obtain that 
$\DD_1(E)$ satisfies $$C_2\rk(\DD(E_1))-D_2\deg(\DD(E_1))>C_2\rk(\DD(E_2))-D_2\deg(\DD(E_2)),$$ because  $C_2r_2+Dd_2<C_2r_1+D_2r_1.$ We can now apply Lemma \ref{COTASSPGG2} to $\sigma'$ and we obtain 
$$\frac{A_2d_2+B_2r_2}{C_2r_2+D_2d_2}\leq \frac{A_2d_1+B_2r_1}{C_2r_1+D_2d_1},$$ which is precisely 
$$\frac{-A_2d_1-B_2r_1}{C_2r_1+D_2d_1}\leq \frac{-A_2d_2-B_2r_2}{C_2r_1+D_2d_2}.$$

Moreover, we obtain the following result, which gives us the necessary conditions to have \linebreak $\sigma$-stable objects. 

\bl \label{COTASSPGG2}Let $\sigma=(Z,\TCoh^{\theta}(C))=\gl_{12}(\sigma,\sigma_2)$ be a pre-stability condition on $\T_C$  with $Z_2(d_2,r_2)=A_2d_2+Br_2+i(C_2r_2+D_2r_2)$ and $\sigma_1=\sigma_2g,$ were $g=(T,f)\in \GL$ and $$T^{-1}=\begin{bmatrix}
-A& B\\ 0 & C
\end{bmatrix}.$$
If there is a $\sigma$-semistable object with $C_2r_1+D_2d_1>C_2r_2+D_2d_2>0$ then $Cy+Ax\leq -B,$ where $x=\frac{-A_2d_1-B_2r_1}{C_2r_1+D_2d_1}$ and $y=\frac{-A_2d_2-Br_2}{C_2r_2+D_2d_2}.$ 

Moreover, if $[\varphi]\neq 0$ then  $y-x\geq 0$ and $$-B\in [Cy+Ax, Ax+Cy+y-x-\frac{C_2r_2+D_2d_2}{C_2r_1+D_2d_1}y(A+C) \frac{C_2r_1+D_2d_1}{C_2r_1+D_2d_1-C_2r_2-D_2d_2}].$$
\el



\bc\label{TRIALPHA} If $\sigma_{\alpha}$ as defined in Example  \ref{EXGP94} on $\T_C$ and $E$  is a $\alpha$-stable object  with $[E]=(r_1.d_1,r_2,d_2)$ and $r_1\neq r_2,$ then $\alpha\in [\frac{d_2}{r_2}-\frac{d_1}{r_1},(\frac{d_2}{r_2}-\frac{d_1}{r_1})(1+\frac{r_1+r_2}{|r_2-r_1|})].$ 
\ec


\brem Note that Corollary \ref{TRIALPHA} agrees with the necessary conditions for the existence of $\sigma_{\alpha}$-stable objects  {\cite[Theorem.\ 6.1]{GP2}}.
\erem

We will now prove the support property for this type of pre-stability conditions. 

\bl \label{SP1}  Let $\sigma=(Z,\A)=\gl_{12}(\sigma_1,\sigma_2)$ be a pre-stability condition, such that there is \linebreak  $g=(T,f)\in \GL$ with $\sigma_1=\sigma_2 g$ and $T^{-1}=\begin{bmatrix}
-A& B\\
0& C\\
\end{bmatrix}.$ Then $\sigma$ satisfies the support property and therefore it is a Bridgeland stability condition. 
\el
\bdem First note that we can linearly extend $Z$ to $\mathcal{N}(\T_{C})\otimes \R\cong \R^4.$

We denote $Z_1([E])=Z_1([i^*(E)])$ and $Z_2([E])=Z_2([j^!(E)]).$ As mentioned before, since $i^*$ and $j^!$ are exact, they induce homomorphisms on the Grothendieck groups. We use the notation given above for elements $v\in \mathcal{N}(\T_{C})\otimes \R,$ i.e.\ $Z_1(v)$ and $Z_2(v).$
We claim that for $\delta=\frac{-CA}{B^2}>0,$ the pre-stability condition $\sigma$ satisfies the support property with the quadratic form $Q\colon \R^4\la \R,$ defined as 
$$Q(v)=-\Re(Z_1(v))\Im(Z_2(v)+\Im(Z_1(v))\Re(Z_2(v)) +\Im(Z_1(v))\Im(Z_2(v))+ \delta(\Re(Z_1(v))\Re(Z_2(v)),$$ where $\Re(\alpha)$ and $\Im(\alpha)$ are the real and the imaginary parts respectively and $\alpha \in \R^4.$
Note that under the notation of equation \eqref{NotCPPD}, 
we obtain that
$$Q(v)=-Ad_1r_2-Cd_2r_1-Br_1r_2+Cr_1r_2+\delta(-Ad_1d_2-Bd_2r_1).$$

We first show that it is negative definite on $$\Ker(Z)=\{v\in \R^4\mid \Re(Z_1(v))=-\Re(Z_2(v))\textnormal{ and } \Im(Z_1(v))=-\Im(Z_2(v))\}.$$
In fact, we get that \begin{eqnarray} \nonumber 
Q(v)&=&\Re(Z_2(v))\Im(Z_2(v))-\Im(Z_2(v))\Re(Z_2(v)) -\Im(Z_2(v))\Im(Z_2(v))- \delta(\Re(Z_2(v))\Re(Z_2(v))\\\nonumber
&=&  -\Im(Z_2(v))^2-\delta(\Re(Z_2(v))^2)< 0\nonumber
\end{eqnarray}if $v\neq 0.$ Note that if $Q(v)=0,$ for $v\in \Ker(Z),$ then $Z_i(v)=0$ for $i=1,2.$ It follows that $v=0,$ as $\rank(\N(D^b(C)))=2.$

Let $E=E_1\xrightarrow{\varphi} E_2$ be a $\sigma$-stable object. We consider the following short exact sequence \linebreak  $0\la j_*(E)\la E\la i_*(E)\la 0.$ If $[\varphi]=0,$ Remark \ref{TL12} implies that either $E_1=0$ or $E_2=0,$ as $E$ is indecomposable. If $E_1=0$ or $E_2=0,$ we clearly have that $Q([E])=0.$

If $\Im(Z_1(E))=0,$ it follows that $-\Re(Z_1(E))>0.$ We claim that $\Im(Z_2(E))=0.$ Indeed, after considering the decomposition of $E_2$ with respect to the standard torsion pair $( \T,\mathcal{F} )$ given in Remark \ref{TPSPG1} on $\A,$ we have that there is a subtriple $T_2=0\la T(E_2)$ of $E,$ such that $\Im(Z_2(T_2))=0.$ If $T_2\neq 0,$ then 
$1=\phi(T_2)\leq \phi(E)\leq 1,$ which implies that $\phi(E)=1,$ thus $\Im(Z_2(E))=0.$ 
If $T_2=0,$ then $E_2\in F$ and the morphism $\varphi=0,$ as $E_1$ is in $T.$ This implies that $E_1\la 0$ is a subtriple and once again we obtain that $$1=\phi(E_1\la 0)\leq \phi(E)\leq 1.$$ So, $\phi(E)=1$ and $\Im(Z_2(E))=0$ and this implies that $E=0.$ We obtain that $\Im(Z_2(E))=0$ and $-\Re(Z_2(E))>0.$ Hence, we get $Q(E)=\delta(\Re(Z_1(E))\Re(Z_2(E)))>0.$

If $\Im(Z_2(E))=0,$ then $-\Re(Z_2(E))>0.$ By the short exact sequence above, we obtain that $$1=\phi(0\la  E_2)\leq \phi(E_1\la 0)\leq 1.$$ This implies $\Im(Z_1(E))=0$ and $-\Re(Z_1(E))>0.$ As a consequence, we get  $$Q(E)=\delta(\Re(Z_1(E))\Re(Z_2(E)))>0.$$

Now we assume $\Im(Z_1(E)),\Im(Z_2(E))\neq 0$ and therefore $[\varphi]\neq 0$

Let $y=\frac{-\Re(Z_2(E)))}{\Im(Z_2(E))}$ and $x=\frac{-\Re(Z_2(j_*i^*(E))}{\Im(Z_2(j_*i^*(E)))}.$ Note that $\Im(Z_1(E))\neq 0,$ implies $\Im(Z_2(j_*i^*(E)))\neq 0.$
After dividing $Q([E])$ into $\Im(Z_2(E))\Im(Z_2(j_*i^*(E)))$ and  by Equation \eqref{SameheartG}, we obtain that \linebreak $Q(v)\geq 0$ if and only if $$-Ax-Cy-B+C+\delta(-Axy-By)\geq 0.$$
First note that by the semistability of $E$ and Lemma \ref{COTASSPGG} we have that $Cy+Ax\leq -B$ and $0\leq y-x.$


Our proof falls naturally into two cases:

\textbf{Case 1}: $y\geq 0.$
As $C>0,$ we have $0\leq Cy\leq -Ax-B.$ Then $-Axy-By\geq 0$ and since $-Ax-Cy-B+C\geq 0,$ we get that $Q([E])\geq 0.$

\textbf{Case 2} If $y<0,$ then $x<y<0.$ Moreover if $-Ax-B\leq 0,$ we have that $-Axy-By\geq 0,$ and we argue as before. If $0<-Ax-B,$ as $-A>0$ we get $0<-Ax-B\leq -Ay-B<-B.$ Then $Cy\leq -Ax-B\leq -B$ and $\frac{B}{-A}<y <0.$ Let us consider the following function 

\begin{eqnarray} \nonumber
f\colon(0, -B)\times (\frac{B}{-A},0)& \la & \R \\ \nonumber
(x,y) &\mapsto & \frac{-Ax-Cy-B+C}{-Axy-By}. \nonumber
\end{eqnarray}
Note $B\leq -Ax-Cy,$ then $0<C\leq-Ax-Cy-B+C.$  Also $\frac{B^2}{A} < y(-Ax-B),$ as $xB>0.$ Therefore, we obtain $\frac{-A}{-B^2}>\frac{1}{y(x+\alpha)}.$ It follows that $f(x,y)<\frac{C}{y(x+\alpha)}<\frac{-CA}{-B^2}.$
Since $\delta=\frac{-CA}{B^2},$ we have that $f(x,y)<-\delta.$ Hence, we get $-Ax-Cy-B+C+\delta(-Axy-By)>0.$\edem
 
\subsubsection*{Support property for CP-glued pre-stability conditions with negative discriminant}

Let $\sigma=(Z,\A)=\gl_{12}(\sigma_1,\sigma_2)$ be a pre-stability condition, where $\sigma_1=(Z_1,\A_1)$ and \linebreak $\sigma_2=(Z_2,\A_2),$ such that there is $g=(T,f)\in \GL$ with $\sigma_1=\sigma_2g$ where $(T,f)$ satisfies that $M\coloneqq T^{-1}=\begin{bmatrix}
-A& B\\
-D& C\\
\end{bmatrix}$
with $$\Delta(M)=\Tr(M)^2-4\det(M)=(A+C)^2-4BD<0\textnormal{ and } 0<f(0)<1.$$

\brem \label{CLANDG} If $E=E_1\xrightarrow{\varphi}E_2$ is a $\sigma$-semistable object with
$\Im(Z_2(E_1))<0.$ Then $E_1\in \A_2[1].$
\erem

\bl \label{SPCPNDG} Let $\sigma=(Z,\A)$ be as above. Then $\sigma$ is a Bridgeland stability condition.  
\el
\bdem We just need to prove that it satisfies the support property. Note that we can extend $Z$ to $\mathcal{N}(\T_\A)\otimes \R\cong \R^4.$  We claim that $\sigma$ satisfies the support property with respect to the  quadratic form \linebreak $Q\colon \R^4\la \R$ defined as $$Q(v)= -\Im(Z_2(j^!(v))))\Re(Z_2(i^{*}(v)))+\Im(Z_2(i^*v)))\Re(Z_2(j^!(v)).$$
We use the following notation 
$$d_1=-\Re(Z_2(i^{*}(v))) \textnormal{ and } r_1=\Im(Z_2(i^*(v))),$$
$$d_2=-\Re(Z_2(j^!(v))) \textnormal{ and } r_2=\Im(Z_2(j^!(v))).$$

Note that $r_2\geq 0.$ We first show that $Q$ is negative definite on $$\Ker(Z)=\{v\in \R^4 \mid \Re(Z_2(j^!v))=-\Re(Z_1(i^*v)) \textnormal{ and } \Im(Z_2(i^*v))=-\Im(Z_1(j^!v))\}.$$ 
Note that $Z_1(w)=-A\Re(Z_2(w))+B\Im(Z_2(w))+i(-D\Re(Z_2(w))+C\Im(Z_2(w))),$ with $w\in \R^2.$ We get that $Q(v)= -Dd_1^2+(A+C)d_1r_1+Br_1^2<0.$ Indeed, as $0\leq f(0)=t<1$ and \linebreak  $t<1<t+1,$ we have $D>0.$ Moreover since $\Delta(M)=(A+C)^2-4BD<0,$ then $B>0$ and $-Dd^2-(A+C)dr-Br^2<0$ for all $(r,d)\in \R^2.$ If $(r,d)=0,$ we obtain that $v=0.$

Now let $E=E_1\xrightarrow{\varphi} E_2\in \A$ be a $\sigma$-stable object. We show that $Q([E])\geq 0.$ First of all, if $[E]=v\in \N(\T_\A)$ with  $\Im(Z_2(v))=0,$ then $\Re(Z_2(v))< 0$ and  
after considering the following short exact sequence $j_*(E_2)\la E \la i_*(E_1)\la j_*(E_2)[1]$ in $\A,$ we obtain $$1=\phi(0\la E_2)\leq \phi(E_1\la 0)\leq 1.$$ Then $\phi(E_1\la 0)=1,$ and this implies that $Cr_1+Dd_1=0$ and $r_1\leq 0.$  As $-d_2< 0,$ we get $$Q([E])=-d_2r_1\geq 0.$$

If $r_1=0,$ as $Cr_1+Dd_1\geq 0,$ then $Dd_1\geq0$ and $d_1\geq 0.$
Therefore, we have $Q([E])=d_1r_2\geq 0.$

Let us now assume that $r_2,r_1\neq 0.$ From the short exact sequence mentioned above and the correspondence between slope and phase, we obtain 
$$\frac{d_2}{r_2}\leq \frac{-Ad_1+Br_1+d_2}{Cr_1+Dd_1+r_2} \textnormal{ i.e.\ }-Dd_1d_2-Cr_1d_2-Ad_1r_2-Br_1r_2\geq 0. $$ 

We define $x=\frac{d_1}{r_1}$ and $y=\frac{d_2}{r_2}.$ Let us consider two cases:

\textbf{Case 1:} $r_1>0.$ Since by definition $Cr_1+Dd_1\geq 0,$ then  $C+Dx\geq 0.$
Moreover we have $C+Dx>0.$ Indeed, if $C+Dx=0,$ we get $E_1\in \A_2[1]$ and $r_1\leq 0,$ which contradicts our assumption. Then, we obtain
$y\leq \frac{-Ax-B}{Dx+C}.$

As $\Delta(M)<0$ and $D> 0,$ then for all $t\in \R$ we have that $Dt^2+(A+C)t+B>0.$
Thus, we obtain $x>\frac{-Ax+B}{Dx+C}.$ Finally, we get $x>y$ or equivalently $Q([E])=d_1r_2-d_2r_1>0.$

\textbf{Case 2:} $r_1<0.$ We claim that $[\varphi]=0.$ Indeed, by Remark \ref{CLANDG}, we have that $E_1\in \A_2[1].$ Then $\varphi\in\Hom_{D^b(\A)}(E_1,E_2)=0,$ as $\A_2$ is the heart of a bounded t-structure in $D^b(\A).$ 
This implies that $E_1=0$ or $E_2=0,$ and as a consequence, $Q([E])=0.$
\edem
  
We will now prove that all CP-glued pre-stability conditions on $\T_C$ satisfy the support property. 

\bp \label{SPGG} If the pair $\sigma=(Z,\mathcal{A})$ on $\T_C$ is a pre-stability condition with ${\sigma=\gl_{12}(\sigma_1,\sigma_2),}$ then it satisfies the support property and therefore, it is a Bridgeland stability condition. 
\ep

\bdem  
There is ${g=(T,f)\in \GL}$  with ${(Z_1,\A_1)=\sigma_1=\sigma_2 g}$ and $\sigma_2=(Z_2,\A_2).$ The proof falls naturally into the following cases:

\textbf{Case 1}: If $f(0)\geq 1,$ then this follows directly from Lemma \ref{SP2G}.


\textbf{Case 2}: If $0\leq f(0)<1$ and $\Delta(M)\geq 0,$ then  we have the existence of real eigenvalues $\lambda_1,\lambda_2\in \R.$ If $\lambda_1,\lambda_2> 0,$ then by Lemma \ref{SP1} and the fact that the support property is stable under the $\GL$-action, we have that $\sigma$ has the support property. If $\lambda_1,\lambda_2< 0,$ then by arguments along the lines of Proposition \ref{12_3}  there is $h\in \GL,$ such that $\sigma h=\gl_{12}(\sigma_1h,\sigma_2h)$ with $\sigma_ih=(Z'_i,\A'_i)$ and 
$\Hom^{\leq 1}(i_*\A'_1,j_*\A'_2)=0,$ therefore it follows from
Lemma \ref{SP2G}. 

\textbf{Case 3:} If $0<f(0)<1$ and $\Delta(M)<0,$ then this case follows directly from Lemma \ref{SPCPNDG}.
\edem

\brem By Remark \ref{SPUTACTION}, after applying Serre duality, we have that any pre-stability condition $\sigma\in \Theta_i,$ with $i\in\{1,2,3\}$ satisfies the support property. By Theorem \ref{TE02}, we just need to prove the 
support property for $\sigma\in\Gamma.$
\erem
 
 We will now prove the support property for $\sigma\in \Gamma$ just when $g(C)=1.$ For $g(C)>1,$ we conjecture that it is also satisfied. We start by studying the $\sigma$-semistable objects in the pre-stability conditions in Lemma \ref{PrestaNG} under the assumption that $g(C)\geq 1.$

\subsubsection*{Semistability on non-gluing pre-stability conditions}

We will now study $\sigma$-semistable objects, where $\sigma=(Z,\A)$ satisfies that $i_*(\Co(x)),l_*(\Co(x))$ are $\sigma$-stable and $j_*(\Co(x))$ is $\sigma$-stable of phase one. After applying the $\GL$-action on $\sigma$ the objects $i_*(\Co(x)),$ $j_*(\Co(x))$ and $l_*(\Co(x))$ are always $\sigma$-stable.   We use the description of the hearts given in Lemma \ref{BH}, Lemma \ref{BH1} and Lemma \ref{BH2} to prove that all $\sigma$-semistable objects have a particular form.

\bl \label{SSNG} If $E=E_1\xrightarrow{\varphi} E_2\in \A$ is $\sigma$-semistable then 
$E\in\TCoh(C)$ or $E\in \HH_{23}[-1]$ or $E\in \HH_{31}[-1].$
\el 
\bdem First note that there are elements $g_1,g_2\in \GL$ with $\delta_i=\sigma g_i=(W_i,\B_i)$ for $i=1,2,$ such that $\delta_1$ satisfies that $i_*(\Co(x))$ is $\delta_1$-stable of phase one, $j_*(\Co(x)),l_*(\Co(x))$ are $\delta_1$-stable and $\delta_2$ satisfies that $l_*(\Co(x))$ is $\delta_2$-stable of phase one and $j_*(\Co(x)),i_*(\Co(x))$ are $\delta_2$-stable.
We have $E\in \A.$ Note that Lemma \ref{BH} describes the cohomology of $E.$ For an object $G=G_1\xrightarrow{\psi} G_2\in \B_1,$ Lemma \ref{BH1} describes its cohomology and if $G=G_1\xrightarrow{\psi} G_2\in \B_2,$ in Lemma \ref{BH2} we obtain the description of its cohomology.

As $E$ is $\delta_i$-semistable we have that $E\in \B_i[n],$ for $i=1,2$ and $n\in \Z,$ where the only possible cases are $n=1,0,-1,-2.$ We study the non-trivial cases.
\begin{enumerate}[leftmargin=0.5cm]
    \item If $E\in \A\cap \B_1[-2] \cap \B_2[-1],$ as the intersection is contained in $\Coh_3(C)[-1],$ then $E\in \HH_{23}[-1].$
    \item If $E\in \A\cap \B_1[-1]\cap \B_2[-1],$ then  $E\in \HH_{23}[-1].$
    \item If $E\in \A\cap \B_1[-1]\cap \B_2,$ then $E\in \HH_{31}[-1].$ 
    \item If $E\in \A\cap \B_1\cap \B_2[-1],$ as the intersection is contained in $\Coh_2(C),$ then $E\in \TCoh(C).$
    \item If $E\in \A\cap \B_1\cap \B_2,$ then  $E\in \TCoh(C).$
    \item If $E\in \A\cap \B_1\cap \B_2[1],$ as the intersection is contained in $\Coh_1(C),$ then  $E\in \TCoh(C).$
\end{enumerate}
\edem













\subsection{Support property for non-gluing pre-stability conditions with negative discriminant and \texorpdfstring{$g=1$}{TEXT}}
\label{SPNGND}

We consider pre-stability conditions $\sigma=(Z,\A)$ as constructed in Lemma \ref{PrestaNG} with $\Delta<0,$ i.e.\  there is $\sigma_1=(Z_1,\Coh_1^{r_1}(C))=(T,f)\in \GL$ such that $\Coh_1^{r_1}(C)\subseteq \A$ and $\restr{Z}{\Coh_1^{r_1}(C)}=Z_1,$ with $-1<f(0)<0.$ We assume $\Delta(M)<0,$ where $M=T^{-1}.$ Under the assumption that $g(C)=1,$ these pre-stability conditions satisfy the support property and as a consequence, they are Bridgeland stability conditions.  

Since $\Delta(M)<0,$  after applying the $\GL$-action we never obtain a CP-glued pre-stability condition, because $f(\theta)<\theta$ for all $\theta\in \R.$ However, under the assumption $g=1,$ the quadratic form induced by the Euler bilinear form $-\chi(E,E)=d_2r_1-d_1r_2 $ is negative definite on $\Ker(Z)$ as in Lemma \ref{SPCPNDG}.
Therefore, it is a good candidate for being a quadratic form appearing in the support property. 

We will now prove some useful statements about $\A.$






\bl \label{SND} If $F\in\Coh(C)$ is $\mu$-stable, then $i_*(F),$ $j_*(F)$ and $l_*(F)$ are $\sigma$-stable.
\el
\bdem The proof goes along the lines of Proposition \ref{2of3}. 
If $F$ is $\mu$-stable, it is either a skyscraper sheaf of a torsion-free sheaf. In Proposition \ref{2of3} we proved the statement for skyscraper sheaves, therefore  we assume that $F$ is torsion-free and $r>0.$
Let us assume that $i_*(F)$ is not $\sigma$-semistable. Then, we consider the last triangle of its Harder--Narasimhan filtration $E\la i_*(F)\la A\la E[1]$
which satisfies $\Hom_{\T_C}^{\leq n}(E,A)=0,$ with $n\leq 0.$ By Lemma \ref{GKRT} we have that $\Hom_{D^b(C)}(E_1,A_1)=0$ and $E_1,A_1\in \Coh(C).$ By Serre duality on $C$, we get  $\Hom_{D^b(C)}(A_1,E[1])=0,$ which implies that the short exact sequence $$0\la E_1\la F\la A_1\la 0$$ splits. As $F$ is $\mu$-stable, it is indecomposable. Therefore, either $E_1=0$ or $A_1=0.$ Following exactly the same steps as in  Lemma \ref{2of3} with $X=F,$ we show that $A_1=0$ and $j_*(F),l_*(F)$ are $\sigma$-stable.
As in Lemma \ref{HNX}, the last triangle of the HN-filtration of $i_*(F)$ is given by $$l_*(F)\la i_*(F)\la j_*(F)[1]\la l_*(F)[1]$$ and this implies that 
$\phi(l_*(F))> \phi(j_*(F))+1.$ As $j_*(F)\in \Coh_2(C)\subseteq \A,$
then $1<\phi(j_*(F))+1\leq 2,$ which implies that  $1<\phi(l_*(F)).$
Moreover, note that $\phi(l_*(F))<2.$ By the stability of $l_*(\Co(x))$ we have that $1<\phi_4<2$ and we also have a non-zero morphism $l_*(F)\la l_*(\Co(x)).$

By Lemma \ref{THI} we have that $\Coh^{r_3}(C)\subseteq \A,$ so that $l_*(F)[-1]\in \A.$
Moreover, by the correspondence between slope and phase, we have that 
$$\phi(j_*(F))=\frac{d}{r}<\frac{Ad+Br-d}{-Cr-Dd-r}=\phi(l_*(F)[-1]),$$
which implies $-Dd^2-(A+C)dr-Br^2<0$ and induces a contradiction, because \linebreak $\Delta(M)=(A+C)^2-4BD<0$ and $D,B<0.$ Therefore, we obtain that $i_*(F)$ is $\sigma$-semistable. 
Now we assume that $i_*(F)$ is strictly-semistable, by Remark \ref{JHX} we obtain exactly the same contradiction. Therefore, we get that $i_*(F)$ is $\sigma$-stable. Analogously, we prove that $j_*(F)$ and $l_*(F)$ are \linebreak $\sigma$-stable. 
\edem

\bl \label{SSIGMA} If $F\in \Coh(C)$ is $\mu$-semistable, then $i_*(F),$ $j_*(F)$ and $l_*(F)$ are $\sigma$-semistable.
\el 

\bdem If $F=\Co(x),$ the statement is already proved by Lemma \ref{NGSCStability}. Therefore, we assume that $F$ is torsion-free. Let us consider a JH-filtration of $F$ with respect to $\mu.$ Note that all the $\mu$-stable factors $A_i,$ for $i=0,\cdots , n,$ have the same slope $\mu(F).$ By Lemma \ref{SND}, we obtain that  $j_*(A_i)$ is $\sigma$-stable in $\A.$ As $\restr{Z}{\Coh_2(C)}=Z_{\mu},$ we  have that $\phi(j_*(A_i))=\phi(j_*(F))=\lambda,$ with $\lambda \in \R.$ Since the category $\mathcal{P}(\lambda)$ is closed under extensions, we obtain that $j_*(F)$ is $\sigma$-semistable. 
Note that since $F$ is $\mu$-semistable, then $i_*(F)$ is in $\A$ or in $\A[1]$ and $l_*(F)$ is in $\A$ or in $\A[1].$ Analogously, the same conclusion can be drawn for $i_*(F)$ and $l_*(F).$
\edem 



\brem \label{Heart2NG} Note that $\A\cap \C_2 =\Coh_2(C).$ 
\erem 


\bl \label{HSHND} Let $E=E_1\xrightarrow{\varphi} E_2\in \TCoh(C)$ be a $\sigma$-semistable in $\A$. Then $\Hom_{\T_C}(E,E[2])=0.$
\el 
\bdem 
First note that $E\in\TCoh(C)\cap \A=\FF,$ where $\A=( \FF,\T[-1])$ as in Lemma \ref{BH}. By Serre duality $\Hom_{\T_C}(E,E[2])=\Hom_{\T_C}(E[2],\Ser_{\T_C}(E))^*=\Hom(E[1],E_2\la C(\varphi))^*.$ It suffices to prove that $\Hom(E[1],E_2\la C(\varphi))=0.$

Let us consider the triangle 
$i_*(C(\varphi))\la \Ser_{\T_C}(E)[-1] \la j_*(E_2)\la i_*(C(\varphi))[1].$ Applying $\Hom$ induces a long exact sequence and as a consequence, it is enough to prove that $ \Hom_{\T_C}(E[1],i_*(E_2))=0 \textnormal{ and } \Hom_{\T_C}(E[1],j_*(C(\varphi)))=0.$

We have $\Hom_{\T_C}(E[1],i_*(E_2))=\Hom_{D^b(C)}(E_1[1],E_2)=0$ as  $E_1\textnormal{, } E_2 \in \Coh(C).$  We will now prove that $\Hom_{\T_C}(E[1],j_*(C(\varphi)))=0.$

\textbf{Case 1:} If $\Ker(\varphi)= 0,$ we obtain that $C(\varphi)=\Coker(\varphi)$ and  by adjointness we get
$$\Hom_{\T_C}(E[1],j_*(C(\varphi)))= \Hom_{D^b(C)}(\Coker(\varphi)[1],\Coker(\varphi))=0.$$ 

\textbf{Case 2:} $\Ker(\varphi)\neq 0.$ It is enough to show that $\phi(E)+1 >\phi^{+}(j_*(C(\varphi))).$
Let us compute $\phi^{+}(j_*(C(\varphi))).$ By Remark \ref{Heart2NG} we have that $\A\cap \C_2=\Coh_2(C),$ so $j_*(C(\varphi))\notin \A.$ As a consequence, we first need to consider its filtration in the t-structure induced by $\A,$ given by 
$$
\begin{tikzpicture}[description/.style={fill=white,inner sep=1.5pt}]
    \matrix (m) [matrix of math nodes, row sep=2em,
    column sep=1.0em, text height=1.0ex, text depth=0.25ex]
    {0 & & j_*(\Ker(\varphi))[1]  & & j_*(C(\varphi)) \\
& j_*(\Ker(\varphi))[1] & & j_*(\Coker(\varphi)). \\ };
       \path[->]
           (m-1-3) edge node[auto] {} (m-1-5)
		   (m-1-1) edge node[auto] {} (m-1-3)
         (m-1-5)   edge node[auto] {} (m-2-4)
                  (m-1-3)   edge node[auto] {} (m-2-2)
    (m-2-2) edge[dashed] node[auto] {} (m-1-1)
(m-2-4) edge[dashed] node[auto] {} (m-1-3);
\end{tikzpicture}
$$
By definition, $\phi^{+}(j_*(C(\varphi)))=\phi^{+}(j_*(\Ker(\varphi)))+1.$
Let us consider the HN-filtration \linebreak $0\subseteq H_1\cdots \subseteq H_{n-1}\subseteq H_{n}=j_*(\Ker(\varphi))$
of $j_*(\Ker(\varphi))$ in $\A$ with respect to $\sigma.$  Note that by Lemma \ref{SSIGMA}, if $F\in \Coh(C)$ is $\mu$-semistable then $j_*(F)$ is also $\sigma$-semistable. Therefore if we consider the HN-filtration of $\Ker(\varphi)$ with respect to $\mu,$ as $\Coh_2(C)\subseteq \A$ and $\restr{Z}{\Coh_2(C)}=Z_{\mu},$ it will give us the HN-filtration of $j_*(\Ker(\varphi))$ in $\A$ with respect to $\sigma.$ By the uniqueness of the HN-filtration, we deduce that $H_i\in \Coh_2(C),$ for all $i=0,\dots, n.$ Moreover, we have that $H_1\neq 0$ is $\sigma$-semistable and  $\phi(H_1)=\phi^{+}(j_*(\Ker(\varphi)))=\phi^{+}(j_*(C(\varphi)))-1.$
 
Let $H_1=0\la F_1,$ with $F_1\in \Coh(C).$ By definition $F_1\subseteq \Ker(\varphi).$ 
As $\Ker(\varphi)\la 0$ is a subobject of $E$ in $\TCoh(C)$ and $\FF$ is closed under subobjects, we obtain that $F_1\la 0\in \FF\subseteq \A.$ By Lemma \ref{SSIGMA}, as $F_1$ is $\mu$-semistable, $i_*(F_1)$ is also $\sigma$-semistable. Moreover, we have a non-zero morphism $i_*(F_1)\la E.$ As they are both $\sigma$-semistable this implies that $\phi_{\sigma}(i_*(F_1))\leq \phi_{\sigma}(E).$

Let $d=\deg(F_1)$ and $r=\rank(F_1).$ By the definition of $\FF,$ we get that
$\Ker(\varphi)$ and therefore $F_1$ is torsion-free and  $r>0.$ As $i_*(F_1)\in \FF,$ we also have that $Cr+Dd\geq 0.$ Moreover, we have $\phi(H_1)<\phi(i_*(F_1)).$
As a consequence, we obtain $\phi^{+}(j_*(C(\varphi)))-1=\phi(j_*(F_1))<\phi(E)$ as  we wanted to prove.  
\edem


We now use Serre duality to obtain the same results for the $\sigma$-stable objects $E\in \HH_{23}[-1],$ $\HH_{31}[-1].$ 

\bc \label{SNDH}If $E$ is $\sigma$-stable, then  $\Hom_{\T_C}(E,E[2])=0.$
\ec

Now we compute the Euler form for all $\sigma$-stable objects. 

\bl \label{ECN} If $E$  is a $\sigma$-stable object, then $-\chi(E,E)=d_2r_1-d_1r_2\geq 0.$ 
\el
\bdem The proof falls naturally into two cases:

\textbf{Case 1:} $[\varphi]=0.$
It implies that either $E_1=0$ or $E_2=0.$ If not it would contradict that $E$ is $\sigma$-stable. It clearly follows that $-\chi(E,E)=0.$ 

\textbf{Case 2:} $[\varphi]\neq 0.$ As $E\in \A$ and $\A$ is the heart of a bounded t-structure, we have that $\Hom_{\T_C}(E,E[n])=0$ for all $n<0.$ By Corollary \ref{SNDH} we have that $\Hom_{\T_C}(E,E[2])=0$ and as $\TCoh(C)$ has homological dimension $2,$ we get that $\HH_{23}[-1]$ and $\HH_{31}[-1]$ also have homological dimension 2.
 Therefore, it follows that $\Hom_{\T_C}(E,E[n])=0$ for $n\geq 2.$ As a consequence, we obtain 
$-\chi(E,E)=-\hom_{\T_C}(E,E)+\hom_{\T_C}(E,E[1]).$

As $E$ is $\sigma$-stable, it follows that $-\hom_{\T_C}(E,E)=-1.$ To prove our claim, it suffices to show that $\hom_{\T_C}(E,E[1])>0.$ By Serre duality $\Hom_{\T_C}(E,E[1])=\Hom(E[1],\Ser_{\T_C}(E))^{*}$ where \linebreak $\Ser_{\T_C}(E)=E_2[1]\xrightarrow{i_E[1]} C(\varphi)[1].$ We have $\hom_{\T_C}(E,\Ser_{\T_C}(E))>0$ because there is a non-zero morphism $E\la\Ser_{\T_C}(E)[-1].$ Therefore, $-\chi(E,E)=d_2r_1-d_1r_2>0.$

\edem

\bp\label{SPNGND1} Let $\sigma=(Z,\A)$ be a pre-stability condition as in Lemma \ref{PrestaNG} with  $\Delta(M)<0.$ Then it satisfies the support property and therefore it is a Bridgeland stability condition. 

\bdem We claim that $\sigma$ satisfies the support property with respect to the following quadratic form 
$Q(r_1,d_1,r_2,d_2)=d_2r_1-d_1r_2.$ 
We first show that $Q$ is negative definite on $\Ker(Z)=\{(r_1,d_1,r_2,d_2)\mid d_2=Ad_1+Br_1 \textnormal{ and } r_2=-Cr_1-Dd_1\}.$ Let $(r_1,d_1,r_2,d_2)\in \Ker(Z),$ then $Q(r_1,d_1,r_2,d_2 )= Dd_1^2+(A+C)d_1r+Br_1^2<0$ as $-1< f(0)=r<0,$ we have that $D< 0.$ Moreover since $\Delta(M)=(A+C)^2-4BD<0,$ then $B<0$ and  $Dd_1^2+(A+C)d_1r_1+Br_1^2<0$ for all $(r_1,d_1)\in \R^2.$

Let $E= E_1\xrightarrow{\varphi} E_2$ be a $\sigma$-semistable object. By \cite[Lemma\ A.6]{BMS3} it is enough to show that $Q(E)\geq 0$ for $\sigma$-stable objects. By Claim \ref{ECN} we have that $d_2r_1-d_1r_2\geq 0.$
\edem
\ep

\brem To prove the support property of the pre-stability conditions with negative discriminant in the case $g>1,$  we would need to prove \hyperref[HSHND]{Lemma \ref*{HSHND}}. But this would not be enough as we cannot use the Euler form $-\chi(E,E)=d_2r_1-d_1r_2-(1-g)(r_1^2+r_2^2-r_1r_2)$ directly, because it is not negative definite on $\Ker(Z).$
\erem

\bt \label{SP12g=1} Let $g=1$ and $\sigma\in \Theta_{12}$ be a pre-stability condition. Then it satisfies the support property and therefore is a Bridgeland stability condition.
\et
\bdem If $\sigma\in \Theta_i,$ this follows directly from Proposition \ref{SPGG} and if $\sigma\in \Gamma,$ then it follows from Proposition \ref{SPNGND1}.
\edem 

\section{Topological description of \texorpdfstring{$S_{12}$}{TEXT}}
\label{TPC}

It is now our purpose to study the topology of $\Stab(\T_C),$ we proceed by defining the following sets \begin{alignat}{8} \nonumber
S_{12}&=&\{\sigma\in \Stab(\T_{C})&\mid& i_*(\Co(x)), j_*(\Co(x)), i_*(\Ot_C), j_*(\Ot_C)\: \:  \sigma\textnormal{-stable} \textnormal{ for all closed points $x\in C$}&\}&,\\\nonumber
S_{23}&=&\{\sigma\in \Stab(\T_C)&\mid& j_*(\Co(x)), l_*(\Co(x)), j_*(\Ot_C), l_*(\Ot_C)  \: \: \sigma\textnormal{-stable}\textnormal{ for all closed points $x\in C$} &\}&, \\\nonumber
S_{31}&=&\{\sigma\in \Stab(\T_C)&\mid& i_*(\Co(x)), l_*(\Co(x)), i_*(\Ot_C), l_*(\Ot_C)  \:  \:  \sigma\textnormal{-stable}  \textnormal{ for all closed points $x\in C$}&\}&,
\end{alignat}

Throughout the whole section, we assume that every pre-stability condition $\sigma\in \Gamma$ is also a Bridgeland stability condition, i.e.\ it satisfies the support property.  
The aim of this section is to prove that $S_{12}$ is an open, connected four dimensional complex manifold. It is based on the well-behaved wall and chamber decompositions of the space of stability conditions. See \linebreak \cite[Proposition\ 3.3]{BMLP} or \cite[Proposition\ 9.3]{BS8}. Let us consider a connected component $\Stab^{\dagger}(\T_C)$ of $\Stab(\T_C).$

\bl\label{open12} The set $S_{12}\cap \Stab^{\dagger}(\T_C)$ is open in $\Stab^{\dagger}(\T_C).$ 
\el
\bdem Let $S=\{i_*(\Co(x)),j_*(\Co(x)), i_*(\Ot_C), j_*(\Ot_C)\mid  x\in C\}\subseteq \T_C.$ First note that the classes of $i_*(\Co(x)),$ $j_*(\Co(x)),$ and $i_*(\Ot_C)$ and $j_*(\Ot_C)$ in $K(\T_C)$ are primitive.  By \cite[Proposition\ 3.3]{BMLP}, we have a well-behaved wall and chamber decomposition.
We consider the set $\Theta^{\dagger}$ of points $\sigma\in \Stab^{\dagger}(\T_C)$ at which all objects of $S$ are $\sigma$-stable. We will now prove that $\Theta^{\dagger}$ is open. 
Let $B\subseteq \Stab^{\dagger}(\T_C)$ be a compact set, we show that $F=\{\sigma\in B\mid \textnormal{ not every } E\in S\textnormal{ is stable in }  \sigma\}$ is a closed set. As in \cite[Proposition\ 9.4]{BS8} this follows from the fact that $F=\cup^{n}_{j=0}\bar{C_j},$ where each $C_j$ is a chamber in which some $E\in S$ is not stable. 
\edem

Let $g_1,g_2\in \GL,$ with $g_1=(T_1,f_1)$ and $g_2=(T_2,f_2).$ We denote by $$M_i=T_i^{-1}=\begin{bmatrix}
-A_i & B_i \\
-D_i & C_i 
\end{bmatrix} \textnormal{ for } i=1,2.$$
Let us consider the subset $$\mathcal{P}_{12}=\{(\sigma_1,\sigma_2)\in \GL^2\mid \phi_0<\phi_2+1, \phi_1<\phi_3+1 \textnormal{ and if } \phi_0>\phi_2 \textnormal{, then  } \det(M_1+M_2)>0 \},$$
where $f_1(0)=n+\theta_1,$ $f_2(0)=m+\theta_2$, $n,m\in\Z$ and $\theta_1,\theta_2\in [0,1),$ $\rho(\sigma_1)=(m_0,m_1,\phi_0,\phi_1)$ and $\rho(\sigma_2)=(m_2,m_3,\phi_2,\phi_3),$ as we explained in Remark \ref{DATA}.



\brem For $\sigma\in S_{12},$ we consider
\begin{eqnarray} \nonumber
\phi_0=\phi(i_*(\Co(x))) & \textnormal{ and } &\phi_1=\phi(i_*(\Ot_C))\\
\phi_2=\phi(j_*(\Co(x))) & \textnormal{ and } &\phi_3=\phi(j_*(\Ot_C)).
\end{eqnarray}
\erem

\bl For every $\sigma\in S_{12},$ we have that $\phi_1<\phi_0<\phi_1+1\textnormal{ and } \phi_3<\phi_2<\phi_3+1.$
\el
\bdem
It follows directly from the stability of $i_*(\Co(x)), i_*(\Ot_C)$ and $j_*(\Co(x)), j_*(\Ot_C).$ 
\edem

 
 
Since  every $\sigma\in S_{12}$ satisfies $\phi_1<\phi_0<\phi_1+1\textnormal{ and } \phi_3<\phi_2<\phi_3+1,$ then by Remark \ref{DATA} for $(m_0,m_1,\phi_0,\phi_1),$ where 
$m_0=\abs{ Z(i_*(\Co(x)))} \textnormal{ and } m_1=\abs{ Z(i_*(\Ot_C))}$
and for $(m_2,m_3,\phi_2,\phi_3),$ where 
$m_2=\abs{Z(j_*(\Co(x)))} \textnormal{ and } m_3=\abs{ Z(j_*(\Ot_C))}.$ Therefore, we obtain two stability conditions $\sigma_1=(Z_1,\A_1)=(T_1,f_1)$ and $\sigma_2=(Z_2,\A_2)=(T_2,f_2)$ in $\Stab(C).$ 

We define the map \begin{eqnarray} 
\pi\colon  S_{12}&\la & \mathcal{P}_{12}\\\nonumber
 \sigma & \mapsto & (\sigma_1,\sigma_2).\nonumber
\end{eqnarray} Note that $\restr{Z}{	\C_1}=Z_1\textnormal{ and } \restr{Z}{	\C_2}=Z_2.$

\brem The group $\GL$ acts freely on $\mathcal{P}_{12}$  by the definition of the action.
\erem

\bl \label{m12} The map $\pi$ is well-defined, continuous, open and $\GL$-equivariant.
\el
\bdem

First note that $\GL$ also acts freely on $S_{12}.$ As $\pi$ is defined in terms of the slicing, we clearly have a $\GL$-equivariant continuous map from $S_{12}$ to $\GL\times \GL.$

We will now show that $(\sigma_1,\sigma_2)\in \mathcal{P}_{12}.$ First, we prove that $m-n\geq-1.$ Since $i_*(\Co(x)),j_*(\Co(x))$ are stable and we have a non-zero morphism $i_*(\Co(x))\la j_*(\Co(x))[1],$ it follows that $\phi_0-\phi_2<1.$ 

If $\phi_0>\phi_2,$ then by Remark \ref{JHX} we get that $l_*(\Co(x))$ is stable. We show now that in this case  $\det(M_1+M_2)>0.$ By Proposition \ref{12UTA} and the analogous propositions for $\Theta_{23}$ and $\Theta_{31},$ we obtain that  there is $g\in\GL$ such that by acting by $g$ we obtain a stability condition $\sigma^{'}=\sigma g$ such that $\pi(\sigma^{'}) =(\sigma_1 g, \sigma_{\mu}).$ Let $\sigma^{'}=\sigma_1 g=(T^{'},f^{'})$ and $M^{'}={T^{'}}^{-1}.$  By Lemma \ref{NGUTA}, we have \linebreak $\det(M^{'}+I)>0.$ Note that $M^{'}=M_2^{-1}M_1,$ therefore $$0<\det(M_2^{-1}M_1+I)=\det(M_2)\det(M_1+M_2).$$ As $\phi_3<\phi_2<\phi_3+1,$ we obtain $\det(M_2)>0.$ This implies  $\det(M_1+M_2)>0.$ Moreover, as $i_*(\Ot_C)$ and $j_*(\Ot_C)$ are $\sigma$-stable and there is a non-zero morphism $i_*(\Ot_C)\la j_*(\Ot_C)[1],$ then we immediately obtain that $\phi_1<\phi_3+1.$

Consequently, we obtain that $\pi$ is well defined.

Since  $\pi'\colon S_{12}\la \rm{GL}^{+}(2,\R)^2$ is a local homeomorphism, where $\pi'$ maps a stability condition to its stability function, the fact that $\pi$ is a local homeomorphism follows almost directly from the fact that $p\circ \pi=\pi',$ where 
$p\colon \GL^2\la  \rm{GL}^{+}(2,\R)$ is the universal covering. 
\edem

In order to prove that the map $\pi$ is in fact a homeomorphism, we study the action of $\GL$ on $S_{12}.$
As $\GL$ acts freely on $S_{12},$ we define a section of the action
$$\VV_{12}=\{\sigma\in S_{12} \mid \sigma=\pi(\sigma_1,\sigma_2) \textnormal{ such that } \sigma_2=\sigma_{\mu} \}.$$
\bl \label{NGLV} If $\sigma\in\VV_{12}$ and $\pi(\sigma)=(\sigma_1,\sigma_{\mu})$ with $\sigma_1=(T,f)\in\GL$ then $l_*(\Co(x))$ is $\sigma$-stable if and only if $-1<f(0)<0.$
\el 
\bdem 
If $l_*(\Co(x))$ is $\sigma$-stable, since $i_*(\Co(x))$ and $j_*(\Co(x))$ are also $\sigma$-stable, we obtain that \linebreak $\phi_2<\phi_4<\phi_0<\phi_2+1$ and therefore that $n=-1$ and $-1<f(0)<0.$ 
If $l_*(\Co(x))$ is not \linebreak $\sigma$-stable, then by Remark \ref{HNX} we have $\phi_0-\phi_2\leq 0,$ which implies that $f(0)\geq 0$ and we obtain a contradiction. 
\edem

\brem \label{GLV} If $\sigma\in\VV_{12}$ and $\pi(\sigma)=(\sigma_1,\sigma_{\mu})$ and  $0\leq f(0),$ then as $l_*(\Co(x))$ is not $\sigma$-stable, by Theorem \ref{TEO1} we have $\sigma  \in \Theta_{12}.$ \erem

The image of $\mathcal{V}_{12}$ under $\pi$ is contained in $$\mathcal{L}_{12}=\{(\sigma,\sigma_{\mu})\in \GL^{2}\mid f(0)>-1\textnormal{, }3/2>f^{-1}(1/2) \textnormal{ and if} f(0)<0 \textnormal{ then }\det(M+I)>0 \}.$$
Abusing the notation we see $\mathcal{L}_{12}$ as a subset of $\GL.$

\bl The subset $\mathcal{L}_{12}\subseteq \GL$ is open and connected.
\el

\bdem By definition $\mathcal{L}_{12}$ is clearly an open subset of $\GL.$

We define $Y\subseteq \R^4$ as follows: We say that $(m_0,m_1,\phi_0,\phi_1)\in Y$ if $m_i>0, \textnormal{ } \phi_0< 2 \textnormal{, }$\linebreak  $\phi_1<\frac{3}{2} \textnormal{, } \phi_1<\phi_0< \phi_1 +1$ and if $ 1\leq\phi_0<2 \textnormal{ and } 0<\phi_1<\frac{3}{2} \textnormal{, then } \delta(m_0,m_1,\phi_0,\phi_1)>-1,$ where \begin{eqnarray} \nonumber
\delta \colon \R_{>0}\times \R_{>0} \times (1,2) \times (0,\frac{3}{2}) &\la & \R\\ \nonumber
(m_0,m_1,\phi_0,\phi_1)&\mapsto & m_0m_1\sin((\phi_0-\phi_1)\pi)-m_0\cos(\phi_0 \pi)+m_1\sin(\phi_1 \pi). \nonumber
\end{eqnarray}
Note that $Y$ is connected, because although $f$ is defined in terms of trigonometric functions, it is restricted to intervals where it behaves well.
It is easy to see that the map\begin{eqnarray}
\rho \colon \mathcal{L}_{12} & \la & Y \\ \nonumber
(T,f) &\mapsto & (m_0,m_1,f^{-1}(1),f^{-1}(\frac{1}{2})) \nonumber 
\end{eqnarray} where $m_0=|A+Di|, m_1=|B+Ci|,$ is a homeomorphism. It follows that $\mathcal{L}_{12}$ is connected.
\edem

\bp \label{SHomeo} The map \begin{eqnarray}
\pi\colon & \mathcal{V}_{12}&\la \mathcal{L}_{12}\\\nonumber
& \sigma & \mapsto \sigma_1,\nonumber
\end{eqnarray} is a homeomorphism. 
\ep
\bdem 
First, we prove that $\pi$ is injective. Let $\sigma=(Z,\A)$, $\tau=(W,\mathcal{B}) \in \VV_{12},$ such that \linebreak $\pi(\sigma)=\pi(\tau)=\sigma_1.$ 
If $0\leq f(0),$  by Remark \ref{GLV}, we obtain $\sigma, \tau\in \Theta_{12}.$ By Lemma \ref{GUTA}, it follows that $\sigma=\gl_{12}(\sigma_1,\sigma_2)=\tau.$


If $0> f(0)>-1,$ by Lemma \ref{NGLV}, we have that $l_*(\Co(x))$ is stable. In Lemma \ref{NGUTA} we described this type of stability conditions and its hearts. As by definition  $Z=W$ and $d(\mathcal{P},\mathcal{Q})<1,$ where $\mathcal{P}$ and $\mathcal{Q}$ are the slicing of $\sigma$ and $\tau$ respectively, then $\sigma=\tau$.

Thus, by \hyperref[m12]{Lemma \ref*{m12}} we already have a homeomorphism onto the image of $\VV_{12}.$ We now prove that it is in fact onto. By \hyperref[open12]{Lemma \ref*{open12}} and \hyperref[m12]{Lemma \ref*{m12}}  the image of $\VV_{12}$ is open. 
Since $\mathcal{L}_{12}$ is also connected, it is enough to prove that $\pi(\VV_{12})$ is closed.  Moreover, it contains a dense subset as the image of the discrete stability conditions constructed in \hyperref[CSCG]{Section \ref*{CSCG}} and  \hyperref[PrestaNG]{Lemma \ref*{PrestaNG}}. We prove it by contradiction. Assume that $\pi(\VV_{12})$ is not close and let us take a $\tau_1=(Z,\A)$ in the boundary of $\pi(\VV_{12})$ which does not belong to the image. 
Note that there is $\tau^{'}=(Z^{'},\A^{'})=\pi(\sigma^{'})\in \pi(\VV_{12}),$ where $\sigma'=(W',\mathcal{B'}),$ sufficiently close to $\tau_1$ such  for $W=Z(r_1,d_1)+Z_{\mu}(r_2,d_2),$ we have that $Q_{W'}$ restricted to $\Ker{W}$ is negative definite, where $Q_{W'}$ is the quadratic form given by the support property satisfied by $\sigma'.$ 

Consider the open subset of $ \Hom(\Z^4,\Co),$ consisting in homomorphisms whose kernel is negative definite with respect to $Q_{W'}$ and let $U$ be the connected component containing $W'.$ Then as in  \cite[Proposition A.5]{BMS3}, there is a continuos function $C\colon U\la \R_{>0}$ such that $C(Y)\in \R_{>0}$  satisfies that $\vert\vert  v \vert\vert C(Y)<|Y(v)|$ for $v\in \R^4$ with $Q_{W'}(v)\geq 0,$  an appropiate norm in $\mathbb{R}^4$ and $\vert-\vert$ the Euclidean norm in $\Co.$ 
Then there is $0<\epsilon<\frac{1}{8},$ such that $|W-W^{'}|_{\infty}\leq \sin(\pi\epsilon) C(W').$ 
As a consequence we have that $|W-W^{'}|_{\infty}<\sin(\epsilon\pi)\frac{|W'(E)|}{\vert\vert [E] \vert\vert},$ which implies $\vert\vert W-W'\vert\vert_{\sigma'}<\sin(\pi \epsilon).$ 

By Bridgeland's deformation Theorem \ref{BDT}, there is a stability condition $\sigma=(W,\mathcal{B})$ in the neighbourhood of $\sigma^{'}.$ It is possible to choose $\tau^{'}$ appropriately, such that $\sigma$ is in a desired wall. Now we assume the object $j_*(\Co(x))$ to be $\sigma$-semistable but not stable and $j_*(\Co(x))\in \mathcal{P}_{\sigma}(1).$ By Remark \ref{JHX} the Jordan-H\"older filtration is given by $i_*(\Co(x))[-1]\la j_*(\Co(x))\la l_*(\Co(x)),$ all with the same phase. 
Therefore, we obtain $i_*(\Co(x))[-1]\in \mathcal{P}_{\sigma}(1),$ which implies that $\restr{W}{\C_1}=Z=Ad+Br+iCr.$ It follows that $\tau_1$ is  a stability condition given by $(Z,\Coh(C)[n]),$ for $n\in \Z.$ By the definition of $\mathcal{L}_{12}$ we have that $f(0)=n\geq 0.$ As a consequence, the stability conditions $\tau_1$ and $\sigma_{\mu}$ satisfy the CP-gluing conditions. Moreover $\sigma_1=\gl_{12}(\tau_1,\sigma_{\mu})$ is a discrete stability condition, by Corollary \ref{CP1} and Lemma \ref{SP1} a Bridgeland stability condition in $\mathcal{V}_{12}$, satisfying 
$\pi(\sigma_1)=\tau_1.$ We obtain a contradiction. The argument goes along the same lines for the other walls. We finally obtain $\pi(\VV_{12})=\mathcal{L}_{12}.$
\edem

\bc The map \begin{eqnarray} 
\pi\colon & S_{12}&\la \mathcal{P}_{12}\\\nonumber
& \sigma & \mapsto (\sigma_1,\sigma_2)\nonumber
\end{eqnarray} is a homeomorphism.
\ec

We can finally prove the Harder--Narasimhan property for non-discrete stability conditions.



\bp The pairs $\sigma=(Z,\mathcal{A})$ constructed in Corollary \ref{CP1} with the semiorthogonal decomposition $\lin \C_1,\C_2\rin$ and the pairs $\sigma=(Z_{r},\mathcal{A}_r)$ given in Lemma \ref{GNGSC} with $f^{-1}(\frac{1}{2})<\frac{3}{2}$ are Bridgeland stability conditions. 
\ep 

 \bdem Whenever $\sigma_1=(T_1,f_1)=(Z_1,\A_1)\in\Stab(C)$ and $\sigma_2$ are discrete we have already shown that the data gives us a Bridgeland stability condition. It is enough to show this whenever $\sigma_2=\sigma_{\mu}.$ 
Let $\sigma=\gl_{12}(\sigma_1,\sigma_{\mu})$ be a CP-glued pair.
As $f_1(0)\geq 0,$ we get $(\sigma_1,\sigma_{\mu})\in \Li_{12}.$ Therefore, there is a stability condition $\tau\in \VV_{12}$ such that $\pi(\tau)=(\sigma_1,\sigma_2).$ By Remark \ref{GLV}, we obtain $\tau\in \Theta_{12}$ and by Lemma \ref{GUTA}, we have that $\tau=\gl_{12}(\sigma_1,\sigma_2)=(Z,\mathcal{A}).$ As a consequence, the pair $(Z,\mathcal{A})$ gives a Bridgeland stability condition.

Let  $\sigma=(Z_{r},\mathcal{A}_r).$ We consider $\sigma_1$ as in Lemma \ref{GNGSC} with $f^{-1}(\frac{1}{2})<\frac{3}{2}.$
We get that \linebreak  $-1<f_1(0)<0$ and by hypothesis $\det(M_1+I)>0,$  we have that $(\sigma_1,\sigma_{\mu})\in \Li_{12}.$ As a consequence, there is a stability condition $\tau\in \VV_{12}$ such that $\pi(\tau)=(\sigma_1,\sigma_2).$ By Lemma \ref{NGLV}, we obtain that $l_*(\Co(x))$ is $\tau$-stable. Therefore by Lemma \ref{NGUTA}, we have that $\tau$ is given precisely by the construction in Proposition \ref{GNGSC}. Then $\tau=(Z_{r},\mathcal{A}_r)$ and as a consequence, it is a Bridgeland stability condition. 
\edem

\brem If $\sigma=(Z_{r},\mathcal{A}_r)$ is a pre-stability condition given in Lemma \ref{GNGSC} with $f^{-1}(\frac{1}{2})\geq \frac{3}{2},$ then either $i_*(\Ot_C)$ is not stable or $j_*(\Ot_C)$ is not stable. Then $\sigma$ is in  $S_{23}$ or in $S_{31}.$ Precisely, by Lemma \ref{NGSL} if $\phi_5>\frac{3}{2}$ then $j_*(\Ot_C)$ and $l_*(\Ot_C)$ are $\sigma$-stable, and if $\phi_5<1/2$ then $l_*(\Ot_C)$ and $i_*(\Ot_C)$ are $\sigma$-stable. 
As a consequence, all the already constructed pairs in Lemma \ref{GNGSC} are Bridgeland stability conditions. 
\erem





\bt\label{TEO2} The space of stability conditions $\Stab(\T_C)=S_{12}\cup S_{23}\cup S_{31}$ is a connected, four dimensional complex manifold.
\et 

\bdem 	Since $\VV_{12}$ is connected, this implies that $S_{12}$ is also connected. 

Moreover, $S_{12}\cap S_{23}=S_{23}\cap S_{31}=S_{12}\cap S_{31}$ is not empty. Therefore $\Stab(\T_C)$ is connected.
\edem

\appendix
\section{Recollement}

In this appendix, we would like to stress the importance of the gluing condition \eqref{eq:glHom0} of the hearts in the definition of the CP-glued stability conditions. In {\cite[Proposition 2.8.12]{EM1}}, we showed that CP-gluing of hearts correspond to hearts constructed by classical recollement in the sense of \cite{BBD} where the gluing condition \eqref{eq:glHom0} is satisfied. We recall the definitions in \cite{BBD} below.

\begin{definition}\label{def:recollement}
Let $\mathcal{X}, \mathcal{Y}, \mathcal{D}$ be triangulated categories. $\mathcal{D}$ is said to be a \textit{recollement} of $\mathcal{X}$ and $\mathcal{Y}$ if there are six triangulated functors as in the diagram
 
\begin{center}
\recoll{\mathcal{X}}{\mathcal{D}}{\mathcal{Y}}{j_*=j_!}{j^*}{j^!}{i^*=i^!}{i_!}{i_*} 
\end{center}

such that 
\begin{enumerate}
\item $(i^*,i_*)$, $(i_!,i^!)$, $(j^*,j_*)$, $(j_!,j^!)$ are adjoint pairs;
\item $j_*$, $i_*$, $i_!$ are full embeddings;
\item $j^!\circ i_*=0$ (and thus also $i^* \circ j_*=0$ and $j^* \circ i_!=0$);
\item for every $T \in \mathcal{D}$ there are triangles
$$ \xymatrixrowsep{0.1pc}\xymatrix{ i_! i^! T \ar[r] & T \ar[r] & j_*j^* T \ar[r] & i_! i^! T [1] \\
j_! j^! T \ar[r] & T \ar[r] & i_*i^* T \ar[r] & j_! j^! T [1]. }$$
\end{enumerate}
\end{definition}

Note that the functors of the definition of recollement satisfy the following properties as a consequence of the vanishing condition (3.). 
\begin{itemize}
    \item $\prescript{\perp}{}{\mathcal{X}} = \ker(j^*)$ and $j^* \circ i_!=0$ implies that $i_!$ embeds the category $\mathcal{Y}$ as $\prescript{\perp}{}{\mathcal{X}}$.
    \item $\mathcal{X}^\perp= \ker(j^!)$ and $j^! \circ i_*=0$ implies that $i_*$ embeds the category $\mathcal{Y}$ as $\mathcal{X}^\perp$.
\end{itemize}
Hence, if $i_*$ denotes the natural embedding of $\mathcal{X}$ into $\mathcal{D}$, we have $\mathcal{Y}=\mathcal{X}^\perp$. In fact, the definition of recollement gives the two semiorthogonal decompositions $\mathcal{D} = \langle \mathcal{X}, \prescript{\perp}{}{\mathcal{X}}  \rangle$ and $\mathcal{D} = \langle \mathcal{X}^\perp ,\mathcal{X}   \rangle$ associated to an admissible full subcategory $\mathcal{X} \subset \mathcal{D}$. 

\begin{proposition}[{\cite[Proposition 2.8.2]{EM1}}]\label{prop:RecolIFFAdm}
Let $\mathcal{D}$ be a triangulated category and let $\mathcal{X} \subset \mathcal{D}$ be a full triangulated subcategory. Then, $\mathcal{D}$ is a recollement of $\mathcal{X}$ and $\mathcal{X}^\perp$ if and only if $\mathcal{X}$ is (left and right) admissible.
\end{proposition}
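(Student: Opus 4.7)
The forward direction is immediate from unpacking the definition: the adjoint pairs $(j^*, j_*)$ and $(j_!, j^!)$ combined with the equality $j_* = j_!$ exhibit $j^*$ and $j^!$ as a left and a right adjoint of the fully faithful inclusion $j_* \colon \mathcal{X} \hookrightarrow \mathcal{D}$, so $\mathcal{X}$ is both left and right admissible, i.e.\ admissible.

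For the converse I would construct the entire recollement datum from admissibility. Writing $\iota \colon \mathcal{X} \hookrightarrow \mathcal{D}$ for the inclusion and $\iota^L, \iota^R$ for its two adjoints, Lemma \ref{lemma: sodecomp} immediately yields the two semiorthogonal decompositions $\mathcal{D} = \langle \mathcal{X}, \prescript{\perp}{}{\mathcal{X}} \rangle$ and $\mathcal{D} = \langle \mathcal{X}^\perp, \mathcal{X} \rangle$, and also produces a further adjoint to each of the inclusions $\prescript{\perp}{}{\mathcal{X}} \hookrightarrow \mathcal{D}$ and $\mathcal{X}^\perp \hookrightarrow \mathcal{D}$. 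Setting $j_* = j_! := \iota$, $j^* := \iota^L$ and $j^! := \iota^R$ then takes care of two of the four required adjoint pairs and of the full faithfulness of $j_*$.

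The delicate point is the $i$-side, because one abstract category $\mathcal{Y}$ has to accommodate both embeddings $i_*$ and $i_!$, whose images are the a priori distinct subcategories $\mathcal{X}^\perp$ and $\prescript{\perp}{}{\mathcal{X}}$. The plan is to let $\mathcal{Y}$ be the Verdier quotient $\mathcal{D}/\mathcal{X}$ and take $i^* = i^!$ to be the quotient functor $q \colon \mathcal{D} \to \mathcal{Y}$. The compositions $\prescript{\perp}{}{\mathcal{X}} \hookrightarrow \mathcal{D} \xrightarrow{q} \mathcal{Y}$ and $\mathcal{X}^\perp \hookrightarrow \mathcal{D} \xrightarrow{q} \mathcal{Y}$ are equivalences, because in each SOD the corresponding subcategory is an admissible complement of $\mathcal{X}$; composing the quasi-inverses with the inclusions into $\mathcal{D}$ gives the two full embeddings $i_! \colon \mathcal{Y} \to \mathcal{D}$ (landing in $\prescript{\perp}{}{\mathcal{X}}$) and $i_* \colon \mathcal{Y} \to \mathcal{D}$ (landing in $\mathcal{X}^\perp$). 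The adjunctions $(i^*, i_*)$ and $(i_!, i^!)$ then follow directly from the SOD projection formulas of Lemma \ref{lemma: sodecomp}, while axiom (3) becomes pure orthogonality, e.g.\ $j^! \circ i_* = 0$ because $i_*$ lands in $\mathcal{X}^\perp = \ker(\iota^R) = \ker(j^!)$, and the two triangles of axiom (4) are exactly the SOD decomposition triangles produced by Lemma \ref{lemma: sodecomp} applied to each of the two decompositions, rewritten via the equivalences $\prescript{\perp}{}{\mathcal{X}} \simeq \mathcal{Y} \simeq \mathcal{X}^\perp$.

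The main obstacle I expect is bookkeeping rather than conceptual: tracking which SOD supplies which adjoint, and checking that units and counits match up coherently under the two equivalences $\prescript{\perp}{}{\mathcal{X}} \simeq \mathcal{Y} \simeq \mathcal{X}^\perp$, so that the vanishings in (3) and the triangles in (4) appear canonically and not merely up to a choice of quasi-inverse. In practice this comes down to identifying the composites $j_* j^* = \iota \iota^L$ and $j_! j^! = \iota \iota^R$ with the corresponding pieces of the SOD projection triangles, and observing that $i_* i^*$ and $i_! i^!$ then automatically appear as the remaining terms.
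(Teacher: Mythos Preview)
Your argument is correct and is the standard way to establish this equivalence. The paper itself does not supply a proof of this proposition; it merely records the statement and defers to the cited reference \cite[Proposition 2.8.2]{EM1}, so there is nothing to compare against on the paper's side.

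One small cosmetic point: the statement is phrased as ``$\mathcal{D}$ is a recollement of $\mathcal{X}$ and $\mathcal{X}^\perp$'', so strictly speaking the abstract category $\mathcal{Y}$ should be $\mathcal{X}^\perp$ itself rather than the Verdier quotient $\mathcal{D}/\mathcal{X}$. Of course the composite $\mathcal{X}^\perp \hookrightarrow \mathcal{D} \xrightarrow{q} \mathcal{D}/\mathcal{X}$ is an equivalence once $\mathcal{X}$ is admissible, so your construction transports verbatim; you may simply want to phrase the final datum with $\mathcal{Y} = \mathcal{X}^\perp$, taking $i_*$ to be the inclusion and $i_!$ to be the inclusion precomposed with the equivalence $\mathcal{X}^\perp \simeq \prescript{\perp}{}{\mathcal{X}}$, to match the statement literally. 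Everything else---the identification of the vanishings in axiom (3) with orthogonality and of the two triangles in axiom (4) with the SOD decomposition triangles from Lemma \ref{lemma: sodecomp}---is exactly right.
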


The following theorem shows how to construct t-structures from t-structures in the smaller subcategories.
\begin{theorem}[{\cite[Theorem 1.4.10]{BBD}}]\label{thm:BBDheart} Let $\mathcal{X}, \mathcal{Y}, \mathcal{D}$ be triangulated categories such that $\mathcal{D}$ is a recollement of $\mathcal{X}$ and $\mathcal{Y}$ and assume the notation of Definition \ref{def:recollement}. Let $(\mathcal{X}^{\leq 0}, \mathcal{X}^{\geq 0})$ and $(\mathcal{Y}^{\leq 0}, \mathcal{Y}^{\geq 0})$ be t-structures on $\mathcal{X}$ and $\mathcal{Y}$ respectively. Then there is a t-structure $(\mathcal{D}^{\leq 0}, \mathcal{D}^{\geq 0})$ in $\mathcal{D}$ defined by:
$$\begin{array}{c}
\mathcal{D}^{\leq 0}\coloneqq \{T \in \mathcal{D} \mid i^*T \in \mathcal{Y}^{\leq 0}, j^* T \in \mathcal{X}^{\leq 0} \} \\
\mathcal{D}^{\geq 0}\coloneqq \{T \in \mathcal{D} \mid i^*T \in \mathcal{Y}^{\geq 0}, j^! T \in \mathcal{X}^{\geq 0} \}.
\end{array}$$
If we write $\mathcal{A}_\mathcal{X}$ and $\mathcal{A}_\mathcal{Y}$ for the corresponding hearts in $\mathcal{X}$ and $\mathcal{Y}$ respectively, we denote by $\rec(\mathcal{A}_\mathcal{Y},\mathcal{A}_\mathcal{X}) \coloneqq \mathcal{D}^{\leq 0} \cap \mathcal{D}^{\geq 0}$.
\end{theorem}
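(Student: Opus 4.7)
The plan is to verify the three t-structure axioms of the definition given in Section 2 for the pair $(\mathcal{D}^{\leq 0},\mathcal{D}^{\geq 0})$. Axiom (3), stability under the appropriate shifts, is immediate: the defining conditions use the triangulated functors $i^{*},j^{*},j^{!}$, and the analogous inclusions $\mathcal{X}^{\leq 0}\subset\mathcal{X}^{\leq 1}$, $\mathcal{X}^{\geq 0}\supset\mathcal{X}^{\geq 1}$ (and the same for $\mathcal{Y}$) already hold.

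For axiom (1), let $A\in\mathcal{D}^{\leq 0}$ and $B\in\mathcal{D}^{\geq 1}$. Applying $\Hom_{\mathcal{D}}(-,B)$ to the recollement triangle $i_{!}i^{!}A\to A\to j_{*}j^{*}A$ of Definition \ref{def:recollement} reduces the vanishing of $\Hom_{\mathcal{D}}(A,B)$ to those of $\Hom_{\mathcal{D}}(j_{*}j^{*}A,B)$ and $\Hom_{\mathcal{D}}(i_{!}i^{!}A,B)$. The adjunction $(j_{!},j^{!})$, combined with $j_{!}=j_{*}$, rewrites the first as $\Hom_{\mathcal{X}}(j^{*}A,j^{!}B)$, which vanishes because $j^{*}A\in\mathcal{X}^{\leq 0}$ and $j^{!}B\in\mathcal{X}^{\geq 1}$. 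The adjunction $(i_{!},i^{!})$, combined with $i^{!}=i^{*}$, rewrites the second as $\Hom_{\mathcal{Y}}(i^{*}A,i^{*}B)$, which vanishes because $i^{*}A\in\mathcal{Y}^{\leq 0}$ and $i^{*}B\in\mathcal{Y}^{\geq 1}$.

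The substantial step is axiom (2): for each $T\in\mathcal{D}$, constructing a distinguished triangle $T_{0}\to T\to T_{1}\to T_{0}[1]$ with $T_{0}\in\mathcal{D}^{\leq 0}$ and $T_{1}\in\mathcal{D}^{\geq 1}$. I proceed by iterated truncation. First, let $X_{0}\to j^{*}T\to X_{1}\to X_{0}[1]$ be the truncation triangle of $j^{*}T$ in $\mathcal{X}$, with $X_{0}\in\mathcal{X}^{\leq 0}$ and $X_{1}\in\mathcal{X}^{\geq 1}$. Applying the exact functor $j_{*}$ and composing with the counit $T\to j_{*}j^{*}T$ yields a morphism $T\to j_{*}X_{1}$; let $T'$ denote its fiber, producing a triangle $T'\to T\to j_{*}X_{1}$. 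Applying $j^{*}$ and using $j^{*}j_{*}\cong\id$, the resulting triangle is precisely the truncation triangle of $j^{*}T$, so $j^{*}T'\cong X_{0}\in\mathcal{X}^{\leq 0}$, while the vanishing $i^{*}j_{*}=0$ from Definition \ref{def:recollement} gives $i^{*}T'\cong i^{*}T$. Next, let $Y_{0}\to i^{*}T\to Y_{1}\to Y_{0}[1]$ be the truncation triangle in $\mathcal{Y}$, and let $T_{0}$ be the fiber of the composition $T'\to i_{*}i^{*}T'\cong i_{*}i^{*}T\to i_{*}Y_{1}$, giving a triangle $T_{0}\to T'\to i_{*}Y_{1}$. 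Using $i^{*}i_{*}\cong\id$ one obtains $i^{*}T_{0}\cong Y_{0}\in\mathcal{Y}^{\leq 0}$, and using $j^{*}i_{*}=0$ one obtains $j^{*}T_{0}\cong j^{*}T'=X_{0}\in\mathcal{X}^{\leq 0}$, so $T_{0}\in\mathcal{D}^{\leq 0}$.

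To produce $T_{1}$ and verify $T_{1}\in\mathcal{D}^{\geq 1}$, apply the octahedral axiom to the composition $T_{0}\to T'\to T$ with its two already-constructed triangles $T_{0}\to T'\to i_{*}Y_{1}$ and $T'\to T\to j_{*}X_{1}$: this yields simultaneously the desired truncation triangle $T_{0}\to T\to T_{1}\to T_{0}[1]$ and an auxiliary distinguished triangle $i_{*}Y_{1}\to T_{1}\to j_{*}X_{1}\to i_{*}Y_{1}[1]$. Applying $i^{*}$ to the auxiliary triangle and using $i^{*}i_{*}=\id$ together with $i^{*}j_{*}=0$ gives $i^{*}T_{1}\cong Y_{1}\in\mathcal{Y}^{\geq 1}$; applying $j^{!}$ and using $j^{!}j_{*}=\id$ together with $j^{!}i_{*}=0$ gives $j^{!}T_{1}\cong X_{1}\in\mathcal{X}^{\geq 1}$. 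Hence $T_{1}\in\mathcal{D}^{\geq 1}$, completing the construction. The main subtlety lies in this last octahedron: one must verify that the maps $i^{*}T_{1}\to Y_{1}$ and $j^{!}T_{1}\to X_{1}$ coming from the octahedral construction agree with those induced by the recollement adjunctions, since $T_{1}$ is only defined up to non-unique isomorphism. This naturality bookkeeping is the core technical content of the argument; everything else is a formal manipulation of the six adjoint triples and the two vanishing identities $i^{*}j_{*}=0$ and $j^{!}i_{*}=0$.
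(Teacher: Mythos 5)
The paper itself provides no proof of this theorem, citing only \cite{BBD}; so the task is to assess your argument on its own merits against the standard proof. Your treatment of axioms (1) and (3) is correct, and your use of the two recollement triangles and adjunctions there is fine. The gap is in the construction of $T_0$.

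In the second truncation step you form $T_0 \to T' \to i_* Y_1$ and then assert \emph{``using $j^* i_* = 0$ one obtains $j^* T_0 \cong j^* T'$.''} But $j^* i_* = 0$ is \emph{not} one of the recollement vanishings. The primary axiom is $j^! \circ i_* = 0$, from which one derives $i^* \circ j_* = 0$ and $j^* \circ i_! = 0$ by adjunction — and that is the complete list. Indeed, by adjunction $\Hom_{\mathcal X}(j^* i_* Y, X) = \Hom_{\mathcal D}(i_* Y, j_* X) = \Hom(\mathcal{X}^\perp, \mathcal{X})$, which is generically nonzero: this is exactly the ``gluing'' morphism in a semiorthogonal decomposition. (If $j^* i_* $ vanished in addition to $j^! i_*$, the first recollement triangle applied to $i_* Y$ would force $i_! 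Y \cong i_* Y$, i.e.\ the recollement degenerates.) So the triangle $j^* T_0 \to X_0 \to j^* i_* Y_1$ does not let you conclude $j^* T_0 \in \mathcal{X}^{\leq 0}$, since nothing controls where $j^* i_* Y_1$ sits relative to the t-structure on $\mathcal X$.

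The fix is to reverse the order of the two truncations, so that the step using $j_*$ comes \emph{last}, where its good vanishing $i^* j_* = 0$ is the one you need. Concretely: first let $T''$ be the fiber of $T \to i_* \tau^{\mathcal Y}_{\geq 1} i^* T$; then $i^* T'' \cong \tau_{\leq 0} i^* T$ and, since $j^! i_* = 0$, $j^! T'' \cong j^! T$. Then let $T_0$ be the fiber of $T'' \to j_* \tau^{\mathcal X}_{\geq 1} j^* T''$; then $j^* T_0 \cong \tau_{\leq 0} j^* T'' \in \mathcal{X}^{\leq 0}$, and since $i^* j_* = 0$ we retain $i^* T_0 \cong i^* T'' \in \mathcal{Y}^{\leq 0}$, so $T_0 \in \mathcal{D}^{\leq 0}$. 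The octahedron for $T_0 \to T'' \to T$ then produces $T_1$ with an auxiliary triangle $j_* \tau_{\geq 1} j^* T'' \to T_1 \to i_* \tau_{\geq 1} i^* T$, from which $i^* T_1 \cong \tau_{\geq 1} i^* T$ (using $i^* j_* = 0$) and $j^! T_1 \cong \tau_{\geq 1} j^* T''$ (using $j^! j_* \cong \id$ and $j^! i_* = 0$). Every vanishing invoked is now one of the three genuine recollement vanishings. Apart from this order-of-truncation issue, your outline (including the final caution about naturality of the octahedral maps) matches the BBD argument.
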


\begin{remark}
From Remark \ref{rem:Diadmissible}, we know that all three subcategories $\mathcal{C}_i$ for $i=1,2,3$ are admissible in $\mathcal{T}_C$. Therefore, by Proposition \ref{prop:RecolIFFAdm} they define 3 recollements of $\mathcal{T}_C$. Given hearts $\mathcal{A}_i$ (resp.\ $\mathcal{A}_j$) in $\mathcal{C}_i$ (resp.\ $\mathcal{C}_j$), we will denote by $\rec_{ij}(\mathcal{A}_i,\mathcal{A}_j)$ the heart in $\mathcal{T}_C$ as recollement of $\mathcal{C}_j$ and $\mathcal{C}_i$ constructed as in Theorem \ref{prop:RecolIFFAdm} for $ij \in \{12, 23, 31\}$.
\end{remark}

In general, when a heart is constructed by CP-gluing we know it agrees with a heart constructed from recollement. See {\cite[Proposition 2.8.12]{EM1}}. On the other hand, at least in the context of holomorphic triples, a heart constructed by recollement without satisfying CP-gluing condition does not accept a stability function as shown in the example below.

\begin{lemma}[Jealousy Lemma]\label{lem:jealousy}
Let $\mathcal{A}\subset \mathcal{T}_{C}$ be a heart constructed by recollement of hearts $\mathcal{A}_i \subset \mathcal{C}_i$, $\mathcal{A}_j \subset \mathcal{C}_j$ which do not satisfy CP-gluing conditions. Then, $\mathcal{A}$ does not accept a stability function defined on $K(\mathcal{A})$, i.e.\ $Z(\mathcal{A}) \not\subset \overline{\mathbb{H}} $ for every $Z \colon K(\mathcal{A}) \rightarrow \mathbb{C}.$
\begin{proof}
We give the proof for the case of $\mathcal{T}_C$ as recollement of $\mathcal{C}_2$ and $\mathcal{C}_1$ and the other cases will follow by acting with the Serre functor $S_{\mathcal{T}_{C}}$ (or its inverse) on $\sigma$.

Let $\sigma = (Z,\mathcal{A}_{12})$ be a stability condition on $\mathcal{T}_{C}$ such that $\mathcal{A}_{12} \coloneqq\rec_{12}(\mathcal{A}_1,\mathcal{A}_2)$ is a heart in $\mathcal{T}_{C}$ defined by recollement from given hearts $\mathcal{A}_i \coloneqq \coh^{r_i}(C)$ in $\mathcal{C}_i$, for $i=1,2$ that do not satisfy the CP-gluing condition \eqref{eq:glHom0}, i.e.\ such that 
\begin{equation}\label{eq:nongluing12}
r_2-1< r_1 < r_2.
\end{equation}
 
First of all, we claim that the hearts $i_*\mathcal{A}_1$, $j_*\mathcal{A}_2$ and $l_*\mathcal{A}_1$ are in $\mathcal{A}_{12}$. Indeed, it follows from the definitions. Recall from the isomorphism of Theorem \ref{thm:stabC} that we can identify $\mathcal{A}_i = \mathcal{P}(r_i, r_i + 1]$ for $i=1,2$, where $\mathcal{P}(0,1]=\Coh(C)$. Therefore, $j_*\mathcal{P}(r_2,r_2 + 1]\subset \mathcal{A}_{12}$ since $i^* j_*=0$ and by adjunction $j^*j_*=\id$ and $j^!j_* =\id$. Also $i_*\mathcal{P}(r_1, r_1 + 1] \subset \mathcal{A}_{12}$, since $j^! i_*=0$, $i^* i_*=\id$ and using $j^* i_*= j^* j_! [1]$ and by \eqref{eq:nongluing12}, $\mathcal{P}(r_1 +1,r_1 + 2] \subset \mathcal{P}(r_2, \infty)$. Similarly, $l_*\mathcal{P}(r_1,r_1 + 1]\subset \mathcal{A}_{12}$, since $l_* = i_!$, $j^* i_!=0$, $i^* i_!=\id$ and using $j^! i_!= l^* l_*$ and by \eqref{eq:nongluing12}, $\mathcal{P}(r_1 ,r_1 + 1] \subset \mathcal{P}(-\infty, r_2+1]$.

As in Remark \ref{SCCHR}, we write each $r_i = n_i + \theta_i$ for unique $n_i \in \mathbb{Z}$ and $\theta_i \in [0, 1)$. We assume $n_2$ to be (up to shift) equal to $0$. Note that equation \eqref{eq:nongluing12} implies that either $n_1 = 0$ and $\theta_1 < \theta_2 < \theta_1 +1$ or $n_1 = -1$ and $\theta_2 < \theta_1 < \theta_2 +1$. 

If we look closely at the imaginary part of $Z$, it has the form $\Im Z(r_1,d_1,r_2,d_2)=D_1 d_1 + D_2 d_2 + C_1 r_1 + C_2 r_2$ with $C_i,D_i \in \mathbb{R}$, for $i=1,2$. The restrictions to the previous hearts are
$$ \begin{array}{l}
\Im Z \mid_{i_*\mathcal{A}_1}=D_1 d + C_1 r \\
\Im Z \mid_{j_*\mathcal{A}_2}=D_2 d + C_2 r  \\
\Im Z \mid_{l_*\mathcal{A}_1}=(D_1 + D_2 )d + (C_1 + C_2 )r
\end{array}
$$
for $d,r \in \mathbb{Z}$ with $r \geq 0$. We recall that if $Cr+Dd$ is the imaginary part of a stability function on $\Coh^\theta$, then the value $\theta$ is determined by the quotient $D / C$ which implies that $\theta_1$ is determined by the quotients $D_1 / C_1$ and $(D_2 + D_1) / (C_1 + C_2)$. But these two quotients cannot determine the same $\theta_1$ unless $\theta_1 = \theta_2$, which contradicts the assumption \eqref{eq:nongluing12}.
\end{proof}
\end{lemma}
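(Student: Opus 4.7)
The plan is to exploit the threefold symmetry of $\T_C$ to reduce to one recollement, and then extract incompatible constraints on any candidate central charge. Since the Serre functor $S_{\T_C}$ cyclically permutes the three semiorthogonal decompositions in \eqref{eq:intro:sod}, it suffices to treat $\A = \rec_{12}(\A_1,\A_2)$ with $\A_i = \coh^{r_i}(C)$ (by Remark \ref{HEARTSCURVE}, every heart of a stability condition on $D^b(C)$ has this form up to $\GL$-action). Writing $r_i = n_i + \theta_i$ with $\theta_i \in [0,1)$ and shifting so that $n_2 = 0$, the failure of CP-gluing \eqref{eq:nongluing12} forces $\theta_1 \neq \theta_2$ in both surviving cases $n_1 = 0$ and $n_1 = -1$.

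The core step is to verify that three distinct hearts of $D^b(C)$, embedded via the three natural functors, all land inside the recollement heart, namely
\[
i_*\A_1,\quad j_*\A_2,\quad l_*\A_1\ \subset\ \rec_{12}(\A_1,\A_2).
\]
The inclusion $j_*\A_2 \subset \A$ is immediate from $i^*j_* = 0$ and $j^*j_* = j^!j_* = \id$. The inclusion $i_*\A_1 \subset \A$ uses $j^!i_* = 0$, $i^*i_* = \id$ and $j^*i_* \cong j^*j_![1]$, together with the non-gluing range $r_1 < r_2 \leq r_1 + 1$, which puts the shifted piece into the correct truncation of $\A_2$. The crucial inclusion is $l_*\A_1 \subset \A$, which uses the identification $l_* = i_!$ in the recollement attached to $\mathcal{C}_1 \subset \T_C$ together with $j^*i_! = 0$, $i^*i_! = \id$ and $j^!i_! \cong l^*l_* = \id_{D^b(C)}$, again invoking the non-gluing bound.

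Granted these inclusions, assume $Z\colon K(\A) \to \Co$ is a stability function and write
\[
\Im Z(r_1,d_1,r_2,d_2) = C_1 r_1 + D_1 d_1 + C_2 r_2 + D_2 d_2.
\]
Restriction to $i_*\A_1$, $j_*\A_2$ and $l_*\A_1$ yields the $\R$-linear forms $C_1 r + D_1 d$, $C_2 r + D_2 d$ and $(C_1 + C_2) r + (D_1 + D_2) d$ respectively, and each must be the imaginary part of a weak stability function on the respective heart $\coh^{\theta_i}(C)$ (or a shift thereof). The key input, already used in the authors' sketch, is that a nonzero form $Cr + Dd$ is the imaginary part of a weak stability function on $\coh^{\theta}(C)$ exactly when $D/C = \tan(\pi\theta)$, so the slope recovers $\theta$ modulo $\Z$. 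Applying this to $i_*\A_1$ and to $l_*\A_1$ computes $\theta_1$ in two ways, forcing $D_1/C_1 = (D_1 + D_2)/(C_1 + C_2)$, i.e.\ $D_1 C_2 = D_2 C_1$; combined with the restriction to $j_*\A_2$ this gives $\theta_1 = \theta_2$, contradicting the non-gluing hypothesis. The other two recollement configurations follow by conjugating with $S_{\T_C}^{\pm 1}$.

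The main obstacle will be the functor bookkeeping for the inclusion $l_*\A_1 \subset \rec_{12}(\A_1,\A_2)$: it requires unpacking the six recollement functors attached to the admissible subcategory $\mathcal{C}_1 \subset \T_C$, in particular the identifications $l_* = i_!$ and $j^!i_! \cong l^*l_*$ encoding the interaction between the SODs $\langle \mathcal{C}_1,\mathcal{C}_2\rangle$ and $\langle \mathcal{C}_2,\mathcal{C}_3\rangle$, and checking that under the bound $r_1 < r_2 \leq r_1 + 1$ every shifted piece lands on the correct side of each t-structure. After this is in hand, the algebraic collapse at the level of the imaginary parts is essentially a one-line calculation.
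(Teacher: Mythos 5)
Your argument reproduces the paper's proof essentially step for step: reduction via the Serre functor, the verification that $i_*\mathcal{A}_1$, $j_*\mathcal{A}_2$ and $l_*\mathcal{A}_1$ all sit inside the recollement heart using the same adjunction and vanishing identities, and the final collapse at the level of $\Im Z$ by matching the slopes $D_1/C_1$, $D_2/C_2$ and $(D_1+D_2)/(C_1+C_2)$ to conclude $\theta_1=\theta_2$. The approach, the three embedded hearts, and the contradiction are all the same as in the text.
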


\begin{remark}
The contradiction was a consequence of including 3 smaller hearts in the big one. The only case when this situation does not create a contradiction is for the extreme cases of CP-gluing conditions in Proposition \ref{prop: gluingcond}.
\end{remark}

\bibliographystyle{acm}
\bibliography{unified_bib}

\end{document}